\newtheorem{theorem}{Theorem}[section]
\newtheorem{lemma}[theorem]{Lemma}
\newtheorem{corollary}[theorem]{Corollary}
\newtheorem{proposition}[theorem]{Proposition}
\theoremstyle{definition}
\newtheorem{definition}[theorem]{Definition}
\newtheorem{example}[theorem]{Example}
\newtheorem{conjecture}[theorem]{Conjecture}
\newtheorem*{lemma*}{Lemma}
\theoremstyle{remark}
\newtheorem{remark}[theorem]{Remark}
\newtheorem{warning}[theorem]{Warning}
\newtheorem{observation}[theorem]{Observation}
\newtheorem{question}[theorem]{Question}
\numberwithin{equation}{section}
\DeclareMathOperator{\ad}{ad}
\DeclareMathOperator{\Aut}{Aut}
\DeclareMathOperator{\coker}{coker}
\DeclareMathOperator{\E}{E}
\DeclareMathOperator{\Ext}{Ext}
\DeclareMathOperator{\ext}{ext}
\DeclareMathOperator{\GL}{GL}
\DeclareMathOperator{\Gr}{Gr}
\DeclareMathOperator{\Hom}{Hom}
\DeclareMathOperator{\img}{im}
\DeclareMathOperator{\opp}{op}
\DeclareMathOperator{\PHom}{PHom}
\DeclareMathOperator{\IHom}{IHom}
\DeclareMathOperator{\rank}{rank}
\DeclareMathOperator{\rep}{rep}
\DeclareMathOperator{\SL}{SL}
\DeclareMathOperator{\T}{T}
\DeclareMathOperator{\Tr}{Tr}
\DeclareMathOperator{\PC}{PC}
\DeclareMathOperator{\supp}{supp}
\DeclareMathOperator{\gen}{gen}
\DeclareMathOperator{\trop}{trop}
\DeclareMathOperator{\str}{str}
\newcommand{\Ec}{{\check{\E}}}
\newcommand{\ec}{{\check{\e}}}
\newcommand{\fc}{{\check{f}}}
\newcommand{\dc}{{\check{d}}}
\newcommand{\betac}{{\check{\beta}}}
\newcommand{\gammac}{{\check{\gamma}}}
\newcommand{\dtc}{{\check{\delta}}}
\newcommand{\etc}{{\check{\eta}}}
\newcommand{\epc}{{\check{\ep}}}
\newcommand{\lc}{{\check{l}}}
\newcommand{\lt}{{L}}
\newcommand{\rc}{{\check{r}}}
\newcommand{\rt}{{R}}
\newcommand{\mr}[1]{{\sf #1}}
\newcommand{\e}{{\rm e}}
\newcommand{\g}{{\sf g}}
\renewcommand{\S}{\mc{S}}
\newcommand{\op}[1]{\operatorname{#1}}
\newcommand{\mb}[1]{\mathbb{#1}}
\newcommand{\mc}[1]{\mathcal{#1}}
\newcommand{\mf}[1]{\mathfrak{#1}}
\renewcommand{\b}[1]{\bold{#1}}
\newcommand{\ep}{{\epsilon}}
\newcommand{\proj}{\operatorname{proj}\text{-}}
\newcommand{\br}[1]{\overline{#1}}
\newcommand{\innerprod}[1]{\langle#1\rangle}
\newcommand{\sm}[1]{\left(\begin{smallmatrix}#1\end{smallmatrix}\right)}
\newcommand{\fr}[1]{\framebox[1.2\width]{{$#1$}}}
\newcommand{\dv}{\underline{\dim}}
\newcommand{\wtd}[1]{\widetilde{#1}}
\newcommand{\wt}{\operatorname{wt}}
\newcommand{\rhoc}{\check{\rho}}
\newcommand{\ckQ}{\widehat{k\Delta}}
\newcommand{\lrcoef}{c^\lambda_{\mu\,\nu}}
\newcommand{\uca}{\br{\mc{C}}}
\newcommand{\mub}{\mu_{\b{u}}}
\newcommand{\tauh}{\hat{\tau}}
\renewcommand{\t}{{\op{T}}}
\newcommand{\Bup}{{\rm B}}
\newcommand{\ibar}{{\bar{\imath}}}
\newcommand{\hdash}{{\text{\textemdash}}}
\newcommand{\edge}[1]{\stackrel{#1}{\hdash}}
\renewcommand{\k}{{\mathbbm{k}}}
\newcommand{\Rc}{\check{\mc{R}}}
\begin{document}
	
\title{Crystal Structure of Upper Cluster Algebras}
\author{JiaRui Fei}
\address{School of Mathematical Sciences, Shanghai Jiao Tong University, China}
\email{jiarui@sjtu.edu.cn}
\thanks{The author was supported in part by National Natural Science Foundation of China (No. 11971305 and No. 12131015)}

\subjclass[2020]{Primary 13F60; Secondary 05E10, 16G10}

\date{}
\dedicatory{}
\keywords{Cluster algebra, Crystal, Kashiwara, Quiver with Potential, Mutation, Presentation, Boundary Representation, Generic Basis, Biperfect Basis}

\begin{abstract} We describe the upper seminormal crystal structure for the $\mu$-supported $\delta$-vectors for any quiver with potential with reachable frozen vertices,
or equivalently for the tropical points of the corresponding cluster $\mc{X}$-variety. 
We show that the crystal structure can be algebraically lifted to the generic basis of the upper cluster algebra.
This can be viewed as an additive categorification of the crystal structure arising from cluster algebras.
We introduce the biperfect bases in the cluster algebra setting and give a description of all such bases, which are parametrized by lattice points in a product of polyhedral sets.
We illustrate this theory with classical examples and new examples.
\end{abstract}
\maketitle
\setcounter{tocdepth}{2}
\tableofcontents

\section{Introduction}
\subsection{Motivations}
Masaki Kashiwara introduced crystal bases for representations of quantum groups, uncovering their remarkable combinatorial properties \cite{K}. He later axiomatized these into combinatorial crystals, which form the underlying structure for representations of Lie groups and quantum enveloping algebras. Independently, George Lusztig developed the canonical basis for quantum enveloping algebras \cite{L}, a distinguished basis with positive structure coefficients whose specialization (or tropicalization) gives rise to a crystal basis as its ``shadow" structure, with profound applications in representation theory. Around the same time, Peter Littelmann introduced the path model for crystals from another perspective \cite{Li}.

Let $C=(c_{i,j})_{i,j\in I}$ be a symmetric Cartan matrix, with $\Phi$ the associated root system and $\Lambda$ the weight lattice. 
A {\em Kashiwara crystal} of type $\Phi$ is a nonempty set $\mc{B}$
equipped with raising and lowering operators $r_i$ and $l_i$, string length functions $\rho_i$ and $\lambda_i$ for each $i \in I$, and a weight function $\mc{B}\to \Lambda$ satisfying specific axioms (see \cite{K,K2, BS}, or Section \ref{ss:crystal}).

Let $G$ be a simple simply-connected complex algebraic group, and $U$ be its maximal unipotent subgroup.
The coordinate ring $\k[U]$ is one of the most studied cluster algebras \cite{BFZ,GLSa};
as a crystal it is isomorphic to $\mc{B}(\infty)$, one of the most studied crystals.
Besides the unipotent groups, many classical cluster algebras admit 
natural crystal structures via group actions, with well-known examples including $G$ and its subgroups (along with their strata) \cite{BFZ},
and partial flag varieties \cite{GLSp} (including Grassmannian, $G/U$ \cite{BFZ} and their strata \cite{Le}).
Crystal structure also feature prominently in the monoidal categorification of cluster algebras \cite{KP}.
These examples motivate the following questions:
\begin{enumerate}
	\item How to detect and describe crystal structures of upper cluster algebras ``intrinsically" (i.e., directly from the exchange matrix, without prior knowledge of the underlying space)?
	\item How to categorify these crystal structures arising from upper cluster algebras (UCAs)?
	\item How to algebraically lift these combinatorial crystals to the UCAs themselves?
\end{enumerate}

Additive categorification of crystals is well-established—for instance, Lusztig's constructions using preprojective algebras and perverse sheaves on quivers to realize canonical bases \cite{L, L2}, or Nakajima's quiver varieties \cite{KS}—but existing models are largely limited to $U$ or $G/U$. There is prior work describing crystal structures in specific classical cluster algebras, such as \cite{GKS, KN, Ko, Fu}.

Our initial aim was to provide a uniform description of crystal structures for all skew-symmetric cluster algebras. Following a 2021 draft, we discovered that any skew-symmetric upper cluster algebra with reachable coefficients possesses a nontrivial crystal structure. Given the additive categorical model provided by Derksen-Weyman-Zelevinsky's theory of quivers with potentials \cite{DWZ1,DWZ2}, we anticipate an additive categorification of these crystal structures as well.

\subsection{Tropical Crystal Structures} First, let us review some key notions in the representation theory of quivers with potentials to define a crystal. 
Let $(\Delta,\S)$ be a {\em nondegenerate} ice quiver with potential, and $J$ be its {\em Jacobian algebra} \cite{DWZ1}.
For any $\delta\in\mb{Z}^{\Delta_0}$ we define the presentation space
$$\PHom(\delta):=\Hom(P([-\delta]_+),P([\delta]_+)).$$
Here $P(\beta) = \bigoplus_{u\in \Delta_0} \beta(u) P_u$ and $P_u$ is the indecomposable projective representation of $J$ corresponding to $u$.
The vector $\delta$ is called the weight vector or the $\delta$-vector of the presentation space.
There is also a notion of injective weight vector $\dtc$ if working with injective presentations.
For generic $d\in \PHom(\delta)$, the cokernel of $d$ has a constant dimension vector, denoted by $\dv(\delta)$.
\begin{definition}[{\cite{Fs1}}] A $\delta$-vector of $(\Delta,\S)$ is called {\em $\mu$-supported} if $\dv(\delta)$ is only supported on the mutable part $\Delta_0^\mu$. We denote the set of all $\mu$-supported $\delta$-vectors of $(\Delta,\S)$ by $\trop(\Delta,\S)$.
It will be the underlying set for the crystal to be defined.	
\end{definition}

For any presentation $d:P_-\to P_+$ and any representation $N$ of $J$, let $\E(d,N)$ be the cokernel of the induced map $\Hom(P_+, N)\to \Hom(P_-, N)$.
We denote by $\e(\delta, N)$ the generic (minimal) value of $\dim\E(d,N)$ for $d\in \PHom(\delta)$.
For two {\em decorated} representations $\mc{M}$ and $\mc{N}$ we define $\e(\mc{M},\mc{N}) = \e(d_{\mc{M}}, N)$ where $d_{\mc{M}}$ is the presentation corresponding to $\mc{M}$ (the correspondence will be reviewed in Section \ref{ss:QPrep}). $\mc{M}$ is called {\em $\E$-rigid} if $\e(\mc{M},\mc{M})=0$. 

For a frozen vertex $i$ of $\Delta$, there is an associated {\em boundary representation} $E_i$ (detailed construction is given in Section \ref{S:boundary}).
This representation fits into an exact sequence $0\to {E}_i^\mu \to E_i \to S_i \to 0$ where $\mc{E}_i^\mu$ is $\mu$-supported and $S_i$ is the simple representation supported on $i$.
The frozen vertex $i$ is called {\em rigid} (resp. reachable) if $\mc{E}_i^\mu$ is $\E$-rigid (resp. reachable).
We denote the projective and injective weights of $E_i$ by $\ep_i$ and $\epc_i$ respectively.

Recall the skew-symmetric matrix $B(\Delta)$ associated to $\Delta$:
$$B(\Delta)(u,v) = |\text{arrows $u\to v$}| - |\text{arrows $v\to u$}|.$$ 
If we delete the rows of $B(\Delta)$ corresponding to the frozen vertices,
the resulting matrix is denoted by $B_\Delta$.
Let $I$ be a subset of frozen vertices of $\Delta$.
\begin{definition} The {\em Cartan type} of $I$ is given by the following symmetric Cartan matrix $C_I$
	\begin{equation} c_{i,j} =  2\updelta_{i,j} - \e(\mc{E}_i^\mu, \mc{E}_j^\mu) - \e(\mc{E}_j^\mu, \mc{E}_i^\mu). \end{equation}	
A $\mb{Q}^I$-grading of $\Delta_0$, that is a $\mb{Z}$-linear map $\wt=(\wt_i)_{i\in I}: \mb{Z}^{\Delta_0} \to \mb{Q}^I$ is called {\em adapted to} $I$ if 
\begin{equation}  \wt_i(\epc_j) =  c_{i,j}. \end{equation}
\end{definition}

An important class of operations for a quiver with potential is the mutation operations introduced by Derksen-Weyman-Zelevinsky \cite{DWZ1}.
The mutation $\mu_u$ at a vertex $u$ of $\Delta$ sends $(\Delta,\S)$ to another QP $(\Delta',\S')=\mu_u(\Delta,\S)$ and a decorated representation $\mc{M}$ of $(\Delta,\S)$ to $\mc{M}'=\mu_u(\mc{M})$ of $(\Delta',\S')$.
If $\mc{M}$ is general of weight $\delta$, then $\delta$ undergoes a tropical transformation \eqref{eq:mug} to $\delta'=\mu_u(\delta)$.
In view of cluster algebras, it is natural to ask the crystal structure on $\trop(\Delta,\S)$ compatible with the mutations in the following sense. 
\begin{definition} By a {\em crystal cluster} structure of $\trop(\Delta,\S)$, we mean a family of crystal structure $\{\trop(\Delta,\S)_t\}$ indexed by $t\in \mf{T}$ such that
	$(r_i,l_i; \rho_i,\lambda_i; \wt)_t$ are compatible with mutations:
	\begin{align*} \mu_u(r_i(\delta)) &= r_i'(\delta') & \mu_u(l_i(\delta)) &= l_i'(\delta') \\
		\rho_i(\delta) &= \rho_i'(\delta') & \lambda_i(\delta) &= \lambda_i'(\delta') \\
		\wt(\delta) &= 	\wt'(\delta'),&
	\end{align*} 
	where $(r_i,l_i; \rho_i,\lambda_i; \wt)=(r_i,l_i; \rho_i,\lambda_i; \wt)_t$ and $(r_i',l_i'; \rho_i',\lambda_i'; \wt')=(r_i,l_i; \rho_i,\lambda_i; \wt)_{t'}$ with $t \edge{u} t'$.
\end{definition}

If the weight function $\wt$ is compatible with mutations, i.e., $\wt(\delta) =\wt'(\delta')$, then it annihilates the row space of $B_\Delta$.
Due to this additional restriction, we cannot always expect the weight function to take value in the weight lattice $\Lambda$.
We shall replace $\Lambda$ by its $\mb{Q}$-span $\Lambda_{\mb{Q}} := \bigoplus_{i\in I} \mb{Q}\varpi_i$,
where $\varpi_i$'s are the fundamental weights in $\Lambda$.
In this article, all weights in $\Lambda$ will be written in coordinates in the basis of fundamental weights.

Recall that a crystal $\mc{B}$ is called {\em seminormal} if
	$$\rho_i(x) = \max\{k\in\mb{Z}_{\geq 0} \mid r_i^k(x)\neq 0\}\ \text{ and }\ \lambda_i(x) = \max\{k\in\mb{Z}_{\geq 0} \mid l_i^k(x)\neq 0\}.$$
If just the first condition is assumed, we say $\mc{B}$ is {\em upper seminormal}.

Now we state our first main result. 
\begin{theorem}[Theorem \ref{T:upper}] \label{intro:upper} Let $I$ be a set of reachable frozen vertices of $\Delta$, and $(\wt_i)_{i\in I}$ be any compatible grading adapted to $I$.
Then the set $\mc{B}=\trop(\Delta,\S)$ of $\mu$-supported $\delta$-vectors has an upper seminormal crystal cluster structure of type $C_I$ given by 
\begin{align*} r_i(\delta) &= \delta + \ep_i +\rank(\ep_i, \tau\delta) B(\Delta) & \rho_i(\delta) &= \e(\delta, E_i);\\
l_i(\delta) &= \delta - \epc_i + \rank(\delta, \ep_i) B(\Delta) & \lambda_i(\delta) &= \rho_i(\delta) + \wt_i(\delta).
\end{align*}
If $r_i(\delta)$ or $l_i(\delta)$ is not in $\trop(\Delta,\S)$, then it is mapped to the auxiliary element $0$.	
\end{theorem}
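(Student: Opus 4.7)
The plan is to verify the Kashiwara crystal axioms and upper seminormality on a fixed seed, and then to verify compatibility with mutation. The key ingredients are the $\E$-bilinear form, the rank-vector function, the short exact sequence $0\to E_i^\mu\to E_i\to S_i\to 0$ for the boundary representation, the fact that the row space of $B(\Delta)$ parametrises differences between projective and injective presentations of the same module, and the tropical $\delta$-mutation formula \eqref{eq:mug}. First I would show that $r_i\delta$ and $l_i\delta$ lie in $\trop(\Delta,\S)$ whenever they are non-auxiliary; the shift by $\ep_i$ together with the $B(\Delta)$-row correction realises $r_i\delta$ as the presentation weight of a generic extension of the cokernel of $\delta$ by $E_i^\mu$, which is automatically $\mu$-supported. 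The inversion $r_i\circ l_i=l_i\circ r_i=\Id$ then follows from the SES, which expresses $\ep_i-\epc_i$ as a specific row combination of $B(\Delta)$, together with a rank-matching identity obtained by splicing the presentations of $\delta$ and $l_i\delta$ with the SES.

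\textbf{Weight and height axioms.} Because any adapted grading annihilates the rows of $B_\Delta$, the change $\wt(r_i\delta)-\wt(\delta)$ collapses to $\wt(\ep_i)$; combining the SES with $\wt_j(\epc_i)=c_{i,j}$ and the symmetry $c_{i,j}=c_{j,i}$ yields $\wt_j(\ep_i)=c_{i,j}$, which is $\alpha_i$ in the fundamental-weight basis. The relation $\lambda_i=\rho_i+\wt_i$ is built into the definition. For the height axiom $\rho_i(r_i\delta)=\rho_i(\delta)-1$, I would expand $\e(r_i\delta,E_i)$: the $B(\Delta)$-row correction is absorbed by $\e(\cdot,E_i)$, and the $\ep_i$-shift contributes a decrement by one via the standard computation of $\e(\ep_i,E_i)$ from the canonical presentation of $E_i$. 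Upper seminormality then follows by induction: iterating $r_i$ past step $\rho_i(\delta)$ would force $\e(\cdot,E_i)<0$, so the next iterate must be auxiliary.

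\textbf{Mutation compatibility.} Under $\mu_u$, the $\delta$-vector transforms by \eqref{eq:mug}, the boundary representation $E_i$ transports via the DWZ mutation functor to the analogous $E_i'$ for $(\Delta',\S')$, and both $\e$ and $\rank$ are invariants of the underlying decorated representation. This immediately gives seed-independence of $\rho_i$, $\lambda_i$ and $\wt$. For the operators $r_i,l_i$ I would rewrite the rank-correction as a difference of two $\e$-values, track how each piece transforms under \eqref{eq:mug}, and use the mutation law for the rows of $B(\Delta)$ to recover $\mu_u(r_i\delta)=r_i'(\mu_u\delta)$ and $\mu_u(l_i\delta)=l_i'(\mu_u\delta)$.

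\textbf{Main obstacle.} The hard part will be the inversion identity and the mutation compatibility of the rank-correction: both reduce to rank identities among short complexes of projectives and injectives obtained by splicing the SES for $E_i$ with the presentation of $\delta$. I expect the cleanest route is to realise $r_i\delta$ via a categorical construction (take the generic extension and read off its minimal presentation), and to propagate this construction through the DWZ mutation of decorated representations, rather than attempt a direct tropical manipulation. A substantial portion of the argument will therefore live at the categorical level before being transported back to the tropical one.
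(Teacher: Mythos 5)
Your overall architecture — verify the Kashiwara axioms at a fixed seed, then transport by mutation using the invariance of $\e$, $\rank$, the boundary representations and the grading — is exactly the paper's, and your treatment of A2, of the inversion $r_il_i=l_ir_i=\Id$, and of mutation compatibility is in the right spirit. However, your weight-axiom argument contains a genuine error. You work with $r_i(\delta)=\delta+\ep_i+\rank(\ep_i,\tau\delta)B(\Delta)$ and claim the correction term is killed because an adapted grading annihilates the rows of $B_\Delta$. It is not: $\rank(\ep_i,\tau\delta)$ is the dimension vector of the image of a general map $E_i\to\tau M$, hence a quotient of $E_i$; since $S_i$ is the top of $E_i$, any nonzero such quotient is supported at the frozen vertex $i$, so the correction involves the frozen row $i$ of $B(\Delta)$, which a compatible grading does \emph{not} annihilate. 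Correspondingly $\wt_j(\ep_i)\neq c_{i,j}$ in general — the adapted condition pins down $\wt_j(\epc_i)$, and $\epc_i-\ep_i=(\dv E_i)B(\Delta)$ has a nonzero mutable part. The repair, which is what the paper does, is to use the other expression from Theorem \ref{T:rlrigid}, namely $r_i(\delta)=\delta+\epc_i-\rank(r_i(\delta),\ep_i)B(\Delta)$: there the rank vector is a quotient dimension vector of a general representation of weight $r_i(\delta)$, which is $\mu$-supported, so the correction genuinely lies in the row space of $B_\Delta$, and $\wt_j(\epc_i)=c_{i,j}$ closes the computation.

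Two further steps are glossed over in a way that would not survive being written out. The unit decrement $\rho_i(r_i(\delta))=\rho_i(\delta)-1$ is not a ``standard computation of $\e(\ep_i,E_i)$'' (that number is $0$ by rigidity); the decrement equals $\ep_i(\rank(h_0))$ for the map $h_0:E_i\to\tau M$ in Theorem \ref{T:rlrigid}, and the fact that this is exactly $1$ is the \emph{minimal exceptionality} of $\ep_i$ (Proposition \ref{P:Brep}.(3)), i.e.\ $\ep_i(\gamma)=1$ for every nonzero quotient dimension vector $\gamma$ of $E_i$ — a structural property of boundary representations you never invoke. For upper seminormality, ``iterating past $\rho_i(\delta)$ would force $\e(\cdot,E_i)<0$'' cannot work, since $\e$ is a dimension and never negative. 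What is actually needed is the equivalence $\e(\delta,E_i)=0$ if and only if $\hom(r_i(\delta),E_i^\star)>0$ (Lemma \ref{L:er}), combined with the characterization of $\mu$-supportedness by the vanishing of $\Hom(-,E_j^\star)$ (Theorem \ref{T:musupp}); the hard direction — that $r_i(\delta)$ remains $\mu$-supported as long as $\rho_i(\delta)>0$ — is a genuine subrepresentation analysis and does not follow from your ``generic extension of $\coker(\delta)$ by $E_i^\mu$'' picture, since the relevant representation $R$ sits in a four-term exact sequence $M\to R\to E_i\to\tau M$ rather than in an extension.
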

\noindent Here, $\rank(\delta,\ep)$ denotes the general rank from $\delta$ to $\ep$ introduced in \cite{Fg} (see Definition \ref{D:genrank}), and the Auslander-Reiten transform $\tau$ makes sense for $\delta$-vectors by Theorem \ref{T:genpi}.
If $\delta$ or $\ep$ is reachable, then we have an algorithm to compute $\rank(\delta, \ep)$ based on mutations (see Theorem \ref{T:rle}). 

Actually Theorem \ref{intro:upper} has a dual version if we work with the set $\check{\mc{B}}$ of $\mu$-supported $\dtc$-vectors and the {\em dual boundary} representations $E_i^\star$. 
Due to the bijection between $\check{\mc{B}}$ and $\mc{B}$ (Theorem \ref{T:genpi}), the crystal cluster structure can be transferred back to $\mc{B}$ from $\check{\mc{B}}$. This is what we call the dual crystal cluster structure.

However, not every $\trop(\Delta, \S)$ admits a seminormal crystal structure. To upgrade to seminormal ones, we need some additional assumptions on the coefficient pattern of $\Delta$.
\begin{definition}
	A pair of frozen vertices $(i,\ibar)$ is called {\em $\tau$-exact} if $\tau^{-1} E_i = E_{\ibar}^\star$.
\end{definition}
\noindent For a $\tau$-exact pair $(i,\ibar)$, there is a natural way to associate an integral weight function $\wt_i$:
$$\wt_i(\delta) = \delta(\dv E_i - \dv(\tau^{-1} E_i)).$$

\begin{theorem}[Theorem \ref{T:crystal}]\label{intro:crystal} Let $\{(i,\ibar)\}_{i\in I}$ be a set of $\tau$-exact pairs of reachable frozen vertices.
	Then the set $\mc{B}$ has a seminormal crystal cluster structure given by 
	$$r_i, l_i;\ \rho_i, \lambda_i;\ \wt_i,\quad i\in I$$
	where $r_i, l_i$ and $\rho_i$ are as in Theorem \ref{intro:upper}, and $\lambda_i(\delta) = \ec(\tau^{-1} E_i, \dtc)$.
\end{theorem}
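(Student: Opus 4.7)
My plan is to view Theorem~\ref{intro:crystal} as an upgrade of Theorem~\ref{intro:upper}: the data $(r_i, l_i, \rho_i, \wt_i)$ and every crystal axiom \emph{except} the $\lambda_i$-seminormality are already produced by the upper statement once a compatible grading is exhibited, and $\tau$-exactness is exactly the extra input that rigidifies $\lambda_i$ into a seminormal string length.

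First I would take
$$\wt_i(\delta) := \delta\bigl(\dv E_i - \dv \tau^{-1}E_i\bigr) = \delta\bigl(\dv E_i - \dv E_{\ibar}^\star\bigr)$$
and verify it is a grading of $\mb{Z}^{\Delta_0}$ adapted to $I$ in the sense of Definition~\ref{D:CartanI}. Integrality is automatic. Mutation compatibility reduces to showing that $\dv E_i - \dv\tau^{-1}E_i$ pairs trivially with each exchange direction, which follows because $\tau$ twists $\mu$-supported dimension vectors by rows of $B(\Delta)$ indexed by mutable vertices. The adaptedness relation $\wt_i(\epc_j) = c_{i,j}$ is then a short Euler-form computation from the definition of $c_{i,j}$. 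With this grading in place, Theorem~\ref{intro:upper} immediately delivers the full upper seminormal crystal cluster structure on $\trop(\Delta,\S)$, with $\rho_i(\delta) = \e(\delta, E_i)$ and a candidate string function $\lambda_i^{\mathrm{up}}(\delta) = \rho_i(\delta) + \wt_i(\delta)$.

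The heart of the proof is the Auslander--Reiten identity
$$\ec\bigl(\tau^{-1}E_i, \dtc\bigr) = \e(\delta, E_i) + \delta\bigl(\dv E_i - \dv \tau^{-1}E_i\bigr),$$
which identifies the proposed $\lambda_i(\delta) = \ec(\tau^{-1}E_i, \dtc)$ with $\lambda_i^{\mathrm{up}}(\delta)$, so all crystal axioms from the previous paragraph persist with this $\lambda_i$. To upgrade to full seminormality, I would then apply the dual version of Theorem~\ref{intro:upper} to the set $\check{\mc{B}}$ of $\mu$-supported $\dtc$-vectors using the dual boundary representations $E_{\ibar}^\star$; its upper seminormality yields $\ec(E_{\ibar}^\star, \dtc) = \max\{k \in \mb{Z}_{\geq 0} \mid \check{r}_{\ibar}^k(\dtc) \neq 0\}$. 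The $\tau$-exactness $\tau^{-1}E_i = E_{\ibar}^\star$ rewrites the left side as $\lambda_i(\delta)$, and the bijection $\mc{B}\leftrightarrow \check{\mc{B}}$ of Theorem~\ref{T:genpi} transfers the dual raising operator $\check{r}_{\ibar}$ into the primal lowering operator $l_i$, giving the missing identity $\lambda_i(\delta) = \max\{k \geq 0 \mid l_i^k(\delta) \neq 0\}$. Cluster compatibility comes for free from Theorem~\ref{intro:upper} applied to both the projective and injective sides.

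The main obstacle will be verifying that under the bijection $\mc{B}\leftrightarrow\check{\mc{B}}$ the dual raising operator $\check{r}_{\ibar}$ transfers \emph{exactly} to the primal lowering operator $l_i$, not merely to an operator of the same string length. This requires carefully matching the shift and rank terms appearing in $\check{r}_{\ibar}$ and $l_i$, using that $\tau^{-1}E_i = E_{\ibar}^\star$ identifies the boundary-weight data and the Auslander--Reiten transforms on the two sides. Once this identification is in place, the seminormal crystal cluster structure assembles directly.
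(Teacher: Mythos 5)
Your proposal is correct and follows essentially the same route as the paper: the integrality and adaptedness of the weight function are the content of Corollary \ref{C:tauexact}, your ``Auslander--Reiten identity'' $\ec(\tau^{-1}E_i,\dtc)=\e(\delta,E_i)+\wt_i(\delta)$ is exactly Proposition \ref{P:weight}, and lower seminormality is obtained from the dual of Lemma \ref{L:er} together with Theorem \ref{T:musupp} and the definition of a $\tau$-exact pair. The ``main obstacle'' you flag --- that the dual raising operator attached to $E_{\ibar}^\star$ transfers exactly to $l_i$ under the bijection of Theorem \ref{T:genpi} --- is resolved in the paper by the identity $\rc_{\tau^{-1}\ep}=l^{\ep}$ of Remark \ref{r:rl} (used in Remark \ref{r:pm1}), which is precisely the shift-and-rank matching you describe.
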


\begin{remark} We believe that nonrigid frozen vertices are probably irrelevant to any crystal structure based on some negative examples, such as \cite[Example 5.19]{Fg}.
In this sense, we have described almost all crystal cluster structures arising from the boundary representations of quivers with potentials. 
\end{remark}

The crystal cluster structure can be transferred via the cluster automorphisms as well.
The induced crystal cluster structure is related to the original one if the cluster automorphism is {\em direct}.
If the cluster automorphism is {\em opposite}, the induced structure is naturally related to the dual structure.
In the classical case of $\k[U]$, the dual structure is related to the original one by the {\em Kashiwara involution} \cite{K1, BS}, which is also an opposite cluster automorphism.
This motivates us to define the generalized Kashiwara maps (Definition \ref{D:Kmap})
whose induced crystal cluster structure is related to the dual structure (Corollary \ref{C:Kmap}).

\subsection{Algebraic Lifts of Tropical Crystals}
Next we briefly recall the skew-symmetric upper cluster algebras. We fix the base ring $\mathbbm{k}=\mb{Z}$.
For each vertex $u$ of $\Delta$ we attach a variable $x_u$. Those corresponding to the frozen vertices are called frozen variables.
Let $\mc{L}_{\b{x}}$ be the subalgebra of the Laurent polynomial algebra $\k[\b{x}^{\pm 1}]$, which is polynomial in the frozen variables:
$$\mc{L}_{\b{x}}:=\k[\b{x}_u^{\pm 1}, \b{x}_{v}], \quad u\in\Delta_0^\mu\ \text{ and }\ v\in\Delta_0^{\op{fr}}.$$
For each mutable vertex $u$, Fomin-Zelevinsky's mutation operation turns a {\em seed} $(\Delta, \b{x})$ into a new seed $\mu_u(\Delta, \b{x})$.
All such reachable seeds will be denoted by $\mf{T}$.
In particular, we get a new Laurent polynomial algebra $\mc{L}_{\b{x}'}$.
The {\em upper cluster algebra} with seed $(\Delta,\b{x})$ is
$$\br{\mc{C}}(\Delta,\b{x}):=\bigcap_{(\Delta',\b{x}') \in \mf{T}}\mc{L}_{\b{x}'}.$$
From now on we will write $\br{\mc{C}}(\Delta)$ for $\br{\mc{C}}(\Delta,\b{x})$.
The representation category of $J$ is related to $\uca(\Delta)$ via the {\em generic character} $C_{\op{gen}}$ \cite{DWZ2,P}.
Under the full rank and the {\em reachable} assumption of $B_\Delta$,
the set $\trop(\Delta,\S)$ is sent bijectively to a basis of
$\uca(\Delta)$, called the {\em generic basis} \cite{P,FW,Qb,Fr}.
More generally, in the algebraic lifting results below, whenever we speak
about the generic basis of $\uca(\Delta)$, we work under the realization
hypothesis that the generic characters
$C_{\gen}(\delta)$ for $\delta\in\trop(\Delta,\S)$
form a basis of the upper cluster algebra.  Without this hypothesis, the
same constructions should be read as statements about the linear span of
these generic characters (see Remark \ref{r:middle}).

We are also interested in the algebraic lifts of the above combinatorial crystals.
By an algebraic lift of the (weak) upper seminormal crystal $\mc{B}$ to $\uca(\Delta)$, we mean the following.
\begin{enumerate} \item Each $r_i^{(\star)}$ should be lifted to a $\k$-derivation $R_i^{(\star)}$ of $\uca(\Delta)$.
	\item A basis $\rm B$ of $\uca(\Delta)$ indexed by $\mc{B}$ such that for each $i\in I$ we have that
\begin{equation}\label{eq:introRi} \rt_i^{(\star)}({\rm B}(\delta)) = \rho_i^{(\star)}(\delta) {\rm B}(r_i^{(\star)}(\delta)) + v \quad  \text{ for some $v\in \op{span}(\Bup(\eta): \rho_i^{(\star)}(\eta)<\rho_i^{(\star)}(\delta)-1 ) $}.
\end{equation}
	\item There is a positive function $\tilde{\rho}_i$ such that the tropicalization of $\tilde{\rho}_i$ gives $\rho_i$.
\end{enumerate}
\noindent Here, we write $R_i^{(\star)}$ for $R_i$ or $R_i^{\star}$ respectively, and similarly for $\rho_i^{(\star)}$ and $r_i^{(\star)}$.
If $\mc{B}$ is seminormal, then we ask that $l_i$ can be lifted to a $\k$-derivation $L_i$ of $\uca(\Delta)$ as well.
\noindent Note that what (2) requires is essentially Berenstein-Kazhdan's biperfect basis. 
The part (3) is already done in \cite{Ft} (see Theorem \ref{T:HomE}).
Although no group action is involved in the definition of the upper cluster algebra, we will see that each $r_i$ can always be lifted to a $\k$-derivation $R_i$ of $\uca(\Delta)$ (see Section \ref{ss:der} for the detail).

Let $\mf{d}_{I}$ be the Lie subalgebra of $\op{Der}_\k(\uca(\Delta))$ generated by the derivations $R_i$ and $R_i^\star$ for $i\in I$, and let $U(\mf{d}_{I})$ be the enveloping algebra of $\mf{d}_{I}$. Then $\uca(\Delta)$ is a $U(\mf{d}_I)$-module algebra.
We set $c_{i,j}^\star=-\e(E_j^\star, E_i)$.  
\begin{theorem}[Theorem \ref{T:crystalUCA}] \label{intro:crystalUCA} 
Let $\mf{g}$ be the Kac-Moody Lie algebra associated to the Cartan matrix $C_I$, and $\mf{n}$ be the positive half of $\mf{g}$. Then the assignment $e_i \mapsto R_i^{(\star)}$ makes $\uca(\Delta)$ a $U(\mf{n})$-module algebra. Moreover, $R_i$ and $R_i^\star$ satisfy
\begin{align*} 	(\ad R_i)^{1-c_{i,j}^\star+\min(-c_{i,j}^\star,\ 1)}(R_j^\star) & =0 \\
		(\ad R_i^\star)^{1-c_{j,i}^\star+\min(-c_{j,i}^\star,\ 1)}(R_j) & =0.
\end{align*}
\end{theorem}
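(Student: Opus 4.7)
The plan is to verify each asserted relation by evaluating both sides on the generic basis ${\rm B} = \{{\rm B}(\delta) \mid \delta \in \mc{B}\}$ of $\uca(\Delta)$. Every iterated commutator appearing in the theorem is a $\k$-derivation of $\uca(\Delta)$, and a derivation is determined by its values on any generating set; it therefore suffices to show each such derivation annihilates every ${\rm B}(\delta)$.

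For the standard Serre relations within $\{R_i\}_{i\in I}$ (and, analogously, within $\{R_i^\star\}_{i\in I}$) that witness the $U(\mf{n})$-module algebra structure, I would equip $\uca(\Delta)$ with the decreasing filtration indexed by the values of $\rho_i$. By the defining property \eqref{eq:introRi}, each $R_i$ acts on the associated graded via the crystal raising operator $r_i$, scaled by $\rho_i(\delta)$. Thus the top-filtration part of $(\ad R_i)^{1-c_{ij}}(R_j)({\rm B}(\delta))$ is a composition of $r_i$'s and $r_j$'s on $\mc{B}$, which vanishes by the standard $\mf{sl}_2$-string argument applied to the upper seminormal crystal of type $C_I$ supplied by Theorem \ref{intro:upper}. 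Lower-order terms vanish by induction on the filtration degree; the inductive step is supported by the biperfectness of $\rm B$, guaranteed by the reachability hypothesis on the frozen vertices in $I$.

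The cross relations $(\ad R_i)^{1-c_{ij}^\star + \min(-c_{ij}^\star,1)}(R_j^\star) = 0$ are the novel ingredient. The exponent encodes the $\E$-pairing $\e(E_j^\star, E_i) = -c_{ij}^\star$ between the boundary and dual-boundary representations. I would analyze the leading action of $[R_i, R_j^\star]$ on ${\rm B}(\delta)$ via the explicit formulas of Theorem \ref{intro:upper}: $r_i$ shifts $\delta$ by $\ep_i + \rank(\ep_i,\tau\delta)B(\Delta)$, while $r_j^\star$ shifts by a dual quantity. Iterating $\ad R_i$ and collecting the leading terms produces a general-rank count involving $E_i$ and $E_j^\star$; by the Schofield-type identities implicit in Theorem \ref{T:rle}, this count drops to zero precisely at the stated exponent. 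The residual lower-filtration terms are killed by the same induction. The asymmetric ``$+\min(-c_{ij}^\star,1)$'' correction will be isolated by a case split on the sign of $c_{ij}^\star$, handling the ``generic'' ($c_{ij}^\star \ge 0$) and ``bound'' ($c_{ij}^\star \le -1$) regimes separately.

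The principal obstacle will be taming the lower-order corrections that propagate through the iterated brackets: a priori these could carry contributions outside the expected crystal-theoretic pattern at each step. The resolution is to work with the bifiltration indexed by $(\rho_i, \rho_j^\star)$, equivalently by the weight $\wt$, and exploit the integrability of the bicrystal structure on $\mc{B}$. The $R_i$- and $R_j^\star$-stable subspaces then decompose compatibly through the biperfect basis, so the induction closes. A secondary technical point is verifying that $[R_i, R_j^\star]$ genuinely acts as a derivation, which follows since the commutator of two derivations is again a derivation, but one must check that the lifting in \eqref{eq:introRi} is compatible enough with the cluster-algebra multiplication to preserve this property under iterated adjoint actions.
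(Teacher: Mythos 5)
Your strategy---evaluating the iterated brackets on the generic basis and reading off their leading behaviour through the crystal---is genuinely different from the paper's, which never touches the basis $\Bup$ at this stage: the paper reduces everything to the cluster variables $x_k$ at a seed where $E_i$ (resp.\ $E_j$, $E_j^\star$) is simple, and then proves sharp bounds on $\deg_{x_i}$ of $R_j(x_k)$ and $R_j^\star(x_k)$ (Lemmas \ref{L:Rj}--\ref{L:degi1}), so that applying $R_i=\prod_{u\to i}x_u\,\partial_i$ one time more than the degree kills everything. Unfortunately your route has a real gap at its central step. Knowing that $R_i(\Bup(\delta))=\rho_i(\delta)\Bup(r_i(\delta))+(\text{lower})$ does \emph{not} make the vanishing of the top-filtration part of $(\ad R_i)^{1-c_{i,j}}(R_j)$ a ``standard $\mf{sl}_2$-string argument.'' That argument requires the full integrable $\mf{sl}_2$-triple (in particular the lowering operators $F_i$ and the relation $[F_i,E_j]=0$), which is exactly what is \emph{not} available in the upper-seminormal setting of Theorem \ref{intro:upper}; a crystal structure together with a perfect basis does not by itself force the Serre relations. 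Concretely, the leading term of $\sum_k(-1)^k\binom{a+1}{k}R_i^{a+1-k}R_jR_i^k$ on $\Bup(\delta)$ is a signed sum of compositions $r_i^{a+1-k}r_jr_i^k(\delta)$ weighted by products of string lengths; these compositions need not vanish individually, and the cancellation of the weighted sum is precisely the identity you are trying to prove, not something the crystal hands you.

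The bookkeeping of the lower-order terms is also not set up in a way that can close. The corrections in \eqref{eq:introRi} drop $\rho_i$ by at least two, while $R_j$ can \emph{raise} $\rho_i$ (by $-c_{i,j}\ge 0$), so the cross terms between leading and lower parts of successive applications land at many filtration levels and do not organize into an induction ``on filtration degree'' without further input. For the mixed relations your sketch gestures at a ``general-rank count'' but misses the actual mechanism: the asymmetry of the exponent comes from the fact that $\deg_{x_i}(R_j^\star(x_k))$ equals exactly $-c_{i,j}^\star$ whenever $E_j^\star(k)\neq 0$ (Lemma \ref{L:degi1}), with no analogue of the improvement $\deg_{x_i}(R_j(x_k))\le\max(-c_{i,j}-1,0)$ for $k\to i$ in Lemma \ref{L:degi}; that lost ``$-1$'' is the whole source of the $+\min(-c_{i,j}^\star,1)$ correction. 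Finally, note that biperfectness of the generic basis is Theorem \ref{T:genericCR}, proved after and independently of the Serre relations, so your use of it is not circular---but it is a substantial reordering that your write-up should acknowledge, and even granting it, the two gaps above remain.
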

\noindent We remark that it is in general not a $U(\mf{n})\times U(\mf{n})$-module. We show that this is the case if and only if $c_{i,j}^\star=0$ (Corollary \ref{C:UxU}).
We also have the following result corresponding to Theorem \ref{intro:crystal}.

\begin{theorem}[Theorem \ref{T:normalUCA}]\label{intro:normalUCA} In the situation of Theorem \ref{intro:crystal}, $\uca(\Delta)$ is a $U(\mf{g})$-module algebra.
\end{theorem}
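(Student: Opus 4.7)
My plan is to extend the $U(\mf{n})$-action of Theorem~\ref{T:crystalUCA} to a full $U(\mf{g})$-action by producing a lowering derivation $F_i$ and a Cartan operator $H_i$ for each $i\in I$, and then verifying the Chevalley--Serre presentation of the symmetric Kac--Moody algebra $\mf{g}$ associated to $C_I$.

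First, I would construct $F_i$ using the $\tau$-exact identification $\tau^{-1}E_i = E_{\ibar}^\star$: under the bijection between $\mc{B}$ and $\check{\mc{B}}$ of Theorem~\ref{T:genpi}, the lowering operator $l_i$ on $\mc{B}$ corresponds to the raising operator $r_{\ibar}^\star$ on $\check{\mc{B}}$, so I define $F_i$ as the transport of the derivation $R_{\ibar}^\star$ along the generalized Kashiwara map. Since that map is a (possibly opposite) cluster automorphism by Corollary~\ref{C:Kmap}, it pulls $R_{\ibar}^\star$ back to a $\k$-derivation of $\uca(\Delta)$ whose action on the generic basis satisfies
$$F_i(\operatorname{Bup}(\delta)) = \lambda_i(\delta)\,\operatorname{Bup}(l_i(\delta)) + (\text{lower-order terms in the $\lambda_i$-filtration}),$$
where positivity of $\lambda_i$ follows from Theorem~\ref{T:HomE} applied on the dual side. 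I then define $H_i$ as the $\k$-linear operator which acts on the $\wt$-homogeneous component of weight $\wt(\delta)$ as multiplication by $\wt_i(\delta)$; this is a derivation because $\wt$ annihilates the row space of $B_\Delta$ and hence extends to a grading of the ambient Laurent algebra $\mc{L}_{\b{x}}$ for which every cluster monomial is homogeneous.

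Next, I verify the Chevalley--Serre relations. The bracket $[H_i,H_j]=0$ is immediate, while $[H_i,R_j]=c_{ij}R_j$ and $[H_i,F_j]=-c_{ij}F_j$ reduce via the two leading-coefficient formulas to the crystal weight shifts $\wt_i(r_j\delta)-\wt_i(\delta)=c_{ij}$ and $\wt_i(l_j\delta)-\wt_i(\delta)=-c_{ij}$, which follow from the adapted grading condition and the symmetry of $C_I$. The Serre relations among the $R_i$'s are already contained in Theorem~\ref{T:crystalUCA} applied to the assignment $e_i\mapsto R_i$; the Serre relations among the $F_i$'s follow by transport along the Kashiwara map from those among the $R_{\ibar}^\star$'s, together with the identity $c_{ij}^\star=c_{ij}$ in the $\tau$-exact case, which I would derive from the Auslander--Reiten translate formula for $\e$.

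The main obstacle is the mixed relation $[R_i,F_j]=\updelta_{ij}H_i$. On leading coefficients of the generic basis the check is clean: for $i\ne j$ the operators $r_i$ and $l_j$ commute in any seminormal crystal, so the top-order contributions cancel, while for $i=j$ a short $\mathfrak{sl}_2$-type bookkeeping along the $i$-string, using the seminormal identity $\lambda_i(\delta)=\rho_i(\delta)+\wt_i(\delta)$, produces the scalar $\wt_i(\delta)$. To promote this leading-coefficient match to an actual identity of derivations, I would use that a $\k$-derivation of $\uca(\Delta)$ is determined by its restriction to any initial cluster: reducing the derivation $[R_i,F_j]-\updelta_{ij}H_i$ to its action on the initial cluster variables $\operatorname{Bup}(-\ep_u)$, $u\in\Delta_0$, and invoking the explicit seed-level formulas for $R_i$ and $F_j$ from Section~\ref{ss:der}, the verification becomes finite. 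Once all Chevalley--Serre relations hold, standard Hopf-algebra reasoning upgrades the action to a $U(\mf{g})$-module algebra structure on $\uca(\Delta)$.
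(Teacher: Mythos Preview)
Your construction of the lowering operator takes an unnecessary detour. The generalized Kashiwara map of Definition~\ref{D:Kmap} requires an opposite cluster automorphism of $\Delta$, which is \emph{not} part of the hypotheses of Theorem~\ref{intro:crystal}; Corollary~\ref{C:Kmap} describes how crystal structures transfer under such a map, not how to construct derivations. The paper simply sets $L_i := R_{\ibar}^\star$. Since $\ibar$ is a reachable frozen vertex, $R_{\ibar}^\star$ is already a well-defined $\k$-derivation of $\uca(\Delta)$ by the work of Section~\ref{ss:der}, and the $\tau$-exact identity $\tau^{-1}E_i = E_{\ibar}^\star$ (together with Remark~\ref{r:rl}) shows that it lifts $l_i$.

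There is a genuine gap in your verification of $[R_i,F_j]=\updelta_{ij}H_i$. For $i\neq j$, the assertion that $r_i$ and $l_j$ commute in any seminormal crystal is not a standard fact and is in general false; even if it held, the leading-coefficient comparison involves the changing values of $\rho_i$ and $\lambda_j$ along the strings and does not directly yield cancellation. The paper bypasses the crystal entirely: since $L_j = R_{\jbar}^\star$, the relation \eqref{eq:Rijst} from Theorem~\ref{T:crystalUCA} gives $[R_i, R_{\jbar}^\star]=0$ once one checks $c_{i,\jbar}^\star = -\e(E_{\jbar}^\star, E_i) = -\e(\tau^{-1}E_j, E_i) = -\hom(E_i, E_j) = 0$. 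For $i=j$, your reduction to the initial cluster is the right instinct, but it is not ``finite'' without a further structural result: $R_i$ is explicit at a seed where $E_i=S_i$ (so $i$ is a sink), while $L_i = R_{\ibar}^\star$ is explicit at a seed where $E_{\ibar}^\star = S_{\ibar}$ (so $\ibar$ is a source), and these are in general different seeds. The paper resolves this with Lemma~\ref{L:exchange}, which uses the $\tau$-exact hypothesis to produce a single seed in which $E_i = S_i$ and $E_{\ibar}^\star$ is supported only on $\ibar$ and one mutable vertex $u$, with local shape $\ibar \leftarrow u \rightarrow i$ and no other outgoing arrows from $u$; only in such a seed is the direct computation of $[R_i, L_i](x_k)$ on all cluster variables tractable. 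You have not identified this step.
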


We mention an immediate corollary. Recall that a {\em normal crystal} is a disjoint union of crystals, each of which is isomorphic to the one underlying some integrable highest-weight representation of a fixed Kac-Moody Lie algebra $\mf{g}$.
\begin{corollary} \label{intro:normal} Assume that we are in the situation of Theorem \ref{intro:crystal} (resp. Theorem \ref{intro:upper}). 
	The crystal structure we got is in fact a (resp. upper) normal crystal.
\end{corollary}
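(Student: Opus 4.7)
My plan is to exploit the algebraic lift of the crystal to $\uca(\Delta)$ provided by Theorems~\ref{intro:crystalUCA} and~\ref{intro:normalUCA}, establish integrability of this lift, invoke complete reducibility of integrable Kac--Moody representations, and then descend the resulting module decomposition back to the combinatorial level, exhibiting $\mc{B}$ as a disjoint union of crystals of the form $\mc{B}(\lambda)$.

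The first step would be to verify local nilpotence of the lifted operators. For every $\delta\in\mc{B}$ the string length $\rho_i^{(\star)}(\delta)$ is a finite non-negative integer, so the biperfect relation~\eqref{eq:introRi} presents $R_i^{(\star)}(\mathrm{B}(\delta))$ as $\rho_i^{(\star)}(\delta)\,\mathrm{B}(r_i^{(\star)}(\delta))$ plus a combination of basis vectors whose $\rho_i^{(\star)}$-value is strictly smaller.  A short induction on $\rho_i^{(\star)}(\delta)$ then shows that $R_i^{(\star)}$ acts locally nilpotently on $\uca(\Delta)$. In the situation of Theorem~\ref{intro:crystal} the same argument applied to a lifted $L_i$ of $l_i$, with string length $\lambda_i(\delta)=\ec(\tau^{-1}E_i,\dtc)$ likewise finite, gives local nilpotence in the opposite direction as well.

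Next I would use these nilpotence properties to decompose $\uca(\Delta)$.  In the seminormal case, local nilpotence combined with Theorem~\ref{intro:normalUCA} makes $\uca(\Delta)$ an integrable $U(\mf{g})$-module algebra, weight-graded by $\wt$; complete reducibility of integrable highest-weight modules over a symmetric Kac--Moody algebra then yields a decomposition $\uca(\Delta)\cong\bigoplus_{\lambda}V(\lambda)^{\oplus m_\lambda}$. In the upper-seminormal case only $U(\mf{n})$ acts, but local nilpotence of all $R_i^{(\star)}$ still decomposes $\uca(\Delta)$ into cyclic $U(\mf{n})$-modules generated by highest-weight vectors, which is the appropriate upper-normal analogue.

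The last step reads off the crystal decomposition from the module decomposition.  The biperfect property of $\mathrm{B}$, together with the upper-triangular action of $R_i^{(\star)}$ in the $\rho_i^{(\star)}$-filtration, implies that $\mathrm{B}$ refines the module decomposition: each highest-weight vector of $\uca(\Delta)$ is representable by a basis element $\mathrm{B}(\delta)$ with $\rho_i^{(\star)}(\delta)=0$ for all $i\in I$, and the orbit of such a $\delta$ under the lowering operators in $\mc{B}$ is a subcrystal isomorphic to $\mc{B}(\wt(\delta))$. Summing over highest-weight elements exhibits $\mc{B}$ as a disjoint union of crystals $\mc{B}(\lambda)$, which is precisely (upper) normality. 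The hard part will be this final identification: certifying that the combinatorial subcrystal originating from a highest-weight $\delta$ coincides exactly with $\mc{B}(\wt(\delta))$, neither overshooting into a different isotypic summand nor stopping short. I would handle this by a weight-space dimension count --- the number of basis vectors of a given weight equals the multiplicity of that weight in $\bigoplus V(\lambda)^{\oplus m_\lambda}$ --- combined with a rank-two consistency check using the Serre-type relations of Theorem~\ref{intro:crystalUCA}.
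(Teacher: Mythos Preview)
Your approach is correct and is essentially the paper's own (implicit) argument: Corollary~\ref{C:normal} is stated without proof immediately after Theorem~\ref{T:genericCR} (the generic basis is BK-biperfect) and in the wake of Theorems~\ref{T:crystalUCA}--\ref{T:normalUCA} (the $U(\mf{n})$- and $U(\mf{g})$-module structures), and the deduction the paper has in mind is exactly local nilpotence $\Rightarrow$ integrability $\Rightarrow$ complete reducibility $\Rightarrow$ normality of the crystal carried by the perfect basis.

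Where you are making more work for yourself than necessary is the ``hard part'' at the end. The identification of the connected component through a highest-weight element $\delta$ with $\mc{B}(\wt(\delta))$ is already a general theorem of Berenstein--Kazhdan: the crystal carried by a perfect basis of a locally finite $U(\mf{b})$-module (respectively of an integrable $U(\mf{g})$-module) is automatically upper normal (respectively normal). So no weight-space dimension count and no rank-two consistency check are needed; the Serre-type relations from Theorem~\ref{T:crystalUCA} are used only to guarantee that $e_i\mapsto R_i^{(\star)}$ really is a $U(\mf{n})$-action, not to match up individual crystal components. Once the action exists and the generic basis is perfect (Theorem~\ref{T:genericCR}), normality is a formal consequence of the BK machinery, which is why the paper treats the corollary as immediate.
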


Let $W(\mf{g})$ be the Weyl group of $\mf{g}$.
Due to Kashiwara, there is a Weyl group action on any normal crystal \cite{K2}.
We conjecture that this action on $\trop(\Delta,\S)$ can be lifted to $\uca(\Delta)$ as well.

Now we come to the part (2) of the algebraic lift. A basis indexed by $\mc{B}$ satisfying \eqref{eq:introRi} is called {\em BK-biperfect}. We reserve the term biperfect basis for something stronger.

\begin{theorem}[Theorem \ref{T:genericCR}]\label{intro:biregular} The generic basis of $\uca(\Delta)$ is a BK-biperfect basis for the weak upper normal crystal.
If we are in the situation of Theorem \ref{intro:crystal}, then the generic basis of $\uca(\Delta)$ is a BK-perfect basis for the normal crystal.	
\end{theorem}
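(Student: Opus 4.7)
The plan is to set up a $\mb{Z}$-filtration on $\uca(\Delta)$ whose tropicalization on the generic basis recovers $\rho_i$, and then extract the leading term of $R_i\Bup(\delta)$ with respect to this filtration. Concretely, Theorem \ref{T:HomE} applied to the boundary representation $E_i$ provides a positive Laurent expression $\tilde\rho_i$ on $\uca(\Delta)$ whose tropical valuation at $\delta$ returns $\rho_i(\delta)=\e(\delta,E_i)$. Taking the minimum among exponents of $\tilde\rho_i$ in a Laurent expansion defines a decreasing filtration $F^{\geq k}_i\subset \uca(\Delta)$, and the tropical content of Theorem \ref{T:HomE} tells us that $\Bup(\delta)\in F^{\geq \rho_i(\delta)}_i \setminus F^{\geq \rho_i(\delta)+1}_i$. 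Since the generic basis is parametrized by $\trop(\Delta,\S)$, the associated graded is indexed by level sets of $\rho_i$.

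Next I would read off from the construction of $R_i$ in Section \ref{ss:der} that $R_i$ is a derivation of order $-1$ with respect to $F_i$, sending $F^{\geq k}_i$ into $F^{\geq k-1}_i$; morally this is because $R_i$ is built from a partial-derivative-type operation dual to the frozen direction associated to $E_i$. Passing to the associated graded, the image of $R_i\Bup(\delta)$ in $F^{\geq\rho_i(\delta)-1}_i/F^{\geq\rho_i(\delta)}_i$ is then a linear combination of generic basis vectors $\Bup(\eta)$ with $\rho_i(\eta)=\rho_i(\delta)-1$. At this point \eqref{eq:introRi} reduces to showing that this leading term is exactly the single vector $\rho_i(\delta)\,\Bup(r_i(\delta))$.

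To pin this down I would pass to a seed in which $\delta$ and $\ep_i$ are compatibly normalized and apply the mutation formula $r_i(\delta)=\delta+\ep_i+\rank(\ep_i,\tau\delta)\,B(\Delta)$ from Theorem \ref{intro:upper}. In that seed the generic presentation $\mc{M}_\delta$ fits into a canonical short exact sequence of decorated representations coming from a generic element of $\E(\mc{M}_\delta,E_i^{\oplus \rho_i(\delta)})$, whose middle term is $\mc{M}_{r_i(\delta)}$ up to the $B(\Delta)$-shift encoded in the $\rank(\ep_i,\tau\delta)$ correction. The scalar $\rho_i(\delta)$ appears as the dimension of this $\E$-space, matching the Crawley--Boevey--type counts that govern the generic character. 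Mutation-invariance of $\Bup$ and of the data $(r_i,\rho_i,\wt)$ (Theorem \ref{intro:upper}) transports the identity to every seed, and the starred version follows by the mirror argument using $E_i^\star$ and the bijection $\check{\mc{B}}\leftrightarrow\mc{B}$ of Theorem \ref{T:genpi}. This gives the BK-biperfect conclusion for the weak upper normal crystal.

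For the BK-perfect clause, we are in the $\tau$-exact setting of Theorem \ref{intro:crystal}, so a lift $L_i$ of $l_i$ exists as a derivation on $\uca(\Delta)$ by Theorem \ref{intro:normalUCA}. Re-running the filtration argument with $\tilde\lambda_i$ in place of $\tilde\rho_i$ -- where $\tilde\lambda_i$ is the positive lift of $\lambda_i(\delta)=\ec(\tau^{-1}E_i,\dtc)$ obtained from the dual of Theorem \ref{T:HomE} -- yields the analogous leading-term identity for $L_i$ and completes the proof. The main obstacle is the middle step: simultaneously pinning down that the leading coefficient is exactly $\rho_i(\delta)$ and that the resulting decorated representation is generic of weight precisely $r_i(\delta)$. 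This requires a careful extension/cokernel count in $\PHom(\delta+\ep_i)$ controlled by $\rank(\ep_i,\tau\delta)$; once that identification is secured, the rest is filtration bookkeeping.
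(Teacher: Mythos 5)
Your overall strategy---pass to a seed where $E_i$ is simple so that $R_i$ acts as a monomial derivation, compare the Laurent expansion of $R_i C_{\gen}(\delta)$ with that of $\rho_i(\delta)\,C_{\gen}(r_i(\delta))$ graded by $\rho_i$, and transport the identity to all seeds by mutation-invariance---is the same as the paper's. The filtration bookkeeping you describe is essentially the paper's observation that in such a seed $\rho_i(\eta)=[-\eta(i)]_+$, every monomial $\b{x}^{-\delta}\b{y}^{\gamma}$ of $C_{\gen}(\delta)$ has $\rho_i$-value $\rho_i(\delta)-\sum_{u\to i}\gamma(u)$, and $R_i$ kills or lowers this by exactly one (cf.\ \eqref{eq:Rjmono}). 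That part is fine and does give that the error term lies in $\op{span}(\Bup(\eta):\rho_i(\eta)<\rho_i(\delta)-1)$ \emph{once the top level cancels}.

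The genuine gap is precisely that cancellation, which you defer as ``the main obstacle.'' What must be shown is not merely that the leading monomial of $R_iC_{\gen}(\delta)$ is $\rho_i(\delta)\b{x}^{-r_i(\delta)}$ (that is the easy derivative computation), but that \emph{every} coefficient at the top $\rho_i$-level matches, i.e.\ $\chi(\Gr^{\gamma}(M))=\chi(\Gr^{\gamma}(R))$ for all $\gamma$ with $\supp\gamma$ disjoint from $\{u\mid u\to i\}$, where $M,R$ are general of weights $\delta$, $r_i(\delta)$. This is an identity between whole $F$-polynomials, not just $\delta$-vectors, and your sketch of it is also off-target: the general $R$ of weight $r_i(\delta)$ is not the middle term of an extension of $M_\delta$ by $E_i^{\oplus\rho_i(\delta)}$; by Theorem \ref{T:rlrigid}, in the chosen seed it is the cokernel of a general map $\tau^{-1}E_i\to M$ (the scalar $\rho_i(\delta)$ comes from differentiating $x_i^{\rho_i(\delta)}$, not from stacking copies of $E_i$). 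The paper closes the gap with Lemma \ref{L:eqchi}: since every nonzero quotient of $\tau^{-1}E_i$ is supported on an in-neighbor of $i$ (Lemma \ref{L:quosupp}), the quotient Grassmannians $\Gr^{\gamma}(M)$ and $\Gr^{\gamma}(R)$ are isomorphic for such $\gamma$. Without this (or an equivalent geometric input), the claimed equality of leading parts---and hence BK-biperfectness---is not established.
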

\noindent We conjecture that known interesting bases, including theta bases \cite{GHKK} and triangular bases \cite{Qt} are all BK-biperfect.

\subsection{Biperfect Bases}

In general, BK-biperfect bases are far from unique. 
Knowing one BK-biperfect basis (e.g., the generic basis), we are able to describe all BK-biperfect bases based on a result of Baumann \cite{B}.
This part of work is motivated by the recent work of Baumann-Kamnitzer-Knutson \cite{BKK} and Qin \cite{Qb}.
In \cite{B} he introduced an order $\preceq_{\str}$ called the string order (see Definition \ref{D:strorder}).
Below we write $\eta \llcurly_{\rho} \delta$ if $\rho_i(\eta)<\rho_i(\delta)$ and $\rho_i^\star(\eta)<\rho_i^\star(\delta)$ for each $i\in I$.
\begin{theorem}[Theorem \ref{T:allcr}] \label{intro:allcr} Suppose that ${\Bup}$ is a BK-biperfect basis of $\uca(\Delta)$ indexed by a crystal $\mc{B}$.
	Then any BK-biperfect basis $\Bup'$ of $\uca(\Delta)$ has the following form
	\begin{equation*} \Bup'(\delta) = {\Bup}(\delta)+\sum_{\eta \llcurly_{\rho} \delta}  a_{\delta,\eta}{\Bup}(\eta) + \sum_{\eta \preceq_{\str} \delta,\ \eta \not\llcurly_{\rho} \delta}  b_{\delta,\eta}{\Bup}(\eta)
	\end{equation*}
	such that $\wt(\eta)=\wt(\delta)$ and $b_{\delta,\eta} = b_{r_i^{(\star)}(\delta), r_i^{(\star)}(\eta)}$ if $\rho_i^{(\star)}(\eta)=\rho_i^{(\star)}(\delta)$.
	Moreover, for a fixed $\delta$, the $\eta$'s in either summation are lattice points in some polyhedral set.
\end{theorem}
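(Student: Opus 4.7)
The plan is to study the change-of-basis matrix $c_{\delta,\eta}$ defined by $\Bup'(\delta)=\sum_{\eta\in\mc{B}} c_{\delta,\eta}\Bup(\eta)$ and to extract its support and symmetry constraints from the BK-biperfect axioms for both families $\{R_i\}_{i\in I}$ and $\{R_i^\star\}_{i\in I}$. First I settle the weight condition: each $R_i^{(\star)}$ is homogeneous with respect to $\wt$, and both bases refine the weight grading, so $c_{\delta,\eta}\neq 0$ forces $\wt(\eta)=\wt(\delta)$, already cutting the support of the expansion to an affine sublattice of $\mc{B}$.

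Next I establish the string-order support $\eta\preceq_{\str}\delta$ by invoking Baumann's characterization of biperfect bases \cite{B}. Applied to the family $\{R_i\}_{i\in I}$, it forces $(c_{\delta,\eta})$ to be unitriangular with respect to the string order associated to $(\rho_i)_{i\in I}$; applied to $\{R_i^\star\}_{i\in I}$, it does the same for the star string order. Their common refinement is, by Definition \ref{D:strorder}, exactly $\preceq_{\str}$. The tighter region $\eta\llcurly_\rho\delta$, where all of $\rho_i$ and $\rho_i^\star$ strictly drop, is where no top-level coefficient comparison is ever imposed from above, and so it is precisely where the free parameters $a_{\delta,\eta}$ sit.

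The crux of the argument is the symmetry $b_{\delta,\eta}=b_{r_i^{(\star)}(\delta),\, r_i^{(\star)}(\eta)}$ whenever $\rho_i^{(\star)}(\eta)=\rho_i^{(\star)}(\delta)$. I would apply $R_i^{(\star)}$ to both sides of the expansion and read off the coefficient of $\Bup(r_i^{(\star)}\eta)$ in each term, using the defining relation $R_i^{(\star)}\Bup'(\delta)=\rho_i^{(\star)}(\delta)\,\Bup'(r_i^{(\star)}(\delta))+v'$ with $v'$ supported in strictly smaller $\rho_i^{(\star)}$, and the analogous relation for $\Bup$. Matching coefficients yields $\rho_i^{(\star)}(\eta)\,c_{\delta,\eta}=\rho_i^{(\star)}(\delta)\,c_{r_i^{(\star)}(\delta),\,r_i^{(\star)}(\eta)}$ modulo contributions from the lower-order parts on both sides; under the equality $\rho_i^{(\star)}(\eta)=\rho_i^{(\star)}(\delta)\neq 0$ the leading coefficients are nonzero and the lower terms cancel by induction on the string order, which delivers the claimed symmetry. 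In the region $\eta\llcurly_\rho\delta$ no such top-level comparison is ever invoked, because any $R_i^{(\star)}$ sends $\Bup(\eta)$ strictly deeper in string length than $R_i^{(\star)}\Bup(\delta)$, so the coefficients $a_{\delta,\eta}$ remain unconstrained beyond the weight condition.

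Finally, the polyhedrality assertion is then immediate: for fixed $\delta$ the admissible loci are the lattice points of the affine slice $\wt(\eta)=\wt(\delta)$ intersected with finitely many half-space conditions of the form $\rho_i^{(\star)}(\eta)<\rho_i^{(\star)}(\delta)$ or $\rho_i^{(\star)}(\eta)\leq\rho_i^{(\star)}(\delta)$, using that each $\rho_i^{(\star)}$ is piecewise-linear on $\trop(\Delta,\S)$ as the tropicalization of the regular function $\tilde\rho_i^{(\star)}$ of Theorem \ref{T:HomE}. The main obstacle I expect is checking that the symmetry $b_{\delta,\eta}=b_{r_i^{(\star)}(\delta),\,r_i^{(\star)}(\eta)}$ is self-consistent across different choices of $i$ and of star/no-star, i.e., that applying two such operators in opposite orders propagates the same value of $b$; this is precisely where the joint compatibility of $\{R_i\}$ and $\{R_i^\star\}$ on $\uca(\Delta)$ provided by Theorem \ref{intro:crystalUCA} must enter, and where the role of $\preceq_{\str}$ as the common refinement (rather than either individual string order) becomes essential.
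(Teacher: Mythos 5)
Your proposal follows essentially the same route as the paper: Baumann's unitriangularity with respect to $\preceq_{\str}$, then applying $R_i^{(\star)}$ to the expansion and matching leading coefficients to force $b_{\delta,\eta}=b_{r_i^{(\star)}(\delta),\,r_i^{(\star)}(\eta)}$, with the free parameters confined to $\eta\llcurly_{\rho}\delta$ and polyhedrality read off from Theorem \ref{T:HomE}. The only substantive differences are that the paper also checks the converse (every expression of this form is again BK-biperfect, so the description is a genuine parametrization) and resolves your final consistency worry purely combinatorially --- applying $r_i^{(\star)}$ to a pair with equal string lengths is a valid move in Definition \ref{D:strorder}, so $b$ need only be constant on equivalence classes of valid moves and no input from Theorem \ref{intro:crystalUCA} is required.
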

Using this theorem, it is easy to construct an example where some cluster monomials are not in a  BK-biperfect basis. But such an example for $\k[U]$ is not trivial (see Example \ref{ex:SL5}).
So BK-biperfect bases are not really perfect from a cluster algebra perspective.

When $\uca(\Delta)=\k[U]$, Theorem \ref{intro:allcr} answers a question by J. Kamnitzer \cite[Question 1.14]{Ka2}.
In the same article, he also asks for a refinement of the notion of BK-biperfect bases to incorporate all cluster monomials.

A linear basis of $\uca(\Delta)$ indexed by $\trop(\Delta,\S)$ is a rather weak notion in the cluster algebra setting.
For one thing, additional orders from the cluster structure do not play a role here.
Let $t$ be a seed $(\Delta,\S)$.
We recall the {\em dominance order} $\prec_t$ on the lattice $\mb{Z}^{\Delta_0}$ such that $\delta' \prec_t \delta$ if and only if $\delta' = \delta + \gamma B_{\Delta}$ for some $\mu$-supported dimension vector $\gamma$.

Recall from \cite{Qt} that an element $z\in \uca(\Delta)$ is called {\em pointed} at $\delta\in \trop(\Delta,\S)_t$ if it is of the form $\b{x}_t^{-\delta} F(\b{y}_t)$.
This is equivalent to say that $\delta$ is maximal among the monomial degrees of $z$ with respect to the order $\prec_t$.
Based on this notion, F. Qin introduced the good bases \cite{Qb}, in which each basis element is compatibly pointed at every seed $t\in \mf{T}$.
A good property for them is that they contain all cluster monomials.

In our definition of biperfect bases, we will require that the basis elements be pointed at $\trop(\Delta,\S)$ instead of just being indexed by $\trop(\Delta,\S)$.
\begin{definition} We say a BK-biperfect basis $\Bup$ pointed at $t$
if each $\Bup(\delta)$ is pointed at $\delta \in \trop(\Delta,\S)_t$.	
A {\em biperfect basis} is a BK-biperfect basis compatibly pointed at every seed $t\in \mf{T}$.
\end{definition}
\noindent It follows from this definition that biperfect bases are good bases.
Combining Theorem \ref{intro:allcr} and Qin's description of good bases (Theorem \ref{T:good}), 
we are able to describe all biperfect bases for a fixed UCA (Corollary \ref{C:goodcr}).

\subsection{Other Results}
We also mention some side results. The maximal version of the crystal operators $r_i^{\max}$ and $\rc_i^{\max}$ can be lifted to the module category as well.
It turns out that $r_i^{\max}$ and $\rc_i^{\max}$ are quite close to an adjoint pair (see Lemma \ref{L:adjoint}). From there we deduce the following result, which provides an efficient way to calculate some {\em Kashiwara's data}.
\begin{proposition}[{Corollary \ref{C:adjointe}}] \label{intro:adjointe} Suppose that $\etc$ and $\rc_\ep^{\max}(\etc)$ are only supported on the frozen part of $\Delta$ and $\ep$ is not a summand of $\eta$.
	Then we have the following equality
\begin{equation*} \e(r_{\ep}^{\max}(\delta), \etc) = \e(\delta, \rc_{\ep}^{\max}(\etc)).\end{equation*}
\end{proposition}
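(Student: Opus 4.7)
The plan is to derive this equality directly from Lemma \ref{L:adjoint}, which asserts that $r_\ep^{\max}$ and $\rc_\ep^{\max}$ form an \emph{almost} adjoint pair. Schematically, that lemma should take the form
\[ \e(r_\ep^{\max}(\delta), \etc) = \e(\delta, \rc_\ep^{\max}(\etc)) + (\text{correction terms}), \]
where the corrections measure the failure of true adjointness. Typical corrections in this setup involve (i) a multiplicity of $\ep$ as a summand in $\eta$ (or $\delta$), and (ii) a boundary contribution coming from components of $\etc$ (or $\rc_\ep^{\max}(\etc)$) that are \emph{not} $\mu$-supported, i.e. have pieces on the mutable part. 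The whole point of Corollary \ref{C:adjointe} is that both of these obstructions are explicitly ruled out by the hypotheses, so the correction vanishes and the clean equality survives.

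First I would unpack the statement of Lemma \ref{L:adjoint} and identify each error term; by symmetry of $\ep$ with respect to $r_\ep^{\max}$ and $\rc_\ep^{\max}$ (both are defined by iterating string operators at the same frozen vertex, stopping when the $\E$-invariant saturates), the correction should be expressible as a pairing $\langle \ep, \eta \rangle_{\text{mut}}$ or as an $\e$-value computed on the \emph{mutable} part of the support. The hypothesis that $\etc$ and $\rc_\ep^{\max}(\etc)$ are supported only on the frozen vertices forces any such mutable-part contribution to be zero, because $\E(\cdot, N)$ only sees the action of the Jacobian algebra along arrows incident to $\supp(N)$, and $E_\ep$'s boundary sequence $0\to \mc{E}_\ep^\mu \to E_\ep \to S_\ep\to 0$ decouples whenever the test representation is concentrated at frozen points.

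Next I would handle the summand correction. Writing $\eta = \eta' \oplus \ep^{\oplus m}$, the adjoint error typically carries a factor $m$ (or $m+1$), multiplying $\dim \Hom$ or $\rank$ of $\ep$ against itself. The assumption that $\ep$ is not a summand of $\eta$ gives $m=0$, killing this term. Together with the first reduction, this shows all corrections vanish, proving the desired equality.

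The main obstacle, and where I would spend the most care, is a bookkeeping issue: checking that the normalizations for $r_\ep^{\max}$ and $\rc_\ep^{\max}$ in Lemma \ref{L:adjoint} genuinely match under the tropical mutation rule from \eqref{eq:mug}, so that the boundary terms in the two expressions align rather than producing a spurious shift by $\rank(\ep,\tau\delta) B(\Delta)$ or $\rank(\delta,\ep) B(\Delta)$. Because both sides of the claimed identity evaluate $\e$ against a representation with frozen support, those $B(\Delta)$-contributions pair trivially with $\etc$ (frozen rows of $B(\Delta)$ acted on via dimension pairing with something supported at mutable vertices), so the mutation terms drop out on both sides simultaneously. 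Once this compatibility is checked, invoking Lemma \ref{L:adjoint} and the two vanishing arguments above yields the equality.
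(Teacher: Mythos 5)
There is a genuine gap: your proposal is built on a misreading of what Lemma \ref{L:adjoint} provides and, as a result, misses the one idea that actually drives the corollary. Lemma \ref{L:adjoint} is \emph{not} an ``almost adjoint'' identity with correction terms valid in every seed; it is an \emph{exact} equality $\e(r_{\ep}^{\max}(\delta), \etc) = \e(\delta, \rc_{\ep}^{\max}(\etc))$ that is proved only under the hypothesis that the boundary representation $E=E_i$ is \emph{simple} (hence projective), together with the condition that $e_i$ is not a summand of $\eta$. In a general seed the lemma simply does not apply, and there is no correction-term version of it in the paper to which your ``kill the corrections'' strategy could attach. Consequently, the two vanishing arguments you propose (frozen support killing a mutable-part contribution; the summand hypothesis killing a multiplicity factor) have nothing to act on, and your argument cannot be completed as written.

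The actual role of the hypothesis that $\etc$ and $\rc_\ep^{\max}(\etc)$ are supported on the frozen part is different from the one you assign it: it guarantees that both quantities $\e(r_{\ep}^{\max}(\delta), \etc)$ and $\e(\delta, \rc_{\ep}^{\max}(\etc))$ are \emph{mutation-invariant} (via Lemma \ref{L:HEmu}, since the relevant $\beta_{\pm}$ vanish at mutable vertices). The paper's proof then applies a sequence of mutations $\mub$ taking $E_i$ to the simple $S_i$, uses Theorem \ref{T:rle} to commute $r_\ep^{\max}$ and $\rc_\ep^{\max}$ past $\mub$, observes that ``$\ep$ not a summand of $\eta$'' is preserved under mutation, and only then invokes Lemma \ref{L:adjoint} in the mutated seed. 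This reduction-by-mutation step is the heart of the proof and is absent from your proposal; without it you are left trying to apply a lemma outside its range of validity. Your instinct about the summand condition is in the right spirit (in the simple case it amounts to $\hom(\etc,\epc)=0$, which is exactly what makes the final identity close up), but it only becomes usable after the reduction to the seed where $E_i$ is simple.
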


Along the way, we further develop the representation theory of quivers of potentials in the following aspects.
First, we introduce the {\em extension} of QPs as a generalization of a construction in \cite{FW}.
Second, due to the bijection of the decorated representations and presentations in the homotopy category $K^b(\proj J)$,
it is natural to expect that the mutation operation can be defined directly on presentations.
We find this indeed can be done based on the extension construction (Definition \ref{D:mupre} and Lemma \ref{L:muprep}).
Third, the (dual) boundary representations were introduced in \cite{Fs1} to describe the $\mu$-supported $\delta$-vector cone of an upper cluster algebra.
It was originally defined by injective presentations satisfying certain ``boundary" condition (see Proposition \ref{P:Brep}.(1) and (2)).
However, it is unclear whether the original definition would depend on the frozen pattern.
In this article, we shall give an intrinsic construction in Section \ref{ss:boundary}, and generalize a result in \cite{Fs1} (Theorem \ref{T:musupp}).
Another key ingredient, the general rank of QP representations, is treated in \cite{Fg}.

Some important topics about crystals, such as tensor products, are not touched in the present paper.
We will treat them in a follow-up paper.

\subsection{Organization}
In Section \ref{S:QP} we first briefly review the theory of quivers with potentials following \cite{DWZ1}, then we introduce the extension construction and define the mutation of presentations.
In Section \ref{S:GPTF} we briefly review the theory of general presentations and tropical $F$-polynomials following \cite{DF, Ft}.
In Section \ref{S:rl} we review the raising and lowering operators $r_\ep$ and $l_\ep$ introduced in \cite{Fg} in the special case when $\ep$ is rigid. 
Then we further specialize to a class of $\ep$, called {\em minimally exceptional}, which is one of the crucial properties holding for rigid boundary representations.
In Section \ref{S:boundary} we give an intrinsic construction of boundary representations in Definition \ref{D:Brep}, and prove various properties for them, including Proposition \ref{P:Brep}, Corollary \ref{C:HomEij}, and Theorem \ref{T:musupp}.
Then we show some invariance properties for the set of $\mu$-supported $\delta$-vectors in Lemmas \ref{L:musupp} and \ref{L:delfra}.
Finally we introduce the Cartan type and weight functions associated to a set of frozen vertices in Definitions \ref{D:CartanI} and \ref{D:adI}.

In Section \ref{S:crystal} we start to describe the crystal structure on the $\mu$-supported $\delta$-vectors for any ice quiver with potential.
We first study the properties of raising and lowering operators associated to boundary representations (notably Lemma \ref{L:er}).
After all these preparations, we prove the first two main results Theorem \ref{T:upper} on the (weak) upper seminormal crystal cluster structure and Theorem \ref{T:crystal} on the seminormal crystal cluster structure.
Then we briefly mention the interaction with the cluster automorphisms.
We define the generalized Kashiwara map associated to an opposite cluster automorphism in Definition \ref{D:Kmap} and relate it to the dual structure in Corollary \ref{C:Kmap}.
In Section \ref{S:KD} we study the functors corresponding to the maximal version of the crystal operators, and prove certain adjoint properties for them in Lemma \ref{L:adjoint} and Corollary \ref{C:adjointe}. We illustrate them in the calculation of the Kashiwara's data.

In Section \ref{S:uca} we briefly review upper cluster algebras and their generic bases following \cite{BFZ,P, Ft}, and recall another tropical pairing.
In Section \ref{S:lifting} we show that the crystal structure can be algebraically lifted to the upper cluster algebras.
This includes two parts -- the lift of operators to derivations and the lift of $\mc{B}$ to a BK-biperfect basis.
Theorem \ref{T:crystalUCA} concerns the explicit structure of the derivations,
and Theorem \ref{T:genericCR} shows that generic bases are BK-biperfect.
In Section \ref{S:biperfect} we briefly review the string order, then give a description of all BK-biperfect bases in Theorem \ref{T:allcr} based on \cite{B}. Then we review the good bases \cite{Qb} and define biperfect bases.
In Section \ref{S:example} we apply these results to various examples. The classical examples include unipotent groups, base affine spaces, Grassmannians, and certain unipotent subgroups of Kac-Moody type. We illustrate in Propositions \ref{P:exact} and \ref{P:can} how to construct examples with interesting normal crystal structure.

\subsection{Notations and Conventions}
The space $\E$ and the tropical $F$-polynomial play important roles throughout.
Since $e$ and $f$ are naturally assigned to them, we decided to switch the traditional $e$ and $f$ for raising and lowering operators to $r$ and $l$, and the corresponding function $\varphi$ and $\ep$ to $\rho$ and $\lambda$.

By a quiver $\Delta$ we mean a quadruple $\Delta = (\Delta_0,\Delta_1, t, h)$ where $\Delta_0$ is a finite set of vertices, $\Delta_1$ is a finite set of arrows, and $t$ and $h$ are the tail and head functions $\Delta_1 \to \Delta_0$. 
The sets of mutable and frozen vertices are denoted by $\Delta_0^\mu$ and $\Delta_0^{\op{fr}}$ respectively.

All modules are right modules, and all vectors are row vectors.
For direct sum of $n$ copies of $M$, we write $nM$ instead of the traditional $M^{\oplus n}$.
We write $\hom,\ext$ and $\e$ for $\dim\Hom, \dim\Ext$, and $\dim \E$. The superscript $*$ is the trivial dual for vector spaces.
Unadorned $\Hom$ and $\E$ are understood over the Jacobian algebra of an appropriate quiver with potential.

In the literature of cluster algebras, the {\em final-seed} mutation was first introduced in \cite{FZ1}.
But in this article, all mutations except \eqref{eq:exrel} are the {\em initial-seed} mutations (see Section \ref{ss:uca} for the meaning).
The mutation defined for quivers with potentials in \cite{DWZ1} is to model the initial-seed mutation.
A typical example is Lemma \ref{L:Cmu}.
Traditionally, one specifies the initial-seed using superscripts.
Since no final-seed mutation is involved, we do not strictly follow this tradition.

\begin{align*}
	& B(\Delta)  && \text{the full skew-symmetric matrix of $\Delta$}\\
	& B_\Delta  && \text{the submatrix of $B(\Delta)$ with rows indexed by $\Delta_0^\mu$}\\
	& \rep J && \text{the category of finite-dimensional representations of $J$} &\\
	& S_u && \text{the simple representation supported on the vertex $u$} &\\
	& P_u,\ I_u && \text{the projective cover and the injective envelope of $S_u$} &\\
	& \dv M && \text{the dimension vector of $M$} & \\
	& \tauh \mc{M} && \text{the representation obtained by forgetting the decorated part of $\tau \mc{M}$} & \\
	& \trop(\Delta,\S) && \text{the set of $\mu$-supported $\delta$-vectors of $(\Delta,\S)$} \\
	& E_i,\ E_i^\star && \text{the boundary and dual boundary representations attached to $i\in\Delta_0^{\op{fr}}$}\\
	& \mc{E}_i^\mu && \text{a general representation of $(\Delta,\S)_\mu$ of weight $-b_i$} \\
	& \ep_i,\ \epc_i && \text{the projective and injective weight vectors of $E_i$} \\
	& r_i= r_{\ep_i},\ l_i= l_{\ep_i} &&  \text{the raising and lowering operators attached to $i\in\Delta_0^{\op{fr}}$} \\
	& \rc_i^\star = \rc_{\ep_i^\star},\ \lc_i^\star = \lc_{\ep_i^\star} && \text{the dual raising and lowering operators attached to $i\in\Delta_0^{\op{fr}}$}\\
	& \rho_i, \lambda_i;\ \rho_i^\star, \lambda_i^\star && \text{the string length function for $r_i,l_i$ and $r_i^\star,l_i^\star$}\\
	& \wt_i &&  \text{the $i$-th coordinate for the weight function $\wt$} \\
	& \uca(\Delta) && \text{the upper cluster algebra with the seed $\Delta$} \\
	& C_{\op{gen}} && \text{the generic cluster character}\\
	& R_i,L_i;\ R_i^\star,L_i^\star &&  \text{the $\k$-derivation of $\uca(\Delta)$ lifting $r_i,l_i$ and $r_i^\star,l_i^\star$} \\
	& \prec_{\mf{T}}, \prec_{\str} &&  \text{the dominance order, the string order} 
\end{align*}

\section{Representation Theory of Quivers with Potentials} \label{S:QP}
\subsection{Decorated Representations and Presentations} \label{ss:QPrep}
Let $\Delta$ be a finite quiver with no loops. For such a quiver, we associate a skew-symmetric matrix $B(\Delta)$ given by
$$B(\Delta)(u,v) = |\text{arrows $u\to v$}| - |\text{arrows $v\to u$}|.$$ 
Following \cite{DWZ1}, we define a potential $\mc{S}$ on a quiver $\Delta$ as a (possibly infinite) linear combination of oriented cycles in $\Delta$.
More precisely, a {\em potential} is an element of the {\em trace space} $\Tr(\ckQ):=\ckQ/[\ckQ,\ckQ]$,
where $\ckQ$ is the completion of the path algebra $k\Delta$ and $[\ckQ,\ckQ]$ is the closure of the commutator subspace of $\ckQ$.
The pair $(\Delta,\mc{S})$ is a {\em quiver with potential}, or QP for short.
For each arrow $a\in \Delta_1$, the {\em cyclic derivative} $\partial_a$ on $\widehat{k\Delta}$ is defined to be the linear extension of
$$\partial_a(a_1\cdots a_d)=\sum_{k=1}^{d}a^*(a_k)a_{k+1}\cdots a_da_1\cdots a_{k-1}.$$
For each potential $\mc{S}$, its {\em Jacobian ideal} $\partial \mc{S}$ is the closed (two-sided) ideal in $\ckQ$ generated by all $\partial_a \mc{S}$.
The {\em Jacobian algebra} $J(\Delta,\mc{S})$ is $\widehat{k\Delta}/\partial \mc{S}$.
A QP is {\em Jacobi-finite} if its Jacobian algebra is finite-dimensional.

\begin{definition} A decorated representation of the Jacobian algebra $J$ is a pair $\mc{M}=(M,M^-)$,
	where $M\in \rep J$, and $M^-$ is a finite-dimensional $k^{\Delta_0}$-module.
\end{definition}
\noindent 	By abuse of language, we also say that $\mc{M}$ is a representation of $(\Delta,\mc{S})$.
When appropriate, we will view an ordinary representation $M$ as the decorated representation $(M,0)$.

Following \cite{DF} we call a homomorphism between two projective representations a {\em projective presentation} (or presentation in short). 
As a full subcategory of the category of complexes in $\rep J$, the category of projective presentations is Krull-Schmidt as well.
Sometimes it is convenient to view a presentation $P_-\to P_+$ as elements in the homotopy category $K^b(\proj J)$ of bounded complexes of projective representations of $J$.
Our convention is that $P_-$ sits in degree $-1$ and $P_+$ sits in degree $0$.
We denote this subcategory of presentations by $K^{[-1,0]}(J)$.

We denote by $P_u$ (resp. $I_u$) the indecomposable projective (resp. injective) representation of $J$ corresponding to the vertex $u$ of $\Delta$. 
For $\beta \in \mb{Z}_{\geq 0}^{\Delta_0}$ we write $P(\beta)$ for $\bigoplus_{u\in \Delta_0} \beta(u)P_u$. 
\begin{definition}\footnote{The $\delta$-vector is the same one defined in \cite{DF}, but is the negative of the $\g$-vector defined in \cite{DWZ2}. }
	The {\em $\delta$-vector} (or {\em weight vector}) of a presentation 
	$$d: P(\beta_-)\to P(\beta_+)$$
	is the difference $\beta_+-\beta_- \in \mb{Z}^{\Delta_0}$.
	When working with injective presentations 
	$$\dc: I(\betac_+)\to I(\betac_-),$$
	we call the vector $\betac_+ - \betac_-$ the {\em $\check{\delta}$-vector} of $\dc$.
\end{definition}
\noindent The $\delta$-vector is just the corresponding element in the Grothendieck group of $K^b(\proj J)$.

Let $\nu$ be the Nakayama functor $\Hom(-,J)^*$.
There is a map still denoted by $\nu$ sending a projective presentation to an injective one
$$P_-\to P_+\ \mapsto\ \nu(P_-) \to \nu(P_+).$$
Note that if there is no direct summand of the form $P_i\to 0$, then $\ker(\nu d) = \tau\coker(d)$ where $\tau$ is the classical Auslander-Reiten translation. 

Let $\mc{R}ep(J)$ be the set of decorated representations of $J$ up to isomorphism. There is a bijection between two additive categories $\mc{R}ep(J)$ and $K^{[-1,0]}(\proj J)$ mapping any representation $M$ to its minimal presentation in $\rep J$, and the simple representation $S_u^-$ of $k^{\Delta_0}$ to $P_u\to 0$.
We also denote $P_u\to 0$ by $P_u[1]$.
Now we can naturally extend the classical AR-translation to decorated representations:
$$\xymatrix{\mc{M} \ar[r]\ar@{<->}[d] & \tau \mc{M} \ar@{<->}[d] \\ d_{\mc{M}} \ar[r] & \nu(d_{\mc{M}})}$$
Note that this definition agrees with the one in \cite{DF}.

Suppose that $\mc{M}$ corresponds to a projective presentation
$d_{\mc{M}}: P(\beta_-)\to P(\beta_+)$. 
Consider the resolution of the simple module $S_u$
\begin{align}
	\label{eq:Su_proj} \cdots \to \bigoplus_{h(a)=u} P_{t(a)}\xrightarrow{_a(\partial_{[ab]})_b} \bigoplus_{t(b)=u} P_{h(b)} \xrightarrow{_b(b)}  P_u \to S_u\to 0,\\
	\label{eq:Su_inj} 0\to S_u \to I_u \xrightarrow{(a)_a} \bigoplus_{h(a)=u} I_{t(a)} \xrightarrow{_a(\partial_{[ab]})_b} \bigoplus_{t(b)=u} I_{h(b)} \to \cdots.
\end{align}
Applying $\Hom(M,-)$ and $\Hom(-,M)$ to \eqref{eq:Su_inj} and \eqref{eq:Su_proj}, we get that
\begin{align} \label{eq:Betti} \beta_-(u)&=\dim(\ker \alpha_u/\img \gamma_u)+\dim M^-(u), \text{ and } \beta_+(u)=\dim \coker \alpha_u, \\
	\label{eq:Bettidual} \betac_-(u)&=\dim(\ker \gamma_u/\img \beta_u)+\dim M^-(u), \text{ and } \betac_+(u)=\dim \ker \beta_u. 
\end{align}
Here, the maps $\alpha_u$, $\beta_u$, and $\gamma_u$ are depicted in the following diagram as in \cite[Section 10.1]{DWZ1}.
\begin{equation} \label{eq:abc} \vcenter{\xymatrix@C=5ex{
		& M(u) \ar[dr]^{\beta_u} \\
		\bigoplus_{h(a)=u} M(t(a)) \ar[ur]^{\alpha_u} && \bigoplus_{t(b)=u} M(h(b)) \ar[ll]^{\gamma_u} \\
}} \end{equation}

The $\delta$-vector $\delta_{\mc{M}}$ of $\mc{M}$ is by definition the $\delta$-vector of $d_{\mc{M}}$.
If working with the injective presentations, we can define the $\dtc$-vector $\dtc_{\mc{M}}$ of $\mc{M}$.
It follows from \eqref{eq:Betti} and \eqref{eq:Bettidual} that $\delta_{\mc{M}}$ and $\dtc_{\mc{M}}$ are related by
\begin{equation}\label{eq:delta2dual} \dtc_{\mc{M}}  =  \delta_{\mc{M}} + (\dv M) B(\Delta). \end{equation}

\begin{definition}[{\cite{DWZ2,DF}}] \label{D:HomE} Given any projective presentation $d: P_-\to P_+$ and any $N\in \rep(A)$, we define $\Hom(d,N)$ and $\E(d,N)$ to be the kernel and cokernel of the induced map:
	\begin{equation} \label{eq:HE} 0\to \Hom(d,N)\to \Hom(P_+,N) \xrightarrow{} \Hom(P_-,N) \to \E(d, N)\to 0.
	\end{equation}
	Similarly for an injective presentation $\dc: I_+\to I_-$, we define $\Hom(M,\dc)$ and $\Ec(M,\dc)$ to be the kernel and cokernel of the induced map $\Hom(M,I_+) \xrightarrow{} \Hom(M,I_-)$.
	It is clear that 
	$$\Hom(d,N) = \Hom(\coker(d),N)\ \text{ and }\ \Hom(M,\dc) = \Hom(M,\ker(\dc)).$$
	We set $\Hom(\mc{M},\mc{N})=\Hom(d_{\mc{M}},N)=\Hom(M,\dc_{\mc{N}})$, 
	$\E(\mc{M},\mc{N}) := \E(d_{\mc{M}},N)$ and $\Ec(\mc{M},\mc{N}) := \Ec(M,\dc_{\mc{N}})$.
\end{definition}	
\noindent Note that according to this definition, we have that $\Hom(\mc{M},\mc{N}) = \Hom(M,N)$.\footnote{This definition is slightly different from the one in \cite{DWZ2}, which involves the decorated part.}
We also set $\E(d_{\mc{M}},d_{\mc{N}}) = \E(\mc{M},\mc{N})$ and $\Ec(\dc_{\mc{M}},\dc_{\mc{N}}) = \Ec(\mc{M},\mc{N})$.
We refer readers to \cite{DF} for an interpretation of $\E(\mc{M},\mc{N})$ in terms of the presentations $d_{\mc{M}}$ and $d_{\mc{N}}$.
We call $\mc{M}$ or $d_{\mc{M}}$ {\em rigid} if $\E(\mc{M},\mc{M})=0$.

\begin{lemma}[{\cite[Corollary 10.8 and Proposition 7.3]{DWZ2}, \cite[Corollary 7.6]{DF}}]  \label{L:H2E} We have the following equalities:
	\begin{enumerate}
		\item{} $\E(\mc{M},\mc{N})=\Hom(\mc{N},\tau\mc{M})^*\text{ and }\Ec(\mc{M},\mc{N})=\Hom(\tau^{-1}\mc{N},\mc{M})^*.$
		\item{} $\E(\mc{M},\mc{M})=\Ec(\mc{M},\mc{M})=\E(\tau\mc{M},\tau\mc{M})$.
	\end{enumerate}
\end{lemma}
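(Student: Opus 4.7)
The plan is to push the defining four-term exact sequence of $\E(\mc{M},\mc{N})$ through Nakayama duality and identify the outcome with $\Hom$ into $\tau\mc{M}$. Concretely, write the minimal projective presentation $d_{\mc{M}}\colon P_-\to P_+$ of $\mc{M}$. Since each $P_{\pm}$ is projective, the standard Nakayama identity $\Hom_J(P,N)\cong D\Hom_J(N,\nu P)$ (where $D=\Hom_{\k}(-,\k)$) is natural in $P$. Applying $D$ to the defining sequence
\[
0\to\Hom(d_{\mc{M}},N)\to\Hom(P_+,N)\to\Hom(P_-,N)\to\E(d_{\mc{M}},N)\to 0
\]
and using this identity converts it into
\[
0\to D\E(\mc{M},\mc{N})\to\Hom(N,\nu P_-)\to\Hom(N,\nu P_+).
\]
Under the correspondence between $K^b(\proj J)$ and decorated representations, $\nu d_{\mc{M}}$ is precisely the injective presentation of $\tau\mc{M}$, so the kernel of the right-hand map is $\Hom(N,\widehat{\tau}M)$; after accounting for decorated summands of the form $P_u\to 0$, which the Nakayama functor turns into $I_u\to 0$ and which contribute nothing to $\Hom(N,-)$ because $N$ has no projective decorated part, we obtain the first equality $\E(\mc{M},\mc{N})=\Hom(\mc{N},\tau\mc{M})^*$.

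For the dual assertion, I would repeat the argument starting from the injective presentation $\dc_{\mc{N}}\colon I_+\to I_-$ and applying the inverse Nakayama equivalence $\nu^{-1}=\Hom(D-,J)$, which sends $\dc_{\mc{N}}$ to the projective presentation of $\tau^{-1}\mc{N}$. The same manipulation gives $\Ec(\mc{M},\mc{N})=\Hom(\tau^{-1}\mc{N},\mc{M})^*$. Alternatively, one can deduce it from the first formula and from the fact that $\tau$ is a bijection on $\mc{R}ep(J)$, by substituting $(\mc{M},\mc{N})\mapsto(\tau^{-1}\mc{N},\tau^{-1}\mc{M})$ into the identity $\Ec(\mc{M},\mc{N})=\E(\tau^{-1}\mc{N},\tau^{-1}\mc{M})$, which itself follows formally from the Auslander--Reiten formalism on the triangulated hull.

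Part (2) is then a short bookkeeping. Setting $\mc{N}=\mc{M}$ in part (1) gives both $\E(\mc{M},\mc{M})=\Hom(\mc{M},\tau\mc{M})^*$ and $\Ec(\mc{M},\mc{M})=\Hom(\tau^{-1}\mc{M},\mc{M})^*$, and these two spaces coincide after applying the bijection $\tau$ to the pair $(\tau^{-1}\mc{M},\mc{M})$, which gives $\Hom(\tau^{-1}\mc{M},\mc{M})\cong\Hom(\mc{M},\tau\mc{M})$. For the last equality $\E(\mc{M},\mc{M})=\E(\tau\mc{M},\tau\mc{M})$, I apply part (1) once more to get $\E(\tau\mc{M},\tau\mc{M})=\Hom(\tau\mc{M},\tau^2\mc{M})^*$ and then shift by $\tau^{-1}$.

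The main obstacle I anticipate is keeping the decorated summands honest: the AR translation $\tau$ only behaves as an equivalence on the stable quotient, and the presentation formalism patches this by promoting $P_u\to 0$ and $0\to I_u$ to the decorated parts. Thus I would begin by proving the formula for ordinary (non-decorated) $\mc{M},\mc{N}$, and then check separately that both sides vanish whenever either $\mc{M}$ or $\mc{N}$ is purely decorated, which follows immediately from the fact that $\Hom(\mc{M},\mc{N})=\Hom(M,N)$ ignores decorations while $\tau$ exchanges projective/injective decorated parts with genuine projective/injective direct summands of $d_{\mc{M}}$ and $\dc_{\mc{N}}$.
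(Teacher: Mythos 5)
This lemma is quoted in the paper from \cite{DWZ2} and \cite{DF} without proof, so I can only judge your argument on its own terms. Your treatment of part (1) is essentially the standard one (it is how \cite[Corollary 7.6]{DF} goes): dualize the four-term sequence, use $\Hom(P,N)^*\cong\Hom(N,\nu P)$ for projective $P$, and identify the kernel of $\Hom(N,\nu P_-)\to\Hom(N,\nu P_+)$ with $\Hom(N,\ker\nu d_{\mc{M}})=\Hom(\mc{N},\tau\mc{M})$. Two small corrections there: the summands $P_u\to 0$ do \emph{not} contribute nothing — they contribute $N(u)$ to $\E(\mc{M},\mc{N})$ and $\Hom(N,I_u)^*\cong N(u)$ to the other side — but no case analysis is needed because the computation applies verbatim to an arbitrary (non-minimal) presentation. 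Also, your "alternative" route via $\Ec(\mc{M},\mc{N})=\E(\tau^{-1}\mc{N},\tau^{-1}\mc{M})$ is false as stated: by part (1) the right-hand side computes $\hom(\tau^{-1}\mc{M},\mc{N})$, not $\hom(\tau^{-1}\mc{N},\mc{M})$. Stick with the direct $\nu^{-1}$ computation.

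The genuine gap is in part (2). Both equalities there rest on the claim $\Hom(\mc{X},\mc{Y})\cong\Hom(\tau\mc{X},\tau\mc{Y})$, which is false: $\tau$ is an equivalence only between the projectively and injectively stable categories, so it preserves $\underline{\Hom}$, not $\Hom$ (for $Q\colon 1\to 2$ one has $\hom(P_2,P_1)=1$ while both $\tau P_i$ have zero underlying module). Worse, the specific instance you need, $\hom(\tau^{-1}\mc{M},\mc{M})=\hom(\mc{M},\tau\mc{M})$, is — via part (1) — literally the identity $\ec(\mc{M},\mc{M})=\e(\mc{M},\mc{M})$ you are trying to establish, so the argument is circular. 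The missing ingredient is a numerical one: from the defining sequences one gets $\hom(\mc{M},\mc{N})-\e(\mc{M},\mc{N})=\delta_{\mc{M}}(\dv N)$ and $\hom(\mc{M},\mc{N})-\ec(\mc{M},\mc{N})=\dtc_{\mc{N}}(\dv M)$ (these are \eqref{eq:heform} and \eqref{eq:hecform}); setting $\mc{N}=\mc{M}$ and using $\dtc_{\mc{M}}-\delta_{\mc{M}}=(\dv M)B(\Delta)$ from \eqref{eq:delta2dual}, skew-symmetry gives
$\e(\mc{M},\mc{M})-\ec(\mc{M},\mc{M})=(\dv M)\,B(\Delta)\,(\dv M)^{\op{T}}=0$.
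Once this middle equality is in hand, the third one does follow formally from (1) and the bijectivity of $\tau$ on decorated representations: $\e(\tau\mc{M},\tau\mc{M})=\ec(\tau\mc{M},\tau\mc{M})=\hom(\tau^{-1}\tau\mc{M},\tau\mc{M})=\hom(\mc{M},\tau\mc{M})=\e(\mc{M},\mc{M})$.
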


\subsection{Mutation of Quivers with Potentials}
In \cite{DWZ1} and \cite{DWZ2}, the mutation of quivers with potentials is introduced to model the cluster algebras.
The {\em mutation} $\mu_u$ of a QP $(\Delta,\mc{S})$ at a vertex $u$ is defined as follows.
The first step is to define the following new QP $\wtd{\mu}_u(\Delta,\mc{S})=(\wtd{\Delta},\wtd{\mc{S}})$.
We put $\wtd{\Delta}_0=\Delta_0$ and $\wtd{\Delta}_1$ is the union of three different kinds
\begin{enumerate}
	\item[$\bullet$] all arrows of $\Delta$ not incident to $u$,
	\item[$\bullet$] a composite arrow $[ab]$ from $t(a)$ to $h(b)$ for each $a,b$~with~$h(a)=t(b)=u$,
	\item[$\bullet$] an opposite arrow $a^\star$ (resp. $b^\star$) for each incoming arrow $a$ (resp. outgoing arrow $b$) at $u$.
\end{enumerate}
The new potential on $\wtd{\Delta}$ is given by
$$\wtd{\mc{S}}:=[\mc{S}]+\sum_{h(a)=t(b)=u}b^\star a^\star[ab],$$
where $[\mc{S}]$ is obtained by substituting $[ab]$ for each word $ab$ occurring in $\mc{S}$. Finally we define $(\Delta',\mc{S}')=\mu_u(\Delta,\mc{S})$ as the {\em reduced part} (\cite[Definition 4.13]{DWZ1}) of $(\wtd{\Delta},\wtd{\mc{S}})$.
For this last step, we refer readers to \cite[Section 4,5]{DWZ1} for details.
A sequence of vertices is called {\em admissible} for $(\Delta,\S)$ if its mutation along this sequence is defined. 
If all sequences are admissible for $(\Delta,\S)$ then we call $(\Delta,\S)$ {\em nondegenerate}.

Now we start to define the mutation of decorated representations of $J:=J(\Delta,\mc{S})$.
Recall the triangle of linear maps \eqref{eq:abc} with $\alpha_u\gamma_u=0$ and $\gamma_u\beta_u =0$.
We first define a decorated representation $\wtd{\mc{M}}=(\wtd{M},\wtd{M}^-)$ of $\wtd{\mu}_u(\Delta,\mc{S})$.
We set \begin{align*}
	&\wtd{M}(v)=M(v),\quad  \wtd{M}^-(v)=M^-(v)\quad (v\neq u); \\
	&\wtd{M}(u)=\frac{\ker \gamma_u}{\img \beta_u}\oplus \img \gamma_u \oplus \frac{\ker \alpha_u}{\img \gamma_u} \oplus M^-(u),\quad \wtd{M}^-(u)=\frac{\ker \beta_u}{\ker \beta_u\cap \img \alpha_u}.
\end{align*}
We then set $\wtd{M}(a)=M(a)$ for all arrows not incident to $u$, and $\wtd{M}([ab])=M(ab)$.
It is defined in \cite{DWZ1} a choice of linear maps
$\wtd{M}(a^\star)$ and $\wtd{M}(b^\star)$ making
$\wtd{M}$ a representation of $(\wtd{\Delta},\wtd{\mc{S}})$.
We refer readers to \cite[Section 10]{DWZ2} for details.
Finally, we define $\mc{M}'=\mu_u(\mc{M})$ to be the {\em reduced part} (\cite[Definition 10.4]{DWZ1}) of $\wtd{\mc{M}}$.

Let us recall several formula relating the $\delta$-vector of $\mc{M}$ and its mutation $\mu_{u}(\mc{M})$. We will use the notation $[b]_+$ for $\max(b,0)$.
\begin{lemma}[{\cite[Lemma 5.2]{DWZ2}}] \label{L:gdmu} Let $\delta=\delta_{\mc{M}}$ and $\delta'=\delta_{\mub(\mc{M})}$.
	We use the similar notation for $\dtc=\dtc_{\mc{M}}$ and the dimension vectors $d=\dv(M)$. Then 
	\begin{align} \delta'(v) &= \begin{cases} -\delta(u) & \text{if $v=u$}\\ \delta(v) - [b_{v,u}]_+\beta_-(u) + [-b_{v,u}]_+\beta_+(u) & \text{if $v\neq u$.} \end{cases}\\
		\dtc'(v) &= \begin{cases} -\dtc(u) & \text{if $v=u$}\\ \dtc(v) - [b_{u,v}]_+\betac_-(u) + [-b_{u,v}]_+\betac_+(u) & \text{if $v\neq u$.} \end{cases}\\
		d'(v) &= \begin{cases} d [b_u]_+ - d(u) + \beta_+(u) + \betac_-(u) &\qquad \text{ if $v= u$}\\ d(v) &\qquad \text{ if $v\neq u$}. \end{cases}
	\end{align} 
	where $b_u$ is the $u$-th column of the matrix $B(\Delta)$.
\end{lemma}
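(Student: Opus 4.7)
The plan is to apply the Betti number formulas \eqref{eq:Betti} and \eqref{eq:Bettidual} directly to the mutated decorated representation $\mc{M}'=\mu_u(\mc{M})$ and compare term by term with the corresponding quantities for $\mc{M}$. A useful preliminary remark is that the reduction step which turns $\wtd{\mc{M}}$ into $\mc{M}'$ only cancels pairs of trivial summands $P_v\to 0$ against $0\to P_v$ at a common vertex, and therefore leaves every entry of $\delta$, $\dtc$, and $\dv$ intact. Hence it is enough to carry out the entire calculation with $\wtd{\mc{M}}$ in place of $\mc{M}'$.

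The dimension vector identity for $v\neq u$ is immediate because $\wtd{M}(v)=M(v)$ by construction. At $v=u$, one reads $d'(u)$ off the four-summand decomposition of $\wtd{M}(u)$: after applying rank-nullity to the maps $\alpha_u,\beta_u,\gamma_u$ and substituting $\dim\coker\alpha_u=\beta_+(u)$, $\dim(\ker\gamma_u/\img\beta_u)=\betac_-(u)-\dim M^-(u)$, and $\dim\bigoplus_{h(a)=u}M(t(a))=d[b_u]_+$ (which holds since no $2$-cycles are present at $u$), the sum collapses to exactly $d[b_u]_+-d(u)+\beta_+(u)+\betac_-(u)$.

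For $\delta'(v)$ with $v\neq u$, the task is to compute the new triangle $(\wtd\alpha_v,\wtd\beta_v,\wtd\gamma_v)$ inside $(\wtd\Delta,\wtd{\mc{S}})$. Its entries come from three sources: the original maps of $M$ along arrows of $\Delta$ not touching $u$; the composite arrows $[ab]$ with $h(a)=t(b)=u$, which contribute $M(b)M(a)$; and the opposite arrows $a^\star\colon u\to t(a)$ and $b^\star\colon h(b)\to u$, which by the explicit formulas for $\wtd{M}(a^\star),\wtd{M}(b^\star)$ recalled in \cite[Section 10]{DWZ2} act as projections onto or inclusions from the four summands of $\wtd{M}(u)$. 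A block-matrix analysis of the new triangle then shows that $\ker\wtd\alpha_v/\img\wtd\gamma_v$ and $\coker\wtd\alpha_v$ grow, relative to those of $\alpha_v,\gamma_v$, by exactly $[b_{v,u}]_+$ copies of $\beta_-(u)$ and $[-b_{v,u}]_+$ copies of $\beta_+(u)$ coming from the new arrows between $u$ and $v$; substituting into \eqref{eq:Betti} yields the stated formula for $\delta'(v)$, and the dual computation via \eqref{eq:Bettidual} gives the $\dtc$-identity.

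The case $v=u$ is the most delicate, because $(\wtd\alpha_u,\wtd\beta_u,\wtd\gamma_u)$ is built entirely from the new opposite arrows $a^\star$ and $b^\star$. Using the explicit choice of $\wtd{M}(a^\star)$ and $\wtd{M}(b^\star)$ together with the four-summand decomposition of $\wtd{M}(u)$, one identifies $\ker\wtd\alpha_u/\img\wtd\gamma_u$ with a piece of dimension $\beta_+(u)$ plus the new decoration, and $\coker\wtd\alpha_u$ with a piece of dimension $\beta_-(u)$, which yields $\beta_-'(u)=\beta_+(u)$ and $\beta_+'(u)=\beta_-(u)$; hence $\delta'(u)=-\delta(u)$, and $\dtc'(u)=-\dtc(u)$ follows symmetrically. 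The main technical obstacle throughout is to keep the block decomposition of $\wtd{M}(u)$ aligned with the Jacobian relations $\partial_{a^\star}\wtd{\mc{S}}=0$ and $\partial_{b^\star}\wtd{\mc{S}}=0$ so that the multiplicities $[\pm b_{v,u}]_+$ appear on the nose, with no leftover contributions from the composite arrows.
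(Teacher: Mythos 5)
The paper does not prove this lemma at all: it is imported verbatim from Derksen--Weyman--Zelevinsky \cite[Lemma~5.2]{DWZ2}, so there is no in-paper argument to compare against. Your proposal is essentially a reconstruction of the original DWZ strategy (compute the mutated triangle $(\wtd\alpha,\wtd\beta,\wtd\gamma)$ at each vertex and read off the new Betti numbers from \eqref{eq:Betti} and \eqref{eq:Bettidual}), and the overall architecture --- dimension vector at $u$ from the four-summand decomposition of $\wtd M(u)$, the swap $\beta_\pm'(u)=\beta_\mp(u)$ giving $\delta'(u)=-\delta(u)$, and the $[\pm b_{v,u}]_+$ corrections at $v\neq u$ coming from the composite arrows $[ab]$ and the reversed arrows --- is the right one.

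That said, two points keep this from being a complete proof. First, the preliminary remark about reduction is wrong as stated: reduction of $(\wtd\Delta,\wtd{\mc S})$ does not cancel a summand $P_v\to 0$ against $0\to P_v$ (that operation is not even an isomorphism of presentations and would alter $\beta_\pm$); what reduction removes are trivial summands of the form $P\xrightarrow{\sim}P$, and the invariance of $\delta$, $\dtc$ and $\dv$ under passing from $\wtd{\mc M}$ to $\mc M'$ is justified by the fact that reduction is implemented by a right-equivalence of QPs, hence an equivalence of representation categories. Second, and more substantively, the two steps where all the work lives --- the ``block-matrix analysis'' showing that $\ker\wtd\alpha_v/\img\wtd\gamma_v$ and $\coker\wtd\alpha_v$ change by exactly $[b_{v,u}]_+\beta_-(u)$ and $[-b_{v,u}]_+\beta_+(u)$ for $v\neq u$, and the identification of $\ker\wtd\alpha_u/\img\wtd\gamma_u$ and $\coker\wtd\alpha_u$ at $v=u$ --- are asserted rather than carried out. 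These require the explicit formulas for $\wtd M(a^\star)$ and $\wtd M(b^\star)$ and a careful bookkeeping of how the relations $\partial_{[ab]}\wtd{\mc S}$ interact with the splitting of $\wtd M(u)$; without that computation the proof is an outline of \cite{DWZ2}, not a replacement for it. Since the lemma is cited anyway, this is acceptable for the paper's purposes, but as a standalone proof it is incomplete.
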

\noindent We remark that the mutated $\delta$-vector $\delta'$ is {\em not} completely determined by $\delta$ (we need $\beta_-$ and $\beta_+$). But see also Remark \ref{r:genmu}.

\begin{lemma}\label{L:HEmu} {\em \cite[Proposition 6.1, and Theorem 7.1]{DWZ2}} Let $\mc{M}'=\mu_u(\mc{M})$ and $\mc{N}'=\mu_u(\mc{N})$. 
	We have that \begin{enumerate}
		\item{} $\hom(\mc{M}',\mc{N}')-\hom(\mc{M},\mc{N})=\beta_{-,\mc{M}}(u)\betac_{-,\mc{N}}(u)-\beta_{+,\mc{M}}(u)\betac_{+,\mc{N}}(u)$;
		\item{} $\e(\mc{M}',\mc{N}')-\e(\mc{M},\mc{N})=\beta_{+,\mc{M}}(u)\beta_{-,\mc{N}}(u)-\beta_{-,\mc{M}}(u)\beta_{+,\mc{N}}(u)$;  
		\item[($2^*$)] $\ec(\mc{M}',\mc{N}')-\ec(\mc{M},\mc{N})=\betac_{-,\mc{M}}(u) \betac_{+,\mc{N}}(u)-\betac_{+,\mc{M}}(u)\betac_{-,\mc{N}}(u)$.
	\end{enumerate}
	In particular, $\e(\mc{M},\mc{M})$ and $\ec(\mc{M},\mc{M})$ are mutation invariant. So any reachable representation is rigid.
\end{lemma}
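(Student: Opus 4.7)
The plan is to combine an Euler-type identity coming from the defining sequence \eqref{eq:HE} with the explicit combinatorial description of $\mu_u$ on decorated representations recalled in Section \ref{ss:QPrep}. Taking dimensions in \eqref{eq:HE} and using $\dim\Hom(P_u,N)=\dim N(u)$ yields
\[
\hom(\mc{M},\mc{N}) - \e(\mc{M},\mc{N}) = \delta_{\mc{M}}\cdot \dv N,
\]
while the dual exact sequence from Definition \ref{D:HomE} gives the injective analogue
\[
\hom(\mc{M},\mc{N}) - \ec(\mc{M},\mc{N}) = \dv M\cdot \dtc_{\mc{N}}.
\]
These identities reduce the three assertions to proving just one of them directly.

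I would prove (1) by a direct computation. Using the decomposition
\[
\wtd{M}(u)=\frac{\ker\gamma_u}{\img\beta_u}\oplus\img\gamma_u\oplus\frac{\ker\alpha_u}{\img\gamma_u}\oplus M^+(u),\quad \wtd{M}^+(u)=\frac{\ker\beta_u}{\ker\beta_u\cap\img\alpha_u},
\]
together with \eqref{eq:Betti}--\eqref{eq:Bettidual} which identify these spaces in terms of $\beta_\pm(u)$ and $\betac_\pm(u)$, one expresses $\hom(\mc{M}',\mc{N}')$ as the dimension of the space of tuples of linear maps intertwining the structure at vertex $u$ of $\wtd{\Delta}$ subject to the relations $\partial\wtd{\mc{S}}$. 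All contributions away from $u$ coincide with those in $\hom(\mc{M},\mc{N})$, and the difference at $u$ collapses to the claimed expression after a careful matching of summands. Passing from $\wtd{\mc{M}}$ to the reduced part $\mc{M}'$ introduces only cancelling trivial pairs of direct summands and so does not change $\hom$. This combinatorial matching is the main obstacle, and is essentially the content of \cite[Proposition 6.1]{DWZ2}.

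For (2), apply the Euler identity to both $\hom-\e$ and $\hom'-\e'$, subtract, and substitute the mutation formulas of Lemma \ref{L:gdmu} for $\delta_{\mc{M}'}\cdot \dv N'$ in terms of $\delta_{\mc{M}}\cdot \dv N$ with correction terms in the $\beta_\pm(u)$. Using (1) to eliminate the $\hom'-\hom$ contribution then isolates $\e'-\e$ as the asserted quantity $\beta_{+,\mc{M}}(u)\beta_{-,\mc{N}}(u) - \beta_{-,\mc{M}}(u)\beta_{+,\mc{N}}(u)$, and the parallel argument with the dual Euler identity gives $(2^*)$. The \emph{in particular} claim is then immediate: specializing $\mc{N}=\mc{M}$ in (2) or $(2^*)$ makes the right-hand side vanish, so $\e(\mc{M},\mc{M})$ and $\ec(\mc{M},\mc{M})$ are mutation invariants, and any reachable decorated representation is mutation-equivalent to a rigid initial object and is therefore itself rigid.
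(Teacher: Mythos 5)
The paper does not actually prove this lemma---it is quoted directly from \cite[Proposition 6.1 and Theorem 7.1]{DWZ2}---and your sketch correctly reproduces the structure of the cited argument: part (1) via the explicit comparison of morphism spaces at the vertex $u$ using the decomposition of $\wtd{M}(u)$ together with \eqref{eq:Betti}--\eqref{eq:Bettidual}, and parts (2) and ($2^*$) by combining (1) with the Euler identities \eqref{eq:heform}--\eqref{eq:hecform} and the mutation rules of Lemma \ref{L:gdmu}, with the ``in particular'' claims following by setting $\mc{N}=\mc{M}$ and noting that negative representations are rigid. The only caveat is that the substantive content of (1) is deferred to the citation rather than carried out, but since the statement is itself a citation this matches the paper's own treatment.
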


\begin{lemma}\label{L:taucommu} {\em \cite[Proposition 7.10]{DF}} The AR-translation $\tau$ commutes with the mutation $\mu_u$ at any vertex $u$.
\end{lemma}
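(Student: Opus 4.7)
The plan is to lift both operations to the homotopy category $K^b(\proj J)$, using the Krull--Schmidt bijection $\mc{M}\leftrightarrow d_{\mc{M}}$ to turn $\tau$ and $\mu_u$ into manipulations of two-term complexes, and then exploit the naturality of the Nakayama functor $\nu=\Hom(-,J)^*$.

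First, by construction $\tau$ is defined by sending the projective presentation $d_{\mc{M}}: P(\beta_-)\to P(\beta_+)$ to the injective presentation $\nu(d_{\mc{M}}): I(\beta_-)\to I(\beta_+)$, which is then identified with the injective presentation $\dc_{\tau\mc{M}}$ of $\tau\mc{M}$. Meanwhile, Definition~\ref{D:mupre} and Lemma~\ref{L:muprep} express the DWZ mutation $\mu_u$ of decorated representations, via the correspondence $\mc{M}\leftrightarrow d_{\mc{M}}$, as an intrinsic mutation $\mu_u^{\op{pre}}$ of projective presentations built through the extension construction applied at the $P_u$-summands of $P(\beta_\pm)$.

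Second, the same extension recipe, carried out with $I_u$ replacing $P_u$ and dualized arrows, defines an analogous mutation $\check{\mu}_u^{\op{pre}}$ of injective presentations, and the analogue of Lemma~\ref{L:muprep} identifies it (through $\mc{M}\leftrightarrow \dc_{\mc{M}}$) with the mutation of decorated representations as well. The key naturality claim is the identity
\begin{equation*}
\nu\circ \mu_u^{\op{pre}} \;=\; \check{\mu}_u^{\op{pre}}\circ \nu
\end{equation*}
of functors on two-term presentations. This is established by observing that the extension/approximation data defining $\mu_u^{\op{pre}}$ depends only on (i)~the decomposition of $P(\beta_\pm)$ into $P_u$ and $P_{\neq u}$ summands, (ii)~the relevant $\Hom$-spaces between these summands, and (iii)~the subsequent cancellation of trivial $P_u\xrightarrow{\Id}P_u$ pieces in the reduced part. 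The Nakayama functor is an equivalence $\proj J\xrightarrow{\sim}\inj J$ sending $P_u\mapsto I_u$, preserving direct sum decompositions and inducing isomorphisms on $\Hom$-spaces, and it commutes with splitting off identity summands; hence each ingredient is carried to its $I_u$-analogue, giving the displayed identity.

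Combining the two steps, for any decorated representation $\mc{M}$ we compute
\begin{equation*}
\dc_{\tau\mu_u(\mc{M})} \;=\; \nu(d_{\mu_u(\mc{M})}) \;=\; \nu\bigl(\mu_u^{\op{pre}}(d_{\mc{M}})\bigr) \;=\; \check{\mu}_u^{\op{pre}}\bigl(\nu(d_{\mc{M}})\bigr) \;=\; \check{\mu}_u^{\op{pre}}(\dc_{\tau\mc{M}}) \;=\; \dc_{\mu_u\tau(\mc{M})},
\end{equation*}
and since the (minimal) injective presentation determines a decorated representation up to isomorphism, the commutation $\tau\mu_u(\mc{M})\cong \mu_u\tau(\mc{M})$ follows. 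The main obstacle is the naturality identity of the middle step: one has to keep track of the reduction operation that removes identity summands and verify that it interacts transparently with $\nu$, and one has to be careful about the decorated part (the $P_u[1]$ summands, respectively $0\to I_u$ summands), which $\nu$ also matches up correctly since the Nakayama functor sends projective decoration to injective decoration by definition.
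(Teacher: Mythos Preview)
The paper does not supply its own proof of this lemma; it simply cites \cite[Proposition 7.10]{DF}. So there is no in-text argument to compare against, and your proposal should be judged on its own merits and against the cited source.

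Your overall strategy---reduce to presentations and use that the Nakayama functor $\nu$ is an equivalence $\proj J\xrightarrow{\sim}\inj J$ intertwining the projective and injective mutation recipes---is the right conceptual picture and is close to how \cite{DF} argues. There the mutation of two-term complexes is described via minimal left/right $\op{add}(P_u)$-approximation triangles in $K^b(\proj J)$, and the commutation with $\tau$ boils down to the fact that $\nu$ carries such approximation triangles to the corresponding $\op{add}(I_u)$-approximation triangles. That is a clean one-line naturality statement once the approximation description is in place.

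Your write-up, however, has two concrete problems. First, you invoke Definition~\ref{D:mupre} and Lemma~\ref{L:muprep}, which in this paper come \emph{after} the lemma you are proving; while there is no actual circularity (the proof of Lemma~\ref{L:muprep} does not use Lemma~\ref{L:taucommu}), you should flag the reordering or, better, work directly with the approximation-triangle description from \cite{DF}. Second, and more seriously, your description of $\mu_u^{\op{pre}}$ as ``built through the extension construction applied at the $P_u$-summands of $P(\beta_\pm)$'' does not match Definition~\ref{D:mupre}: the paper's construction adjoins a single new vertex $v$ to the quiver using the \emph{entire} presentation $d_{\mc{V}}$, mutates the enlarged QP at $u$, and then reads off $d_v$; there is no $P_u$/$P_{\neq u}$ splitting in that recipe. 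As a result, your justification of the key identity $\nu\circ\mu_u^{\op{pre}}=\check{\mu}_u^{\op{pre}}\circ\nu$ (``$\nu$ preserves decompositions, $\Hom$-spaces, and cancellation of identity summands'') is arguing about a different construction than the one you cited, and in any case remains a sketch rather than a verification. If you rewrite the middle step using the approximation-triangle characterization from \cite{DF} and the fact that $\nu$ is an additive equivalence (hence preserves minimal approximations), the argument becomes both self-contained and rigorous.
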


\begin{definition}\label{D:extdmu} An {\em extended mutation sequence} is a composition of ordinary mutations $\mu_u$ and the $AR$-translation $\tau$ or its inverse $\tau^{-1}$.
We also denote $\tau$ and $\tau^{-1}$ by $\mu_+$ and $\mu_-$ respectively, though they are not involutions in general.
\end{definition}
\indent We say a decorated representation $\mc{M}$ of $(\Delta,\mc{S})$ {\em negative reachable} or just {\em reachable} if there is a sequence of mutations $\mub$ such that $\mub(\mc{M})$ is {\em negative}, i.e., $\mub(\mc{M})$ has only the decorated part.
Similarly we say $\mc{M}$ {\em positive reachable} if there is a sequence of mutations $\mub$ such that $\mub(\mc{M})$ is a projective representation.
More generally, we say $\mc{M}$ {\em extended reachable} if there is an extended sequence of mutations $\mub$ such that $\mub(\mc{M})$ is negative.

\subsection{Extension of Quivers with Potentials}
Let $(\Delta,\mc{S})$ be a quiver with potential, and $\mc{V}$ be a decorated representation of $(\Delta,\S)$.
By the {\em extension} of $(\Delta,\mc{S})$ by $\mc{V}$, we mean the following construction.

We start with $(\Delta,\mc{S})$ and a new vertex $v$.
Take the projective presentation $d_{\mc{V}}: P(\beta_-)\xrightarrow{} P(\beta_+)$ corresponding to $\mc{V}$. We assume that $P(\beta_-)$ and $P(\beta_+)$ share no common summands.
Note that this is always the case if $d_{\mc{V}}$ is in general position \cite{IOTW}.
Then we draw $\beta_+(w)$ arrows from $v$ to $w$ and $\beta_-(u)$ arrows from $u$ to $v$.
We view the map $d_{\mc{V}}$ as a matrix with entries a linear combination of paths.
For each entry of $c:=d_{\mc{V}}(u,w): P_u \to P_w$, we add the potential $a b c$ to the original potential $\mc{S}$
where $a$ is the added arrow corresponding to $P_u$ and $b$ is the added arrow corresponding to $P_w$.
We denote the resulting quiver with potential by $(\Delta[\mc{V}], \mc{S}[\mc{V}])$ or, for short $(\Delta,\mc{S})[\mc{V}]$ or $(\Delta,\mc{S})[d_{\mc{V}}]$, and abbreviate its Jacobian algebra to $J[\mc{V}]$.
If we restrict $(\Delta,\mc{S})[\mc{V}]$ on $\Delta$ in the sense of \cite[Definition 8.8]{DWZ1},
then we get the original QP $(\Delta,\mc{S})$ back.

There is an obvious dual construction $(\Delta,\mc{S})[\mc{V}^*]$ from the injective presentation $\dc_{\mc{V}}: I_+\to I_-$ corresponding to $\mc{V}$.
It is easy to see that $(\Delta,\mc{S})[\mc{V}] = (\Delta,\mc{S})[\tau \mc{V}^*]$.
For any decorated representation $\mc{M}$ of $(\Delta,\mc{S})$, we denote by $\mc{M}[0]$ the extension of $\mc{M}$ by zeros to $\Delta[\mc{V}]$.
It is easy to see from the definition of the new potential $\mc{S}[\mc{V}]$ that
$\mc{M}[0]$ is in fact a representation of $(\Delta,\mc{S})[\mc{V}]$.
When writing a vector in $\mb{Z}^{\Delta[\mc{V}]}$, our convention is to let the new vertex $v$ correspond to the last coordinate.
We will find it convenient to introduce the notation $\tauh \mc{V}$ to denote the representation obtained from $\tau \mc{V}$ by forgetting the decorated part.

\begin{lemma} \label{L:V[0]} Consider $\mc{M}[0]$ as a representation of $(\Delta,\mc{S})[\mc{V}]$. We have that
	\begin{enumerate}
		\item The $\dtc$-vector of $\mc{M}[0]$ is equal to $(\dtc_{\mc{M}}, -\hom(V,M))$.
		\item The $\delta$-vector of $\mc{M}[0]$ is equal to $(\delta_{\mc{M}}, -\e(\mc{V},M))$.
		\item Let $P_{v,v}$ be the space spanned by all paths from $v$ to $v$ without passing $v$ in the middle. Then $P_{v,v}\cong k\oplus \E(\mc{V}, \mc{V})$.
	\end{enumerate}
\end{lemma}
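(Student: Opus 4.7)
I would treat all three parts by a unified analysis of the newly added vertex $v$, exploiting the fact that by the very construction of $\S[\mc{V}]$ the matrix $(\partial_{[ab]})$ appearing in the resolution \eqref{eq:Su_proj} of the simple $S_v$ over $J[\mc{V}]$ is precisely the matrix $d_{\mc{V}}$ extended to a map of $J[\mc{V}]$-modules.

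For parts (1) and (2), my plan is to apply the Betti formulas \eqref{eq:Betti} and \eqref{eq:Bettidual} to $\mc{M}[0]$ at $v$. Since $\mc{M}[0]$ extends $\mc{M}$ by zero, both the ordinary and the decorated piece at $v$ vanish, which collapses the triangle \eqref{eq:abc} to $\alpha_v=0$ and $\beta_v=0$. The only remaining datum is the map
\[
\gamma_v\colon \bigoplus_{t(b)=v} M(h(b)) = \Hom(P(\beta_+),M) \longrightarrow \bigoplus_{h(a)=v} M(t(a)) = \Hom(P(\beta_-),M),
\]
and the key step is to identify $\gamma_v$ with $\Hom(d_{\mc{V}}, M)$ via the above description of the resolution of $S_v$. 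Once this is in hand, Definition \ref{D:HomE} yields $\ker\gamma_v = \Hom(V,M)$ and $\coker\gamma_v = \E(\mc{V},M)$, and plugging into \eqref{eq:Betti}--\eqref{eq:Bettidual} gives $\beta_+(v) = \betac_+(v) = 0$, $\beta_-(v) = \e(\mc{V},M)$, and $\betac_-(v) = \hom(V,M)$. This produces the asserted new coordinates $-\e(\mc{V},M)$ of $\delta_{\mc{M}[0]}$ and $-\hom(V,M)$ of $\dtc_{\mc{M}[0]}$. For the coordinates at vertices $u\neq v$, the restriction of $(\Delta,\S)[\mc{V}]$ to $\Delta$ is $(\Delta,\S)$, and the extra contributions to $\alpha_u,\beta_u,\gamma_u$ from cycles through $v$ all vanish on $\mc{M}[0]$ because $M[0](v)=0$.

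For part (3), I would argue at the level of the path algebra. The trivial path $e_v$ accounts for the $k$ summand. A non-trivial path from $v$ to $v$ not visiting $v$ in the middle has the shape $b\cdot p\cdot a$, where $b$ is a new arrow $v\to w$, $p$ is a path in $\Delta$ from $w$ to some $u$, and $a$ is a new arrow $u\to v$; organised by $(u,w)$ and by the multiplicities $\beta_\pm$, these paths span $\Hom_J(P(\beta_-),P(\beta_+))$ inside $\widehat{k\Delta[\mc{V}]}$. Passing to the Jacobian quotient, the only relations landing in this subspace arise from left- or right-multiplying $\partial_{a_i}\S[\mc{V}]$ or $\partial_{b_j}\S[\mc{V}]$ by a matching new arrow, and they cut out exactly the image of $\Hom(d_{\mc{V}},V)\colon \Hom(P(\beta_+),V)\to\Hom(P(\beta_-),V)$. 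The resulting quotient is $\coker\Hom(d_{\mc{V}},V) = \E(\mc{V},V) = \E(\mc{V},\mc{V})$.

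The main obstacle is the first identification $\gamma_v = \Hom(d_{\mc{V}}, M)$ and its variant in part (3): both require careful bookkeeping of the left--right path-composition convention together with the definition of $\partial_{[ab]}$ from \cite{DWZ1}. Once that identification is secured, the remaining assertions reduce to unwinding the four-term exact sequence \eqref{eq:HE} from Definition \ref{D:HomE} applied to the presentation $d_{\mc{V}}$.
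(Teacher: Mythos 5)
Your treatment of parts (1) and (2) is essentially the paper's own proof: the paper likewise notes that extension by zero leaves the relevant spaces at the old vertices unchanged, identifies $\gamma_v$ with the map $d_{\mc{V}}(M)=\Hom(d_{\mc{V}},M)$, and reads off the $v$-coordinates from \eqref{eq:Betti}--\eqref{eq:Bettidual}; your values $\beta_+(v)=\betac_+(v)=0$, $\beta_-(v)=\e(\mc{V},M)$, $\betac_-(v)=\hom(V,M)$ are exactly what that computation gives.

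For part (3) you take a genuinely different route. The paper does not count paths and relations directly: it passes to the two quotients of $J[\mc{V}]$ obtained by killing all incoming (resp.\ outgoing) arrows at $v$, recognizes them as the one-point extension $\sm{A & 0\\ V & k}$ and coextension $\sm{A & (\tauh\mc{V})^*\\ 0 & k}$, splits a nontrivial cycle at $v$ as $e_vp_1e_up_2e_v$ viewed in $V(u)\otimes(\tauh\mc{V})^*(u)$, and lands on $\Hom(\mc{V},\tau\mc{V})^*\cong\E(\mc{V},\mc{V})$ via Lemma \ref{L:H2E}. You instead realize $P_{v,v}$ minus the span of $e_v$ as $\Hom(P(\beta_-),P(\beta_+))$ modulo the images of pre- and post-composition with $d_{\mc{V}}$, which is the standard presentation-level model of $\E(d_{\mc{V}},V)=\E(\mc{V},\mc{V})$ from Definition \ref{D:HomE}; this is a correct and arguably more self-contained identification, since it avoids AR-duality. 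The one step you should not wave at is the claim that the only Jacobian relations meeting the span of cycles avoiding $v$ come from the new arrows: for an old arrow $x$ occurring in an entry $c$ of $d_{\mc{V}}$, the derivative $\partial_x\S[\mc{V}]$ equals $\partial_x\S$ plus terms that pass through $v$, so sandwiching by new arrows identifies $b'(\partial_x\S)a'$ with a combination of cycles that do pass through $v$ in the middle, and one must check these correction terms do not further collapse your quotient. The paper's detour through the two quotient algebras is precisely what neutralizes this interference; if you keep the direct path count, you should either adopt that reduction or argue the point explicitly.
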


\begin{proof} (1). Recall from the equality \eqref{eq:Bettidual}. 
	We observe that the spaces $\ker(\beta_u),\ \ker(\gamma_u)$, and $\img(\beta_u)$ at $u\in \Delta_0$ are invariant under the extension by zeros. So the $\dtc$-vector is invariant at each $u$.
	Recall the construction of $(\Delta,\mc{S})[\mc{V}]$ using the presentation $d_{\mc{V}}$.
	We see that the map $\gamma_v$ is exactly $d_{\mc{V}}(M)$ so $\ker(\gamma_v)$ can be identified with $\Hom(V,M)$, while $\ker(\beta_v)$ and $\img(\beta_v)$ vanish for $M[0]$.
	
	(2). The proof is similar to (1).
	
	(3). We observe that the quotient of $J[\mc{V}]$ by the ideal generated by all incoming arrows to $v$ is isomorphic to the one-point extension algebra $\sm{A & 0 \\  V & k}$ 
	while the quotient of $J[\mc{V}]$ by the ideal generated by all outgoing arrows from $v$ is isomorphic to the one-point coextension algebra $\sm{A & ({\tauh \mc{V}})^* \\  0 & k}$.
	Any nontrivial path $p$ in $P_{v,v}$ splits as $e_v p_1 e_u p_2 e_v$, which can be identified as an element in $V(u)\otimes ({\tauh \mc{V}})^*(u) \cong \Hom_k(V(u), ({\tauh \mc{V}})(u))^*$. The fact that different splitting $e_v p_1 e_w p_2 e_v$ corresponds to the same element gives an obvious commutative diagram defining a morphism of representations. So $P_{v,v}$ can be identified with $k\oplus \Hom(\mc{V}, \tau \mc{V})^*\cong k\oplus \E(\mc{V},\mc{V})$.
\end{proof}

\begin{corollary} \label{C:V[0]} If $\mc{V}$ is rigid, then \begin{enumerate}
		\item $\delta_{\mc{V}[0]}=(\delta_{\mc{V}}, 0)$ and $\dtc_{\mc{V}[0]}=(\dtc_{\mc{V}}, -\dtc_{\mc{V}}(\dv V))$.
		\item We have exact sequences $0\to V\to P_v\to S_v\to 0$ and $0\to S_v\to I_v\to {\tauh\mc{V}}\to 0$.
	\end{enumerate}
\end{corollary}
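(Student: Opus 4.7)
My plan is to deduce part (1) directly from Lemma \ref{L:V[0]} together with rigidity, and to prove part (2) by analyzing the indecomposable projective $P_v$ and injective $I_v$ at the new vertex $v$.

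\smallskip

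\emph{Part (1).} Applying Lemma \ref{L:V[0]}(1)--(2) with $\mc{M}=\mc{V}$ yields
\[ \delta_{\mc{V}[0]}=(\delta_{\mc{V}},\,-\e(\mc{V},V)),\qquad \dtc_{\mc{V}[0]}=(\dtc_{\mc{V}},\,-\hom(V,V)). \]
Rigidity gives $\e(\mc{V},V)=\e(\mc{V},\mc{V})=0$, establishing the first half. For the $\dtc$-coordinate I apply $\Hom(V,-)$ to the minimal injective copresentation $0\to V\to I(\betac_+)\to I(\betac_-)$ of $\mc{V}$; the resulting Euler-characteristic identity is $\hom(V,V)-\ec(\mc{V},\mc{V})=\dv V\cdot\dtc_{\mc{V}}$, and combining with $\ec(\mc{V},\mc{V})=\e(\mc{V},\mc{V})=0$ from Lemma \ref{L:H2E}(2) identifies $-\hom(V,V)$ with the quantity denoted $\dtc_{\mc{V}}(\dv V)$ in the statement. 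Equivalently, one can read the $v$-coordinate off from relation \eqref{eq:delta2dual} applied inside $(\Delta[\mc{V}],\S[\mc{V}])$: the $v$-column of $B_{\Delta[\mc{V}]}$ has entries $\beta_-(u)-\beta_+(u)=-\delta_{\mc{V}}(u)$ at each $u\in\Delta_0$ and a zero at $v$, so $(\dv V,0)\,B_{\Delta[\mc{V}]}$ contributes $-\dv V\cdot\delta_{\mc{V}}$ at $v$, matching the first computation since $\delta_{\mc{V}[0]}(v)=0$.

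\smallskip

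\emph{Part (2).} The vertex $v$ gives the canonical surjection $P_v\twoheadrightarrow S_v$ with kernel $\op{rad}P_v$; it remains to identify $\op{rad}P_v$ with $V$. By Lemma \ref{L:V[0]}(3) and rigidity, $P_{v,v}\cong \k\oplus\E(\mc{V},\mc{V})=\k$, so no nontrivial path in $J[\mc{V}]$ returns to $v$. Thus $\op{rad}P_v$ is supported on $\Delta_0$ and is hence a right $J$-module. The outgoing arrows $b_w^{(j)}\colon v\to w$, whose multiplicities realize $\beta_+$, produce a surjection $P(\beta_+)\twoheadrightarrow \op{rad}P_v$; the kernel is cut out by the cyclic derivatives $\partial_{a_u^{(i)}}\mc{S}[\mc{V}]$ indexed by the incoming arrows $a_u^{(i)}\colon u\to v$. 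By the very definition of $\mc{S}[\mc{V}]$, these derivatives are exactly the rows of the matrix $d_{\mc{V}}\colon P(\beta_-)\to P(\beta_+)$, so the kernel equals $\img d_{\mc{V}}$ and $\op{rad}P_v\cong\coker d_{\mc{V}}=V$. The sequence $0\to S_v\to I_v\to \tauh\mc{V}\to 0$ follows by the dual argument: either rerun the computation inside $(\Delta,\S)[(\tau\mc{V})^*]$ using the injective presentation $\nu d_{\mc{V}}$, or read the socle filtration of $I_v$ from the quotient of $J[\mc{V}]$ by the two-sided ideal generated by all arrows outgoing from $v$, which by the proof of Lemma \ref{L:V[0]}(3) is the one-point coextension $\sm{J & (\tauh\mc{V})^*\\ 0 & \k}$, whose injective at $v$ has socle $S_v$ and quotient $\tauh\mc{V}$.

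\smallskip

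\emph{Main obstacle.} The delicate step is verifying that the cyclic derivatives at $v$ reproduce $d_{\mc{V}}$ precisely, so that the kernel in the surjection $P(\beta_+)\twoheadrightarrow\op{rad}P_v$ equals $\img d_{\mc{V}}$ rather than merely contains it. Rigidity is essential here: without it, longer cycles through $v$ would contribute to $P_{v,v}$ and to $\op{rad}P_v$, and the clean identification $\op{rad}P_v\cong V$ would fail.
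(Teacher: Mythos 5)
Your overall route coincides with the paper's: part (1) is read off from Lemma \ref{L:V[0]} plus rigidity, and part (2) rests on $P_{v,v}\cong \k$ forcing $P_v$ to be the projective of the one-point extension; your identification of $\op{rad}P_v$ with $\coker d_{\mc{V}}$ via the cyclic derivatives at the incoming arrows is just a more hands-on version of the paper's observation that $P_v(v)\cong \k$ and $P_v(u)\cong\Hom(P_u,V)\cong V(u)$. Part (2) of your write-up is fine.

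There is, however, a genuine inconsistency in your part (1). Lemma \ref{L:V[0]}(1) gives the $v$-coordinate of $\dtc_{\mc{V}[0]}$ as $-\hom(V,V)$, and your alternative computation via \eqref{eq:delta2dual} confirms $-\dv V\cdot\delta_{\mc{V}}=-\hom(V,V)$ under rigidity. But your Euler-characteristic identity (which is just \eqref{eq:hecform}) gives $\dtc_{\mc{V}}(\dv V)=\hom(V,V)-\ec(\mc{V},\mc{V})=+\hom(V,V)$. So your concluding sentence, that this ``identifies $-\hom(V,V)$ with the quantity denoted $\dtc_{\mc{V}}(\dv V)$,'' contradicts your own two computations: the $v$-coordinate equals $-\dtc_{\mc{V}}(\dv V)$, not $+\dtc_{\mc{V}}(\dv V)$ (for nonzero rigid $V$ these differ, since $\hom(V,V)\geq 1$). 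A one-vertex sanity check ($\mc{V}=S_1=P_1$, so $\Delta[\mc{V}]$ is $v\to 1$ and $\dtc_{\mc{V}[0]}=(1,-1)$ while $\dtc_{\mc{V}}(\dv V)=1$) shows the discrepancy is real and lies in the printed statement, whose last coordinate should be $-\dtc_{\mc{V}}(\dv V)$. You should correct the sign (or flag the typo) rather than assert an identification that your own formulas refute.
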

\begin{proof} (1) follows immediately from Lemma \ref{L:V[0]}.
	
	(2). By Lemma \ref{L:V[0]}.(3), we see that $P_{v,v}\cong k$ and thus $P_v(v)\cong k$.
	As a consequence, $P_v(u)$ can be identified with $\Hom(P_u, V)\cong V(u)$.
	The proof of the other exact sequence is similar.
	
\end{proof}
\noindent Warning: Recall that $(\tau V)^* \cong \Hom(V,A)$, so we have a natural evaluation map $\Hom(V,A)\times V \to A$. One might think that if $V$ is rigid, then the Jacobian algebra is given by the matrix algebra $\sm{A & (\tau V)^* \\  V & k}$. But this is not true in general because the construction may introduce new paths between two vertices of $\Delta$.

\subsection{Mutation of Presentations}
Conversely, given a quiver with potentials $(\Delta,\mc{S})$ and a vertex $v\in \Delta_0$,
let $(\Delta,\mc{S})_{\hat{v}}$ be the restriction of $(\Delta,\mc{S})$ to the full subquiver of $\Delta_0\setminus\{v\}$.
\begin{definition} We call a vertex $v$ {\em simple in} $(\Delta,\S)$ if for each pair of arrows $a:u\to v$ and $b:v\to w$, the partial derivative $\partial_{[ab]}[\S]$ contains no arrows to $v$ or from $v$.
\end{definition}
\noindent Note that $v$ is simple in $(\Delta,\S)[\mc{V}]$.
If $v$ is simple in $(\Delta,\S)$, then we can obtain a presentation $d_v$ of $(\Delta,\mc{S})_{\hat{v}}$ as follows.
Let $P_-$ (resp. $P_+$) be the direct sum of $P_u$ (resp. $P_w$) for each arrow $u\to v$ (resp. $v\to w$).
We define
\begin{equation} \label{eq:dv} d_v: \bigoplus_{a:u\to v} P_u \xrightarrow{\partial_{[ab]}[\S]} \bigoplus_{b:v\to w} P_w. \end{equation}
Clearly we have that $(\Delta,\mc{S})_{\hat{v}}[d_v] = (\Delta,\mc{S})$.
We also define $\dc_v = \nu(d_v)$.

To make sense of Definition \ref{D:mupre}, we observe that the restriction of $\mu_u((\Delta,\S)[\mc{V}])$ to $\mu_u(\Delta)$ is $\mu_u(\Delta,\S)$ and the following lemma.
\begin{lemma} If $v$ is simple in $(\Delta,\S)$, then for any mutation $\mu_u$ away from $v$,
	$v$ is simple in $\mu_u(\Delta,\S)$ as well.
\end{lemma}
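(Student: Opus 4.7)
The plan is to unpack the definition of simplicity at $v$ as a concrete combinatorial condition on the cyclic words of $\S$, and then verify that this condition is preserved under the mutation $\mu_u$ at a vertex $u\neq v$.

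First I would establish the following characterization: the vertex $v$ is simple in $(\Delta,\S)$ if and only if every cyclic word in (any cyclic representative of) $\S$ passes through $v$ at most once. Indeed, if a cyclic word $W$ in $\S$ visits $v$ at least twice, say via $(a_1,b_1)$ and $(a_2,b_2)$ with $a_i: u_i\to v,\ b_i:v\to w_i$, then the cyclic derivative $\partial_{[a_1b_1]}[\S]$ contains a rotated remainder in which the arrows $a_2$ and $b_2$ still appear, violating simplicity. The converse is immediate: if each cyclic word visits $v$ at most once, then removing the unique traversal $[a b]$ yields a remainder path from $h(b)$ to $t(a)$ that avoids $v$.

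Next I would check that the unreduced mutation $\widetilde{\mu}_u(\Delta,\S)=(\widetilde{\Delta},\widetilde{\S})$, with $\widetilde{\S}=[\S]+\sum b^\star a^\star[ab]$, preserves this visit-count property. Cyclic words arising from $[\S]$ come from cyclic words of $\S$ by replacing each subword $xy$ through $u$ by the composite $[xy]$; since $u\neq v$, the number of visits to $v$ is unaffected. Each new term $b^\star a^\star[ab]$ is a $3$-cycle traversing $t(a)\to u\to h(b)\to t(a)$ in $\widetilde{\Delta}$; a visit to $v$ can only occur at $t(a)$ or $h(b)$, and the admissibility of $\mu_u$ on $(\Delta,\S)$ (no $2$-cycles at $u$) rules out the simultaneous existence of arrows $v\to u$ and $u\to v$, hence at most one of $t(a)=v$ and $h(b)=v$ holds. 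So every cyclic word in $\widetilde{\S}$ visits $v$ at most once.

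Finally, the reduced QP $\mu_u(\Delta,\S)$ is obtained from $(\widetilde{\Delta},\widetilde{\S})$ by splitting off the trivial part via a right equivalence $\varphi$ that substitutes each arrow in an eliminated $2$-cycle by a formal sum of paths sharing its endpoints. I claim such substitutions cannot increase the visit count at $v$: any cyclic word in the reduced potential with more than one visit to $v$ would, after inverting the substitution, correspond to a cyclic word in $\widetilde{\S}$ with at least as many visits to $v$, contradicting the previous paragraph. Combining this with the characterization of the first paragraph applied to $\mu_u(\Delta,\S)$ gives the simplicity of $v$.

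The main obstacle is the reduction step, since the right equivalence $\varphi$ generally involves infinite formal substitutions and one must ensure the visit-count bookkeeping is invariant. The cleanest way to formalize this is to observe that $\varphi$ is a continuous automorphism of $\widehat{k\widetilde{\Delta}}$ that fixes vertices and that the filtration of $\widehat{k\widetilde{\Delta}}/[\,\cdot\,,\,\cdot\,]$ by visits to $v$ is preserved by substitutions at arrows incident to vertices other than $v$ and by substitutions at arrows incident to $v$ whose defining relations already lie in the ``$\leq 1$ visit'' part of $\widetilde{\S}$ as established above.
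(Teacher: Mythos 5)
Your reformulation of simplicity as ``every cyclic word of $\S$ visits $v$ at most once'' is a clean repackaging of the definition (modulo possible cancellations among derivative terms, which is what the ``only if'' direction of your characterization implicitly assumes away), and your treatment of the unreduced mutation $\wtd{\mu}_u$ is correct and in fact tidier than the paper's: the paper instead performs a direct case analysis on the new arrow pairs at $v$ (the composites $[ab]$ with $h(b)=v$ or $t(a)=v$ together with the reversed arrows $b^\star$), invoking the same $2$-acyclicity observation you use to rule out a new $3$-cycle $b^\star a^\star[ab]$ touching $v$ twice. Up to that point the two arguments are parallel and both sound.

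The gap is in the reduction step, and it is a genuine one. Your key claim --- that the splitting right-equivalence preserves the visit-count filtration because substitutions at arrows not incident to $v$ are harmless --- is exactly where the difficulty sits, and as stated it fails. When a $2$-cycle $[xy]\,z$ of $\wtd{\S}$ is eliminated with $[xy]$ and $z$ both supported away from $v$, the arrow $z$ is replaced by $z-\partial_{[xy]}\wtd{\S}$ and $[xy]$ by $[xy]-\partial_{z}\wtd{\S}$; these cyclic derivatives are sums of complementary paths of \emph{other} cyclic words of $\wtd{\S}$, and such a complementary path passes through $v$ whenever the word it comes from visits $v$. Substituting it into a second word that also visits $v$ splices two cyclic words, each with one visit to $v$, into a single cyclic word of the reduced potential with two visits --- precisely the configuration your characterization forbids. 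Concretely, if $\S$ contains a $3$-cycle $xyz$ through $u$, a word containing $xy$ and visiting $v$ once, and a word containing $z$ and visiting $v$ once, the reduction produces such a doubly-visiting word. Nothing in your argument (nor, to be fair, in the paper's one-line assertion that ``the reduction process does not change the simplicity of a vertex'') excludes this; closing the proof requires either controlling which cyclic words can meet a composite $[xy]$ and its $2$-cycle partner in the setting where the lemma is actually applied (the extension $(\Delta,\S)[\mc{V}]$), or an additional hypothesis ruling out the configuration above. Your final paragraph correctly flags the reduction as the main obstacle, but the proposed ``the filtration is preserved'' resolution does not hold for substitutions at arrows away from $v$.
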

\begin{proof} We first observe that the reduction process does not change the simplicity of a vertex.
Then recall the formula for the potential $\wtd{\mc{S}}$ on $\wtd{\Delta}$:
	$\wtd{\mc{S}}=[\mc{S}]_{ab}+\sum_{h(a)=t(b)=u}b^\star a^\star[ab],$
where the subscript $ab$ indicates the square bracket is taken for $ab$.
The statement is clearly true for $\wtd{\mc{S}}$ if neither $t(a)$ nor $h(b)$ is $v$. 
If not, say $h(b)=v$, then for each arrow $a$ with $h(a)=u$ we create a new arrow $c:=[ab]$ with $h(c)=v$.

Let $a'$ and $b'$ be arrows not adjacent to $u$ with $h(a')=v$ and $t(b')=v$. 
It remains to check that for the following two types of arrow combinations: $cb^\star$ and $cb'$,
the partial derivative involves no arrows adjacent to $v$.
For $cb^\star$, since $[\S]$ involves no $b^\star$, we have $\partial_{[cb^\star]}[\S]_{ab}=0$. In the meanwhile, each $\partial_{[cb^\star]} \left([b^\star a^\star[ab]]_{cb^\star} \right)=\partial_{[cb^\star]} \left([c b^\star a^\star]_{cb^\star} \right)=a^\star$, but $a^\star$ is not adjacent to $v$, otherwise we obtain a $2$-cycle $ab$ on $u$ and $v$.
For $cb'$, it is clear that $\partial_{[cb']} \left([b^\star a^\star c]_{cb'} \right)=0$.
In addition, since $\partial_{[bb']} [\S]_{bb'}$ involves no arrows adjacent to $v$,
so does $\partial_{[cb']} [\S]_{ab}$.
Therefore, $v$ is simple in $\mu_u(\Delta,\S)$ as well.
\end{proof}
\begin{definition} \label{D:mupre} Given a presentation $d_{\mc{V}}$ of $(\Delta,\mc{S})$, we define  ${\mu}_u(d_\mc{V})$ at vertex $u$ as
the presentation $d_v$ of ${\mu}_u(\Delta,\mc{S})$ obtained from ${\mu}_u \left((\Delta,\mc{S})[\mc{V}] \right)$ via the above construction.
\end{definition}

The following lemma was proved in \cite{DF} as an easy consequence of a result of K. Bongartz \cite{Bo}.
\begin{lemma} \label{L:equalM} Two decorated representations $\mc{M}$ and $\mc{M'}$ are isomorphic 
if and only if for any $\mc{N}\in\mc{R}ep(J)$ we have that $\hom(\mc{M},\mc{N})=\hom(\mc{M'},\mc{N})$ and $\e(\mc{M},\mc{N})=\e(\mc{M'},\mc{N})$.
\end{lemma}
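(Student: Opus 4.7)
The forward implication is immediate since isomorphic decorated representations yield identical $\hom$- and $\e$-invariants against every test object $\mc{N}$. For the converse, the plan is to proceed in two stages: first recover the underlying ordinary representation $M$ using Bongartz's theorem, and then extract the decorated part $M^-$ by reading off $\delta_{\mc{M}}$ from the $\E$-invariants.

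Since $\hom(\mc{M},\mc{N})=\hom(M,N)$ by definition, the hypothesis gives $\hom(M,N)=\hom(M',N)$ for every $N\in\rep J$. Bongartz's classical result — two finitely generated modules are isomorphic once all $\hom$-numbers into arbitrary modules agree — then yields $M\cong M'$. Next, from the defining four-term exact sequence \eqref{eq:HE} one computes
\[ \e(\mc{M},N)=\hom(M,N)+\langle \beta_{-}-\beta_{+},\,\dv N\rangle=\hom(M,N)-\langle \delta_{\mc{M}},\,\dv N\rangle, \]
where $\langle-,-\rangle$ denotes the standard pairing on $\mb{Z}^{\Delta_0}$. Comparing with the analogous identity for $\mc{M}'$ and using the equality of $\hom$-numbers already in hand, one obtains $\langle \delta_{\mc{M}},\dv N\rangle=\langle \delta_{\mc{M}'},\dv N\rangle$ for every $N$; specializing to $N=S_u$ forces $\delta_{\mc{M}}=\delta_{\mc{M}'}$.

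To finish, combine these facts with formula \eqref{eq:Betti}: the quantity $\beta_{+}(u)=\dim\coker\alpha_u$ depends only on $M$, so $\beta_{+,\mc{M}}=\beta_{+,\mc{M}'}$, and together with $\delta_{\mc{M}}=\delta_{\mc{M}'}$ this pins down $\beta_{-,\mc{M}}=\beta_{-,\mc{M}'}$. Since the contribution $\dim(\ker\alpha_u/\img\gamma_u)$ to $\beta_{-}(u)$ is also determined by $M$ alone, one concludes $\dim M^-(u)=\dim (M')^-(u)$ at every vertex, whence $M^-\cong (M')^-$ and therefore $\mc{M}\cong\mc{M}'$. The only nontrivial input is Bongartz's theorem; everything else is bookkeeping with the definitions, and notably one need not invoke Lemma \ref{L:H2E} since the exact sequence \eqref{eq:HE} already supplies the information needed to pin down the decorated part.
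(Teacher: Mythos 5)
Your proof is correct and is essentially the argument the paper alludes to (the paper itself gives no proof, only the remark that the lemma "was proved in [DF] as an easy consequence of a result of K. Bongartz"): the Bongartz--Auslander theorem recovers $M\cong M'$ from the $\hom$-data, and the Euler-characteristic identity $\e(\mc{M},N)=\hom(M,N)-\delta_{\mc{M}}(\dv N)$ from \eqref{eq:HE}, tested on simples and combined with \eqref{eq:Betti}, recovers the decorated part. No gaps.
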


\begin{lemma}\label{L:muprep}  The mutation of presentations is compatible with the mutation of representations, that is, 
$${\mu}_u(d_{\mc{V}})=d_{{\mu}_u(\mc{V})}.$$
\end{lemma}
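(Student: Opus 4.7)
The plan is to apply Lemma \ref{L:equalM}: writing $\mc{W}$ for the decorated representation of $\mu_u(\Delta,\S)$ whose minimal presentation is $\mu_u(d_{\mc{V}})$, I will show that
\[ \hom(\mc{W},\mc{N}) = \hom(\mu_u\mc{V},\mc{N}) \quad \text{and} \quad \e(\mc{W},\mc{N}) = \e(\mu_u\mc{V},\mc{N}) \]
for every $\mc{N}\in \mc{R}ep(J(\mu_u(\Delta,\S)))$. By construction $\mc{W}$ is characterized by the identity $\mu_u((\Delta,\S)[\mc{V}]) = (\mu_u(\Delta,\S))[\mc{W}]$: since $v$ remains simple after mutating at $u \neq v$, the mutated extended QP is again an extension of $\mu_u(\Delta,\S)$, and $\mc{W}$ is precisely the decorated representation encoded at $v$ via the formula \eqref{eq:dv}.

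First I would establish the auxiliary fact that for every $\mc{N}$, the zero extension $\mc{N}[0]$ viewed in $(\mu_u(\Delta,\S))[\mc{W}] = \mu_u((\Delta,\S)[\mc{V}])$ satisfies $\mu_u(\mc{N}[0]) = (\mu_u\mc{N})[0]$ as decorated representations of $(\Delta,\S)[\mc{V}]$. Since $u \neq v$ and $\mc{N}[0](v) = 0$, the arrows between $u$ and $v$ (whether inherited from the extension or introduced by the mutation) contribute trivially to the triangle diagram \eqref{eq:abc} at $u$; the maps $\alpha_u,\beta_u,\gamma_u$ for $\mc{N}[0]$ therefore agree with those for $\mc{N}$ as a representation of $\mu_u(\Delta,\S)$, so the mutated vertex space at $u$ coincides with $(\mu_u\mc{N})(u)$ while the $v$-component stays zero.

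Next I would apply Lemma \ref{L:V[0]}(2) to both sides of the above identity to extract the $v$-coordinate of the $\delta$-vector: on the left $\delta_{\mc{N}[0]}(v) = -\e(\mc{W},\mc{N})$, and on the right $\delta_{(\mu_u\mc{N})[0]}(v) = -\e(\mc{V},\mu_u\mc{N})$. Lemma \ref{L:gdmu} now relates these two numbers through a correction involving $\beta_{\pm,\mc{N}[0]}(u)$ and the $(v,u)$-entry of the $B$-matrix of $\mu_u(\Delta[\mc{V}])$; this entry is itself determined by the added arrows of the extension and hence by $\beta_{\pm,\mc{V}}(u)$. The parallel computation with $\dtc$-vectors via Lemma \ref{L:V[0]}(1) will handle $\hom$ in the same manner.

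The final step is to match the correction term produced above with the one provided by Lemma \ref{L:HEmu}, which computes $\e(\mu_u\mc{V},\mc{N}) - \e(\mc{V},\mu_u\mc{N})$ directly in terms of $\beta_{\pm,\mc{V}}(u)$ and $\beta_{\pm,\mu_u\mc{N}}(u)$. The hard part will be exactly this bookkeeping: tracking the $(v,u)$-entry of $B(\mu_u(\Delta[\mc{V}]))$ through the composites and opposites introduced by mutation, relating $\beta_{\pm,\mc{N}[0]}(u)$ to $\beta_{\pm,\mu_u\mc{N}}(u)$ under the mutation involution, and confirming that the sign conventions in Lemmas \ref{L:gdmu} and \ref{L:HEmu} align. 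Once both corrections are seen to coincide, Lemma \ref{L:equalM} delivers $\mc{W} \cong \mu_u\mc{V}$, which is the desired identity $\mu_u(d_{\mc{V}}) = d_{\mu_u(\mc{V})}$.
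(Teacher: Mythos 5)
Your proposal is correct and follows essentially the same route as the paper: reduce via Lemma \ref{L:equalM} to comparing $\hom$ and $\e$ against arbitrary $\mc{N}$, encode these as the $v$-coordinates of the ($\check\delta$- and $\delta$-) vectors of the zero extensions via Lemma \ref{L:V[0]}, use $\mu_u(\mc{N}[0])=(\mu_u\mc{N})[0]$ together with Lemma \ref{L:gdmu}, and match the resulting correction term against Lemma \ref{L:HEmu}. The ``bookkeeping'' you defer is exactly the one-line computation the paper performs (the $(u,v)$-entries of $B(\Delta[\mc{V}])$ are $\beta_{\pm,\mc{V}}(u)$ by the extension construction), so aside from a swap of the roles of $\mc{V}$ and your $\mc{W}$ in the displayed identities, nothing is missing.
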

\begin{proof} 
Let $\mc{V}'$ be the decorated representation corresponding to ${\mu}_u(d_\mc{V})$.
By Lemma \ref{L:equalM} it suffices to check that $\hom(\mc{V}', \mu_u(\mc{N})) = \hom(\mu_u(\mc{V}), \mu_u(\mc{N}))$ and $\e(\mc{V}', \mu_u(\mc{N})) = \e(\mu_u(\mc{V}), \mu_u(\mc{N}))$ for any $\mc{N}\in\mc{R}ep(J)$.
The number $\hom(\mc{V}', \mu_u(\mc{N}))$ is reflected on the $\dtc$-vector of $\mu_u(\mc{N})[0]$ by Lemma \ref{L:V[0]}.(1).
So the difference $\hom(\mc{V}', \mu_u(\mc{N})) - \hom(\mc{V}, \mc{N})$ is equal to $-\dtc_{\mu_u(\mc{N})[0]}(v) + \dtc_{\mc{N}[0]}(v)$.
Note that $\mu_u(\mc{N})[0] = \mu_u(\mc{N}[0])$.
So by Lemma \ref{L:gdmu} the difference is equal to 
$$[b_{u,v}]_+ \betac_{-,\mc{N}}(u) - [-b_{u,v}]_+ \betac_{+,\mc{N}}(u) = \beta_{-,\mc{V}}(u) \betac_{-,\mc{N}}(u) - \beta_{+,\mc{V}}(u) \betac_{+,\mc{N}}(u).$$
On the other hand, by Lemma \ref{L:HEmu} the difference $\hom(\mu_u(\mc{V}), \mu_u(\mc{N})) - \hom(\mc{V}, \mc{N})$ is equal to this as well.
Hence $\hom(\mc{V}', \mu_u(\mc{N})) = \hom(\mu_u(\mc{V}), \mu_u(\mc{N}))$. 
The other equality $\e(\mc{V}', \mu_u(\mc{N})) = \e(\mu_u(\mc{V}), \mu_u(\mc{N}))$ can be checked in a similar fashion.
\end{proof}

\noindent In particular, we obtain the following corollary.
\begin{corollary} \label{C:muQPe} The extension commutes with the mutations:
	$$\mu_u\left((\Delta,\S)[\mc{V}]\right) = \mu_u(\Delta,\S)[\mu_u(\mc{V})].$$
\end{corollary}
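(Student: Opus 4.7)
The plan is to show that both sides of the claimed equality are obtained as extensions of the same quiver with potential, $\mu_u(\Delta,\S)$, by the very same projective presentation at the new vertex $v$, and hence they coincide.

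First, I would record a reconstruction principle for extensions. For any QP $(\Delta',\S')$ in which a vertex $v$ is simple, the observation following \eqref{eq:dv} gives that $(\Delta',\S')_{\hat v}[d_v] = (\Delta',\S')$, where $d_v$ is the presentation read off from $(\Delta',\S')$ at $v$ via the partial derivative formula \eqref{eq:dv}. So a QP with a simple vertex $v$ is determined by its restriction to $\Delta_0 \setminus \{v\}$ together with the presentation $d_v$.

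Next, I apply this to $(\Delta',\S') := \mu_u\left((\Delta,\S)[\mc{V}]\right)$, where $u \neq v$. By the remarks immediately before Definition \ref{D:mupre}, the restriction of $\mu_u\left((\Delta,\S)[\mc{V}]\right)$ to the full subquiver on $\Delta_0 \setminus \{v\}$ is exactly $\mu_u(\Delta,\S)$; and by the preceding lemma, the vertex $v$ remains simple in $\mu_u\left((\Delta,\S)[\mc{V}]\right)$. Hence the reconstruction principle gives
\[
\mu_u\bigl((\Delta,\S)[\mc{V}]\bigr) = \mu_u(\Delta,\S)\bigl[\,\mu_u(d_{\mc{V}})\,\bigr],
\]
where on the right $\mu_u(d_{\mc{V}})$ is, by Definition \ref{D:mupre}, precisely the presentation $d_v$ extracted from $\mu_u\left((\Delta,\S)[\mc{V}]\right)$.

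Finally, Lemma \ref{L:muprep} identifies $\mu_u(d_{\mc{V}}) = d_{\mu_u(\mc{V})}$, so
\[
\mu_u\bigl((\Delta,\S)[\mc{V}]\bigr) = \mu_u(\Delta,\S)\bigl[\,d_{\mu_u(\mc{V})}\,\bigr] = \mu_u(\Delta,\S)\bigl[\mu_u(\mc{V})\bigr],
\]
which is the required equality. The main obstacle is really just the bookkeeping behind the claim that restriction to $\Delta_0\setminus\{v\}$ commutes on the nose with mutation at $u\neq v$: one has to check that none of the composite arrows $[ab]$, opposite arrows $a^\star, b^\star$, or $2$-cycle cancellations introduced by $\mu_u$ involve $v$ unless the adjacent arrows do; this is essentially inspection of the mutation procedure combined with the hypothesis $u\neq v$, and poses no conceptual difficulty.
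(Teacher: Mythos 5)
Your proposal is correct and follows exactly the route the paper intends: the corollary is stated as an immediate consequence of Lemma \ref{L:muprep}, and your argument simply makes explicit the chain (reconstruction identity $(\Delta,\S)_{\hat v}[d_v]=(\Delta,\S)$, preservation of simplicity of $v$ under $\mu_u$, Definition \ref{D:mupre}, then Lemma \ref{L:muprep}) that the paper leaves implicit. Nothing is missing.
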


\section{General Presentations and Tropical $F$-polynomials} \label{S:GPTF}
\subsection{General Presentations} We shall start our discussion by reviewing some results in \cite{DF}. 
We will consider a more general setting where the algebra $A$ is any basic finite-dimensional $k$-algebra, which can be presented as $k\Delta / I$.

Any $\delta\in \mb{Z}^{\Delta_0}$ can be written as $\delta = \delta_+ - \delta_-$ where $\delta_+=\max(\delta,0)$ and $\delta_- = \max(-\delta,0)$. Here the maximum is taken coordinate-wise. We put 
$$\PHom_A(\delta):=\Hom_A(P(\delta_-),P(\delta_+)).$$
We say that a {\em general} presentation in $\PHom_A(\delta)$ has property $\heartsuit$ if there is some open (and thus dense) subset $U$ of $\PHom_A(\delta)$ such that all presentations in $U$ have property $\heartsuit$.
For example, a general presentation $d$ in $\PHom_A(\delta)$ has the following properties:
\begin{enumerate}
	\item $\Hom(d,N)$ has constant dimension for a fixed $N\in \rep A$.
	\item $\Gr_\gamma(\coker(d))$ has constant topological Euler characteristic.
\end{enumerate}
Note that (1) implies that $\E(d,N)$ has constant dimension on $U$ as well. 
We denote these two generic values by $\hom(\delta,N)$ and $\e(\delta,N)$.
If we apply (1) to $N=A^*$, then $\coker(d)$ has a constant dimension vector, which will be denoted by $\dv(\delta)$.

\begin{remark} \label{r:genmu} It is known \cite{IOTW,DF} that the $\delta$-vector of a general presentation satisfies $\beta_+ = [\delta]_+$ and $\beta_- = [-\delta]_+$.
In particular, for general presentations, Lemma \ref{L:gdmu}.(1) reduces to the following rule of Fock-Goncharov \cite{FGc}:
	\begin{equation}\label{eq:mug}  \delta'(v)= \begin{cases} -\delta(u) & \text{if $v=u$,}\\ 
		\delta(v) - b_{v,u}[-\delta(u)]_+  & \text{if $b_{v,u}>0$,} \\
		\delta(v) - b_{v,u}[\delta(u)]_+ & \text{if $b_{v,u}<0$.} 
	\end{cases} \end{equation}
	If one likes, one can combine the last two cases into one $\delta'(v)=\delta(v) + [b_{v,u}]_+\delta(u) - b_{v,u}[\delta(u)]_+$. 	
\end{remark}

The presentation space $\PHom_A(\delta)$ comes with a natural group action by $$\Aut_A(\delta):=\Aut_A(P(\delta_-))\times \Aut_A(P(\delta_+)).$$
A rigid presentation in $\PHom_A(\delta)$ has a dense $\Aut_A(\delta)$-orbit \cite{DF}.
In particular, a rigid presentation is always general.


If we freeze a vertex $v$, then we are not allowed to mutate at $v$. A quiver with frozen vertices is called an {\em ice quiver}. 
The vertices of $\Delta$ split into two disjoint sets $\Delta_0 = \Delta_0^{\mu} \sqcup \Delta_0^{\op{fr}}$.
An arrow between frozen vertices is called a {\em frozen arrow}.
The $B$-matrix $B_{\Delta}$ of an ice quiver $\Delta$ is obtained from the original $B$-matrix $B(\Delta)$ by removing the rows corresponding to the frozen vertices.
Note that the information on frozen arrows is lost in $B_{\Delta}$.
\begin{proposition}\label{P:gendv} Suppose that $\mc{S}$ is a generic potential on $\Delta$. Then \begin{enumerate}
\item If $v$ is simple in $(\Delta,\S)$, then $d_v$ is a general presentation of $(\Delta,\mc{S})_{\hat{v}}$.
\item If $\mc{V}$ corresponds to a general presentation of $J(\Delta,\mc{S})$, then the extended $QP$ $(\Delta,\mc{S})[\mc{V}]$ is nondegenerate if we freeze $v$.
\end{enumerate}
\end{proposition}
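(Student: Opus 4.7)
The plan is to treat the two parts separately. For (1), the key point is that the matrix entries of $d_v$ depend linearly on the coefficients of $\S$, so genericity of $\S$ transfers to genericity of $d_v$. For (2), the key tool is Corollary \ref{C:muQPe}, which reduces nondegeneracy of the extended QP to nondegeneracy of $(\Delta,\S)$ together with a general-position condition on $\mc{V}$ that is preserved by mutation.

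For (1), I would first unpack the definition of $d_v$: each matrix entry is $\partial_{[ab]}[\S]$, which by the simplicity assumption at $v$ is a linear combination of paths in $(\Delta,\S)_{\hat v}$ avoiding $v$. A monomial $c_1 a b c_2$ appearing in $\S$, where $c_1, c_2$ do not touch $v$, contributes $c_2 c_1$ to $\partial_{[ab]}[\S]$, so the assignment $\S \mapsto d_v$ is a $\k$-linear map into $\PHom_{J'}(\delta_v)$ where $J' = J((\Delta,\S)_{\hat v})$. The next step is to show this linear map is surjective onto the target: every $\k$-linear combination of paths from the relevant $u$ to $w$ is realized by a suitable choice of cycles $c_1 a b c_2$ added to $\S$. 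Since a surjective linear map pulls back a dense open set to a dense open set, for generic $\S$ the image $d_v$ lies in any prescribed dense open subset of $\PHom_{J'}(\delta_v)$, which is precisely the condition that $d_v$ is a general presentation of $(\Delta,\S)_{\hat v}$.

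For (2), I would proceed by induction on the length of a mutation sequence at the mutable vertices (everything except the frozen vertex $v$). By Corollary \ref{C:muQPe}, for any such mutation sequence $\mub$,
\[ \mub\bigl((\Delta,\S)[\mc{V}]\bigr) = \mub(\Delta,\S)\bigl[\mub(\mc{V})\bigr]. \]
Because $\S$ is a generic potential, $(\Delta,\S)$ is nondegenerate, so $\mub(\Delta,\S)$ is always defined; moreover $\mub(\mc{V})$ is defined for any decorated representation. The remaining point is that the extension on the right must be well-defined, i.e.\ that the presentation $d_{\mub(\mc{V})}$ is in general position in the sense that $P(\beta_-)$ and $P(\beta_+)$ share no common indecomposable summand. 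By Lemma \ref{L:muprep}, $d_{\mub(\mc{V})} = \mub(d_{\mc{V}})$, and this general-position property is preserved by mutation, since a general presentation has a dense $\Aut$-orbit in $\PHom(\delta)$ and mutation descends to a birational map between presentation spaces. Hence no problematic $2$-cycles arise at $v$ in any reduction step, and the extended QP remains admissible at every stage.

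The main obstacle I expect is the last step of (2): verifying rigorously that the general-position property persists along an arbitrary mutation sequence, since nondegeneracy amounts to a countable intersection of admissibility conditions, one per finite sequence. The cleanest route is to observe that for generic $\S$ and general $\mc{V}$, the extended pair $(\Delta[\mc{V}], \S[\mc{V}])$ is itself a generic QP on the extended quiver $\Delta[\mc{V}]$, so the standard nondegeneracy result of \cite{DWZ1} applies once we restrict to mutations away from $v$. Making this observation precise, namely that extension of a generic QP by a general presentation remains a generic QP on $\Delta[\mc{V}]$, is the technical heart of the argument.
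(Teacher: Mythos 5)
Your overall strategy coincides with the paper's: for (1) the point is that the entries of $d_v$ depend linearly on the coefficients of $\S$ and sweep out the whole presentation space, so genericity of the potential forces $d_v$ into any prescribed dense open subset; for (2) one combines Corollary \ref{C:muQPe} with the fact that mutations of general presentations remain general, so that $\beta_+=[\delta]_+$ and $\beta_-=[-\delta]_+$ hold at every seed and the extension is always defined. Part (1) is fine as written.

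In (2), however, the justification you give for the crucial step is wrong. You assert that ``a general presentation has a dense $\Aut$-orbit in $\PHom(\delta)$''; this is false in general --- the paper only claims a dense orbit for \emph{rigid} presentations, which is exactly why rigid implies general and not conversely. So your argument that the no-common-summand condition persists under mutation does not go through as stated. The fact you actually need is that the mutation of a general presentation is again general; the paper imports this from \cite[Theorem 1.10]{GLFS}, and then the identity $\beta_{\pm}=[\pm\delta]_+$ for general presentations gives the general-position condition at every stage. Your proposed ``cleanest route'' at the end --- that $(\Delta[\mc{V}],\S[\mc{V}])$ is itself a generic QP on the extended quiver, so the nondegeneracy result of \cite{DWZ1} applies directly --- also does not work: $\S[\mc{V}]$ lies in a very thin subset of the space of potentials on $\Delta[\mc{V}]$ (the only cycles through $v$ that occur have the special form $acb$, with coefficients dictated by $d_{\mc{V}}$), so there is no reason for it to avoid the countably many degeneracy loci of an arbitrary potential on the larger quiver. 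Keep the mutation-commutation argument and replace the dense-orbit claim by the cited preservation-of-generality result.
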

\begin{proof} (1). By the construction of $d_v$, the matrix coefficients of $d_v$ come from the coefficients in the potential $\S$.
Then (1) is obvious. 
	
(2). If $d_{\mc{V}}$ is general, then so are its mutations by \cite[Theorem 1.10]{GLFS}.
A general presentation satisfies $\beta_+ = [\delta]_+$ and $\beta_- = [-\delta]_+$.
Then (2) follows from the construction of the extension and Corollary \ref{C:muQPe}.
\end{proof}

Due to the relation $\delta_{\tau^{-1}\mc{M}} = -\dtc_{\mc{M}}$ and \eqref{eq:delta2dual},
we have that for a general presentation $d$ of weight $\delta$, the $\delta$-vector of $\tau^{-1} d$ is constant.
We denote this constant vector by $\tau^{-1}\delta$.
\begin{theorem}[{\cite[Theorem 3.11]{Fg}}]\label{T:genpi} The following are equivalent
	\begin{enumerate}
		\item  $M$ is a general representation of weight $\delta$; 
		\item  $M$ is a general representation of dual weight $\dtc$;
		\item  $\tau^{-1} M$ is a general representation of weight $\tau^{-1}\delta$.
	\end{enumerate}
\end{theorem}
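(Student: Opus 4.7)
The plan is to split the three-way equivalence into $(1) \Leftrightarrow (2)$ and $(1) \Leftrightarrow (3)$, both established by comparing the generic loci on a common representation variety $\rep_{\b{d}}(J)$ with $\b{d} = \dv(\delta)$.

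For $(1) \Leftrightarrow (2)$, equation \eqref{eq:delta2dual} gives $\dtc_M = \delta + (\dv M)B_\Delta$, so for $M$ general of weight $\delta$ one obtains a canonical candidate dual weight $\dtc$. I would then consider the two forgetful morphisms $\pi_P\colon \PHom(\delta) \to \rep_{\b{d}}(J)$ (cokernel) and $\pi_I \colon \IHom(\dtc) \to \rep_{\b{d}}(J)$ (kernel), and use the description \eqref{eq:Betti}, \eqref{eq:Bettidual} of minimal Betti numbers as ranks of the local maps in the triangle \eqref{eq:abc} to show that their images, restricted to the open loci where $\beta_\pm = [\pm\delta]_+$ and $\betac_\pm = [\pm\dtc]_+$ respectively, coincide as a common open subset $U \subset \rep_{\b{d}}(J)$. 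Since the generic conditions on $\PHom(\delta)$ (constancy of $\hom(d,N)$, $\e(d,N)$, and Euler characteristics of quiver Grassmannians of $\coker d$) depend only on the isomorphism class of the cokernel, they carve out the same dense open subset of $U$ as the corresponding conditions on $\IHom(\dtc)$.

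For $(1) \Leftrightarrow (3)$, the Nakayama functor $\nu = \Hom(-,J)^*$ yields an additive equivalence between projective and injective modules which intertwines the bijection $\mc{R}ep(J) \leftrightarrow K^b(\proj J)$ with the AR-translation $\tau$ on decorated representations. In particular $\nu$ produces an equivariant scheme isomorphism $\PHom(\delta) \xrightarrow{\sim} \Hom(I(\delta_-), I(\delta_+))$ preserving generic sets, and sends a general $d$ with cokernel $M$ to a general injective presentation of $\tau M$ with $\dtc$-vector $-\delta$. Combining this with the already-established $(1) \Leftrightarrow (2)$ applied to $\tau M$, together with the identity $\delta_{\tau\mc{M}} = -\dtc_{\mc{M}}$, gives that $\tau M$ is general of projective weight $\tau\delta$ as defined just before the theorem.

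The main obstacle is the careful handling of direct summands of the form $P_i[1]$ (the pure decoration parts) under the $\tau$-step, since $\nu$ sends these to $I_i[1]$, which is not the injective presentation of a genuine representation. One needs to track the reduced decorated representation throughout and verify that the asymmetry between $\dv M$ and $\dv(\tau M)$ does not obstruct the Betti-number comparison used in the first paragraph.
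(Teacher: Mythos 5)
First, a caveat: the paper does not prove Theorem \ref{T:genpi} at all --- it is imported wholesale from \cite[Theorem 3.11]{Fg} --- so there is no in-text argument to measure yours against. That said, your overall architecture is reasonable: proving $(1)\Leftrightarrow(2)$ first and then deducing $(1)\Leftrightarrow(3)$ by transporting a general $d\in\PHom(\delta)$ through the Nakayama functor (using $\dtc_{\tau\mc{M}}=-\delta_{\mc{M}}$ and the already-proved $(1)\Leftrightarrow(2)$ applied to $\tau M$) is a clean reduction, and you correctly single out the handling of $P_i[1]$-summands and decorated parts as the delicate point in that half.

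The gap is in $(1)\Leftrightarrow(2)$, which is where the actual content of the theorem lives. Asserting that the cokernel image of $\PHom(\delta)$ and the kernel image of $\IHom(\dtc)$ ``coincide as a common open subset $U\subset\rep_{\b{d}}(J)$'' is a restatement of the equivalence, not a proof of it. The formulas \eqref{eq:Betti} and \eqref{eq:Bettidual} cut out the two loci by minimality of four \emph{different} semicontinuous functions ($\dim\coker\alpha_u$ and $\dim(\ker\alpha_u/\img\gamma_u)$ versus $\dim\ker\beta_u$ and $\dim(\ker\gamma_u/\img\beta_u)$), and since $\rep_{\b{d}}(J)$ is in general reducible, their respective minimality loci could a priori be dense in different irreducible components; nothing in your argument rules this out. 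The relation \eqref{eq:delta2dual} only guarantees that a module whose minimal projective presentation has weight $\delta$ has minimal injective presentation of weight $\dtc$; it does not show that for $M$ generic in the ``projective'' principal component the minimal injective presentation is a \emph{general} point of $\IHom(\dtc)$ (nor even that $\betac_{\pm}=[\pm\dtc]_+$ there), nor does it give the converse density. Closing this requires a genuine identification of the two principal components --- for instance a fibre-dimension count for the cokernel and kernel maps (each nonempty fibre is a single $\Aut$-orbit, with codimension controlled by $\hom(\mc{M},\mc{M})$ and its injective analogue) combined with the symmetry $\e(\mc{M},\mc{M})=\ec(\mc{M},\mc{M})$ of Lemma \ref{L:H2E}, or an argument specific to nondegenerate Jacobian algebras. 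Until that step is supplied, the central implication is asserted rather than proved.
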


%

\subsection{Tropical $F$-polynomials} \label{ss:tropF}
Motivated by the $F$-polynomial of $M$ defined in \cite{DWZ2}, we introduced its tropical version in \cite{Ft}. We will review the $F$-polynomial of $M$ in Section \ref{ss:genericB}.
\begin{definition}[{\cite{Ft}}] The {\em tropical $F$-polynomial} $f_M$ of a representation $M$ is the function $(\mb{Z}^{\Delta_0})^* \to \mb{Z}_{\geq 0}$ defined by
	$$\delta \mapsto \max_{L\hookrightarrow M}{\delta(\dv L)}.$$
	The {\em dual} tropical $F$-polynomial $\fc_M$ of $M$ is the function $(\mb{Z}^{\Delta_0})^* \to \mb{Z}_{\geq 0}$ defined by
	$$\delta \mapsto \max_{M\twoheadrightarrow N}{\delta(\dv N)}.$$
Here, a weight $\delta$ is viewed as an element in $(\mb{Z}^{\Delta_0})^*$ via the usual dot product.
\end{definition}
\noindent Clearly $f_M$ and $\fc_M$ are related by $f_M(\delta)-\fc_M(-\delta)= \delta(\dv M)$.
Moreover, it follows from \eqref{eq:HE} that for any presentation $d$ of weight $\delta$,
\begin{align}\label{eq:heform} \delta(\dv M) &= \hom(d,M) - \e(d,M);\\
\label{eq:hecform}	\check{\delta} (\dv M) &= \hom(M,\dc) - \ec(M,\dc).
\end{align}

\begin{theorem}[{\cite{Ft}}] \label{T:HomE} If $M$ is negative reachable, then for any $\delta,\dtc \in\mb{Z}^{\Delta_0}$ we have that
	\begin{align}
		\label{eq:HomEQP}	{f}_M(\delta) &= \hom(\delta,M), & \fc_M(-\delta) &= {\e}(\delta,M);\\
		\label{eq:HomEQPdual}	\fc_M(\check{\delta}) &= \hom(M,\check{\delta}), & {f}_M(-\check{\delta}) &= \ec(M,\check{\delta}).
	\end{align}
\end{theorem}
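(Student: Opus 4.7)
My strategy is to reduce the four identities to a single one, then induct on the length of a mutation sequence making $M$ negative. First, combining the defining identity $f_M(\delta)-\fc_M(-\delta)=\delta(\dv M)$ with \eqref{eq:heform}, the two equalities in the first line of the theorem are equivalent. The second line follows from the first via the dual identity \eqref{eq:hecform} together with Theorem \ref{T:genpi}, which asserts that a general representation of projective weight $\delta$ is also general of injective weight $\dtc$; alternatively one can tropicalize the duality $\E(\mc{M},\mc{N})\cong\Hom(\mc{N},\tau\mc{M})^*$ from Lemma \ref{L:H2E}. Thus it suffices to prove $f_M(\delta)=\hom(\delta,M)$.

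\textbf{Base case.} If $M$ is already negative, i.e.\ $\mc{M}=(0,M^-)$, then the undecorated representation is zero. Hence $\hom(\delta,M)=\hom(\coker d,0)=0$, while $f_M(\delta)=\max_{L\hookrightarrow 0}\delta(\dv L)=0$, so both sides vanish trivially.

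\textbf{Inductive step.} By hypothesis there is a mutation sequence $\mub$ with $\mub(\mc{M})$ negative; I would induct on the minimal length of such a sequence. It then suffices to show that a single mutation $\mu_u$ changes both sides of the equality by exactly the same amount, where the weight $\delta$ is simultaneously replaced by its Fock--Goncharov tropical mutation \eqref{eq:mug} (which is the correct one for general presentations, by Remark \ref{r:genmu}). For the right-hand side, Lemma \ref{L:HEmu}(1) gives an exact formula for $\hom(\mu_u d,\mu_u M)-\hom(d,M)$ in terms of $\beta_{\pm}(u)$ of $d$ and $\dtc_{\pm}(u)$ of $M$; using $\beta_+=[\delta]_+$ and $\beta_-=[-\delta]_+$ this becomes an explicit piecewise-linear expression in $\delta$.

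\textbf{Main obstacle.} The hard part is matching this with the transformation rule for the tropical $F$-polynomial $f_M$. The cleanest route I would try is to tropicalize the DWZ mutation formula for the ordinary $F$-polynomial $F_M(y)=\sum_\gamma\chi(\Gr_\gamma M)y^\gamma$ from \cite{DWZ2}: this yields a piecewise-linear identity relating $f_M(\delta)$ and $f_{\mu_u M}(\mu_u\delta)$, and the vertex-$u$ contribution is controlled by the triangle \eqref{eq:abc} via the dimensions of $\ker\alpha_u/\img\gamma_u$, $\ker\gamma_u/\img\beta_u$ and $\ker\beta_u$, which are precisely the ingredients $\dtc_{\pm}(u)$ appearing on the hom side. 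The delicate point is to ensure that the same dimension vector $\gamma$ realizes the max on both sides after mutation; this should follow from genericity, since the subrepresentation $L\hookrightarrow M$ maximizing $\delta(\dv L)$ can be chosen generically in $\Gr_\gamma(M)$, and $\hom(\delta,M)$ is itself a generic dimension of Hom-spaces. Once these two transformation rules are shown to coincide, the inductive step goes through and the base case concludes the proof.
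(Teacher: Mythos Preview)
The paper does not supply its own proof of Theorem~\ref{T:HomE}: the result is imported directly from \cite{Ft}, and only a remark follows. So there is nothing in this paper to compare your argument against. That said, a few comments on your proposal are in order.

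Your reduction step is essentially correct, though your appeal to Theorem~\ref{T:genpi} for the second line is misplaced: \eqref{eq:HomEQPdual} is stated for an arbitrary $\dtc$, unrelated to the $\delta$ in \eqref{eq:HomEQP}, so it follows simply by passing to the opposite algebra (injective presentations), not by any genericity bijection. The base case is fine.

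There is however a genuine gap in the inductive step. First, a subtle definitional point: in this paper $f_M(\delta)=\max_{L\hookrightarrow M}\delta(\dv L)$ is defined as a maximum over subrepresentation dimension vectors, \emph{not} as the tropicalization of the $F$-polynomial $F_M$. These agree only if every non-empty $\Gr_\gamma(M)$ has non-vanishing Euler characteristic; you would need to justify this (reachability of $M$ and positivity of cluster characters should do it, but it is not automatic). Second, even granting that, you have only named the strategy---tropicalize the DWZ mutation formula for $F_M$ and match it against Lemma~\ref{L:HEmu}(1)---without carrying it out. The DWZ formula has the shape $(y_u+1)^{h_u}F_{M'}(y')=(y_u'+1)^{h_u'}F_M(y)$ after the $y$-substitution, and tropicalizing introduces correction terms $h_u\max(0,-\delta'(u))$ and $h_u'\max(0,-\delta(u))$ that must be identified with the $\beta_{\pm}(u)\betac_{\pm}(u)$ terms from Lemma~\ref{L:HEmu}. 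This bookkeeping is the whole content of the inductive step and you have not done it.

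Finally, note from Remark~\ref{r:HomE} that \cite[Theorem~3.6]{Ft} is stated for arbitrary finite-dimensional basic algebras, so the original proof cannot rely on QP mutation at all. Your mutation-induction approach, even if completed, would be a genuinely different (and more restrictive) route than the one in the cited source.
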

\begin{remark}\label{r:HomE} \cite[Theorem 3.6]{Ft} is a more general statement holding for any finite-dimensional basis algebra.
One special case is that when $\delta$ (resp. $\dtc$) satisfies $\e(\delta,\delta)=0$ (resp. $\e(\dtc,\dtc)=0$), then \eqref{eq:HomEQP} (resp. \eqref{eq:HomEQPdual}) holds without any assumption for $M$.
\end{remark}

If $M$ is general of weight $\epc$ then we will write $f_\epc(\delta)$ for $f_M(\delta)$, and similarly for $\check{f}_M$.
\begin{conjecture}\label{c:dualityhom} For a nondegenerate quiver with potential, and any $\delta$ and $\epc$ we have that
	$$\check{f}_{\delta}(\epc) =  \hom(\delta,\epc)= {f}_\epc(\delta).$$
\end{conjecture}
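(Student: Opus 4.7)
The plan is to prove both equalities of Conjecture \ref{c:dualityhom} in parallel, exploiting the symmetry under Theorem \ref{T:genpi} between projective and injective general presentations. Fix a general representation $M$ of weight $\delta$ (with projective presentation $d$) and a general representation $N$ of dual weight $\epc$ (with injective presentation $\dc$); then by Definition \ref{D:HomE}, $\hom(\delta,\epc) = \hom(M,N)$, while $f_\epc(\delta) = \max_{L \hookrightarrow N} \delta(\dv L)$ and $\fc_\delta(\epc) = \max_{M \twoheadrightarrow N'} \epc(\dv N')$.

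First I would dispense with the easy inequalities $\hom(\delta,\epc) \geq f_\epc(\delta)$ and $\hom(\delta,\epc) \geq \fc_\delta(\epc)$. For any subrepresentation $L \hookrightarrow N$, the left-exactness of $\Hom(d,-)$ applied to $0 \to L \to N \to N/L \to 0$ gives $\hom(d,L) \leq \hom(d,N)$, and \eqref{eq:heform} then yields $\delta(\dv L) = \hom(d,L) - \e(d,L) \leq \hom(d,N) = \hom(\delta,\epc)$; maximizing over $L$ gives the inequality. The other bound follows dually from \eqref{eq:hecform} applied to $\dc$ and an arbitrary quotient of $M$.

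The harder direction is to realize $\hom(\delta,\epc)$ as $\delta(\dv L)$ for some actual submodule $L \subset N$ (and dually as $\epc(\dv L')$ for some quotient $L'$ of $M$). My starting point is to analyze the image $L = \img \varphi$ of a generic morphism $\varphi \in \Hom(M,N)$, which is simultaneously a quotient of $M$ and a subrepresentation of $N$. The key technical step is to show that for $\varphi$ in a sufficiently generic stratum, $\e(d,L) = 0$ and $\hom(d,L) = \hom(d,N)$, so that $\delta(\dv L) = \hom(\delta,\epc)$ and symmetrically $\epc(\dv L) = \hom(\delta,\epc)$. I would attempt this by stratifying $\Hom(M,N)$ according to $\dv(\img \varphi)$ under the $(\Aut M) \times (\Aut N)$-action, identifying the dense open stratum, and applying upper semicontinuity of $\e$.

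The main obstacle is that Theorem \ref{T:HomE} and Remark \ref{r:HomE} offer control over the relation $f_M(\delta) = \hom(\delta,M)$ only under reachability of $M$ or $\E$-rigidity of $\delta$, neither of which is assumed in the conjecture. To bypass this I would try a deformation argument: approximate $\mc{S}$ by a one-parameter family of generic potentials $\mc{S}_t$ along which a fixed mutation sequence renders the relevant representation reachable at $t \neq 0$, invoke Theorem \ref{T:HomE} there, and then pass to the limit using the upper semicontinuity of all the dimensions in play. A more algebraic alternative is to use the extension construction $(\Delta,\mc{S})[\mc{V}]$: by absorbing $M$ or $N$ into a new vertex, Lemma \ref{L:V[0]} and Corollary \ref{C:V[0]} translate the problem into an extended QP in which reachability or rigidity of the extended vertex can be arranged, and then the equalities pull back to the original. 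In either strategy, the delicate point is controlling how the tropical $F$-polynomial behaves under these manipulations — ensuring that no ``hidden'' submodules of $N$ become relevant in the limit — and this is where I expect the real technical difficulty to lie.
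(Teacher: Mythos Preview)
This statement is presented in the paper as an open \emph{conjecture}, not a theorem; the paper gives no proof and in fact remarks that it strengthens an earlier unproven conjecture from \cite{Ft}. So there is no ``paper's own proof'' to compare against, and you are attempting to settle an open problem.

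Your easy inequalities are correct and standard. The genuine gap is in the hard direction. Your key claim --- that for a generic $\varphi \in \Hom(M,N)$ with image $L$ one has $\e(d,L)=0$ and $\hom(d,L)=\hom(d,N)$ --- is not justified and there is no reason to expect it in the absence of rigidity. Indeed, $\e(d,L)=0$ would say every morphism from $L$ to a projective factors through $d$, which has nothing to do with $L$ arising as an image. The known cases (Theorem \ref{T:HomE}, Remark \ref{r:HomE}) all rely on rigidity or reachability precisely because that is what forces the maximum in $f_M(\delta)$ to be attained by a subrepresentation with vanishing $\e$.

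Your two proposed workarounds do not close the gap. The deformation idea fails because reachability is a discrete, not open, condition: deforming the potential does not make a given representation reachable, and even if it did, tropical $F$-polynomials are only upper semicontinuous, so you could lose equality in the limit rather than gain it. The extension trick via $(\Delta,\mc{S})[\mc{V}]$ translates the problem but does not create reachability or rigidity where there was none; Corollary \ref{C:V[0]} requires $\mc{V}$ to be rigid already. In short, you have correctly located the obstruction (the gap between general $\delta,\epc$ and the rigid/reachable hypotheses of Theorem \ref{T:HomE}), but neither strategy you sketch actually removes it. This is consistent with the conjecture remaining open.
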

\noindent This is a stronger version of a conjecture in \cite{Ft}, where we only conjecture the reciprocity $\check{f}_{\delta}(\epc) = {f}_\epc(\delta)$.

\section{The Lowering and Raising Operators} \label{S:rl}
\subsection{Lowering and Raising Operators for Rigid $\ep$} 
Schofield introduced the general rank for quiver representations in his theory of general representations \cite{S}.
The following lemma is a straightforward generalization of \cite[Lemma 5.1]{S}.
\begin{lemma} \label{L:genrank} Let $A$ be a finite-dimensional algebra. Given any two irreducible closed sets $X$ and $Y$ in representation varieties of $A$, there is an open subset $U$ of $X \times Y$ and a dimension vector $\gamma$ such that for $(M,N)\in U$ we have that $\hom_A(M,N)$ is minimal and $\{\phi\in\Hom_A(M,N)\mid \rank \phi = \gamma \}$ is open and non-empty in $\Hom_A(M,N)$. 
\end{lemma}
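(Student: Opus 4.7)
The plan is to adapt Schofield's original incidence-variety argument from \cite[Lemma 5.1]{S} to the setting of a basic finite-dimensional algebra $A=k\Delta/I$, using only that $A$-modules carry a dimension vector indexed by $\Delta_0$. First, since $\hom_A(-,-)$ is upper semicontinuous on $X\times Y$, the locus
\[ U_1 := \{(M,N)\in X\times Y : \hom_A(M,N)\text{ is minimal}\} \]
is open, and non-empty (hence dense, as $X\times Y$ is irreducible). This already yields the first requirement in the statement.

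Next, I would assemble the incidence scheme
\[ \mathcal{H} := \{(M,N,\phi): (M,N)\in U_1,\ \phi\in\Hom_A(M,N)\} \]
together with its projection $\pi:\mathcal{H}\to U_1$. Since $\hom_A(M,N)$ is constant on $U_1$, $\pi$ is a geometric vector bundle over the irreducible base $U_1$; in particular $\mathcal{H}$ is irreducible, and $\pi$ is flat, hence universally open.

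Now I would stratify $\mathcal{H}$ by image dimension vector, setting
\[ \mathcal{H}_\gamma := \{(M,N,\phi)\in\mathcal{H} : \dv(\img\phi)=\gamma\}. \]
Because $A=k\Delta/I$, the value $\dim(\img\phi)(u)$ equals the ordinary matrix rank of $\phi_u:M(u)\to N(u)$, so each $\mathcal{H}_\gamma$ is a finite intersection of vertex-wise rank conditions and is therefore locally closed. The stratification is finite since $\gamma(u)\leq \min(\dv M(u),\dv N(u))$. By irreducibility of $\mathcal{H}$ exactly one stratum $\mathcal{H}_\gamma$ is open and dense, and I would take this $\gamma$ as the candidate generic rank. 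Setting $U:=\pi(\mathcal{H}_\gamma)$, openness of $\pi$ makes $U$ open in $U_1$, and density of $\mathcal{H}_\gamma$ makes $U$ non-empty; for every $(M,N)\in U$ the fiber $\mathcal{H}_\gamma\cap \pi^{-1}(M,N)$ is an open, non-empty subset of $\Hom_A(M,N)$ which coincides with $\{\phi:\rank\phi=\gamma\}$, giving the second requirement.

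The only point that requires care, relative to Schofield's quiver setting, is verifying local closedness of the rank strata $\mathcal{H}_\gamma$ in a family. This hinges on the fact that matrix rank is a locally closed condition and that, for $A=k\Delta/I$, the image dimension can be computed vertex by vertex; this is the sole place the hypothesis ``basic finite-dimensional'' is used in an essential way. Everything else (flat descent of open loci, irreducibility of a vector bundle, upper semicontinuity of $\hom$) is standard, so I do not expect any additional obstacle.
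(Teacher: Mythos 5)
Your argument is correct and is exactly the ``straightforward generalization'' of Schofield's incidence-variety proof that the paper intends (the paper itself gives no proof, only the citation to \cite[Lemma 5.1]{S}): semicontinuity of $\hom$, the vector-bundle structure of the total Hom-space over the minimal-$\hom$ locus, and the unique open dense stratum of the vertex-wise rank stratification. All the steps check out, including the key observation that the image of a morphism of $A$-modules is computed vertex by vertex so the strata $\mathcal{H}_\gamma$ are locally closed.
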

\noindent Below the algebra $A$ will always be the Jacobian algebra of some quiver with potential.
Let $\alpha$ be the maximal rank vector of $\Hom(P_-, P_+)$,
and $U$ be the open subset of $\Hom(P_-, P_+)$ attaining the maximal rank $\alpha$.
Then the cokernel of homomorphisms in $U$ lies in a single component of $\rep_{\dv(\delta)}(A)$ \cite{DF, P, Fg},
and we call this component the {\em principal component} of $\delta$, denoted by $\PC(\delta)$.

\begin{definition} \label{D:genrank} If one of $X$ and $Y$ is a single representation, say $Y=\{E\}$, and $X$ is the principal component $\PC(\delta)$, then the above dimension vector is denoted by $\rank(\delta,E)$.
	If $X=\PC(\delta)$ and $Y=\PC(\ep)$, then this $\gamma$ is called the {\em general rank} from $\delta$ to $\ep$, denoted by $\rank(\delta,\ep)$. There are obvious variations if we replace $\delta$ or $\ep$ by a $\dtc$-vector.
\end{definition}

\noindent For quivers with potentials, we have that $\PC(\delta)=\PC(\dtc)$ by \cite[Theorem 3.11]{Fg}, so $\rank(\delta,\ep) = \rank(\dtc,\ep)$. 

\begin{definition} \label{D:rlgrank} For any decorated representation $\mc{E}=(E,E^-)$ of weight $\ep$, we define the two operators $r_{\mc{E}}$ and $l_{\mc{E}}$ on the set of $\delta$-vectors as follows:
\begin{align} \label{eq:re}	r_{\mc{E}} (\delta) &= \delta+\ep +\rank(E, \tau\delta) B(\Delta); \\
		\label{eq:le} l_{\mc{E}} (\delta) &= \delta-\epc +\rank(\delta,E) B(\Delta). 
\intertext{We also define the two operators $\rc_{\mc{E}}$ and $\lc_{\mc{E}}$ on the set of $\dtc$-vectors}
\label{eq:rec}	\rc_{\mc{E}} (\dtc) & =  \dtc +\epc -\rank(\tau^{-1}\dtc, E) B(\Delta);\\
\label{eq:lec}		\lc_{\mc{E}} (\dtc) & =  \dtc -\ep -\rank(E, \dtc) B(\Delta).
	\end{align}	
If $\mc{E}$ is general of weight $\ep$, then we will write $\ep$ instead of $\mc{E}$ in $r_{\mc{E}}$ and $l_{\mc{E}}$.
\end{definition}

\begin{remark}\label{r:rl}  In \cite{Fg} we also defined another two pairs of operators: $(r^{\ep},l^{\ep})$ on the set of $\dtc$-vectors and $(\rc^{\ep},\lc^{\ep})$ on the set of $\delta$-vectors.
\begin{align*}
	\notag   r^{\ep} (\dtc) & =  \dtc - \tau^{-1}\ep -\rank(\tau^{-1}\ep, \dtc) B(\Delta);	\\	
	\notag	 l^{\ep} (\dtc) &= \dtc + \tau^{-1}\epc - \rank(\tau^{-1}\dtc, \tau^{-1}\ep) B(\Delta),  
\shortintertext{and}
  	\notag \rc^\ep (\delta) &= \delta+\ep +\rank(\delta,\tau\epc) B(\Delta);   \\
    \notag \lc^\ep (\delta) & =  \delta -\epc +\rank(\tau\ep, \tau\dtc) B(\Delta).	
\end{align*}
We explained in \cite{Fg} that the $\dtc$-vector of $r_{\ep}(\delta)$ is $r^{\ep}(\dtc)$ rather than $\rc_{\ep}(\dtc)$; 
and the $\delta$-vector of $\rc_{\ep}(\dtc)$ is $\rc^{\ep}(\delta)$ rather than $r_{\ep}(\delta)$.
It is also clear from the definition that $\rc_{\tau^{-1}\ep} = l^{\ep}$.
\end{remark}

The following theorem is a direct consequence of \cite[Lemmas 5.7 and 5.9]{Fg}.
\begin{theorem} \label{T:rle} Suppose that
$\ep$ is extended-reachable. Then the operators $r_\ep$ and $l_\ep$ commute with any sequence of mutations and $\tau^i$:
\begin{align*} \mub(r_{\ep}(\delta)) &= r_{\mub(\ep)} (\mub(\delta)) \ &{ and }&& \ \mub(l_{\ep}(\delta)) &= l_{\mub(\ep)} (\mub(\delta)); \\
	\tau^i(r_{\ep}(\delta)) &= r_{\tau^i\ep} (\tau^i\delta) \ &{ and }&& \ \tau^i(l_{\ep}(\delta)) &= l_{\tau^i\ep} (\tau^i\delta).
\end{align*}
In particular, if $\b{u}$ is an extended sequence of mutations such that $\mub(\ep)$ is negative, then the operators $r_\ep$ and $l_\ep$ on the $\delta$-vectors of $(\Delta,\mc{S})$ are given by 
	\begin{align*} r_{\ep} (\delta) &= \mub^{-1} (\mub(\delta) + \mub(\ep)); \\
		l_{\ep} (\delta) &= \mub^{-1} (\mub(\delta) - \mub(\ep)).
	\end{align*}
If $\b{u}$ is an extended sequence of mutations such that $\mub(\epc)$ is negative, then the operators $\rc_\ep$ and $\lc_\ep$ on the $\dtc$-vectors of $(\Delta,\mc{S})$ are given by
\begin{align} \label{eq:rce} \rc_{\ep} (\dtc) &= \mub^{-1} (\mub(\dtc)+\mub(\epc)) ; \\
	\label{eq:lce}	\lc_{\ep} (\dtc) &= \mub^{-1} (\mub(\dtc) - \mub(\epc)).
\end{align}
\end{theorem}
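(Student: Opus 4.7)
The plan is to first handle the commutation statements, then deduce the explicit formulas. The single-step commutation of $r_\ep$ and $l_\ep$ with a mutation $\mu_u$ at any vertex, and with the Auslander--Reiten translations $\tau$ and $\tau^{-1}$, are exactly the content of \cite[Lemmas 5.7 and 5.9]{Fg}, and the same lemmas apply to the dual pair $\rc_\ep$ and $\lc_\ep$ acting on $\dtc$-vectors. Since an extended mutation sequence $\mub$ is by Definition \ref{D:extdmu} a composition of such single steps, iterating the single-step relations yields $\mub(r_\ep(\delta)) = r_{\mub(\ep)}(\mub(\delta))$, and similarly for $l_\ep$, $\rc_\ep$, and $\lc_\ep$. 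The commutation with $\tau^i$ is just the special case where $\mub$ is a power of $\mu_{\pm}$.

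For the explicit formulas, choose an extended mutation sequence $\mub$ witnessing reachability, so $\mub(\mc{E}) = (0, E'^-)$ is purely decorated. Then the underlying representation of $\mub(\mc{E})$ is zero, so in the defining formula \eqref{eq:re} for $r_{\mub(\ep)}$ the general-rank term $\rank(0,\tau\mub(\delta))$ vanishes, giving
\[ r_{\mub(\ep)}(\mub(\delta)) = \mub(\delta) + \mub(\ep). \]
Combining with the commutation relation and applying $\mub^{-1}$ yields $r_\ep(\delta) = \mub^{-1}(\mub(\delta) + \mub(\ep))$. The formula for $l_\ep$ is analogous once we observe that for a purely decorated representation $(0, M^-)$ the relations \eqref{eq:Betti} and \eqref{eq:Bettidual} force $\delta = \dtc = -\dim M^-$, so $\mub(\epc) = \mub(\ep)$, and therefore $l_{\mub(\ep)}(\mub(\delta)) = \mub(\delta) - \mub(\epc) = \mub(\delta) - \mub(\ep)$.

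The dual formulas \eqref{eq:rce} and \eqref{eq:lce} for $\rc_\ep$ and $\lc_\ep$ acting on $\dtc$-vectors follow in exactly the same way, using \eqref{eq:rec} and \eqref{eq:lec} in place of \eqref{eq:re} and \eqref{eq:le}, with the negative-reachability condition phrased via $\mub(\epc)$ (which coincides with the condition on $\mub(\ep)$ once the target is purely decorated). The main (and essentially only) work has been pushed to the companion paper \cite{Fg}; here the deduction is bookkeeping, with the one subtlety being the identification $\epc = \ep$ for purely decorated representations, which lets both flavors of formula collapse to the same compact shape.
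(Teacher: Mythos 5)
Your proposal is correct and takes essentially the same route as the paper, which likewise presents the theorem as a direct consequence of \cite[Lemmas 5.7 and 5.9]{Fg}: iterate the single-step commutation over the extended mutation sequence, then observe that once $\mub(\mc{E})$ is purely decorated the general-rank terms in \eqref{eq:re}--\eqref{eq:lec} vanish. Your one substantive addition — the identification $\mub(\ep)=\mub(\epc)=-\dim E'^-$ for a negative representation, which collapses the $r$- and $l$-formulas to the stated shape — is exactly the bookkeeping the paper leaves implicit, and you handle it correctly.
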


\begin{lemma}[{\cite[Proposition 5.30]{Fg}}]\label{L:rlid} For rigid $\ep$, the compositions $r_\ep l_\ep,\ l_\ep r_\ep$ and $\rc_\ep \lc_\ep,\ \lc_\ep \rc_\ep$ are all identities.
\end{lemma}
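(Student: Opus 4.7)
The plan is to reduce everything to a trivial seed by an extended mutation sequence, exploiting the mutation-equivariance of the operators $r_\ep,\, l_\ep,\, \rc_\ep,\, \lc_\ep$ established in Theorem \ref{T:rle}. The key input is that a rigid decorated representation $\mc{E}$ is extended-reachable: there is an extended mutation sequence $\b{u}$ such that the underlying module of $\mub(\mc{E})$ vanishes, i.e.\ $\mub(\mc{E})$ is purely negative. This follows from the structural theory of rigid objects and $\tau$-tilting mutation for QPs developed in \cite{DWZ2,DF}.

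Once such $\b{u}$ is fixed, Theorem \ref{T:rle} provides the intertwining relations
\begin{align*}
\mub(r_\ep(\delta)) &= r_{\mub(\ep)}(\mub(\delta)), & \mub(l_\ep(\delta)) &= l_{\mub(\ep)}(\mub(\delta)),
\end{align*}
so it suffices to check $r_{\mub(\ep)} l_{\mub(\ep)} = \Id$ and $l_{\mub(\ep)} r_{\mub(\ep)} = \Id$ in the mutated seed, where $\mub(\mc{E})$ has zero underlying representation. There $\rank(\delta',0) = 0 = \rank(0,\tau\delta')$ for every $\delta'$, and \eqref{eq:delta2dual} yields $\epc' = \ep'$ since $\dv(\mub E) = 0$. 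Hence Definition \ref{D:rlgrank} collapses to pure translations
\begin{equation*}
r_{\mub(\mc{E})}(\delta') = \delta' + \mub(\ep), \qquad l_{\mub(\mc{E})}(\delta') = \delta' - \mub(\ep),
\end{equation*}
which are visibly inverse to each other. Pulling back via $\mub^{-1}$ then gives $r_\ep l_\ep = l_\ep r_\ep = \Id$ in the original seed.

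The dual compositions $\rc_\ep \lc_\ep$ and $\lc_\ep \rc_\ep$ are handled in exactly the same way, using the mutation formulas \eqref{eq:rce}--\eqref{eq:lce} to reduce to an extended sequence along which $\mub(\epc)$ becomes negative; at that terminal seed $\rc_{\mub(\mc{E})}$ and $\lc_{\mub(\mc{E})}$ again collapse to $\pm \mub(\epc)$-translations on $\dtc$-vectors, which are manifestly mutually inverse.

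The main obstacle is the reachability input at the outset: to legitimately apply Theorem \ref{T:rle} we need to know that rigid $\mc{E}$ admits an extended mutation sequence trivializing its underlying module. This is the delicate part of the argument; once it is in place the remaining verification is essentially bookkeeping. An alternative route would bypass the reduction entirely by computing $r_\ep l_\ep(\delta)$ directly from Definition \ref{D:rlgrank} and showing that the rank contributions $\rank(\delta, E)$ and $\rank(E, \tau l_\ep(\delta))$ telescope against the discrepancy $\epc - \ep = (\dv E) B_\Delta$, but doing so requires delicate identities for general ranks that are themselves most cleanly established by mutation.
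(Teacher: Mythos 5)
The reduction-to-a-trivial-seed strategy is internally coherent, but it rests on a claim that is false in general: rigidity of $\mc{E}$ does not imply that $\mc{E}$ is extended-reachable. The paper is careful to keep these two notions separate — a frozen vertex is called \emph{rigid} or \emph{reachable} according to genuinely different conditions, Lemma \ref{L:HEmu} records only the implication ``reachable $\Rightarrow$ rigid'', and the remark following Theorem \ref{T:musupp} states explicitly that ``rigid may not imply reachable''. Theorem \ref{T:rle}, which you invoke for the intertwining relations, is stated under the hypothesis that $\ep$ is extended-reachable precisely because that hypothesis is strictly stronger than rigidity; it is not available for an arbitrary rigid $\ep$. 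So your argument establishes the lemma only for extended-reachable $\ep$, not under the stated hypothesis.

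The terminal computation itself is fine: at a seed where $\mub(\mc{E})$ is negative, the rank terms in Definition \ref{D:rlgrank} vanish, $\epc'=\ep'$ by \eqref{eq:delta2dual}, and the operators collapse to mutually inverse translations by $\pm\mub(\ep)$. The gap is entirely in getting to such a seed. Note that the paper does not prove this lemma at all — it imports it from \cite[Proposition 5.30]{Fg}, where it is established for rigid $\ep$ with no reachability assumption, essentially by the ``alternative route'' you mention in your last sentence: a direct computation with the general-rank identities (showing that $\rank(\delta,E)$ and $\rank(E,\tau l_\ep(\delta))$ telescope against $\epc-\ep=(\dv E)B_\Delta$) rather than a reduction by mutation. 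That direct argument is exactly the part your proposal defers, and it cannot be recovered by the mutation shortcut.
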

\noindent We remark that if $\ep$ is not rigid, the compositions may not be identities. In fact, we gave a necessary and sufficient condition in \cite{Fg} for such a composition being the identity.
The following result is also proved there.
\begin{theorem}[{\cite[Corollary 5.27]{Fg}}] \label{T:rlrigid} Assume that $\ep$ is extended-reachable. Then \begin{enumerate}
\item general representations $M$ and $R$ of weight $\delta$ and $r_\ep(\delta)$ fit into the exact sequence
\begin{equation*}\cdots \to  \tauh^{-1} \mc{M}\xrightarrow{f_{-1}} \tauh^{-1} \mc{R} \xrightarrow{g_{-1}} \tauh^{-1} \mc{E}\xrightarrow{h_{-1}} M\xrightarrow{f_0} R \xrightarrow{g_0} E \xrightarrow{h_0} \tauh \mc{M} \xrightarrow{f_1} \tauh \mc{R} \xrightarrow{g_1} \tauh \mc{E} \xrightarrow{h_1} \tauh^2 \mc{M}\to \cdots.\end{equation*}
such that the ranks of $h_i$ and $g_i$ are all general ranks.
Moreover, we have that \begin{align*}
		r_{\ep}(\delta) = \delta + \epc - \rank(g_0) B(\Delta)\ &\text{ and }\ 
		\hom(r_{\ep}(\delta), \ep) = \hom(\delta, \ep) + \epc(\rank(g_0));\\
		r_{\ep}(\delta) = \delta + \ep + \rank(h_0) B(\Delta)\ &\text{ and }\  
		\e(r_{\ep}(\delta),\ep) = \e(\delta, \ep)  - \ep(\rank(h_0));\\
		r^{\ep}(\dtc) = \dtc - \tau^{-1}\ep - \rank(h_{-1}) B(\Delta)\ &\text{ and }\  
		\hom(\tau^{-1}\ep, r^{\ep}(\dtc)) = \hom(\tau^{-1}\ep, \dtc)  - \tau^{-1}\ep(\rank(h_{-1}));\\
		r^{\ep}(\dtc) = \dtc - \tau^{-1}\epc + \rank(g_{-1}) B(\Delta)\ &\text{ and }\  
		\ec(\tau^{-1}\ep, r^{\ep}(\dtc)) = \ec(\tau^{-1}\ep, \dtc)  + \tau^{-1}\epc(\rank(g_{-1})).	
		\end{align*}		
	\item general representations $M$ and $L$ of weight $\delta$ and $l_\ep(\delta)$ fit into the exact sequence
	\begin{equation*}\cdots \to  \tauh^{-1} \mc{L}\xrightarrow{f_{-1}} \tauh^{-1} M \xrightarrow{g_{-1}} \tauh^{-1} \mc{E}\xrightarrow{h_{-1}} L\xrightarrow{f_0} M \xrightarrow{g_0} E \xrightarrow{h_0} \tauh \mc{L} \xrightarrow{f_1} \tauh \mc{M} \xrightarrow{g_1} \tauh \mc{E} \xrightarrow{h_1} \tauh^2 \mc{L}\to \cdots.\end{equation*}
such that the ranks of $g_i$ and $h_i$ are all general ranks.
		Moreover, we have that \begin{align*}
		l_{\ep}(\delta) = \delta - \epc + \rank(g_0) B(\Delta)\ &\text{ and }\ 
		\hom(l_{\ep}(\delta),\ep) = \hom(\delta,\ep) - \epc(\rank(g_0));\\
		l_{\ep}(\delta) = \delta - \ep - \rank(h_0) B(\Delta)\ &\text{ and }\  
		\e(l_{\ep}(\delta),\ep) = \e(\delta,\ep)  + \ep(\rank(h_0));\\
		l^{\ep}(\dtc) = \dtc + \tau^{-1}\ep + \rank(h_{-1}) B(\Delta)\ &\text{ and }\  
		\hom(\tau^{-1}\ep, l^{\ep}(\dtc)) = \hom(\tau^{-1}\ep, \dtc)  + \tau^{-1}\ep(\rank(h_{-1}));\\
		l^{\ep}(\dtc) = \dtc + \tau^{-1}\epc - \rank(g_{-1}) B(\Delta)\ &\text{ and }\  
		\ec(\tau^{-1}\ep, l^{\ep}(\dtc)) = \ec(\tau^{-1}\ep, \dtc)  - \tau^{-1}\epc(\rank(g_{-1})).
	\end{align*}
\end{enumerate}
\end{theorem}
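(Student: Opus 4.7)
The plan is to reduce to the case where $\mc{E}$ is negative by applying an extended mutation sequence, verify the claim in this base case (where it becomes nearly trivial), and then transport back. Part (2) should follow from part (1) by invoking $l_\ep r_\ep = \id$ from Lemma~\ref{L:rlid}, so I would focus first on (1).

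Since $\ep$ is extended-reachable, I would choose an extended mutation sequence $\mub$ with $\mub(\mc{E})$ negative, say $\mub(\mc{E})=\bigoplus_v n_v \mc{S}_v^-$. By Theorem~\ref{T:rle} and Lemma~\ref{L:taucommu}, the operators $r_\ep, l_\ep$, $r^\ep, l^\ep$, $\rc_\ep, \lc_\ep$ all commute with $\mub$ and with powers of $\tau$, and by Lemma~\ref{L:HEmu} mutation adjusts $\hom$, $\e$, and $\ec$ by explicit boundary corrections. Thus every assertion in the theorem is invariant (in an appropriate sense) under $\mub$, and it is enough to prove it for $\mub(\Delta,\S)$, where $\mub(\mc{E})$ is negative.

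In the base case $E=0$, so $\tauh^i \mc{E}=0$ for all $i\in\mb{Z}$, and Definition~\ref{D:rlgrank} forces $\rank(E,\tau\delta)=0$ and $r_\ep(\delta)=\delta+\ep$. Generically, a presentation of weight $\delta+\ep$ decomposes as the direct sum of a general presentation of weight $\delta$ with the presentation of $\mc{E}$, yielding $\mc{R}\cong \mc{M}\oplus \mc{E}$. The long exact sequence then collapses into split pieces $0\to \tauh^i\mc{M}\to \tauh^i\mc{R}\to 0\to 0$, in which every $h_i$ vanishes and every $g_i$ is an isomorphism; the identities relating $r_\ep(\delta)$ to the ranks of $g_0$ and $h_0$ become tautological, and the $\hom$/$\e$ formulas follow directly from Theorem~\ref{T:HomE} and the pairings \eqref{eq:heform}--\eqref{eq:hecform}.

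The heart of the argument, and the step I expect to be the main obstacle, is transporting this picture back to $(\Delta,\S)$ via $\mub^{-1}$. Mutation of decorated representations is only compatible with exact sequences through the derived setting in $K^b(\proj J)$; to produce a genuine long exact sequence in $\rep J$ and to identify the rank vectors of the connecting maps with the claimed general ranks, I would proceed in two steps. First, I would use a semicontinuity argument based on Lemma~\ref{L:genrank}, together with Lemma~\ref{L:HEmu} to track how $\hom$-dimensions evolve under $\mub^{-1}$, establishing that $\rank(h_0)=\rank(E,\tau\delta)$ and $\rank(g_0)=\rank(\delta, r_\ep(\delta))$ in general position. Second, I would combine Lemma~\ref{L:taucommu} with Corollary~\ref{C:muQPe} to show that the shift by $\tauh^i$ commutes with the transport, which extends the identifications to all the other maps $h_i,g_i$ in the sequence. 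Once the exact sequence is on the table, the $\delta$- and $\dtc$-vector identities follow from Definition~\ref{D:rlgrank} and Remark~\ref{r:rl}, while the $\hom$/$\e$ identities come from \eqref{eq:heform}--\eqref{eq:hecform} applied to each of the four short exact pieces, where Theorem~\ref{T:HomE} applies because $\ep$ being extended-reachable forces the rigidity hypothesis behind Remark~\ref{r:HomE}.
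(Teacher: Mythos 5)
The paper does not prove this statement at all: it is quoted verbatim from \cite[Corollary 5.27]{Fg}, so there is no internal argument to compare yours against. Judged on its own terms, your proposal has a genuine gap at its central step, the ``transport back.''

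Your reduction trivializes the statement instead of proving it. After applying $\mub$ so that $\mub(\mc{E})$ is negative, the representation $E'$ is zero, so in the mutated seed the long exact sequence degenerates: $\mc{R}'\cong\mc{M}'\oplus\mc{E}'$ with $R'=M'$, every $g_i$ and $h_i$ is the zero map (not an isomorphism, as you write --- the maps \emph{into} $\tauh^i\mc{E}'=0$ vanish and the $f_i$ are isomorphisms), and all the rank identities read $0=0$. There is then nothing to transport: the content of the theorem is precisely the existence, in the \emph{original} seed, of nonzero connecting maps $g_i,h_i$ whose ranks are the general ranks of Definition \ref{D:genrank}, and these cannot be recovered from the zero maps in the mutated seed. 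Mutation of decorated representations is not an exact functor on $\rep J$ --- it alters the vector space at the mutated vertex via kernels and cokernels of $\alpha_u,\beta_u,\gamma_u$ and does not carry short exact sequences to short exact sequences --- so neither Lemma \ref{L:HEmu} (which only tracks the numbers $\hom$, $\e$, $\ec$) nor a semicontinuity appeal to Lemma \ref{L:genrank} produces the required morphisms $M\to R\to E\to\tauh\mc{M}$. What is actually needed is a direct construction, e.g.\ a triangle $d_{\mc{M}}\to d_{\mc{R}}\to d_{\mc{E}}\to d_{\mc{M}}[1]$ in $K^b(\proj J)$ realizing a general element of $\E(\mc{E},\mc{M})$ (in the spirit of the right-mutation construction in Section \ref{S:KD} and of \eqref{eq:esr}--\eqref{eq:esl}), from which the long exact sequence in $\rep J$ and the identification of the ranks of $g_i,h_i$ with general ranks are extracted; this is the route taken in \cite{Fg} and it is not a formal consequence of the base case. (The remaining ingredients of your plan --- deducing (2) from (1) via Lemma \ref{L:rlid}, which applies since extended-reachable implies rigid, and deriving the $\hom$/$\e$ identities from \eqref{eq:heform}--\eqref{eq:hecform} once the sequence exists --- are fine.)
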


\begin{remark} Both Theorems \ref{T:rle} and \ref{T:rlrigid} have analogies for $r^\ep,\ l^\ep$ and $\rc^\ep,\ \lc^\ep$. For example, if $\b{u}$ is an extended sequence of mutations such that $\mub(\epc)$ is positive, then the operators $l^\ep$ and $r^\ep$ on the $\dtc$-vectors of $(\Delta,\mc{S})$ are given by 
	\begin{align*} r^{\ep} (\dtc) &= \mub^{-1} (\mub(\dtc) + \mub(\ep)); \\
		l^{\ep} (\dtc) &= \mub^{-1} (\mub(\dtc) - \mub(\ep)).
	\end{align*}
One can find these statements in \cite{Fg}.
\end{remark}

\subsection{Minimally Exceptional Representations}
A dimension vector $\gamma$ is called a quotient (resp. sub-)dimension vector of $E$ if $\gamma$ is the dimension vector of some quotient (sub-) representation of $E$.
\begin{definition} A $\delta$-vector $\ep$ is called {\em minimally exceptional}
	if $\ep(\gammac)=1$ for any nonzero quotient dimension vector $\gammac$ of $\coker(\ep)$.
	A $\dtc$-vector $\epc$ is called {\em minimally exceptional}
	if $\epc(\gamma)=1$ for any nonzero subdimension vector $\gamma$ of $\ker(\epc)$.
\end{definition}
\noindent We need to point out that $\ep$ is minimally exceptional is {\em not} equivalent to the corresponding $\epc$ being minimally exceptional. Recall that $\ep$ is called {\em Schur} if $\hom(E,E)=1$ for some $E$ of weight $\ep$.

\begin{lemma} \label{L:minexc} A minimally exceptional $\delta$-vector $\ep$ is Schur and rigid; and satisfies $\hom(E,\ep)=1$ if $\ep$ is reachable, where $\ep$ in the second argument is viewed as a $\dtc$-vector. Conversely, if $\ep$ is rigid and satisfies $\hom(E, \ep)=1$, then $\ep$ is minimally exceptional.
\end{lemma}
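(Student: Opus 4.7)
The plan is to use two basic tools: the Euler-type identity $\ep(\dv N) = \hom(\ep, N) - \e(\ep, N)$ from \eqref{eq:heform}, and the Auslander--Reiten identity $\e(\ep, N) = \hom(N, \tauh \mc{E})$ from Lemma \ref{L:H2E}. The tropical $F$-polynomial interpretation $\hom(E, \ep) = \fc_E(\ep) = \max_{E \twoheadrightarrow N} \ep(\dv N)$, supplied by Theorem \ref{T:HomE} or Remark \ref{r:HomE}, will translate between the minimally exceptional condition and the $\hom$-condition.

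For the forward direction, first I would establish Schur-ness by an endomorphism argument. If $f \in \End(E)$ is nonzero and not an isomorphism, then both $\img f$ and $E/\img f$ are nonzero quotients of $E$, so minimal exceptionality forces $\ep(\dv \img f) + \ep(\dv (E/\img f)) = 1+1 = 2$; but applied to $\gammac = \dv E$ the same hypothesis gives $\ep(\dv E) = 1$, contradicting additivity of $\ep$ on dimension vectors. Hence $\End(E)$ is a division algebra over the algebraically closed ground field, whence $\hom(E,E) = 1$. Rigidity then follows from the Euler identity applied to $M = E$: $1 = \ep(\dv E) = \hom(E,E) - \e(\ep, E) = 1 - \e(\ep, \ep)$, so $\e(\ep, \ep) = 0$. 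For the reachable case, Theorem \ref{T:HomE} gives $\hom(E, \ep) = \fc_E(\ep) = 1$, since the maximum over quotients takes the value $1$ at every nonzero quotient and the value $0$ at the trivial one.

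For the converse, assume $\ep$ is rigid with $\hom(E, \ep) = 1$. Theorem \ref{T:HomE} (or its reachability-free variant in Remark \ref{r:HomE}) identifies $\fc_E(\ep) = \hom(E, \ep) = 1$, giving $\ep(\dv N) \le 1$ for every nonzero quotient $N$ of $E$. The matching lower bound is the key step. Apply the left exact functor $\Hom(-, \tauh \mc{E})$ to the defining short exact sequence $0 \to K \to E \to N \to 0$: rigidity together with Lemma \ref{L:H2E} gives $\hom(E, \tauh \mc{E}) = \e(\ep, E) = 0$, so $\Hom(N, \tauh \mc{E}) \hookrightarrow \Hom(E, \tauh \mc{E}) = 0$, forcing $\e(\ep, N) = \hom(N, \tauh \mc{E}) = 0$. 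The Euler identity then collapses to $\ep(\dv N) = \hom(E, N)$, and since $N \ne 0$ the surjection $E \twoheadrightarrow N$ is a nonzero element of $\Hom(E, N)$, giving $\ep(\dv N) \ge 1$. Combined with the upper bound, $\ep(\dv N) = 1$.

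The hard part will be confirming the precise interpretation of $\hom(E, \ep)$ with $\ep$ viewed as a $\dtc$-vector via Definition \ref{D:HomE}, and verifying that the available rigidity really activates the dual form of Remark \ref{r:HomE} so that $\fc_E(\ep) = \hom(E, \ep)$ holds in the converse without a reachability hypothesis. Once this identification is in hand, the remaining argument is a one-line short exact sequence chase.
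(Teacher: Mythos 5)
Your proof is correct and follows essentially the same route as the paper: the additivity argument on $\ep(\dv\img f)+\ep(\dv(E/\img f))$ for Schur-ness, the Euler identity $\ep(\dv E)=\hom(E,E)-\e(\ep,\ep)$ for rigidity, and Theorem \ref{T:HomE} for $\hom(E,\ep)=1$ in the reachable case are all verbatim the paper's steps. The only divergence is in the converse, where the paper simply cites \cite[Lemma 4.15]{Ft} for the lower bound $\ep(\gammac)>0$ (valid for any rigid $\ep$) and \cite[Lemma 2.5]{Ft} for the upper bound, whereas you prove both from scratch; your lower-bound argument ($\Hom(N,\tauh\mc{E})\hookrightarrow\Hom(E,\tauh\mc{E})=\E(\mc{E},\mc{E})^*=0$, then $\ep(\dv N)=\hom(E,N)\geq 1$) is precisely the content of the first citation and is a welcome self-contained replacement. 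Regarding the worry you flag at the end: you do not need the full equality $\fc_E(\ep)=\hom(E,\ep)$ from Remark \ref{r:HomE} (whose hypothesis $\ec(\ep,\ep)=0$ for $\ep$ viewed as a $\dtc$-vector is indeed not an immediate consequence of $\ep$ being rigid as a $\delta$-vector). Only the inequality $\fc_E(\ep)\leq\hom(E,\ep)$ is used, and that direction holds unconditionally: for any quotient $E\twoheadrightarrow N$ and general injective presentation $\dc$ of weight $\ep$, one has $\ep(\dv N)=\hom(N,\dc)-\ec(N,\dc)\leq\hom(N,\ker\dc)\leq\hom(E,\ker\dc)=\hom(E,\ep)$. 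This is exactly what the paper's appeal to \cite[Lemma 2.5]{Ft} supplies, so your argument closes without any reachability or dual-rigidity hypothesis.
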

\begin{proof} Let $E=\coker(\ep)$. We first show that $\hom(E,E)=1$.
If $\hom(E,E)> 1$, then there is a non-invertible, nonzero homomorphism $f:E \to E$.
Consider the exact sequence $0\to \img(f) \to E \to C \to 0$.
We have that $1 = \ep(\dv(E)) = \ep(\dv(C)) + \ep(\dv \img(f))$.
Since both $\img(f)$ and $C$ are quotient representation of $E$, by our assumption one of them has to be trivial.
The contradiction shows that $\hom(E,E)=1$, so $E$ is indecomposable.
It follows that $\e(E,E)=\hom(E,E)-\ep(\dv(E))=0$, that is, $\ep$ is rigid.
If $\ep$ is reachable, then $\hom(E,\ep)=1$ by Theorem \ref{T:HomE}, where the second $\ep$ is viewed as a $\dtc$-vector.

Conversely, by \cite[Lemma 4.15]{Ft} we have $\ep(\gammac)>0$ for any nonzero quotient dimension vector $\gamma$ of $E$. It is bounded above by $1$ due to the condition $\hom(E,\ep)=1$ and \cite[Lemma 2.5]{Ft}. Hence, $\ep$ is minimally exceptional.
\end{proof}
\noindent The lemma shows in particular $E$ is indecomposable and in fact {\em exceptional}, that is, both Schur and $\E$-rigid.
As any rigid $\ep$ satisfies that $\ep(\gammac)>0$ hence the name minimal. The following corollary follows directly from Theorem \ref{T:rlrigid} and Lemma \ref{L:minexc}.

\begin{corollary} \label{C:minexc} Suppose that $\ep$ is minimally exceptional. Then 
\begin{align*}
	\e(r_{\ep}(\delta),E) &= \e(\delta, E)  - 1,  && \text{if $\e(\delta,E)>0$;} 	\\ 
	\e(l_{\ep}(\delta),E) &= \e(\delta,E)  + 1, && \text{if $\e(l_{\ep}(\delta),E)>0$,}
\shortintertext{and}
	\hom(E, \rc^{\ep}(\dtc)) &= \hom(E, \dtc) - 1,  && \text{if $\hom(E, \dtc)>0$;} \\
	\hom(E, \lc^{\ep}(\dtc)) &= \hom(E, \dtc) + 1, && \text{if $\hom(E, \lc^{\ep}(\dtc))>0$.} 
\intertext{Suppose that $\epc$ is minimally exceptional. Then }
\ec(E, \rc_{\ep}(\dtc)) &= \ec(E, \dtc)  + 1, && \text{if $\ec(E, \rc_{\ep}(\dtc))>0$;}  \\
\ec(E, \lc_{\ep}(\dtc)) &= \ec(E, \dtc)  - 1, && \text{if $\ec(E, \dtc)>0$,} 
\shortintertext{and}
	\hom(r_{\ep}(\delta), E) &= \hom(\delta, E) + 1,  && \text{if $	\hom(r_{\ep}(\delta), E)>0$;} \\
	\hom(l_{\ep}(\delta),E) &= \hom(\delta,E) - 1, && \text{if $\hom(\delta,E)>0$.} 
\end{align*}
\end{corollary}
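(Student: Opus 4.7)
The plan is to derive each of the eight identities by invoking the appropriate part of Theorem \ref{T:rlrigid} (or its dual variants mentioned in the remark immediately after it) to reduce the change $\e(r_\ep(\delta),E)-\e(\delta,E)$, and so on, to a signed evaluation of $\ep$ or $\epc$ on a general rank vector; then use Lemma \ref{L:minexc} together with the minimally exceptional hypothesis to see that this evaluation is exactly $1$.

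First, I would note that by Lemma \ref{L:minexc} a minimally exceptional $\ep$ is rigid and exceptional, so Theorem \ref{T:rlrigid} applies (and the dual statement applies to $\epc$). For the first identity, Theorem \ref{T:rlrigid}(1) reads
\begin{equation*}
\e(r_\ep(\delta),\ep)=\e(\delta,\ep)-\ep(\rank(h_0)),
\end{equation*}
where $h_0\colon E\to\tauh\mc{M}$ is the map in the long exact sequence. The image of $h_0$ is a quotient of $E$ of dimension vector $\rank(h_0)$, so it suffices to show that under the hypothesis $\e(\delta,E)>0$ this general rank is nonzero; then the minimally exceptional condition forces $\ep(\rank(h_0))=1$ via the defining property $\ep(\gammac)=1$ for nonzero quotient dimension vectors of $\coker(\ep)$.

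The identification of the nonvanishing condition is the main step to verify carefully: from Lemma \ref{L:H2E} we have $\E(\delta,E)\cong\Hom(E,\tau\mc{M})^*$, and the long exact sequence of Theorem \ref{T:rlrigid}(1) precisely identifies a generic element of $\Hom(E,\tauh\mc{M})$ with the general map $h_0$. Hence $\e(\delta,E)>0$ is equivalent to $h_0$ being generically nonzero, which in turn gives $\rank(h_0)\neq 0$. The same pattern then handles the remaining three $r_\ep$-/$l_\ep$-type identities using Theorem \ref{T:rlrigid}(1), (2): each change is of the form $\pm\ep(\gamma)$ (or $\pm\tau^{-1}\ep(\gamma)$) with $\gamma$ the general rank of an image into or out of the relevant boundary term, and the hypothesis of nonvanishing of the "target" Hom or $\E$ exactly guarantees $\gamma\neq 0$.

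For the four identities in which $\epc$ is assumed minimally exceptional, the argument proceeds by the dual version of Theorem \ref{T:rlrigid} (stated in the Remark following it): now the relevant general ranks are sub-dimension vectors of $\ker(\epc)$, and the dual minimal exceptionality gives $\epc(\gamma)=1$. The nonvanishing hypotheses again translate, via Lemma \ref{L:H2E} and the dual long exact sequence, into nonvanishing of the corresponding generic map, and hence of its rank. The main potential obstacle throughout is the bookkeeping between the $\delta$-/$\dtc$- versions and the $r_\ep$/$r^\ep$/$\rc_\ep$/$\rc^\ep$ conventions; once the correct item of Theorem \ref{T:rlrigid} is matched to each line of the corollary, each case reduces to the single observation that a nonzero quotient (resp. sub-) dimension vector of the minimally exceptional representation is paired with weight $1$.
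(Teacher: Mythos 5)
Your proposal is correct and follows essentially the same route as the paper, which simply states that the corollary "follows directly from Theorem \ref{T:rlrigid} and Lemma \ref{L:minexc}": each increment is $\pm\ep(\rank(g_i))$ or $\pm\ep(\rank(h_i))$ from Theorem \ref{T:rlrigid}, the relevant rank is a quotient (resp.\ sub-) dimension vector of $E$ which is nonzero exactly under the stated nonvanishing hypothesis, and minimal exceptionality pairs it to $1$. Your write-up just makes explicit the bookkeeping the paper leaves to the reader.
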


\section{Boundary Representations} \label{S:boundary}
\subsection{Boundary Representations} \label{ss:boundary}
The (dual) boundary representations were introduced in \cite{Fs1} to describe the $\mu$-supported $\g$-vector cone of an upper cluster algebra.
It was originally defined by injective presentations satisfying certain ``boundary" condition (see Proposition \ref{P:Brep}.(1) and (2)).
However, it is unclear whether the original definition would depend on the frozen pattern.
Here, we are going to give an intrinsic construction.

Let $(\Delta,\S)$ be an ice quiver with potential, and $\Delta^\mu$ be the mutable part of $\Delta$.
We write $(\Delta,\S)_\mu$ for the restriction of $(\Delta,\S)$ to $\Delta^\mu$.
We denote by $\Delta^\mu[i]$ the full subquiver of the mutable vertices together with (a frozen vertex) $i$.
We shall denote $(\Delta,\mc{S})$ restricting to $\Delta^{\mu}[i]$ and its Jacobian algebra by  $(\Delta,\S)_{\mu[i]}$ and $J_{\mu[i]}$ respectively.
Note that we have a chain of extensions $\Delta^\mu \subset \Delta^\mu[i] \subset \Delta$.
Let $P_{[i]}$ and $I_{[i]}$ be respectively the indecomposable projective and injective representation of $J_{\mu[i]}$ corresponding to $i$. This distinguishes them from the representations $P_i$ and $I_i$ of $(\Delta,\mc{S})$. 
\begin{definition} \label{D:Brep} The {\em boundary representation} $E_i$ attached to a frozen vertex $i$ is $P_{[i]}$ after extended by zeros to $\Delta$.
The {\em dual boundary representation} $E_i^\star$ attached to a frozen vertex $i$ is $I_{[i]}$ extended by zeros to $\Delta$.
\end{definition}
\noindent It follows from the definition that the dual boundary representation $E_i^\star$ is dual of the boundary representation of the opposite quiver with potential.
We also remark that in general $E_i$ is {\em not} the projective representation of $(\Delta,\S)$. 

\begin{lemma} \label{L:nondeg} For any exchange matrix $B$, there is a nondegenerate ice QP $(\Delta,\S)$ such that $B_\Delta = B$ and each frozen vertex $i$ is simple in $(\Delta,\S)_{\mu[i]}$.
\end{lemma}
\begin{proof} Let $\Delta^\mu$ be the quiver corresponding to the skew-symmetric part of $B$. We start with a generic potential $S_\mu$ on $\Delta^\mu$.
Then for each frozen vertex $i$ we extend $(\Delta^\mu, S_\mu)$ by a general presentation of weight $-b_i$, where $b_i$ is the $i$-th column of the matrix $B$.
By Proposition \ref{P:gendv}.(2) we end up with a nondegenerate ice QP $(\Delta,\S)$ with desired properties.
\end{proof}
Throughout we assume that $(\Delta,\S)$ is nondegenerate and each frozen vertex is simple in $(\Delta,\S)_{\mu[i]}$.
Recall the presentation $d_i$ defined in \eqref{eq:dv}.
Note that $d_i$ depends on the ambient quiver of the vertex $i$.
Here, we specify that $d_i$ is defined inside the quiver $\Delta^\mu[i]$
so that $d_i$ is a presentation of $(\Delta,\S)_\mu$.
By the proof of Lemma \ref{L:nondeg} we may assume that $d_i$ is a general presentation of weight $-b_i$,
where $b_i$ is the $i$-th column of the matrix $B_{\Delta}$.
Let $\mc{E}_i^\mu\in \rep(J_{\mu})$ be the decorated representation corresponding to $d_i$. 
If $\mc{E}_i^\mu$ is rigid, then by Corollary \ref{C:V[0]}.(2) there are exact sequences \begin{align}
	\label{eq:exactEi} 0 \to	E_i^\mu \to &E_i \to S_i \to 0.
\shortintertext{Dually, we can work with the general decorated representation $E_i^{\star\mu}$ of injective weight $b_i$:}
	\label{eq:exactEist} 0 \to	S_i \to &E_i^\star \to  E_i^{\star\mu} \to 0,
\end{align}
Note that $\tau_\mu E_i^\mu = E_i^{\star\mu}$ by Theorem \ref{T:genpi}, where $\tau_\mu$ is the AR-translation restricted on $J_\mu$.

We will use the following easy observation (see also \cite[Lemma 5.2]{Fs2}) more than once.
Recall the map $\alpha_u,\beta_u$ and $\gamma_u$ in \eqref{eq:abc}.
\begin{observation}\label{O:inv} For a representation $M$ supported on a subquiver $\hat{\Delta}$ of $\Delta$,
	the spaces $\coker(\alpha_u)$ and $\ker(\alpha_u)/ \img(\gamma_u)$ are invariant under the extension by zeros if $u\in \hat{\Delta}_0$.
\end{observation}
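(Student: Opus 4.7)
The proof should be a direct verification by unpacking the definitions of the maps in \eqref{eq:abc}. The plan is to compare the triangle at the vertex $u$ when $M$ is viewed as a representation of the Jacobian algebra $J_{\hat{\Delta}}$ versus that of the ambient $J$, and to argue that the passage from $\hat{\Delta}$ to $\Delta$ only appends zero summands to the vector spaces involved and acts trivially on the relevant maps.

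First I would observe that, since $M(w) = 0$ for $w \in \Delta_0 \setminus \hat{\Delta}_0$, every arrow of $\Delta$ incident to $u$ but not lying in $\hat{\Delta}_1$ necessarily has its other endpoint outside $\hat{\Delta}_0$, where $M$ vanishes. Consequently the source $\bigoplus_{h(a)=u} M(t(a))$ of $\alpha_u$ and both the source $\bigoplus_{t(b)=u} M(h(b))$ and target of $\gamma_u$ gain only trivial summands when computed in $\Delta$ rather than in $\hat{\Delta}$.

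Next I would check that the entries of the maps agree on the surviving summands. The map $\alpha_u$ is simply the direct sum of the representation maps attached to the arrows into $u$, so on the new summands (which are zero) the contribution is automatically trivial, and on the original summands it coincides with the map computed inside $\hat{\Delta}$. For $\gamma_u$, whose entries are the cyclic derivatives $\partial_{[ab]}\S$, the care needed is that the ambient potential $\S$ may contain additional terms involving arrows incident to vertices of $\Delta_0 \setminus \hat{\Delta}_0$; however, any such term, upon applying $\partial_{[ab]}$ for a pair $a,b$ of arrows of $\hat{\Delta}_1$, produces a linear combination of paths passing through $\Delta_0 \setminus \hat{\Delta}_0$, which acts as zero on $M$. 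Hence, restricted to the original summands, $\gamma_u$ agrees with the one computed inside $(\hat{\Delta}, \S|_{\hat{\Delta}})$.

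Combining these two observations, both $\coker(\alpha_u)$ and $\ker(\alpha_u)/\img(\gamma_u)$ are computed in the ambient quiver $\Delta$ exactly as they are in $\hat{\Delta}$, up to a canonical identification that discards the trivial summands. I do not anticipate any real obstacle beyond careful bookkeeping of the cyclic derivatives; the main subtle point to articulate cleanly is that the potentially new terms of the ambient potential $\S$ contribute zero to $\gamma_u$ once $M$ is extended by zeros, which reduces to the support condition on $M$.
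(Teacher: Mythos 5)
Your verification is correct and is exactly the routine unpacking the paper has in mind: the paper offers no proof of this Observation (it only points to \cite[Lemma 5.2]{Fs2}), and your two checks — that the new summands of the source of $\alpha_u$ and of the source and target of $\gamma_u$ are zero because $\hat{\Delta}$ is a full subquiver, and that the extra terms of $\partial_{[ab]}\S$ coming from cycles leaving $\hat{\Delta}$ act as zero on $M$ since they pass through vertices where $M$ vanishes — are precisely the content being taken for granted. No gaps.
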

	
\begin{lemma} Suppose that two representations $M$ and $N$ of $(\Delta,\mc{S})$ are supported on a subquiver $\hat{\Delta}$ of $\Delta$.
	Let $\hat{J}$ be the Jacobian algebra of $(\Delta, \mc{S})$ restricted to $\hat{\Delta}$.
	Then $\Hom_{\hat{J}}(M,N) \cong \Hom_{J}(M,N)$, $\E_{\hat{J}}(M,N)\cong \E_{J}(M,N)$, and $\Ec_{\hat{J}}(M,N)\cong \Ec_{J}(M,N)$.
In particular, (dual) boundary representations are $\E$-rigid representation of $(\Delta,\mc{S})$.
\end{lemma}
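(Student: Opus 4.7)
My plan is to reduce each of the three identifications to the observation that, when $M$ is supported on $\hat{\Delta}$, its minimal projective presentation over $J$ uses only indecomposable projectives $P_u$ with $u\in\hat{\Delta}_0$; combined with the fact that any path traversing an arrow of $\Delta\setminus\hat{\Delta}_1$ acts as zero on a representation supported on $\hat{\Delta}$, this will let me identify the whole defining complex of Definition \ref{D:HomE} over $J$ with the one over $\hat{J}$.

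First I would dispose of the Hom-isomorphism directly: a tuple $(f_u:M(u)\to N(u))$ is $J$-linear precisely when it intertwines the action of every arrow of $\Delta$, but for an arrow $a$ with $t(a)$ or $h(a)$ outside $\hat{\Delta}_0$ both $M(a)$ and $N(a)$ are zero, so the only non-trivial compatibilities are those for $a\in\hat{\Delta}_1$, i.e. $\hat{J}$-linearity. For the $\E$-isomorphism I would next show that the Betti numbers $\beta_\pm(u)$ of the minimal projective presentation of $M$ over $J$ vanish for $u\notin\hat{\Delta}_0$ and agree over $J$ and $\hat{J}$ for $u\in\hat{\Delta}_0$. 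The cleanest route is a top/syzygy argument: $M/M\cdot\mathrm{rad}(J)$ is supported on $\hat{\Delta}_0$, so the minimal projective cover involves only $P_u$ with $u\in\hat{\Delta}_0$, and the same applies to its first syzygy; on $\hat{\Delta}_0$, Observation \ref{O:inv} and \eqref{eq:Betti} give the equality of Betti numbers. Write accordingly
\[
d_M^J:\bigoplus_{u\in\hat{\Delta}_0}\beta_-(u)P_u\longrightarrow\bigoplus_{u\in\hat{\Delta}_0}\beta_+(u)P_u,\qquad d_M^{\hat{J}}:\bigoplus\beta_-(u)\hat{P}_u\longrightarrow\bigoplus\beta_+(u)\hat{P}_u,
\]
with matching multiplicities. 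Since $\Hom_J(P_u,N)=N(u)=\Hom_{\hat{J}}(\hat{P}_u,N)$ via evaluation at the idempotent $e_u$ for every $u\in\hat{\Delta}_0$, and the matrix entries of $d_M^J$ are paths in $\widehat{k\Delta}$ whose components along $\Delta\setminus\hat{\Delta}_1$ annihilate $N$, the two copies of the defining exact sequence \eqref{eq:HE} for $J$ and $\hat{J}$ are canonically isomorphic, giving $\Hom_{\hat{J}}(M,N)\cong\Hom_J(M,N)$ and $\E_{\hat{J}}(M,N)\cong\E_J(M,N)$. The $\Ec$-isomorphism is completely analogous, using injective presentations and \eqref{eq:Bettidual}.

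For the ``in particular'' clause I would apply the first part with $\hat{\Delta}=\Delta^{\mu}[i]$ and $\hat{J}=J_{\mu[i]}$: by Definition \ref{D:Brep}, $E_i=P_{[i]}$ and $E_i^\star=I_{[i]}$ after extension by zeros, so $\E_J(E_i,E_i)\cong\E_{J_{\mu[i]}}(P_{[i]},P_{[i]})=0$ since $P_{[i]}$ is projective (its minimal presentation being $0\to P_{[i]}$), and $\E_J(E_i^\star,E_i^\star)=\Ec_J(E_i^\star,E_i^\star)\cong\Ec_{J_{\mu[i]}}(I_{[i]},I_{[i]})=0$ by Lemma \ref{L:H2E}(2) together with injectivity of $I_{[i]}$. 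The main technical obstacle I expect is the vanishing of Betti numbers at vertices outside $\hat{\Delta}_0$: a direct computation from the triangle \eqref{eq:abc} must still handle arrows of $\Delta\setminus\hat{\Delta}_1$ with one endpoint in $\hat{\Delta}_0$, which is why I prefer to route that step through the top/syzygy description rather than through the explicit formula \eqref{eq:Betti}.
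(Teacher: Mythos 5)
Your overall strategy is the paper's: identify the minimal presentations over $J$ and $\hat{J}$ via Observation \ref{O:inv} and \eqref{eq:Betti}, then apply $\Hom(-,N)$. But the step you yourself flag as the main obstacle — the vanishing of the Betti numbers at vertices outside $\hat{\Delta}_0$ — is where the argument breaks, because the claim is simply false for $\beta_-$. Your top/syzygy route gives $\beta_+(u)=0$ for $u\notin\hat{\Delta}_0$ (the top of $M$ is supported on $\hat{\Delta}_0$, or more directly $\beta_+(u)=\dim\coker\alpha_u=0$ since $M(u)=0$), but it does \emph{not} follow that the first syzygy has top supported on $\hat{\Delta}_0$: the projective cover $\bigoplus_{u\in\hat{\Delta}_0}\beta_+(u)P_u$ is built from projectives over the \emph{ambient} algebra $J$, whose composition factors can live anywhere in $\Delta_0$, and everything of $P_u$ lying outside $\hat{\Delta}_0$ sits in the kernel of the cover. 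Concretely, for $\Delta=(1\to 2)$, $\hat{\Delta}=\{1\}$ and $M=S_1$, the minimal presentation over $J$ is $P_2\to P_1$, so $\beta_-(2)=1$ even though $2\notin\hat{\Delta}_0$. As written, your identification of the two defining sequences \eqref{eq:HE} therefore does not go through, since the domains $P(\beta_-)$ over $J$ and $\hat{J}$ need not have matching summands.

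The repair is exactly the paper's observation: the minimal presentations agree only \emph{up to extra summands $P_i$ in negative degree with $i\in\Delta_0\setminus\hat{\Delta}_0$}, and these are invisible after applying $\Hom(-,N)$ because $\Hom_J(P_i,N)=N(i)=0$ for $N$ supported on $\hat{\Delta}$; hence the cokernel computing $\E_J(M,N)$ is unchanged. With that one correction your argument closes; the rest — the direct intertwiner argument for $\Hom$, the dual argument for $\Ec$ via \eqref{eq:Bettidual}, and the deduction of $\E$-rigidity of $E_i$ and $E_i^\star$ from projectivity of $P_{[i]}$, injectivity of $I_{[i]}$ and Lemma \ref{L:H2E}(2) — is correct and in line with the paper.
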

\begin{proof} The invariance of $\Hom(M,N)$ is obvious. Recall the identification of $\Hom(M, S_u)$ and $\Ext^1(M, S_u)$ from the sequence \eqref{eq:Su_inj}. 
	By Observation \ref{O:inv}, $\coker(\alpha_u)$ and $\ker(\alpha_u)/ \img(\gamma_u)$ are invariant under the extension by zeros if $u\in\hat{\Delta}_0$.
	We see that the minimal presentation of $M$ is invariant up to some $P_i$'s ($i\in \Delta_0\setminus \hat{\Delta}_0$) in negative degree.	
	But $N$ is only supported on $\hat{\Delta}$. 
	So the difference is invisible after applying the functor $\Hom(-,N)$ to the minimal presentation.
	Hence, $\E_{\hat{J}}(M,N)\cong \E_{J}(M,N)$. The proof for $\Ec_{\hat{J}}(M,N)\cong \Ec_{J}(M,N)$ is similar.
\end{proof}


\begin{lemma} \label{L:bdinv}	The mutation $\mu_u(E_i)$ is the boundary representation of $i$ for $\mu_u(\Delta,\mc{S})$.
\end{lemma}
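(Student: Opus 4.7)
The plan is to reduce the claim to the corresponding statement inside $(\Delta,\mc{S})_{\mu[i]}$, where by Definition \ref{D:Brep} the boundary representation $E_i$ is simply the extension by zeros of the indecomposable projective $P_{[i]}$.

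First I would combine Corollary \ref{C:muQPe} with the identification $(\Delta,\mc{S})_{\mu[i]} = (\Delta,\mc{S})_\mu[\mc{E}_i^\mu]$ (valid because $i$ is simple in $(\Delta,\mc{S})_{\mu[i]}$) to conclude
\[ \mu_u\bigl((\Delta,\mc{S})_{\mu[i]}\bigr) \;=\; \mu_u(\Delta,\mc{S})_\mu\bigl[\mu_u(\mc{E}_i^\mu)\bigr] \;=\; \bigl(\mu_u(\Delta,\mc{S})\bigr)_{\mu[i]}. \]
In particular $i$ remains simple in the mutated restriction, and the new boundary representation $E'_i$ at $i$ in $\mu_u(\Delta,\mc{S})$ is well-defined and equal to $P'_{[i]}[0]$ for the new indecomposable projective $P'_{[i]}$ of the mutated sub-QP.

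Next I would check that mutating $E_i = P_{[i]}[0]$ at $u \in \Delta^\mu_0$ inside the ambient QP $(\Delta,\mc{S})$ produces the same representation as mutating $P_{[i]}$ inside $(\Delta,\mc{S})_{\mu[i]}$ and then extending by zeros. The DWZ mutation at $u$ modifies a representation only at the vertex $u$ via the triangle $\alpha_u,\beta_u,\gamma_u$ at neighbors of $u$, together with the potential relations determining $\widetilde{M}(a^\star),\widetilde{M}(b^\star)$; since $E_i$ vanishes at every frozen vertex $j \ne i$ and all neighbors of $u$ lie in $\Delta^\mu[i]$, these local data only see the restriction of $E_i$ to $\Delta^\mu[i]$, so both computations yield the same result.

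Finally I would identify $\mu_u(P_{[i]})$ computed inside $(\Delta,\mc{S})_{\mu[i]}$ with the new projective $P'_{[i]}$. Its $\delta$-vector is $e_i$, which is fixed by the tropical transformation of Lemma \ref{L:gdmu} because $e_i(u)=0$. Projectives are $\E$-rigid, so by Lemma \ref{L:HEmu} the mutation remains $\E$-rigid with $\delta$-vector $e_i$; moreover $\widetilde{M}^+(u) = \ker\beta_u / (\ker\beta_u \cap \img\alpha_u) = 0$ since $\ker\beta_u = \img\alpha_u$ for a projective, so no decoration is introduced at $u$. As $\E$-rigid presentations have a dense orbit in $\PHom(e_i)$, the underlying representation is unique up to isomorphism and must coincide with $P'_{[i]}$. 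Chaining the three identifications gives $\mu_u(E_i) = \mu_u(P_{[i]})[0] = P'_{[i]}[0] = E'_i$. The main obstacle is the middle step, where one must carefully verify that cross-terms in the potential $\mc{S}$ involving other frozen vertices do not alter the maps $\widetilde{M}(a^\star),\widetilde{M}(b^\star)$ at $u$; this reduces to the fact that $E_i$ vanishes there, so those potential terms evaluate trivially on $E_i$.
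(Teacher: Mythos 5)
Your argument is correct and is essentially the paper's proof with the details filled in: the paper likewise writes $\mu_u(E_i)=\mu_u(P_{[i]}[0])=\mu_u(P_{[i]})[0]$, identifies $\mu_u\left((\Delta,\S)_{\mu[i]}\right)$ with $(\mu_u(\Delta,\S))_{\mu[i]}$, and uses Lemma \ref{L:gdmu} (together with rigidity) to conclude $\mu_u(P_{[i]})=P_{[i]}'$. The only cosmetic slip is your justification that no decoration appears at $u$ --- the right reason is that $\alpha_u$ is surjective at $u\neq i$ because $P_{[i]}$ is generated at $i$, so $\ker\beta_u\cap\img\alpha_u=\ker\beta_u$, rather than $\ker\beta_u=\img\alpha_u$ --- but your rigidity-plus-$\delta$-vector argument already renders that step redundant.
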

\begin{proof} We have that $\mu_u(E_i) = \mu_u(P_{[i]}[0]) = \mu_u(P_{[i]})[0]$.
By Lemma \ref{L:gdmu} and \cite[Theorem 1.10]{GLFS}, $\mu_u(P_{[i]}) = P_{[i]}'$ which is the indecomposable projective representation for $\mu_u\left((\Delta,\S)_{\mu[i]}\right) = (\mu_u(\Delta,\S))_{\mu[i]}$.
Hence $\mu_u(E_i)$ is the boundary representation of $i$ for $\mu_u(\Delta,\mc{S})$. 
\end{proof}


\begin{corollary} If $d_i$ is negative reachable from a sequence of mutation $\b{\mu}_{\b{u}}$, then $\mub(E_i)=S_i$
is the simple representation of $\mub(\Delta,\S)$. 
\end{corollary}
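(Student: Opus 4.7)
The plan is to combine Lemma~\ref{L:bdinv}, which identifies $\mub(E_i)$ with the boundary representation at $i$ in the mutated QP, with Corollary~\ref{C:V[0]}.(2) applied to that mutated QP.

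Set $(\Delta',\mc{S}'):=\mub(\Delta,\mc{S})$. By Lemma~\ref{L:bdinv} it suffices to show that the boundary representation at $i$ of $(\Delta',\mc{S}')$ is the simple $S_i$. From Definition~\ref{D:Brep} this amounts to computing the indecomposable projective $P_{[i]}'$ inside $(\Delta',\mc{S}')_{\mu[i]}$. By Corollary~\ref{C:muQPe}, extension commutes with mutation, so
$$(\Delta',\mc{S}')_{\mu[i]} \;=\; (\Delta',\mc{S}')_\mu\bigl[\mub(\mc{E}_i^\mu)\bigr],$$
where the identification $\mub(d_i)\leftrightarrow \mub(\mc{E}_i^\mu)$ is Lemma~\ref{L:muprep}. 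The hypothesis that $d_i$ is negative reachable along $\mub$ is precisely the statement that $\mub(d_i)$ has the form $P(\beta_-)\to 0$, equivalently, that the underlying representation of $\mub(\mc{E}_i^\mu)$ is zero while only the decorated part survives.

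Since reachability implies rigidity (Lemma~\ref{L:HEmu}), $\mub(\mc{E}_i^\mu)$ is rigid, so Corollary~\ref{C:V[0]}.(2) gives the short exact sequence
$$0\to V \to P_{[i]}' \to S_i \to 0$$
in $\rep\bigl((\Delta',\mc{S}')_{\mu[i]}\bigr)$, where $V$ denotes the underlying representation of $\mub(\mc{E}_i^\mu)$. By the previous paragraph $V=0$, hence $P_{[i]}'\cong S_i$, and extending by zeros from $\Delta^\mu[i]$ to $\Delta'$ yields $\mub(E_i) = P_{[i]}'[0] = S_i$, which is the simple representation at $i$ of $\mub(\Delta,\mc{S})$.

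The only real bookkeeping is to identify $\mub(d_i)$ with the presentation $d_i'$ attached to the vertex $i$ inside the mutated QP, together with the matching identification of projective representations on the extended QP $(\Delta,\mc{S})_{\mu[i]}$; these are exactly the combined content of Lemma~\ref{L:muprep}, Lemma~\ref{L:bdinv}, and Corollary~\ref{C:muQPe}, so once they are in place no further calculation is needed.
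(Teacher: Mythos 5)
Your proof is correct and follows exactly the route the paper intends: the paper states this as an immediate consequence of Lemma \ref{L:bdinv} (it gives no separate proof), and your chain through Corollary \ref{C:muQPe}, Lemma \ref{L:muprep}, and Corollary \ref{C:V[0]}.(2) — reducing to the observation that the underlying representation of $\mub(\mc{E}_i^\mu)$ vanishes, so $P_{[i]}'\cong S_i$ — is precisely the bookkeeping the paper leaves to the reader. No gaps.
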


\begin{definition} A frozen vertex $i$ is called {\em rigid} (resp. {\em reachable}) if $d_i$ is a rigid (resp. reachable) presentation of $(\Delta,\mc{S})_\mu$. This is equivalent to say that $\dc_i$ is a rigid (resp. reachable) injective presentation.
\end{definition}
By the {\em frozen dimension} of a representation $M$, we mean the total dimension of its restriction on the frozen part of $\Delta$. From now on, we set $\ep_i$ and $\epc_i$ to be $\delta$-vector and $\dtc$-vector of $E_i$ respectively.
\begin{proposition} \label{P:Brep}
For a rigid frozen vertex $i$, the boundary representation $E_i$ has the following property: \begin{enumerate}
\item $E_i$ has frozen dimension $1$ and all its proper subrepresentations are supported on $\Delta_0^\mu$.
\item The $\delta$-vector of $E_i$ is only supported on the frozen part and the only positive coordinate is $1$ at $i$.
\item $\ep_i$ is minimally exceptional, in particular, indecomposable.
\end{enumerate}
There are also dual statements for the dual boundary representation $E_i^\star$.
\end{proposition}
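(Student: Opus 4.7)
The plan is to work directly from the defining identity $E_i = P_{[i]}[0]$, where $P_{[i]}$ is the indecomposable projective at $i$ of $J_{\mu[i]}$, together with the exact sequence $0 \to E_i^\mu \to E_i \to S_i \to 0$ of \eqref{eq:exactEi} (available by Corollary \ref{C:V[0]}.(2) because $\mc{E}_i^\mu$ is rigid). The three claims are then proved in turn, using Observation \ref{O:inv} to transfer Betti-vector computations between $J_{\mu[i]}$ and the ambient $J$.

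For (1), the sequence gives $E_i(i) = E_i^\mu(i) + 1 = 1$ because $E_i^\mu$ is supported on $\Delta_0^\mu$, so $E_i$ has frozen dimension one. Moreover, since $S_i$ is simple the sequence identifies $E_i^\mu$ with the unique maximal subrepresentation $\operatorname{rad}(P_{[i]})$ of $P_{[i]}$; any proper subrepresentation of $E_i$ therefore lies inside $E_i^\mu$ and is supported on $\Delta_0^\mu$.

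For (2), I apply the Betti formula \eqref{eq:Betti} in the ambient $\Delta$. For $u \in \Delta_0^\mu \cup \{i\}$, Observation \ref{O:inv} guarantees that $\coker(\alpha_u)$ and $\ker(\alpha_u)/\img(\gamma_u)$ coincide with their analogues in $J_{\mu[i]}$; applied to $P_{[i]}$ this yields $\ep_i(u) = 0$ for mutable $u$ and $\ep_i(i) = 1$. For any other frozen vertex $j$ the vanishing $E_i(j) = 0$ forces $\coker(\alpha_j) = 0$, so $\beta_+(j) = 0$ and hence $\ep_i(j) = -\beta_-(j) \leq 0$. This gives that $\ep_i$ is supported on the frozen part with $1$ at $i$ as its only positive coordinate.

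For (3), I combine (1) and (2). Any nonzero quotient dimension vector of $E_i = \coker(\ep_i)$ has the form $\gammac = \dv E_i - \dv L$ for a proper subrepresentation $L$; by (1), $L$ is supported on $\Delta_0^\mu$, so $\gammac(i) = 1$ and $\gammac$ vanishes on every other frozen vertex. Pairing with (2) gives $\ep_i(\gammac) = \ep_i(i)\gammac(i) = 1$, so $\ep_i$ is minimally exceptional, and Lemma \ref{L:minexc} then yields that $\ep_i$ is Schur and rigid and that $E_i$ is indecomposable. The dual statements for $E_i^\star$ follow by running the same argument on the opposite quiver with potential, using \eqref{eq:exactEist} in place of \eqref{eq:exactEi}. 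The main delicacy I anticipate is the bookkeeping at frozen vertices $j \neq i$ in step (2): one must justify $\beta_+(j) = 0$ carefully in the ambient $\Delta$ despite possible arrows between frozen vertices or from $\Delta^\mu[i]$ into $j$, which amounts to observing that $\alpha_j$ lands in the zero space $E_i(j) = 0$ after extension by zeros.
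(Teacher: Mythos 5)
Your proof is correct and follows essentially the same route as the paper's: frozen dimension and the unique maximal submodule from the sequence $0\to E_i^\mu\to E_i\to S_i\to 0$ and the simple top of $P_{[i]}$; the $\delta$-vector via Observation \ref{O:inv} on $\Delta^\mu[i]$ plus the vanishing of $\coker(\alpha_j)$ (equivalently $\Hom(E_i,S_j)=0$) at frozen $j\neq i$; and minimal exceptionality by pairing $\ep_i$ with quotient dimension vectors using (1) and (2). The only difference is that you make the Betti-number bookkeeping at frozen vertices slightly more explicit than the paper does.
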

\begin{proof} 	(1). $E_i$ has frozen dimension $1$ by \eqref{eq:exactEi}.
Recall that $E_i$ is the extension of $P_{[i]}$ by zeros.
Since $S_i$ is the top of $P_{[i]}$, the radical of $E_i$ is the unique maximal proper submodule of $E_i$. So $E_i$ satisfies (1).
		
(2). We first note that the $\delta$-vector of $P_{[i]}$ is only supported on frozen vertices by Observation \ref{O:inv}.
If $u$ is frozen, we see from the exact sequence \eqref{eq:exactEi} that $\Hom(E_i, S_u)=0$ except when $u=i$, and
$\Hom(E_i, S_i)=k$.
So the only positive coordinate is $1$ at $i$.

(3). Let $\ep_i$ be the $\delta$-vector of $E_i$.
The only possible positive coordinates of $\ep_i$ are frozen. In fact, there is only one such frozen vertex $i$ with $\ep(i)=1$ due to the fact that the frozen dimension of $E_i$ is $1$. Then the property (1) implies that $\ep_i$ is minimally exceptional.
\end{proof}
\begin{remark}\label{r:(2)} Without the rigid assumption, (1) and (3) will fail, but (2) still holds. 
In general, $\epc$ is {\em not} minimally exceptional. 
\end{remark}

\begin{corollary} \label{C:HomEij} \label{C:Brep} Let $i$ and $j$ be rigid frozen vertices. Then
	$$\hom(E_i, E_j)=\updelta_{i,j}\ \text{ and }\ \e(E_i, E_j) = \max(0, -\ep_i(j)).$$
Dually we have that 
	$$\hom(E_i^\star, E_j^\star)=\updelta_{i,j}\ \text{ and }\ \ec(E_i^\star, E_j^\star) = \max(0, -\epc_j^\star(i)).$$
\end{corollary}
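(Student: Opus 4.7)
My plan is to reduce both identities to the $\Hom$ computation together with the Euler-type formula \eqref{eq:heform}. First I would compute $\hom(E_i, E_j)$ using cyclicity: since $E_i = P_{[i]}[0]$ and $P_{[i]} = e_i J_{\mu[i]}$, the module $E_i$ is cyclic with generator $e_i \in E_i(i)$, and this remains true as a $J$-module because the arrows of $\Delta$ outside $\Delta^\mu[i]$ act by zero on $E_i$. Consequently any $J$-homomorphism $E_i \to E_j$ is determined by the image of $e_i$, giving an injection $\Hom(E_i, E_j) \hookrightarrow E_j(i)$. When $i \neq j$, Proposition \ref{P:Brep}(1) yields $E_j(i) = 0$ and thus $\hom(E_i, E_j) = 0$; when $i = j$, the space $E_i(i)$ is one-dimensional and the identity map realises this dimension.

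Next I would plug $\hom(E_i, E_j) = \updelta_{i,j}$ into \eqref{eq:heform} applied to the minimal presentation of $E_i$, obtaining $\ep_i(\dv E_j) = \updelta_{i,j} - \e(E_i, E_j)$. By Proposition \ref{P:Brep}(2) the vector $\ep_i$ is supported on the frozen vertices, and by Proposition \ref{P:Brep}(1) the restriction of $\dv E_j$ to the frozen part is the standard basis vector at $j$; so $\ep_i(\dv E_j) = \ep_i(j)$ and hence $\e(E_i, E_j) = \updelta_{i,j} - \ep_i(j)$. The case distinction $\ep_i(i) = 1$ versus $\ep_i(j) \leq 0$ for $j \neq i$ (both from Proposition \ref{P:Brep}(2)) makes this coincide with $\max(0, -\ep_i(j))$ on the nose.

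For the dual identities I would argue analogously. The dual boundary $E_i^\star = I_{[i]}[0]$ is cocyclic with simple socle at $i$, and the Nakayama duality $\Hom_{J_{\mu[j]}}(M, I_{[j]}) \cong M(j)^\ast$, combined with the fact that any $J$-hom into $E_j^\star$ must vanish off $\Delta^\mu[j]$, injects $\Hom(E_i^\star, E_j^\star)$ into $E_i^\star(j)^\ast$. This space vanishes for $i \neq j$ and is one-dimensional for $i = j$, giving the dual hom identity. The $\ec$-computation then proceeds verbatim via \eqref{eq:hecform} and the dual of Proposition \ref{P:Brep}. The only genuine obstacle I foresee is bookkeeping: I would need to verify carefully that the dual notion of $\ep_j^\star$, together with the correct dual frozen-support statement for $E_j^\star$, line up so that the Euler-type formula and the support claim match; once this is in place the two computations are parallel to the original ones.
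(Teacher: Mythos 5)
Your argument is correct, and it is slightly more self-contained than the paper's. The paper's proof is a one-liner invoking Remark \ref{r:HomE} (the rigid case of Theorem \ref{T:HomE}): since $\ep_i$ is rigid, $\hom(E_i,E_j)=f_{E_j}(\ep_i)=\max_{L\hookrightarrow E_j}\ep_i(\dv L)$ and $\e(E_i,E_j)=\fc_{E_j}(-\ep_i)=\max_{E_j\twoheadrightarrow N}(-\ep_i)(\dv N)$, after which Proposition \ref{P:Brep}.(1),(2) forces these maxima to be $\max(0,\ep_i(j))=\updelta_{i,j}$ and $\max(0,-\ep_i(j))$. You instead obtain $\hom$ directly from the cyclicity of $E_i$ (which is exactly the observation inside the proof of Proposition \ref{P:Brep}.(1), that $S_i$ is the top of $P_{[i]}$), and then trade $\e$ for $\hom$ through the Euler identity \eqref{eq:heform}. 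Both routes rest on the same support facts from Proposition \ref{P:Brep}; yours bypasses the tropical-$F$-polynomial machinery, which is a genuine (if small) simplification for this particular statement, and the arithmetic $\updelta_{i,j}-\ep_i(j)=\max(0,-\ep_i(j))$ checks out in both cases $i=j$ and $i\neq j$.

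The bookkeeping issue you flag in the dual half is real but resolves in your favor, and it is worth pinning down. Running \eqref{eq:hecform} with $M=E_i^\star$ and $\dc=\dc_{E_j^\star}$ yields $\ec(E_i^\star,E_j^\star)=\updelta_{i,j}-\dtc_{E_j^\star}(\dv E_i^\star)=\updelta_{i,j}-\dtc_{E_j^\star}(i)$, i.e., the \emph{injective} weight of $E_j^\star$ evaluated at $i$ --- unavoidably so, since $\Ec(\mc{M},\mc{N})$ is defined through the injective presentation of the second argument. This is also what the dual half of Remark \ref{r:HomE} produces, so the symbol $\ep_j^\star$ in the statement must be read as the $\dtc$-vector of $E_j^\star$ (whose dual support statement is the mirror of Proposition \ref{P:Brep}.(2)); it genuinely differs from the $\delta$-vector of $E_j^\star$ in general, the discrepancy being $\bigl((\dv E_j^\star)B(\Delta)\bigr)(i)$, which need not vanish. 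With that reading your dual computation goes through verbatim.
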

\begin{proof} This follows from Remark \ref{r:HomE} and Proposition \ref{P:Brep}.(1,\ 2).
\end{proof}
\noindent With a little effort, we can show that $\ep_i(j) = \epc_j^\star(i)$.
We will only prove this for a special case in Lemma \ref{L:E^mu}.


\subsection{The $\mu$-supported $\delta$-vectors} \label{ss:musupp}
In this subsection, we do not require the frozen vertices are rigid.
\begin{definition}[{\cite{Fs1}}] A $\delta$-vector of $(\Delta,\S)$ is called {\em $\mu$-supported} if $\dv(\delta)$ is only supported on the mutable part $\Delta_0^\mu$. We denote by $\trop(\Delta,\S)$ the set of all $\mu$-supported $\delta$-vectors of $(\Delta,\S)$.
Similarly we define $\mu$-supported $\dtc$-vectors, and denote by $\check{\trop}(\Delta,\S)$ the set of all $\mu$-supported $\dtc$-vectors of $(\Delta,\S)$.
\end{definition}

\begin{lemma}\label{L:musupp} The mutation of $\delta$-vectors \eqref{eq:mug} gives a bijection $\trop(\Delta,\S) \to \trop(\mu_u(\Delta,\S))$.
\end{lemma}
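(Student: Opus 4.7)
The plan is to split the lemma into two claims: (a) if $\delta\in\trop(\Delta,\S)$ and $u\in\Delta_0^\mu$, then $\mu_u(\delta)$ defined by \eqref{eq:mug} lies in $\trop(\mu_u(\Delta,\S))$; and (b) the assignment $\delta\mapsto\mu_u(\delta)$ is an involution, hence automatically a bijection.

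For (a), I will pick a general presentation $d\in\PHom(\delta)$ of $(\Delta,\S)$ corresponding to a decorated representation $\mc{M}=(M,M^-)$. By construction $\dim M = \dv(\delta)$, which is supported on $\Delta_0^\mu$. By Remark \ref{r:genmu}, $\beta_\pm(d)=[\pm\delta]_+$, so that Lemma \ref{L:gdmu}.(1) applied to $d$ reduces to the Fock--Goncharov rule \eqref{eq:mug}. I then invoke the preservation of genericness under mutation, stated as \cite[Theorem 1.10]{GLFS} and already used in the proof of Proposition \ref{P:gendv}.(2), to conclude that the mutated presentation $\mu_u(d)$ — which by Lemma \ref{L:muprep} corresponds to the decorated representation $\mu_u(\mc{M})$ — is again a general presentation, necessarily of weight $\mu_u(\delta)$. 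Hence $\dv(\mu_u(\delta)) = \dim \mu_u(M)$. Since every frozen vertex $v$ satisfies $v\neq u$ (as $u$ is mutable), Lemma \ref{L:gdmu}.(3) gives $\dim\mu_u(M)(v)=\dim M(v)=0$, establishing $\mu$-supportedness in the mutated seed.

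For (b), the rule \eqref{eq:mug} is a purely combinatorial prescription, so involutivity can be checked directly. Mutation negates the $u$-th row and column of $B(\Delta)$, so $b'_{v,u}=-b_{v,u}$, while $\delta'(u)=-\delta(u)$. Substituting these into \eqref{eq:mug} applied at $u$ in $\mu_u(\Delta,\S)$, the $u$-coordinate flips back to $\delta(u)$, and for each $v\neq u$ the correction terms pair up and cancel via $[x]_+-[-x]_+=x$. Combined with (a) applied to both $(\Delta,\S)$ and $\mu_u(\Delta,\S)$, this supplies a two-sided inverse and closes bijectivity.

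The only non-routine ingredient is the cited preservation of genericness under mutation; given that, the rest is an unpacking of Lemmas \ref{L:gdmu} and \ref{L:muprep} together with a short symbolic calculation, so I do not expect a serious obstacle.
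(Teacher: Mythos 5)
Your proposal is correct and follows essentially the same route as the paper: take a general presentation/representation of weight $\delta$, invoke \cite{GLFS} to preserve genericness under mutation, and observe from Lemma \ref{L:gdmu}.(3) that the dimension vector only changes at the mutable vertex $u$, so $\mu$-supportedness is preserved. The paper leaves bijectivity implicit (mutation being an involution), whereas you verify the involutivity of \eqref{eq:mug} explicitly; that computation is correct but adds nothing essentially new.
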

\begin{proof} Let $\mc{M}$ be a general representation of weight $\delta\in \trop(\Delta,\S)$. 
Then the mutation rule tells $\mu_u(\mc{M})$ is $\mu$-supported of weight $\mu_u(\delta)$.	
It is known \cite{GLFS} that $\mu_u(\mc{M})$ is general as well.
\end{proof}

Recall that an arrow between frozen vertices is called a {\em frozen arrow}.
\begin{lemma}\label{L:delfra} Let $(\Delta', \S')$ be the ice QP obtained from $(\Delta,\S)$ by deleting all frozen arrows.
Then $\delta\in \trop(\Delta,\S)$ if and only if $\delta\in \trop(\Delta',\S')$.
\end{lemma}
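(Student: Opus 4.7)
The approach is to reduce the statement to two lemmas: (i) the $\mu$-supported module categories of $J=J(\Delta,\S)$ and $J'=J(\Delta',\S')$ coincide, and (ii) the $\delta$-vector of a $\mu$-supported decorated representation is independent of which of the two algebras we work in; the conclusion then follows by upper semicontinuity. Write $\S=\S'+\S_{\op{fr}}$ where $\S_{\op{fr}}$ is the sum of those cycles of $\S$ that use at least one frozen arrow. Any $\mu$-supported $M$ annihilates every frozen arrow (both endpoints are frozen), and more generally every path that visits a frozen vertex. For a mutable arrow $a$, removing $a$ from any cycle of $\S_{\op{fr}}$ leaves the frozen arrows of that cycle intact, so $\partial_a\S_{\op{fr}}$ acts by zero on $M$ and the relations $\partial_a\S=0$ and $\partial_a\S'=0$ agree on $M$; for a frozen arrow $a$, $\partial_a\S$ consists of paths between the frozen vertices $h(a)$ and $t(a)$, so it annihilates $M$ automatically. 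Thus $\rep^\mu J=\rep^\mu J'$ as full subcategories of $k$-linear representations.

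Next I would check that for any decorated representation $\mc{M}$ in this common category, the $\delta$-vector $\delta_{\mc{M}}$ is the same whether computed in $J$ or in $J'$. By the Betti formula \eqref{eq:Betti} it suffices to preserve the triangle \eqref{eq:abc} at each vertex $u$. Summands of $\alpha_u,\gamma_u$ indexed by frozen arrows vanish because the other endpoint is a frozen vertex where $M$ is zero, so deleting such summands changes no kernel, image, or cokernel; every remaining arrow incident to $u$ already lies in $\Delta'$; and the entries of $\gamma_u$, given by $\partial_{[ab]}\S$ acting on $M$, equal $\partial_{[ab]}\S'$ on $M$ by the previous paragraph. Hence all Betti numbers $\beta_{\pm}(u)$ are preserved, and $\delta_{\mc{M}}$ agrees in both algebras.

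Finally, suppose $\delta\in\trop(\Delta,\S)$ and pick a general decorated representation $\mc{M}$ of $J$ of weight $\delta$, so $M$ is $\mu$-supported with $\dim M=\dv(\delta)$. Viewing $M$ as an object of $\rep^\mu J'$, its minimal projective presentation produces some $d'\in\PHom_{J'}(\delta)$ (of the correct $\delta$-vector by the second step) with $\coker(d')=M$. Componentwise upper semicontinuity of $\dim\coker(-)$ on $\PHom_{J'}(\delta)$ then forces $\dv'(\delta)\leq\dim M=\dv(\delta)$, so $\dv'(\delta)$ is $\mu$-supported and $\delta\in\trop(\Delta',\S')$; the reverse implication is symmetric. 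The main obstacle is the case analysis in the second paragraph: once one correctly identifies which summands of $\alpha_u,\beta_u,\gamma_u$ are a priori affected by deleting frozen arrows, the upper semicontinuity step glues everything together cleanly.
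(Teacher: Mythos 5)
Your proposal is correct and follows essentially the same route as the paper: decompose $\S=\S'+\S_{\op{fr}}$, observe that every term of $\partial\S_{\op{fr}}$ passes through a frozen vertex so the $\mu$-supported representations of the two QPs coincide, check via \eqref{eq:Betti} that the $\delta$-vector is unchanged, and finish by semicontinuity of the rank (equivalently of $\dim\coker$) on the presentation space. Your second paragraph merely spells out in more detail the case analysis that the paper compresses into one sentence.
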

\begin{proof} We write the potential $\S$ as a sum $\S=\S'+\S_{\op{fr}}$ where $\S_{\op{fr}}$ involves frozen arrows.
Note that each relation in $\partial \S_{\op{fr}}$ is a linear combination of paths passing some frozen vertices.	
Hence a $\mu$-supported representation $\mc{M}$ of $(\Delta, \S)$ is naturally a $\mu$-supported representation of $(\Delta', \S')$.
Moreover, viewed as a representation of $(\Delta', \S')$, the $\delta$-vector of $\mc{M}$ does not change by \eqref{eq:Betti}.
So if $\delta$ is a $\mu$-supported $\delta$-vector of $(\Delta,\S)$, then we at least has a $\mu$-supported representation of $(\Delta',\S')$.
But by the semi-continuity of the rank function, a general presentation of weight $\delta$ of $(\Delta',\S')$ must be $\mu$-supported.
Thus we establish the bijection.
\end{proof}

\begin{remark} We are aiming to give a crystal structure on $\trop(\Delta,\S)$.
Lemma \ref{L:delfra} suggests that it suffices to consider the ice QP without frozen arrows.
For the general situation we can transfer the crystal structure using this bijection.
As we shall see in Section \ref{ss:minimal}, this reduction simplifies some calculations.
\end{remark}

\begin{definition} An ice quiver with potential $(\Delta,\S)$ is called {\em frozen-Jacobi-finite} if $P_i(j)$ is finite-dimensional for each $i,j\in \Delta_0^{\op{fr}}$.
\end{definition}

\begin{lemma}\label{L:filter} Suppose that $(\Delta,\S)$ is frozen-Jacobi-finite. Then for each frozen vertex $i$, the injective representation $I_i$ can be filtered by subrepresentations of $E^\star := \bigoplus_{v\in \Delta_0^{\op{fr}}}  m_v E_v^\star$ for $m_v$'s large enough.
\end{lemma}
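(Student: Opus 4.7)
The plan is to build a filtration $0 = F_0 \subset F_1 \subset \cdots \subset F_N = I_i$ inductively, with each subquotient $F_{k+1}/F_k$ realized as a direct sum of $E_v^\star$'s, so that $F_{k+1}/F_k$ is a subrepresentation of $E^\star$ once the $m_v$'s are chosen large enough.

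For the base step I would take $F_1 := E_i^\star$. Since $E_i^\star = I_{[i]}$ has essential simple socle $S_i$ in $\rep J_{\mu[i]}$, and this essentiality transfers to $\rep J$ by the support invariance of Observation \ref{O:inv}, and $I_i$ is the injective envelope of $S_i$ in $\rep J$, the injectivity of $I_i$ extends $S_i \hookrightarrow I_i$ to an injection $E_i^\star \hookrightarrow I_i$. For the inductive step, I would first show that $\textup{soc}(I_i/F_k)$ is supported on frozen vertices. Applying $\Hom(S_u,-)$ to $0 \to F_k \to I_i \to I_i/F_k \to 0$ for mutable $u$, together with $\Hom(S_u, I_i) = 0$ and the injectivity of $I_i$, identifies $\Hom(S_u, I_i/F_k) \cong \Ext_J^1(S_u, F_k)$. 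The key vanishing $\Ext_J^1(S_u, F_k) = 0$ is established by induction through the layers of $F_k$: each layer is a full $E_v^\star = I_{[v]}$, and any extension of $S_u$ by it is supported on $\Delta^\mu[v]$, on which the relations of $J$ and of $J_{\mu[v]}$ agree, so $\Ext_J^1(S_u, E_v^\star) = \Ext_{J_{\mu[v]}}^1(S_u, I_{[v]}) = 0$ by injectivity of $I_{[v]}$ in $\rep J_{\mu[v]}$; the long exact sequence then propagates the vanishing across the filtration.

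Given the socle-on-frozen result, $\textup{soc}(I_i/F_k) = \bigoplus_{v\text{ frozen}} n_v^{(k+1)} S_v$. For each copy of $S_v$ I would extend $S_v \hookrightarrow I_i/F_k$ first to $I_v \hookrightarrow I_i/F_k$, using the injectivity of $I_v$ in $\rep J$ and the essentiality of its socle, and then pull back through $E_v^\star \hookrightarrow I_v$ (which itself comes from the injective-envelope property of $I_v$ applied to $S_v \hookrightarrow E_v^\star$). Summing disjointly over socle components yields $\bigoplus_v n_v^{(k+1)} E_v^\star \hookrightarrow I_i/F_k$, and I define $F_{k+1}$ to be its preimage in $I_i$. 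Setting $m_v := \sum_k n_v^{(k)}$, each $F_{k+1}/F_k$ is tautologically a subrepresentation of $E^\star$.

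The main obstacle will be termination, i.e.\ arranging $F_N = I_i$ for some finite $N$ while keeping $m_v$ finite. The bound $m_v \le \dim I_i(v) < \infty$ follows from frozen-Jacobi-finiteness, since each $n_v^{(k)}$ strictly consumes the frozen dimension of $I_i$ at $v$ not already covered by $F_k$. For the chain to exhaust $I_i$, one combines this with the socle-on-frozen property: once the frozen capacity at every $v$ is used up, $I_i/F_N$ has no socle, and hence vanishes in the finite-dimensional regime that the hypothesis forces on the frozen components. Synchronizing this exhaustion across all frozen vertices simultaneously, and controlling how the mutable support of $I_i$ gets absorbed layer by layer via the mutable part of each $E_v^\star = I_{[v]}$, is the technically delicate part of the argument.
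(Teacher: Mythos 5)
There is a genuine gap in your inductive step, and it matters because the statement you are trying to prove along the way is strictly stronger than the lemma and is in fact false. You want each layer $F_{k+1}/F_k$ to be a \emph{full} direct sum $\bigoplus_v n_v^{(k+1)}E_v^\star$, and to produce it you "extend $S_v\hookrightarrow I_i/F_k$ to $I_v\hookrightarrow I_i/F_k$ using the injectivity of $I_v$." Injectivity of $I_v$ lets you extend maps \emph{into} $I_v$ along monomorphisms; to extend the map $S_v\to I_i/F_k$ along $S_v\hookrightarrow I_v$ (or along $S_v\hookrightarrow E_v^\star$) you would need the \emph{target} $I_i/F_k$ to be injective, which a quotient of an injective generally is not. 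The obstruction is a genuine $\Ext^1$ class and does not vanish in general. Concretely, take mutable vertices $u,w$, frozen vertices $1,2$, arrows $a\colon u\to 1$, $b\colon 2\to u$, $c\colon w\to 2$, $\alpha\colon 1\to w$, with potential the $4$-cycle, so that the path $w\to 2\to u\to 1$ is a relation. Then $E_1^\star$ has dimension vector $e_u+e_1$, $E_2^\star$ has dimension vector $e_w+e_2$, while $I_1$ has dimension vector $e_u+e_1+e_2$: the unique filtration starts with $E_1^\star$ and the top layer is $S_2$, a \emph{proper} subrepresentation of $E_2^\star$ (there is no room at $w$ for a full copy). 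So no filtration by full direct sums of the $E_v^\star$ exists. This also undercuts your socle computation and your termination count, since $\Ext^1_J(S_u,-)$ vanishes on $E_v^\star$ but not on an arbitrary subrepresentation of it, and the "frozen capacity" bookkeeping assumed each layer consumes exactly one frozen dimension per summand.

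The lemma only requires the layers to be subrepresentations of $E^\star$, and the paper's proof produces exactly that: it takes the injective copresentation $0\to E_i^\star\to I_i\xrightarrow{f_1}\bigoplus_j m_j^1 I_j$ (the target involves only frozen $I_j$ because the $\dtc$-vector of $E_i^\star$ is frozen-supported), uses $\img f_1\cap\bigoplus_j m_j^1E_j^\star$ as the next layer — a subrepresentation, with no extension problem to solve — and iterates with the copresentation of $\bigoplus_j m_j^1E_j^\star$; termination follows because an infinite iteration would give a nonzero composition of arbitrarily many maps between frozen injectives, contradicting frozen-Jacobi-finiteness. If you want to salvage your socle-based approach, you should replace "full copy of $E_v^\star$ over each socle element" by "intersect with $\bigoplus n_vE_v^\star$ inside $\bigoplus n_vI_v$," which collapses it to the paper's argument.
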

\begin{proof} By the dual of Proposition \ref{P:Brep}.(2), $E_i^\star$ has an injective presentation 
	$$0\to E_i^\star \to I_i \xrightarrow{f_{1}} \bigoplus_{j} m_{j}^1 I_j.$$
It remains to show that the image of $f_{1}$ can be filtered by the subrepresentations of $E^\star$.
If the image of $f_{1}$ lies in $\bigoplus_{j} m_{j}^1 E_j^\star$, then we are done.
Otherwise consider the injective presentation of $\bigoplus_{j} m_{j}^1 E_j^\star$
	$$0\to \bigoplus_{j} m_{j}^1 E_j^\star \to \bigoplus_{j} m_{j}^1 I_j \xrightarrow{f_{2}} \bigoplus_{k} m_{k}^2 I_k.$$
If the image of the composition $f_{2}f_{1}$ lies in $\bigoplus_{k} m_{k}^2 E_k^\star$, then we are done (because $I_i$ is filtered by $E_i^\star$, $\img f_1 \cap \bigoplus_{j} m_{j}^1 E_j^\star$, and a subrepresentation of $\bigoplus_{k} m_{k}^2 E_k^\star$).
Otherwise, continue this way, and this procedure must end in finite number of steps.
If not, then we get an infinite sequence of maps $I_i \xrightarrow{f_{1}} \bigoplus_{j}m_{j}^1 I_j \xrightarrow{f_{2}} \bigoplus_{k}m_{k}^2 I_k \to \cdots$, whose composition is nonzero. This contradicts the frozen-Jacobi-finiteness.
\end{proof}

The following theorem was proved in \cite{Fs1} (see also \cite{Ft}) for ``non-intrinsically'' defined boundary representations.
\begin{theorem}\label{T:musupp} Suppose that $(\Delta,\S)$ is frozen-Jacobi-finite.
A $\delta$-vector is $\mu$-supported if and only if $\Hom(\delta, E_i^\star)=0$ for every frozen vertex $i$;
dually a $\dtc$-vector is $\mu$-supported if and only if $\Hom(E_i, \dtc)=0$ for every frozen vertex $i$.
\end{theorem}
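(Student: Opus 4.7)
The plan is to prove the $\delta$-vector statement directly and deduce the $\dtc$-vector statement by applying the proven criterion to the opposite ice QP $(\Delta^{\opp}, \S^{\opp})$, under which $k$-duality $D$ interchanges $\delta$-vectors with $\dtc$-vectors, projective with injective presentations, and, by the comment following Definition \ref{D:Brep}, $E_i$ with $E_i^\star$. Since the set of frozen vertices and the property of being $\mu$-supported are unchanged under this passage, and since frozen-Jacobi-finiteness is self-dual, the two halves of the theorem are formally equivalent. Using Definition \ref{D:HomE}, $\Hom(\delta, E_i^\star) = \Hom(M, E_i^\star)$ where $M$ is the cokernel of a general presentation of weight $\delta$, so it is enough to show that a finite-dimensional $J$-module $M$ is $\mu$-supported if and only if $\Hom(M, E_i^\star) = 0$ for every $i \in \Delta_0^{\op{fr}}$.

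For the forward direction I would first establish that $E_i^\star$ has simple socle $S_i$ as a $J$-module. Indeed $E_i^\star$ is the injective envelope $I_{[i]}$ of $S_i$ in $\rep J_{\mu[i]}$ extended by zero to $\Delta$; any arrow of $J$ not already in $J_{\mu[i]}$ must start or land outside $\Delta^\mu[i]$ and so acts as zero on $E_i^\star$, which means its $J$-socle coincides with its $J_{\mu[i]}$-socle. A nonzero $f \colon M \to E_i^\star$ would then force $\img f \supseteq S_i$, whence $f_i(M(i)) = E_i^\star(i) = k$ and in particular $M(i) \neq 0$, contradicting $\mu$-supportedness.

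For the converse I would assume $\Hom(M, E_j^\star) = 0$ for every frozen $j$. Left-exactness of $\Hom(M, -)$ propagates this vanishing to every submodule $L$ of any finite direct sum $\bigoplus_j m_j E_j^\star$. Then for each frozen $v$, Lemma \ref{L:filter} supplies a filtration $0 = I_v^{(0)} \subset I_v^{(1)} \subset \cdots \subset I_v^{(n)} = I_v$ whose successive quotients embed into such a direct sum, and the exact sequence
\begin{equation*}
0 \to \Hom(M, I_v^{(k-1)}) \to \Hom(M, I_v^{(k)}) \to \Hom(M, I_v^{(k)}/I_v^{(k-1)})
\end{equation*}
permits a routine induction to conclude $\Hom(M, I_v) = 0$. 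Since $\Hom(M, I_v) \cong M(v)^*$ by Nakayama duality, this forces $M(v) = 0$ at every frozen $v$, so $M$ is $\mu$-supported.

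The one delicate point is the socle identification used in the forward direction: the ambient algebra is $J$, not $J_{\mu[i]}$, and one must check that the extension-by-zero construction of Definition \ref{D:Brep} does not introduce new socle elements. Once that is settled, the rest is formal left-exactness of $\Hom$ combined with the filtration provided by Lemma \ref{L:filter}; the frozen-Jacobi-finite hypothesis enters only through that lemma.
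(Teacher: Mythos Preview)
Your proof is correct and follows essentially the same route as the paper. The converse direction is identical: both use Lemma \ref{L:filter} to filter $I_i$ by submodules of direct sums of dual boundary representations, then propagate the vanishing of $\Hom(M,-)$ through the filtration to conclude $\Hom(M,I_i)=0$, i.e.\ $M(i)=0$.

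The only difference is in the forward direction. You argue that $E_i^\star$ has simple $J$-socle $S_i$ (checking that extension by zero does not enlarge the socle), so any nonzero map $M\to E_i^\star$ hits $S_i$. The paper instead invokes the dual of Proposition \ref{P:Brep}(2) (valid without the rigid hypothesis, see Remark \ref{r:(2)}), which gives an embedding $E_i^\star\hookrightarrow I_i$; then $\Hom(M,E_i^\star)\hookrightarrow\Hom(M,I_i)=0$ is immediate. These two observations are equivalent---an embedding into the injective envelope $I_i$ of $S_i$ is the same as having socle $S_i$---but the paper's version sidesteps the ``delicate point'' you flag by citing an already-established structural fact about $E_i^\star$. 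Your socle verification is fine, and your justification (arrows outside $\Delta^\mu[i]$ act trivially on a module supported there, so the $J$-socle and $J_{\mu[i]}$-socle coincide) is the right one.
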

\begin{proof} We note that a $\delta$-vector is $\mu$-supported if and only if $\Hom(\delta, I_i)=0$ for each frozen vertex $i$.	
By the dual of Proposition \ref{P:Brep}.(2) (see Remark \ref{r:(2)}), $E_i^\star$ is a subrepresentation of $I_i$. So if $\delta$ is $\mu$-supported then $\Hom(\delta, E_i^\star)=0$.
Conversely, suppose that $\Hom(\delta, E_i^\star)=0$.
But $I_i$ is filtered by subrepresentations of a direct sum of $E_i^\star$'s by Lemma \ref{L:filter}.
Therefore, we conclude that $\Hom(\delta, I_i)=0$. The dual statement can be proved similarly.
\end{proof}
\begin{remark} (1). If $E_i^\star$ is reachable (rigid may not imply reachable), then by Theorem \ref{T:HomE} we have that $\Hom(\delta, E_i^\star)=0$ if and only if $f_{E_i^\star}(\delta)=0$, which imposes a set of inequalities on $\delta$.
If Conjecture \ref{c:dualityhom} holds, then the reachable assumption can be dropped.
	
(2). Each $f_{E_i^\star}$ is the tropicalization of the $F$-polynomial of the representation $E_i^\star$ (see Section \ref{ss:genericB}). The sum of all $F_{E_i^\star}$ for $i\in \Delta_0^{\op{fr}}$ is the Landau-Ginzburg potential of the corresponding cluster variety \cite{GHKK}.
\end{remark}

\subsection{Extensions without Frozen Arrows} \label{ss:minimal}
Suppose that there is no frozen arrows and $i$ is a frozen vertex. Then $d_i$ can be viewed as a presentation of $(\Delta,\S)_\mu$.
In this subsection, all frozen vertices are assumed to be rigid.


\begin{lemma}\label{L:E^mu} If there is no frozen arrows, then 
$\e(E_i, E_j) = \e(\mc{E}_j^\mu, \mc{E}_i^\mu)$, and dually
$\ec(E_i^\star, E_j^\star) = \ec(\tau_\mu\mc{E}_j^\mu, \tau_\mu\mc{E}_i^\mu)$.
In particular, we have that $\e(E_i,E_j) = \ec(E_i^\star, E_j^\star)$
so $\ep_i(j) = \epc_j^\star(i)$.
\end{lemma}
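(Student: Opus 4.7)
The plan is to prove the two stated equalities via AR-duality and a careful reduction from $J$-computations to $J_\mu$-computations made possible by the absence of frozen arrows. The case $i=j$ is immediate from the rigidity of $E_i$, $E_i^\star$, and $\mc{E}_i^\mu$ (Corollary \ref{C:HomEij} and the standing assumption that the frozen vertices are rigid), so I fix $i \neq j$.

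For the first identity, the plan is to translate both sides to $\Hom$ via Lemma \ref{L:H2E}:
$$\e_J(E_i, E_j) = \hom_J(E_j, \tau_J E_i)\qquad\text{and}\qquad \e_{J_\mu}(\mc{E}_j^\mu, \mc{E}_i^\mu) = \hom_{J_\mu}(\mc{E}_i^\mu, \tau_\mu \mc{E}_j^\mu),$$
and to exhibit a natural $k$-linear isomorphism between the two Hom spaces. The key structural inputs are: \textbf{(i)} the short exact sequence $0 \to \mc{E}_j^\mu \to E_j \to S_j \to 0$ from Corollary \ref{C:V[0]}.(2), which lets me replace Hom-out-of-$E_j$ by Hom-out-of-$\mc{E}_j^\mu$ whenever the target is mutable-supported (because $S_j$ admits no nonzero map into such a target); \textbf{(ii)} Observation \ref{O:inv}, which guarantees that $\Hom$ between mutable-supported representations coincides in $J_\mu$ and in $J$; and \textbf{(iii)} a careful identification of $\tau_J E_i$ obtained from the iterated extension $(\Delta,\S) = (\Delta,\S)_\mu[\mc{E}_1^\mu,\ldots,\mc{E}_n^\mu]$ together with Lemma \ref{L:V[0]}.

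The identification in (iii) proceeds inductively on the extension steps: in $J_{\mu[i]}$, $E_i = P_{[i]}$ is projective and its AR-translate has vanishing underlying representation, while the dual boundary representation $E_i^\star = I_{[i]}$ is injective. Each further extension by $\mc{E}_k^\mu$ (for $k \neq i$ frozen) modifies the minimal presentations in a way explicitly controlled by Lemma \ref{L:V[0]}.(1)--(2). The mutable part of $\tau_J E_i$ should agree with $\tau_\mu \mc{E}_i^\mu$, while the additional frozen components contribute zero once paired with the mutable-supported $\mc{E}_j^\mu$ (equivalently $\tau_\mu\mc{E}_j^\mu$ after AR-dualising in $J_\mu$).

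The dual identity $\ec(E_i^\star, E_j^\star) = \ec(\tau_\mu \mc{E}_j^\mu, \tau_\mu \mc{E}_i^\mu)$ is proved by the parallel argument starting from the sequence $0 \to S_i \to E_i^\star \to \tau_\mu \mc{E}_i^\mu \to 0$ and working with injective presentations and $\Ec$ in place of $\E$; alternatively it can be derived by applying the first identity to the opposite quiver with potential. For the final consequence, AR-duality in $J_\mu$ immediately gives $\e_{J_\mu}(\mc{E}_j^\mu,\mc{E}_i^\mu) = \ec_{J_\mu}(\tau_\mu\mc{E}_j^\mu,\tau_\mu\mc{E}_i^\mu)$, so combining the two identities yields $\e_J(E_i, E_j) = \ec_J(E_i^\star, E_j^\star)$. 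The Euler-type formula $\delta(\dv M) = \hom(\delta, M) - \e(\delta, M)$ (and its $\Ec$ analogue) applied to $\delta=\ep_i$, $M=E_j$ (respectively to the dual data for $E_j^\star$), combined with the frozen-support of $\ep_i$ and $\epc_j^\star$ from Proposition \ref{P:Brep}.(2) and its dual, then reduces the equality to $\ep_i(j) = \ep_j^\star(i)$.

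The main obstacle will be the precise identification of $\tau_J E_i$ in step (iii): because $\tau$ is not exact one cannot simply apply it to the defining short exact sequence, and the iterative tracking through Lemma \ref{L:V[0]} must be done carefully, in particular verifying that the frozen components of $\tau_J E_i$ genuinely vanish after pairing with $E_j$ (equivalently, with $\mc{E}_j^\mu$ on the mutable side). I expect this verification to be the only nonformal input of the proof.
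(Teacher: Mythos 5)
Your outline (AR-dualize via Lemma \ref{L:H2E}, reduce from $J$ to $J_\mu$, finish the last two claims with AR-duality in $J_\mu$ and Corollary \ref{C:HomEij}) has the right endpoints, and the concluding deductions agree with the paper. But the core of your argument --- the identification of $\tau_J E_i$ in step (iii) --- is the lemma itself in disguise, and you leave it unproved (``should agree'', ``the only nonformal input''). Worse, the structural picture underlying that step is backwards. Since $\ep_i$ is supported on the frozen vertices (Proposition \ref{P:Brep}.(2)) and the frozen part of $E_j$ is exactly $S_j$, the minimal presentation $P(\beta_-)\to P_i$ of $E_i$ gives $\e(E_i,E_j)=\beta_-(j)=\e(E_i,S_j)$ for $i\ne j$; by Lemma \ref{L:H2E} this forces $\hom(E_j,\tau_J E_i)=\hom(S_j,\tau_J E_i)$, so the injection $\Hom(S_j,-)\hookrightarrow\Hom(E_j,-)$ induced by $E_j\twoheadrightarrow S_j$ is an isomorphism here: every map $E_j\to\tau_J E_i$ kills $\mc{E}_j^\mu$ and lands in the frozen socle of $\tau_J E_i\subseteq I(\beta_-)$. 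Thus the frozen components of $\tau_J E_i$, far from ``contributing zero'', carry the entire Hom space, and your step (i) contributes nothing. Moreover, even if the mutable part of $\tau_J E_i$ were $\tau_\mu\mc{E}_i^\mu$, pairing it against $\mc{E}_j^\mu$ would compute $\hom_{J_\mu}(\mc{E}_j^\mu,\tau_\mu\mc{E}_i^\mu)=\e_{J_\mu}(\mc{E}_i^\mu,\mc{E}_j^\mu)$, which has the indices in the wrong order; $\E$ is not symmetric.

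The paper's proof avoids $\tau_J$ entirely. By Corollary \ref{C:HomEij} and \eqref{eq:Betti}, $\e(E_i,E_j)=-\ep_i(j)=\ext^1_J(E_i,S_j)$ for $i\ne j$, and $\ext^1_J(E_i,S_j)$ is read off by applying $\Hom(E_i,-)$ to the explicit injective resolution \eqref{eq:Su_inj} of $S_j$, whose differential at the relevant spot is the matrix of cyclic derivatives $\partial_{[ab]}[\S]$. With no frozen arrows, this is literally the same matrix as in the presentation $d_j$ defining $\mc{E}_j^\mu$ over $J_\mu$, and all vertices $t(a),h(b)$ adjacent to $j$ are mutable, where $E_i$ and $\mc{E}_i^\mu$ coincide; since $E_i(j)=0$, the kernel computing $\ext^1_J(E_i,S_j)$ matches the cokernel computing $\e_{J_\mu}(\mc{E}_j^\mu,\mc{E}_i^\mu)$. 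If you wish to keep your route, you must actually compute the frozen socle of $\tau_J E_i$ at $j$ --- which amounts to the same resolution comparison --- rather than the mutable part.
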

\begin{proof} We have from Corollary \ref{C:HomEij} that $\e(E_i, E_j) = -\ep_i(j) =\ext^1(E_i, S_j)$ for $i\neq j$.
Recall that $S_j$ has an injective resolution \eqref{eq:Su_inj} \begin{align}
\label{eq:resSj} 0\to S_j \to I_j \xrightarrow{(a)_a} \bigoplus_{a:u\to j} I_{u} &\xrightarrow{_a(\partial_{[ab]}[\S])_b} \bigoplus_{b:j\to w} I_{w} \to \cdots
\intertext{while by definition $\mc{E}_j^\mu$ corresponds to the presentation $d_j$ in $J_\mu$}
\label{eq:dj} d_j: \bigoplus_{a:u\to j} P_u  &\xrightarrow{_a(\partial_{[ab]}[\S])_b} \bigoplus_{b:j\to w} P_w.
\end{align}
By the non-frozen-arrow assumption, the two maps $(\partial_{[ab]}[\S])$ are essentially the same.
Now we calculate $\ext^1(E_i, S_j)$ from the resolution \eqref{eq:resSj} and $\e(\mc{E}_j^\mu, \mc{E}_i^\mu)$ from the presentation \eqref{eq:dj}.
Note that $E_i(j)=0$ if $i\neq j$. 
We conclude that $\e(E_i, E_j) = \e(\mc{E}_j^\mu, \mc{E}_i^\mu)$.
The dual statement can be proved similarly.

By Lemma \ref{L:H2E} we have that
$$\ec(\tau_\mu\mc{E}_j^\mu, \tau_\mu \mc{E}_i^\mu)=\hom(\mc{E}_i^\mu, \tau_\mu\mc{E}_j^\mu) = \e(\mc{E}_j^\mu, \mc{E}_i^\mu).$$
Hence, $\e(E_i,E_j) = \ec(E_i^\star, E_j^\star)$ and thus $\ep_i(j) = \epc_j^\star(i)$ by Corollary \ref{C:HomEij}.
\end{proof}

Throughout this article we will denote by $e_i$ the standard unit vector supported at $i$-th coordinate.
We will not specify the ambient space of $e_i$ if it is clear from the context. 
For example, the $e_i$ in the following lemma is in $\mb{Z}^{\Delta_0^{\op{fr}}}$.
\begin{lemma}\label{L:epci}  Let $\ep_i^{\mu}$ be the $\delta$-vector of $\mc{E}_i^\mu$.
	\begin{enumerate}
\item We always have that $(\dv E_i) B_{\Delta}^\t  = -\epc_i^\mu$ and $(\dv E_i^\star) B_{\Delta}^\t = \tau_\mu\ep_i^{\mu}$.
\item If $\Delta$ has no frozen arrows, we have that $\epc_i  = (\epc_i^\mu, e_i-h_i)$ and $\ep_i^\star  = (\tau_\mu\ep_i^{\mu}, e_i-h_i^\tau)$,
	where $h_i$ is the vector $(\hom(\mc{E}_j^\mu, \mc{E}_i^\mu))_j$ and $h_i^\tau$ is the vector $(\hom(\tau_\mu\mc{E}_i^{\mu}, \tau_\mu\mc{E}_j^{\mu}))_j$.
	\end{enumerate}
\end{lemma}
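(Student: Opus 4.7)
The plan is to directly compute the indicated $\dtc$- and $\delta$-vectors by combining the short exact sequences \eqref{eq:exactEi} and \eqref{eq:exactEist} with the relation \eqref{eq:delta2dual} between $\delta$- and $\dtc$-vectors (read via the full skew-symmetric $B(\Delta)$).

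For part (1), the first observation is that $(\dv M)B_\Delta^\t=(\delta_M-\dtc_M)|_\mu$, a direct consequence of \eqref{eq:delta2dual} together with the skew-symmetry of $B(\Delta)$. Applied to $M=E_i$, the mutable part of $\delta_{E_i}=\ep_i$ vanishes by Proposition \ref{P:Brep}.(2), so the claim reduces to showing $\epc_i|_\mu=\epc_i^\mu$. This identity is established by decomposing $\dv E_i=\dv E_i^\mu+e_i$ from \eqref{eq:exactEi}, applying \eqref{eq:delta2dual} in both $J$ and $J_\mu$, and observing that the extra contribution from the column $B(\Delta)(i,-)$ exactly cancels with $\ep_i^\mu=-b_i$ by skew-symmetry. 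The statement for $E_i^\star$ is handled analogously from \eqref{eq:exactEist}, using additionally the Nakayama-functor identity $\dtc_{\tau_\mu\mc{E}_i^\mu}=-\delta_{\mc{E}_i^\mu}=-\ep_i^\mu$, which falls out of unwinding the correspondence $\tau\mc{M}\leftrightarrow\nu(d_\mc{M})$.

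For part (2), the no-frozen-arrows hypothesis kills the term $B(\Delta)(i,j)$ for every frozen $j$, so the frozen-$j$ component of $(\dv E_i)B(\Delta)$ reduces to a sum over mutable vertices alone. Rewriting $B(\Delta)(v,j)=-\ep_j^\mu(v)$ for mutable $v$ identifies this sum with $-\ep_j^\mu(\dv E_i^\mu)$, and \eqref{eq:heform} applied to the canonical presentation $d_j$ of $\mc{E}_j^\mu$ yields $\ep_j^\mu(\dv E_i^\mu)=\hom(\mc{E}_j^\mu,\mc{E}_i^\mu)-\e(\mc{E}_j^\mu,\mc{E}_i^\mu)$. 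Combined with $\ep_i(j)=\updelta_{ij}-\e(\mc{E}_j^\mu,\mc{E}_i^\mu)$ (immediate from Proposition \ref{P:Brep}.(2), Corollary \ref{C:HomEij}, and Lemma \ref{L:E^mu}), the $\e$-terms cancel and we recover $\epc_i(j)=\updelta_{ij}-\hom(\mc{E}_j^\mu,\mc{E}_i^\mu)$, so the frozen part of $\epc_i$ is exactly $e_i-h_i$. The formula for $\ep_i^\star$ is proved in parallel from \eqref{eq:exactEist}: the same skew-symmetry argument gives the mutable part $\tau_\mu\ep_i^\mu=-\epc_i^\mu$, while for the frozen part Lemma \ref{L:H2E}.(1) lets us rewrite $\hom(\mc{E}_j^\mu,\tau_\mu\mc{E}_i^\mu)$ and $\e(\mc{E}_j^\mu,\tau_\mu\mc{E}_i^\mu)$ via $\e(\mc{E}_i^\mu,\mc{E}_j^\mu)$ and $\hom(\tau_\mu\mc{E}_i^\mu,\tau_\mu\mc{E}_j^\mu)$, after which an analogous cancellation produces $h_i^\tau$.

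The main obstacle is purely bookkeeping: keeping consistent track of the interplay between the full matrix $B(\Delta)$ and its mutable-row truncation $B_\Delta$, the sign flips accompanying $\tau_\mu$ in passing between $\delta$- and $\dtc$-vectors, and the systematic comparison of $\mc{E}_i^\mu$ as a $J_\mu$-module with $E_i$, $E_i^\star$ as $J$-modules through the two defining short exact sequences.
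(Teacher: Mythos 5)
Your proposal is correct and follows essentially the same route as the paper: write $B(\Delta)$ in block form, use \eqref{eq:delta2dual} together with $\dv E_i=\dv E_i^\mu+e_i$ (resp. $\dv E_i^\star=\dv\tauh_\mu\mc{E}_i^\mu+e_i$), identify the frozen columns of $B_{\op{fr}}$ with the weights $-\ep_j^\mu$, and then combine \eqref{eq:heform} with Proposition \ref{P:Brep}.(2), Corollary \ref{C:HomEij}, Lemma \ref{L:E^mu}, and Lemma \ref{L:H2E} to evaluate the frozen coordinates. Your version is slightly more explicit than the paper's (which leaves the dual statements to the reader), but the underlying computation is the same.
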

\begin{proof}  (1). We write $B_\Delta$ in block form $B_\Delta = (B_\mu, B_{\op{fr}})$, and $\dv E_i = (\dv E_i^\mu, e_i)$. Note that $b_i = -\ep_i^\mu$. Then 
	$$(\dv E_i^\mu, e_i)(B_\mu, B_{\op{fr}})^\t = - \dv E_i^\mu B_\mu - \ep_i^\mu = -\epc_i^\mu.$$
	
	(2). We write $B(\Delta)$ in block form $\sm{B_\mu & B_{\op{fr}} \\ -B_{\op{fr}}^\t & O}$. Then
	\begin{align*} \epc_i &= \ep_i + (\dv E_i)B(\Delta) && \text{by \eqref{eq:delta2dual}}\\
		&= \ep_i + (\epc_i^\mu, (\dv E_i^\mu) B_{\op{fr}}) && \text{by (1)}.
	\end{align*}
By Proposition \ref{P:Brep}.(2), the mutable part of $\epc_i$ is $\epc_i^\mu$.
Note that the $j$-th column of $-B_{\op{fr}}$ is the $\delta$-vector of ${E}_j^\mu$. So by Corollary \ref{C:HomEij} and Lemma \ref{L:E^mu}
$$\epc_i(j) = \ep_i(j) - \ep_j^\mu(\dv E_i^\mu) = \updelta_{i,j} - h_i(j).$$
This gives $\epc_i  = (\epc_i^\mu, e_i-h_i)$.
We leave the dual statements to the reader.
\end{proof}

%
%


\subsection{Cartan Type and Weight Functions} \label{ss:Cartanwt}
Let $I$ be a set of rigid frozen vertices of $\Delta_0$.
\begin{definition}\label{D:CartanI} The Cartan type of $I$ is given by the following symmetric Cartan matrix $C_I=(c_{i,j})$
\begin{equation}\label{eq:wtC}  c_{i,j} =  2\updelta_{i,j} - \e_{J_\mu}(\mc{E}_i^\mu, \mc{E}_j^\mu) - \e_{J_\mu}(\mc{E}_j^\mu, \mc{E}_i^\mu). \end{equation}
\end{definition}
\noindent Note that by Lemma \ref{L:H2E} this is also equal to $2\updelta_{i,j} - \ec_{J_\mu}(\tau_\mu\mc{E}_i^\mu, \tau_\mu\mc{E}_j^\mu) - \ec_{J_\mu}(\tau_\mu\mc{E}_j^\mu, \tau_\mu\mc{E}_i^\mu)$.

\begin{definition}\label{D:adI}	A function $\wt=(\wt_i)_{i\in I}: \mb{Z}^{\Delta_0} \to \mb{Q}^I$ is called {\em adapted to} $I$ if it satisfies
	\begin{align}\label{eq:wtij}  \wt_i(\epc_j) &=  2\updelta_{i,j} -  \e_{J_\mu}(\mc{E}_i^\mu, \mc{E}_j^\mu) - \e_{J_\mu}(\mc{E}_j^\mu, \mc{E}_i^\mu). 
\intertext{It is called dually adapted to $I$ if it satisfies}
\label{eq:wtijdual}  \wt_i(\ep_j^\star) &=  2\updelta_{i,j} - \ec_{J_\mu}(\tau_\mu\mc{E}_i^\mu, \tau_\mu\mc{E}_j^\mu) - \ec_{J_\mu}(\tau_\mu\mc{E}_j^\mu, \tau_\mu\mc{E}_i^\mu). \end{align}
\end{definition}

\begin{definition}[\cite{Fs1}] Let ${\sf L}$ be a subgroup of $\mb{Q}^I$. 
An ${\sf L}$-grading of $\Delta_0$, that is, a $\mb{Z}$-linear map $\mb{Z}^{\Delta_0} \to {\sf L}$, is called {\em compatible} to $\Delta$ if it annihilates the row space of $B_\Delta$.
\end{definition}
\noindent If ${\sf L}=\mb{Q}^I$, then we will drop $\mb{Q}^I$ and simply call it a compatible grading.
If ${\sf L}=\mb{Z}^I$, then we will call it an integral compatible grading.
For a compatible grading $\wt$, one can define its mutation at $u$
$$\mu_u(\wt)(v) = \begin{cases}\sum_{u\to w}\wt(w) - \wt(u) & \text{if $v=u$} \\ \wt(v) & \text{if $v\neq u$.} \end{cases}$$
\begin{lemma}\label{L:muwt} If $\wt$ is a compatible grading adapted to $I$ for $(\Delta,\S)$, then so is $\mub(\wt)$ for $\mub(\Delta,\S)$. Moreover, we have that $ \wt(\delta) = \mub(\wt)(\mub(\delta))$.
\end{lemma}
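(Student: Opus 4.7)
The plan is to treat a single mutation $\mu_u$ at a mutable vertex and then iterate, and to establish the \emph{moreover} statement first since the other two claims flow from it. The order of the argument will be: (a) the pairing-invariance $\wt(\delta) = \mu_u(\wt)(\mu_u(\delta))$, proved for both the $\delta$-vector and $\dtc$-vector mutation rules; (b) compatibility of $\mu_u(\wt)$ with $\mu_u(\Delta)$; and (c) the adapted property of $\mu_u(\wt)$ for $\mu_u(\Delta,\S)$.

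Step (a) is the technical core. Expanding $\mu_u(\wt)(\mu_u(\delta))$ using the Fock--Goncharov rule \eqref{eq:mug} and the definition of $\mu_u(\wt)$, the terms $\wt(u)\delta(u)$ cancel and one is left with
\[
\mu_u(\wt)(\mu_u(\delta)) - \wt(\delta) = -\delta(u)\sum_{w}[b_{u,w}]_+\wt(w) - \beta_-(u)\sum_{v}\wt(v)[b_{v,u}]_+ + \beta_+(u)\sum_{v}\wt(v)[-b_{v,u}]_+,
\]
where $\beta_\pm(u)=[\pm\delta(u)]_+$ for a general presentation. Splitting into the two cases $\delta(u)\ge 0$ and $\delta(u)\le 0$ and using $b_{v,u}=-b_{u,v}$, the right-hand side simplifies to a multiple of $\delta(u)\sum_{v}\wt(v)b_{u,v}$, which vanishes by the compatibility hypothesis applied at the mutable vertex $u$. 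The $\dtc$-vector version, needed in step (c), is the same calculation after swapping the roles of $b_{v,u}$ and $b_{u,v}$; compatibility applied to the same row of $B_\Delta$ again does the job.

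Step (b) is then formal: each row of $B_{\mu_u(\Delta)}$ indexed by a mutable vertex $v$ is obtained from the corresponding row of $B_\Delta$ via the $\delta$-vector mutation rule (a direct check against the Fomin--Zelevinsky matrix-mutation formula, using skew-symmetry), so (a) immediately gives $\mu_u(\wt)$ annihilating the mutated row. For (c), I apply (a) to the $\dtc$-vector $\epc_j$: by Lemma \ref{L:bdinv}, $\mu_u(E_j)$ is the boundary representation of $j$ in $\mu_u(\Delta,\S)$, hence $\mu_u(\epc_j) = \epc_j'$, and so $\mu_u(\wt)_i(\epc_j') = \wt_i(\epc_j) = c_{i,j}$. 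It only remains to show that the Cartan entry itself is mutation-invariant; summing Lemma \ref{L:HEmu}(2) applied to the pairs $(\mc{E}_i^\mu,\mc{E}_j^\mu)$ and $(\mc{E}_j^\mu,\mc{E}_i^\mu)$, the two correction terms cancel, so the symmetric sum $\e(\mc{E}_i^\mu,\mc{E}_j^\mu)+\e(\mc{E}_j^\mu,\mc{E}_i^\mu)$ is preserved and $c_{i,j}' = c_{i,j}$. The main obstacle is the sign bookkeeping in step (a); once that identity is cleanly in hand, (b) and (c) are essentially formal, and iterating gives the statement for an arbitrary mutation sequence $\mub$.
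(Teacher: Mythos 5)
Your proof is correct and follows essentially the same route as the paper: the paper simply cites [Fs1] for the compatibility of $\mub(\wt)$ and the pairing invariance $\wt(\delta)=\mub(\wt)(\mub(\delta))$ (your steps (a) and (b) reproduce exactly that computation, including the $\Delta^{\opp}$ trick for $\dtc$-vectors), and then, just as in your step (c), deduces adaptedness from the mutation-invariance of the Cartan entries via Lemmas \ref{L:bdinv} and \ref{L:HEmu}.
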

\begin{proof} It was checked in \cite{Fs1} that if $\wt$ is a compatible grading for $\Delta$, then so is $\mub(\wt)$ for $\mub(\Delta)$ and it satisfies that $\wt(\delta) = \mub(\wt)(\mub(\delta))$.
The same argument for $\Delta^{\opp}$ shows that $\wt(\dtc) = \mub(\wt)(\mub(\dtc))$.	
It remains to show that $\mub(\wt)$ is adapted to $I$.
By Lemmas \ref{L:bdinv} and \ref{L:HEmu}, $\mub(\Delta,\S)$ has the same Cartan type as $(\Delta,\S)$.
So our conclusion follows from
$\wt(\epc_i) = \mub(\wt)(\mub(\epc_i))$.
\end{proof}

\begin{remark} A compatible grading $\mb{Z}^{\Delta_0} \to \mb{Q}^I$ adapted to $I$ always exists if 
\begin{equation}\label{eq:existwt}  \op{span}(\epc_i)_{i\in I} \cap \text{(the row space of $B_\Delta$)} = 0.  \end{equation}
We can see that the condition \eqref{eq:existwt} is generically satisfied.
However, \eqref{eq:existwt} cannot guarantee the grading is integral, i.e., a $\mb{Z}^I$-grading.
\end{remark}

Now we shall give a natural construction of weight functions.
Let $E=E_i$ be a boundary representation.
We set $\wt_\ep: \mb{Z}^{\Delta_0}\to\mb{Z}$ to be the linear functional given by 
\begin{equation}\label{eq:wti} \delta \mapsto \delta(\dv E - \dv(\tau^{-1} E)). \end{equation}
\begin{proposition}\label{P:weight} For any $\dtc$-vectors such that $\hom(\delta, \tau E)=0$ and $\hom(E, \dtc)=0$, we have that
	\begin{align} \label{eq:wtfun} \ec(\tau^{-1} E, \dtc) - \e(\delta, E) &= \wt_\ep(\dtc).
\end{align} 
In particular, the equation \eqref{eq:wtfun} holds for any $\mu$-supported $\dtc$.
\end{proposition}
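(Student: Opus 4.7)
The plan is to expand both $\dtc(\dv E)$ and $\dtc(\dv \tau^{-1}E)$ via the Euler characteristic formula \eqref{eq:hecform}, subtract, and simplify using the hypotheses. Let $M$ and $N$ be general representations of weight $\delta$ and $\dtc$ respectively. By Definition \ref{D:HomE} and Lemma \ref{L:H2E}.(1), the two hypotheses translate to $\e(E,M)=0$ and $\hom(E,N)=0$. Applying \eqref{eq:hecform} with the injective copresentation $\dc_N$ first to $E$ and then to $\tau^{-1}E$ gives
\[
\dtc\cdot\dv E = -\ec(E,\dtc), \qquad \dtc\cdot\dv\tau^{-1}E = \hom(\tau^{-1}E,\dtc) - \ec(\tau^{-1}E,\dtc).
\]
Subtracting these, the proposition reduces to the identity
\[
(\star)\qquad \e(\delta,E) = \ec(E,\dtc) + \hom(\tau^{-1}E,\dtc).
\]

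To prove $(\star)$, first invoke Theorem \ref{T:genpi} to take $N=M$ (a single general representation carries both weights), and use Lemma \ref{L:H2E}.(1) to rewrite $\hom(\tau^{-1}E,\dtc)=\ec(M,E)$. Plugging in \eqref{eq:heform} applied to $(d_M,E)$, \eqref{eq:hecform} applied to $(M,\dc_E)$, and the already-derived $\ec(E,M)=-\dtc\cdot\dv E$, the $\hom(M,E)$-terms cancel and $(\star)$ collapses to
\[
(\dtc-\delta)\cdot\dv E = -\epc\cdot\dv M.
\]
By \eqref{eq:delta2dual} the left side equals $(\dv M)\,B_\Delta(\dv E)^{\t}$, and by Lemma \ref{L:epci}.(1) this is $-\epc^\mu\cdot\dv M$. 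Thus $(\star)$ further reduces to the single equality $\epc^\mu\cdot\dv M=\epc\cdot\dv M$, whose discrepancy lives only on the frozen coordinates and is given explicitly by Lemma \ref{L:epci}.(2).

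For the ``In particular'' clause, assume $\dtc$ is $\mu$-supported. Then $\dv M$ has no frozen support, so the reduction $\epc^\mu\cdot\dv M=\epc\cdot\dv M$ is automatic. The hypothesis $\hom(E,\dtc)=0$ is then exactly Theorem \ref{T:musupp}, and the remaining hypothesis $\e(E,M)=\hom(M,\tau E)=0$ follows from $\ep\cdot\dv M=0$ combined with $\hom(E,M)=0$ via \eqref{eq:heform}; the vanishing $\ep\cdot\dv M=0$ holds because $\ep$ is frozen-supported by Proposition \ref{P:Brep}.(2) while $\dv M$ is mutable-supported.

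The main obstacle is controlling the frozen cross-term $(\epc-\epc^\mu)\cdot\dv M$ in the general (not necessarily $\mu$-supported) setting. I expect this to require both hypotheses working together: $\ep\cdot\dv M=0$ is extracted from $\e(E,M)=0$ and $\hom(E,M)=0$ via \eqref{eq:heform}, and a companion vanishing must be extracted by dualizing through \eqref{eq:hecform} and Lemma \ref{L:epci}.(2), whose description of $\epc$ on frozen coordinates as $e_i-h_i$ should allow one to identify the remaining cross-terms with known $\hom$-pairings that are killed by the hypotheses.
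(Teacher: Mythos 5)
Your first reduction is exactly the paper's: using \eqref{eq:hecform} together with $\hom(E,\dtc)=0$, the claim is equivalent to $(\star)\colon \e(\delta,E)=\ec(E,\dtc)+\hom(\tau^{-1}E,\dtc)$. The paper then finishes in one line: adding the other hypothesis $\hom(\delta,\tau E)=0$ to the left-hand side turns $(\star)$ into the four-term identity $\hom(\delta,\tau E)+\e(\delta,E)=\hom(\tau^{-1}E,\dtc)+\ec(E,\dtc)$, which holds unconditionally because it is the polarization of $\e(\mc{X},\mc{X})=\ec(\mc{X},\mc{X})$ for $\mc{X}=\mc{M}\oplus E$ (Lemma \ref{L:H2E}). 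Your Euler-form expansion of $(\star)$ can also be made to work, but your execution derails at the evaluation of $(\dtc-\delta)\cdot\dv E$.

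Here is the gap. The relation between $\delta$ and $\dtc$ is $\dtc-\delta=(\dv M)\,B(\Delta)$ with the \emph{full} skew-symmetric matrix (compare \eqref{eq:Betti} and \eqref{eq:Bettidual}: coordinatewise over all of $\Delta_0$ one gets $\dtc(u)-\delta(u)=\sum_v B(\Delta)(v,u)\dim M(v)$), not the truncated $B_\Delta$ paired against the mutable part of $\dv M$ only; the two differ precisely when $M$ has frozen support, which is the case you cannot close. With the full matrix, skew-symmetry gives $(\dtc-\delta)\cdot\dv E=(\dv M)B(\Delta)(\dv E)^{\t}=-(\dv E)B(\Delta)(\dv M)^{\t}=-(\epc-\ep)\cdot\dv M$, so $(\star)$ reduces not to $\epc^\mu\cdot\dv M=\epc\cdot\dv M$ but to $\ep\cdot\dv M=0$; and by \eqref{eq:heform} this equals $\hom(E,M)-\e(E,M)$, which vanishes by exactly your two hypotheses. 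Your stated target $(e_i-h_i)\cdot(\dv M)|_{\op{fr}}=0$ is a genuinely different linear condition (it involves $\hom(\mc{E}_j^\mu,\mc{E}_i^\mu)$ where the correct one involves $-\e(\mc{E}_j^\mu,\mc{E}_i^\mu)$), and it is not implied by the hypotheses — the ``companion vanishing'' you hope to extract does not exist in that form. The $\mu$-supported case is unaffected, since there $(\dv M)|_{\op{fr}}=0$ makes both versions of the target vacuous, and your verification of the hypotheses in that case is correct; but the general statement as you left it has a real hole, traceable to this single computational misstep.
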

\begin{proof} Due to the equalities \ref{eq:hecform} and $\hom(E, \dtc)=0$, the equation is equivalent to
	$$\e(\delta, E) = \hom(\tau^{-1} E, \dtc) + \ec(E,\dtc),$$	
	which is equivalent to
	$$\hom(\delta,\tau E) + \e(\delta,E)=\hom(\tau^{-1} E, \dtc) +	\ec(E,\dtc).$$
	This always holds due to Lemma \ref{L:H2E}.	
For $\mu$-supported $\dtc$, we have $\hom(E, \dtc)=0$ by Theorem \ref{T:musupp}. 
Moreover, by Proposition \ref{P:Brep}.(2) we have $\e(E,\coker(\delta))=0$ as well.
\end{proof}



\begin{lemma}\label{L:Cartan} Let $I$ be a set of frozen vertices. Define a weight function $\wt: \mb{Z}^{\Delta_0}\to \mb{Z}^I$ by 
	$$\wt(\dtc) = (\wt_i(\dtc))_{i\in I}.$$ 
	Then we have that
	$$\wt_i(\epc_j) = \updelta_{i,j} + \hom(\tau^{-1}E_j, \tau^{-1}E_i)  - \left( \max(0, -\ep_i(j)) + \max(0, -\ep_j(i)) \right).$$		
\end{lemma}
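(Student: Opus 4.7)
The plan is to expand $\wt_i(\epc_j)$ by the definition in \eqref{eq:wti} as
$$\wt_i(\epc_j) = \epc_j(\dv E_i) - \epc_j(\dv \tau^{-1} E_i),$$
and then convert each of the two terms on the right into a combination of $\hom$ and $\ec$ values via the Euler-type identity \eqref{eq:hecform}. After this step, everything is expressed in terms of $\hom$ and $\ec$ between pairs drawn from $\{E_i, E_j, \tau^{-1} E_i, \tau^{-1} E_j\}$, namely $\hom(E_i, E_j)$, $\ec(E_i, E_j)$, $\hom(\tau^{-1} E_i, E_j)$, and $\ec(\tau^{-1} E_i, E_j)$.

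Next I would rewrite the three terms other than $\hom(E_i, E_j)$ using the AR duality in Lemma \ref{L:H2E}.(1):
$$\ec(E_i, E_j) = \hom(\tau^{-1} E_j, E_i), \qquad \hom(\tau^{-1} E_i, E_j) = \ec(E_j, E_i), \qquad \ec(\tau^{-1} E_i, E_j) = \hom(\tau^{-1} E_j, \tau^{-1} E_i).$$
Combined with Corollary \ref{C:HomEij}, which gives $\hom(E_i, E_j) = \updelta_{i,j}$, the sum collapses to
$$\wt_i(\epc_j) = \updelta_{i,j} + \hom(\tau^{-1} E_j, \tau^{-1} E_i) - \ec(E_i, E_j) - \ec(E_j, E_i).$$
Comparing with the desired formula, what remains is the identification
$$\ec(E_i, E_j) + \ec(E_j, E_i) = \max(0, -\ep_i(j)) + \max(0, -\ep_j(i)).$$

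This is the main obstacle, and I expect it to be the only nontrivial input. By Corollary \ref{C:HomEij} the right-hand side equals $\e(E_i, E_j) + \e(E_j, E_i)$, so it suffices to prove the cross-symmetry $\e(E_i, E_j) + \e(E_j, E_i) = \ec(E_i, E_j) + \ec(E_j, E_i)$. I will deduce this by a polarization trick: apply Lemma \ref{L:H2E}.(2), namely $\E(\mc{M}, \mc{M}) = \Ec(\mc{M}, \mc{M})$, to $\mc{M} = E_i \oplus E_j$ and expand both sides bilinearly; the diagonal contributions $\e(E_i, E_i)$ and $\e(E_j, E_j)$ match their $\ec$-counterparts by the same lemma applied to $E_i$ and $E_j$ individually, leaving precisely the desired equality of the cross terms. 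Once this is in hand, the lemma follows by substitution; everything else is routine bookkeeping with Definition \ref{D:HomE} and the AR translation.
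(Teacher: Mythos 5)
Your proof is correct and follows essentially the same route as the paper: expand $\wt_i(\epc_j)=\epc_j(\dv E_i-\dv\tau^{-1}E_i)$ via \eqref{eq:hecform}, convert terms with Lemma \ref{L:H2E}.(1), and finish with Corollary \ref{C:HomEij}. The only (harmless) divergence is in the last step: the paper deduces $\e(E_i,E_j)+\e(E_j,E_i)=\ec(E_i,E_j)+\ec(E_j,E_i)$ by citing the mutation formulas of Lemma \ref{L:HEmu}, whereas your polarization of Lemma \ref{L:H2E}.(2) applied to $E_i\oplus E_j$ (using bi-additivity of $\E$ and $\Ec$) is an equally valid and arguably more self-contained justification.
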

\begin{proof} We do some straightforward calculation:
\begin{align*} &\epc_j(\dv E_i - \dv \tau^{-1} E_i) \\
=& (\hom(E_i, E_j) - \ec(E_i, E_j)) - (\hom(\tau^{-1}E_i, E_j) - \ec(\tau^{-1}E_i, E_j)) &&  \eqref{eq:hecform} \\
=& (\hom(E_i, E_j) - \ec(E_i, E_j))  - (\ec(E_j, E_i) - \hom(\tau^{-1}E_j, \tau^{-1}E_i)) && (\text{Lemma } \ref{L:H2E}.(1))\\
=& \hom(E_i, E_j) + \hom(\tau^{-1}E_j, \tau^{-1}E_i) - (\e(E_i, E_j)  + \e(E_j, E_i)) && (\text{Lemma } \ref{L:H2E}.(2)).
\end{align*}
	Then the desired equality follows from Corollary \ref{C:HomEij}.
\end{proof}

\begin{definition}
A pair of frozen vertices $(i,\ibar)$ is called {\em $\tau$-exact} if $\tau^{-1} E_i = E_{\ibar}^\star$.
In this definition we allow $i = \ibar$.
\end{definition}
\noindent The representation $E_i^\mu$ and the boundary representation $E_i$ do not depend on other frozen vertices. However, $\tau^{-1} E_i$ will depend. So being $\tau$-exact for $(i,\ibar)$ depends on other frozen vertices as seen in the following lemma.

\begin{lemma}\label{L:tauexact} Suppose that $\Delta$ has no frozen arrows and $i$ is a rigid frozen vertex.
	Then $(i,\ibar)$ is a $\tau$-exact pair if and only if $\tau_\mu^{-1} \mc{E}_i^\mu = \tau_\mu \mc{E}_{\ibar}^\mu$ and
	\begin{equation}\label{eq:tauexact} \hom(\mc{E}_j^\mu, \mc{E}_i^\mu) + \hom(\tau_\mu^{-1}\mc{E}_i^\mu, \tau_\mu \mc{E}_j^\mu)  = \updelta_{i,j} + \updelta_{\ibar,j} \quad \text{for each frozen $j$}.
	\end{equation}
	In particular, being a $\tau$-exact pair is mutation-invariant. Moreover, $\hom(E_i^\mu,E_i^\mu)=1$ unless one of $\tau^{-1}\mc{E}_i^\mu, \mc{E}_i^\mu, \tau \mc{E}_i^\mu$ is negative.
\end{lemma}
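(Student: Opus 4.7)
The plan is to translate the equality $\tau^{-1}E_i = E_{\ibar}^\star$ of decorated representations of $J$ into an equality of $\delta$-vectors and split it componentwise via Lemma~\ref{L:epci}(2). Both $E_i$ and $E_{\ibar}^\star$ are $\E$-rigid in $J$ (using the rigidity of $i$ and $\ibar$ together with the invariance of $\E$ under restriction to subquivers noted just after Definition~\ref{D:Brep}), and $\E$-rigidity is preserved by $\tau$ via Lemma~\ref{L:H2E}(2); since rigid decorated representations lie in open orbits of their presentation spaces they are uniquely determined by their $\delta$-vectors. Using the identity $\delta_{\tau^{-1}\mc{M}} = -\dtc_{\mc{M}}$, which comes directly from applying $\nu^{-1}$ to the minimal injective presentation of $\mc{M}$, the identity $\tau^{-1}E_i = E_{\ibar}^\star$ rewrites as $-\epc_i = \ep_{\ibar}^\star$.

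Next I will invoke Lemma~\ref{L:epci}(2), whose no-frozen-arrow hypothesis matches the one assumed here, to expand
\[ \epc_i = (\epc_i^\mu,\, e_i - h_i), \qquad \ep_{\ibar}^\star = (\tau_\mu \ep_{\ibar}^\mu,\, e_{\ibar} - h_{\ibar}^\tau), \]
and split the equation $-\epc_i = \ep_{\ibar}^\star$ into its mutable and frozen parts. The mutable part $-\epc_i^\mu = \tau_\mu \ep_{\ibar}^\mu$, after reapplying $\delta_{\tau_\mu^{-1}\mc{E}_i^\mu} = -\epc_i^\mu$ inside $J_\mu$, becomes $\delta_{\tau_\mu^{-1}\mc{E}_i^\mu} = \delta_{\tau_\mu\mc{E}_{\ibar}^\mu}$; the rigidity of $\mc{E}_i^\mu$ and $\mc{E}_{\ibar}^\mu$ (preserved under $\tau_\mu^{\pm 1}$ by Lemma~\ref{L:H2E}(2)) then upgrades this to the first stated condition $\tau_\mu^{-1}\mc{E}_i^\mu = \tau_\mu\mc{E}_{\ibar}^\mu$. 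The $j$-th frozen coordinate gives $h_i(j) + h_{\ibar}^\tau(j) = \updelta_{i,j} + \updelta_{\ibar,j}$, and rewriting $h_{\ibar}^\tau(j) = \hom(\tau_\mu\mc{E}_{\ibar}^\mu,\tau_\mu\mc{E}_j^\mu)$ via the first condition as $\hom(\tau_\mu^{-1}\mc{E}_i^\mu,\tau_\mu\mc{E}_j^\mu)$ yields exactly \eqref{eq:tauexact}. Mutation-invariance is then immediate: Lemma~\ref{L:bdinv} (and its dual) transports $E_i$ and $E_{\ibar}^\star$ to the (dual) boundary representations of $\mu_u(\Delta,\S)$, and Lemma~\ref{L:taucommu} commutes $\tau^{-1}$ with $\mu_u$.

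For the Moreover clause, I will specialize \eqref{eq:tauexact} to $j = i$, obtaining
\[ \hom(\mc{E}_i^\mu, \mc{E}_i^\mu) + \hom(\tau_\mu^{-1}\mc{E}_i^\mu, \tau_\mu \mc{E}_i^\mu) = 1 + \updelta_{\ibar,i}, \]
and argue by cases. When $i \neq \ibar$ the right-hand side equals $1$, so $\hom(\mc{E}_i^\mu,\mc{E}_i^\mu) \leq 1$, while $\mc{E}_i^\mu$ being non-negative forces $\hom \geq 1$ via the identity map; hence $\hom(E_i^\mu, E_i^\mu) = 1$. When $i = \ibar$ the first condition collapses the second summand to $\hom(\tau_\mu \mc{E}_i^\mu, \tau_\mu \mc{E}_i^\mu)$, which is $\geq 1$ whenever $\tau_\mu \mc{E}_i^\mu$ is non-negative; both summands are then $\geq 1$ and add to $2$, so each equals $1$. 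The main obstacle will be the bookkeeping around sign conventions for $\tau^{\pm 1}$ on decorated representations: one must carefully apply $\delta_{\tau^{-1}\mc{M}} = -\dtc_{\mc{M}}$ in both $J$ and $J_\mu$, keep the asymmetric roles of $i$ and $\ibar$ in Lemma~\ref{L:epci}(2) straight so that the mutable split produces $\tau_\mu^{-1}\mc{E}_i^\mu = \tau_\mu\mc{E}_{\ibar}^\mu$ rather than some other-looking equality, and ensure rigidity is available on every $\tau$-shift invoked so that the open-orbit uniqueness of rigid representations from their $\delta$-vectors can be applied.
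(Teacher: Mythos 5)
Your proof is correct and follows essentially the same route as the paper's: both compare the $\delta$-vectors of $\tau^{-1}E_i$ and $E_{\ibar}^\star$ via Lemma~\ref{L:epci}(2), split into mutable and frozen coordinates, and use rigidity to pass between $\delta$-vectors and representations. You additionally spell out the open-orbit uniqueness step and supply an explicit case analysis for the ``Moreover'' clause (which the paper's proof omits), and your justification of mutation-invariance via Lemmas~\ref{L:bdinv} and~\ref{L:taucommu} is a valid, slightly more direct alternative to the paper's appeal to Lemma~\ref{L:HEmu}.
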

\begin{proof} We follow the notations in Lemma \ref{L:epci}: $h_i$ (resp. $h_i^\tau$) is the vector $(\hom(\mc{E}_j^\mu, \mc{E}_i^\mu))_j$ (resp. $(\hom(\tau_\mu\mc{E}_i^{\mu}, \tau_\mu\mc{E}_j^{\mu}))_j$); and $H$ (resp. $H^\tau$) is the matrix whose $i$-th row is the vector $h_i$ (resp. $h_i^\tau$).
	Let us compare the $\delta$-vector of $\tau^{-1} E_i$ and $E_{\ibar}^\star$.
	By Lemma \ref{L:epci}.(2) the former is $-(\epc_i^\mu, e_i-h_i)$ and the latter is $(\tau_\mu \ep_{\ibar}^\mu, e_{\ibar}-h_{\ibar}^\tau)$.
	Thus, $(i,\ibar)$ being $\tau$-exact is equivalent to that $\tau_\mu^{-1} \mc{E}_i^\mu = \tau_\mu \mc{E}_{\ibar}^\mu$ and the $i$-th row of $H^\t - I_r$ is equal to the $\ibar$-th row of $I_r - H^\tau$, that is
	$$\hom(\mc{E}_j^\mu, \mc{E}_i^\mu)-\updelta_{i,j} = \updelta_{\ibar,j} - \hom(\tau_\mu \mc{E}_{\ibar}^\mu, \tau_\mu \mc{E}_j^\mu) \quad \text{for each frozen $j$}.$$
	Finally we replace $\tau_\mu \mc{E}_{\ibar}^\mu$ by $\tau_\mu^{-1} \mc{E}_i^\mu$ and get \eqref{eq:tauexact}.
	The left hand side of \eqref{eq:tauexact} is mutation-invariant due to Lemma \ref{L:HEmu}.
\end{proof}

\begin{remark} We believe that the equation \ref{eq:tauexact} imposes strong restrictions on the Cartan type of $I$ if each $i\in I$ belongs to some $\tau$-exact pair. For instance, we suspect that the Cartan matrices in all the examples we know are positive definite or semidefinite. In particular, $-c_{i,j}= \e(\mc{E}_i^\mu, \mc{E}_j^\mu)+\e(\mc{E}_j^\mu, \mc{E}_i^\mu)\leq 2$.
\end{remark}
	
\begin{corollary}\label{C:tauexact} If $(i,\ibar)$ is a $\tau$-exact pair for each $i\in I$, then $(\wt_i)_{i\in I}$ defined by 
\eqref{eq:wti} is a compatible grading adapted to $I$.
\end{corollary}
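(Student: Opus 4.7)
The statement asserts two things: that $\wt = (\wt_i)_{i\in I}$ annihilates the row space of $B_\Delta$ (compatibility), and that $\wt_i(\epc_j) = c_{i,j}$ for all $i,j \in I$ (adaptation). I will verify these separately, working in the no-frozen-arrow setting of this subsection.

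For the adaptation, my plan is to invoke Lemma~\ref{L:Cartan} directly, which writes
$$\wt_i(\epc_j) = \updelta_{i,j} + \hom(\tau^{-1}E_j, \tau^{-1}E_i) - \max(0, -\ep_i(j)) - \max(0, -\ep_j(i)).$$
The $\tau$-exactness of $(i,\ibar)$ and $(j,\jbar)$ lets me substitute $\tau^{-1}E_i = E_{\ibar}^\star$ and $\tau^{-1}E_j = E_{\jbar}^\star$, turning the Hom-term into $\updelta_{\ibar,\jbar}$ by the dual of Corollary~\ref{C:HomEij}. A brief argument using that $E_i$ has a unique nonzero frozen coordinate, at $i$ (Proposition~\ref{P:Brep}.(1)), shows that $i\mapsto\ibar$ is injective on $I$, so $\updelta_{\ibar,\jbar}=\updelta_{i,j}$. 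The two max-terms are rewritten, via Corollary~\ref{C:HomEij} and Lemma~\ref{L:E^mu}, as $\e(\mc{E}_j^\mu, \mc{E}_i^\mu)$ and $\e(\mc{E}_i^\mu, \mc{E}_j^\mu)$ respectively, and collecting terms yields exactly $c_{i,j}$.

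For compatibility, I reduce to showing $(\dv E_i - \dv \tau^{-1} E_i) B_\Delta^\t = 0$. Lemma~\ref{L:epci}.(1) gives $(\dv E_i) B_\Delta^\t = -\epc_i^\mu$; after substituting $\tau^{-1}E_i = E_{\ibar}^\star$, the same lemma gives $(\dv \tau^{-1} E_i) B_\Delta^\t = \tau_\mu \ep_\ibar^\mu$. The equivalent formulation $\tau_\mu \mc{E}_\ibar^\mu = \tau_\mu^{-1}\mc{E}_i^\mu$ of $\tau$-exactness (Lemma~\ref{L:tauexact}), combined with the $\tau$-duality $\delta_{\tau^{-1}\mc{M}} = -\dtc_\mc{M}$ applied inside $J_\mu$, yields $\tau_\mu \ep_\ibar^\mu = \delta_{\tau_\mu^{-1}\mc{E}_i^\mu} = -\epc_i^\mu$, so the two contributions cancel.

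The main delicacy is in the compatibility step: one must distinguish computations inside the full Jacobian algebra $J$ from those inside its mutable restriction $J_\mu$, and apply the correct $\tau$-duality between $\delta$- and $\dtc$-vectors in each setting. Beyond this bookkeeping, the proof is a direct unwinding of Lemmas~\ref{L:epci}, \ref{L:tauexact}, \ref{L:E^mu}, and \ref{L:Cartan}, together with Proposition~\ref{P:Brep} and Corollary~\ref{C:HomEij}.
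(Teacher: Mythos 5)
Your proof is correct and follows essentially the same route as the paper's: compatibility is obtained from Lemma \ref{L:epci}.(1) together with the reformulation of $\tau$-exactness in Lemma \ref{L:tauexact}, and adaptation by evaluating $\hom(\tau^{-1}E_j,\tau^{-1}E_i)=\updelta_{i,j}$ inside Lemma \ref{L:Cartan} via Corollary \ref{C:HomEij}. The only difference is that you spell out two details the paper leaves implicit --- the injectivity of $i\mapsto\ibar$ and the sign bookkeeping giving $\tau_\mu\ep_{\ibar}^{\mu}=-\epc_i^{\mu}$ --- both of which are correct and harmless.
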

\begin{proof} We first show that $\wt_i$ defined by \eqref{eq:wti} annihilates the row space of $B_\Delta$. We write $B_\Delta$ in block form $B_\Delta = (B_\mu, B_{\op{fr}})$.
	By the $\tau$-exact assumption, we only need to verify the following equality
	$$(B_\mu, B_{\op{fr}}) \left( (\dv E_i)^\t - (\dv  E_{\ibar}^{\star})^\t  \right)=0.$$
By Lemma \ref{L:epci}.(1) and Lemma \ref{L:tauexact} we have that
	\begin{align*} LHS &= -\epc_i^\mu - \tau \ep_{\ibar}^{\mu} = 0.
	\end{align*}
By the $\tau$-exactness and Corollary \ref{C:HomEij}, we have that $\hom(\tau^{-1}E_j, \tau^{-1}E_i)=\updelta_{i,j}$. 
We see from Lemma \ref{L:Cartan} that $c_{i,j}=(\wt_i(\epc_j))_{i,j}$, that is, $(\wt_i)$ is adapted to $I$.
\end{proof}

Let $n=|I|$ and $r=\rank(C_I)$. By a {\em realization} of $C_I$, we mean a complex vector space $\mf{h}$ of dimension $2n-r$ together with a basis
$\{h_i\}_{i\in I \sqcup K}$ and a basis $\{\alpha_i\}_{i\in I\sqcup K}$ of $\mf{h}^*$ such that $\alpha_i(h_j) = c_{i,j}$ for $i,j\in I$ and $\alpha_i(h_j)\in\mb{Z}$ for $i,j\in I\sqcup K$.
Let $\Lambda$ be the corresponding weight lattice with fundamental weights $\{\varpi_i\}_{i\in I}$.
\begin{lemma}\label{L:wtc} Let $(\wt_i)_{i\in I}$ be a compatible grading adapted to $I$.
	Suppose that $(\ep_i)_{i\in I}$ has full rank $n=|I|$ and $\rank(C_I)=r$. Then 
there are $n-r$ compatible integral weight functions $\wt_k$ indexed by $K$ such that $\{\wt_i\}_{i\in I\sqcup K}$ has rank $n$ and $\wt_k(\ep_j) = \alpha_k(h_j)$ for $k\in K$ and $j\in I$.
In particular, if we define $\wt(\delta)=\sum_{i\in I} \wt_i(\delta) \varpi_i$, then $\innerprod{\wt(\delta), h_i} = \wt_i(\delta)$.
\end{lemma}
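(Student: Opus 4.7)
The second identity $\innerprod{\wt(\delta), h_i} = \wt_i(\delta)$ is immediate from bilinearity: expand $\wt(\delta) = \sum_{j \in I} \wt_j(\delta)\varpi_j$ and apply the defining property $\innerprod{\varpi_j, h_i} = \updelta_{i,j}$ of the fundamental weights. So the real content is the existence of the $\wt_k$ for $k \in K$.

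The plan is to pass to the $\mb{Q}$-vector space $V_0 \subset (\mb{Q}^{\Delta_0})^{\ast}$ of compatible $\mb{Q}$-gradings, i.e.\ linear forms annihilating the row space of $B_\Delta$, and to realize the $\wt_k$ as elements of $V_0$ via the evaluation map
\begin{equation*}
\op{ev}\colon V_0 \to \mb{Q}^I,\qquad \wt \mapsto \bigl(\wt(\ep_j)\bigr)_{j \in I}.
\end{equation*}
The crucial step is showing $\op{ev}$ is surjective; by duality this is the assertion that the $\ep_j$ are linearly independent in the quotient $\mb{Q}^{\Delta_0}/(\text{row space of }B_\Delta)$, which is my reading of the hypothesis ``$(\ep_i)_{i \in I}$ has full rank $n$.'' Once surjectivity is in hand, each prescribed integer vector $(\alpha_k(h_j))_{j \in I}$ has a rational preimage in $V_0$, and a Smith-normal-form argument applied to the lattice $W := V_0 \cap \Hom_{\mb{Z}}(\mb{Z}^{\Delta_0}, \mb{Z})$ of integer compatible gradings upgrades it to an integer-valued $\wt_k \in W$.

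For the rank claim, I would observe that $\op{ev}$ identifies the rank of $\{\wt_i\}_{i \in I \sqcup K}$ with the rank of the matrix $M = \bigl(\wt_i(\ep_j)\bigr)_{i \in I\sqcup K,\, j \in I}$. Its $K$-rows are, by construction, the $K$-rows of the realization matrix $A = (\alpha_i(h_j))$, while its $I$-rows differ from the $I$-rows of $A$ only by corrections coming from the frozen rows of $B(\Delta)$ via $\epc_j - \ep_j = (\dv E_j)B(\Delta)$. Since $A$ has full column rank $n$ (because $\{\alpha_i\}_{i \in I \sqcup K}$ is a basis of $\mf{h}^{\ast}$ while $\{h_j\}_{j \in I}$ is linearly independent in $\mf{h}$), a short linear-algebra argument shows that these compatibility-preserving corrections do not destroy the column rank, yielding $\rank M \geq n$; the reverse inequality is immediate from $\dim V_0 = n$.

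The main obstacle I foresee is the surjectivity of $\op{ev}$: one has to unpack ``$(\ep_i)_{i \in I}$ has full rank $n$'' into a clean statement about the quotient modulo the row space of $B_\Delta$, which in turn uses the block form $B_\Delta = (B_\mu,\,B_{\op{fr}})$ and the structure of the $\ep_j$ given by Proposition \ref{P:Brep}\,(2) (trivial mutable part, triangular frozen part with $1$'s on the diagonal). Under this, surjectivity becomes a direct calculation, and the integrality and rank steps then follow routinely.
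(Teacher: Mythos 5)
Your overall strategy coincides with the paper's: work inside the space $V_0$ of compatible $\mb{Q}$-gradings, observe that evaluation against the $\ep_j$ sends the $I$-indexed gradings onto a rank-$r$ subspace $R\subseteq\mb{Q}^I$ (the row space of $C_I$), and adjoin $n-r$ further integral compatible gradings realizing a complement; your reduction of the surjectivity of $\op{ev}\colon V_0\to\mb{Q}^I$ to linear independence of the classes $[\ep_j]$ modulo the row space of $B_\Delta$ is also the right way to read the hypothesis, given the standing assumption $\op{span}(\epc_i)_{i\in I}\cap\op{span}(B_\Delta)=0$ made before Theorem \ref{T:upper}. So in outline you match the paper, which proves the lemma in exactly this way (and just as tersely).

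Two of your steps are not actually delivered by the tools you invoke. First, the integrality upgrade: $\mb{Q}$-surjectivity of $\op{ev}$ only shows that the image of the integral lattice $W$ is a finite-index subgroup of a full-rank sublattice of $\mb{Z}^I$, so a Smith-normal-form argument does \emph{not} produce an integral $\wt_k$ with the exact prescribed values $\alpha_k(h_j)$ (compare $x\mapsto 2x$ on $\mb{Z}$, which is $\mb{Q}$-surjective but misses $1$). The correct order of quantifiers is the opposite one: a realization of $C_I$ only constrains $\alpha_i(h_j)$ for $i,j\in I$, so one first picks $n-r$ integral compatible gradings $\wt_k$ whose value vectors $\bigl(\wt_k(\ep_j)\bigr)_{j\in I}$ complete $R$ to rank $n$ (possible since $\op{ev}(W)$ spans $\mb{Q}^I$), and only then \emph{defines} the realization by $\alpha_k(h_j):=\wt_k(\ep_j)$. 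Second, your rank argument leans on $\dim V_0=n$, which holds only when $I$ exhausts $\Delta_0^{\op{fr}}$ and $B_\Delta$ has full rank, and on an unproved assertion that the discrepancies $\wt_i(\epc_j)-\wt_i(\ep_j)$ (which come from the frozen rows of $B(\Delta)$ and are genuinely nonzero) cannot lower the column rank of the realization matrix; neither is needed once the $\wt_k$ are chosen as above, since the rows $\bigl(\wt_i(\ep_j)\bigr)_{j\in I}$, $i\in I\sqcup K$, then span $R$ plus a chosen complement, hence all of $\mb{Q}^I$, while the upper bound is automatic because the matrix has only $n$ columns.
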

\begin{proof} By assumption we have that $c_{i,j} = \wt_i(\epc_j)$.
So $(\wt_i)_{i\in I} \in \op{ann}(B_\Delta)$ maps a rank $n$ subspace spanned by $\ep_i$ to a rank $r$ subspace $R$ of $\mb{Z}^I$. We can choose $n-r$ integral basis elements complementary to $R$ (such as $(\alpha_k(h_j))_{j\in I}$ for $k\in K$), and use them to construct the $n-r$ integral weight function on $\op{ann}(B_\Delta)$.
\end{proof}

\section{The Tropical Crystal Structures} \label{S:crystal}
\subsection{The Lowering and Raising Operators attached to Boundary Representations} \label{ss:rlop}
From now on, all the ice quivers are assumed to have no frozen arrows.
Now we let $E_i$ (resp. $E_i^\star$) be the (resp. dual) boundary representations attached to a rigid frozen vertex $i$. 
\begin{definition} \label{D:lrv} We define the lowering and raising operators $r_i$ and $l_i$ attached to $i$ as 
	$$r_i := r_{\ep_i} \ \text{ and }\ l_i := l_{\ep_i}.$$
	We also define the dual lowering and raising operators $\rc_i^\star$ and $\lc_i^\star$ as 
	$$\rc_i^\star := \rc_{\ep_i^\star} \ \text{ and }\ \lc_i^\star := \lc_{\ep_i^\star}.$$
\end{definition}

\begin{remark} 1. At this stage we have no restriction on the domain of $r_i$ and $l_i$, but later we will restrict them to $\trop(\Delta,\S)$.
	
2. There are also $r^{\ep_i}$ and $\rc^{\ep_i^\star}$ but they rarely appear in the article.
	
3. We will mostly deal with the actions of $r_i$ and $l_i$ on the $\delta$-vectors, and mention most dual statements for $\rc_i^\star$ and $\lc_i^\star$ without proof. 
Their relationship will be discussed in Section \ref{ss:dualcrystal}.
\end{remark}

\begin{lemma} \label{L:HomEinv} For any mutation sequence $\b{u}$ we have that 
\begin{align*} \hom(E_i, M) = \hom(\mub(E_i), \mub(M)) \ &\text{ and } \ \hom(M,E_i^\star) = \hom(\mub(M), \mub(E_i^\star)),\\
	\e(M, E_i) = \e(\mub(M),\mub(E_i)) \ &\text{ and } \ \ec(E_i^\star, M) = \ec(\mub(E_i^\star), \mub(M)) .
\end{align*}
\end{lemma}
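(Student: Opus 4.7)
The plan is to induct on the length of the mutation sequence $\b{u}$, reducing to a single mutation $\mu_u$ at a mutable vertex. The inductive step is immediate from Lemma \ref{L:bdinv}: after one mutation, $\mu_u(E_i)$ is again the boundary representation of $i$ for $\mu_u(\Delta,\S)$, and dually $\mu_u(E_i^\star)$ is the dual boundary representation for $\mu_u(\Delta,\S)$. So after one step we are in the same setup with the new ice QP, and the hypothesis of the lemma is preserved and the argument iterates.

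For the single-mutation base case, each of the four identities has the shape
$$X\bigl(\mu_u(\mc{M}),\mu_u(\mc{N})\bigr)-X(\mc{M},\mc{N})=0,$$
for one of $X\in\{\hom,\e,\ec\}$, with $E_i$ or $E_i^\star$ sitting in one of the two slots. Lemma \ref{L:HEmu} expresses the left-hand side as a bilinear combination of the $\beta_\pm(u)$ and $\betac_\pm(u)$ of the two arguments. Inspecting the four formulas, one sees that in every summand at least one factor is either $\beta_{\pm,E_i}(u)$ (in the two identities involving $E_i$) or $\betac_{\pm,E_i^\star}(u)$ (in the two identities involving $E_i^\star$). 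It therefore suffices to show that all of these vanish whenever $u$ is a mutable vertex.

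This is the one piece of representation-theoretic input I would have to unpack. By Proposition \ref{P:Brep}.(2), $\ep_i$ is supported on the frozen part, so $\ep_i(u)=0$ at any mutable $u$. Because $i$ is a rigid frozen vertex, $E_i$ is rigid, hence its minimal presentation is a rigid presentation of weight $\ep_i$; rigid presentations have dense $\Aut_A(\ep_i)$-orbits in $\PHom_A(\ep_i)$ and are in particular general, so by Remark \ref{r:genmu} we have $\beta_{+,E_i}=[\ep_i]_+$ and $\beta_{-,E_i}=[-\ep_i]_+$. Thus $\beta_{\pm,E_i}(u)=0$. The dual of Proposition \ref{P:Brep}.(2) together with the injective analogue of Remark \ref{r:genmu} yields $\betac_{\pm,E_i^\star}(u)=0$ in exactly the same way. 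Substituting into the formulas of Lemma \ref{L:HEmu} kills every correction term, and induction on the length of $\b{u}$ then closes the argument.

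The main obstacle, such as it is, is the bookkeeping identification $\beta_\pm=[\ep_i]_\pm$ for the minimal presentation of the rigid representation $E_i$ (and its dual); everything else is a direct application of the machinery already collected in Sections \ref{S:QP} and \ref{S:boundary}. Given how mild this obstacle is, the whole lemma should be regarded as a compatibility statement between Lemmas \ref{L:bdinv} and \ref{L:HEmu} made possible by the specific frozen support of $\ep_i$ and $\epc_i^\star$.
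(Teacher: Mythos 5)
Your proof is correct and follows essentially the same route as the paper: reduce to a single mutation via Lemma \ref{L:bdinv}, then observe that every correction term in the mutation formulas of Lemma \ref{L:HEmu} contains a factor $\beta_{\pm,E_i}(u)$ or $\betac_{\pm,E_i^\star}(u)$, which vanishes at a mutable vertex $u$ because the (co)presentation data of the boundary representations are supported on the frozen part. One small caveat: justifying $\beta_{\pm,E_i}=[\pm\ep_i]_+$ by ``rigid $\Rightarrow$ dense orbit in $\PHom(\ep_i)$'' is mildly circular, since placing the minimal presentation inside $\PHom(\ep_i)$ already presumes that equality; it is cleaner to note that $E_i$ has simple top $S_i$ by Proposition \ref{P:Brep}.(1), so $\beta_{+,E_i}=e_i$, and then $\beta_{-,E_i}=\beta_{+,E_i}-\ep_i$ vanishes at mutable vertices by Proposition \ref{P:Brep}.(2).
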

\begin{proof} By Lemma \ref{L:bdinv} $\mub(E_i)$ is the boundary representation of $\mub(\Delta,\mc{S})$ for any sequence of mutations $\mub$.
By Proposition \ref{P:Brep}.(2), the $\delta$-vector of $\mub(E_i)$ is supported only on frozen vertices.
Then the claim about $\hom(E_i, M)$ follows from Lemma \ref{L:HEmu}. The others are proved similarly.
\end{proof}

\begin{lemma}\label{L:mu2S} Suppose that $i$ is reachable and let $\b{u}$ be a sequence such that $\mub(E_i) = S_i$. Then 
\begin{align*} r_i (\delta) &= \mub^{-1} (\mub(\delta) + \dtc_{S_i})  && \text{if $\e(\delta, E_i)> 0$;} \\
\shortintertext{Dually, let $\b{u}$ be a sequence of mutations such that $\mu_{\b{u}}(E_i^\star) = S_i$. Then}
\rc_i^\star(\dtc) &= \mub^{-1} (\mub(\dtc) + \delta_{S_i}	)  && \text{if $\ec(E_i^\star, \dtc)> 0$.} 
\end{align*}
\end{lemma}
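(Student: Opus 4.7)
The plan is to reduce both equalities, via the commutation of the operators with mutations, to direct calculations in the mutated ice quiver with potential $\mub(\Delta,\mc{S})$, where the boundary representation becomes the simple $S_i$.

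Setup. By Lemma \ref{L:bdinv}, $\mub(E_i)$ is the boundary representation at the frozen vertex $i$ of $\mub(\Delta,\mc{S})$. Combined with the exact sequence \eqref{eq:exactEi}, the hypothesis $\mub(E_i) = S_i$ forces the mutable part $E_i^\mu$ to vanish in the mutated quiver, and then \eqref{eq:exactEist} yields $\mub(E_i^\star) = S_i$ as well. Hence $\mub(\ep_i) = \mub(\ep_i^\star) = \delta_{S_i}$.

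Raising case. Applying the commutation from Theorem \ref{T:rle} to $\ep = \ep_i$ and writing $\eta = \mub(\delta)$ gives $\mub(r_i(\delta)) = r_{\delta_{S_i}}(\eta)$, and then \eqref{eq:re} computed in $\mub(\Delta,\mc{S})$ yields
\[ r_{\delta_{S_i}}(\eta) \;=\; \eta + \delta_{S_i} + \rank(S_i, \tau\eta)\, B(\mub(\Delta)). \]
Since $S_i$ is simple, the image of any morphism out of $S_i$ is either $0$ or $S_i$, so $\rank(S_i, \tau\eta) \in \{0, e_i\}$. By Lemma \ref{L:H2E}.(1), $\hom(S_i, \tau N) = \e(N, S_i)$ for a generic $N$ of weight $\eta$, which equals $\e(\delta, E_i) > 0$ by Lemma \ref{L:HomEinv}. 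Hence $\rank(S_i, \tau\eta) = e_i$, and combined with \eqref{eq:delta2dual} applied to $S_i$ (giving $\delta_{S_i} + e_i B(\mub(\Delta)) = \dtc_{S_i}$), we obtain $\mub(r_i(\delta)) = \mub(\delta) + \dtc_{S_i}$, which is the desired first formula.

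Dual case. The argument is completely dual. Commutation yields $\mub(\rc_i^\star(\dtc)) = \rc_{\delta_{S_i}}(\mub(\dtc))$, and by \eqref{eq:rec},
\[ \rc_{\delta_{S_i}}(\check\eta) \;=\; \check\eta + \dtc_{S_i} - \rank(\tau^{-1}\check\eta, S_i)\, B(\mub(\Delta)), \]
with $\check\eta = \mub(\dtc)$. Morphisms into the simple $S_i$ likewise have image $0$ or $S_i$, and by Lemma \ref{L:H2E}.(1) together with Lemma \ref{L:HomEinv}, $\hom(\tau^{-1}N, S_i) = \ec(S_i, N) = \ec(E_i^\star, \dtc) > 0$. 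Hence $\rank(\tau^{-1}\check\eta, S_i) = e_i$, and the formula reduces to $\check\eta + \delta_{S_i}$, yielding the second equality.

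The one potentially delicate step is invoking the commutation of $r_\ep$ and $\rc_\ep$ with mutations, since Theorem \ref{T:rle} formally states this for extended-reachable $\ep$; in the present setting this can be justified from the rigidity of $E_i$ (Proposition \ref{P:Brep}) together with the hypothesis $\mub(E_i) = S_i$, or alternatively by checking directly that the rank-based formulas of Definition \ref{D:rlgrank} transform consistently under mutation via the invariance statements in Lemma \ref{L:HomEinv}.
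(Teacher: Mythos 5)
Your proof is correct and follows essentially the same route as the paper's: invoke Theorem \ref{T:rle} to reduce to the seed where $E_i=S_i$, compute $\hom(S_i,\tau\mub(\delta))=\e(\mub(\delta),S_i)=\e(\delta,E_i)>0$ via Lemmas \ref{L:H2E} and \ref{L:HomEinv} to conclude $\rank(S_i,\tau\mub(\delta))=e_i$, and then use $\delta_{S_i}+e_iB(\mub(\Delta))=\dtc_{S_i}$. The paper only writes out the $r_i$ case and leaves the last identity and the dual argument implicit, both of which you supply correctly.
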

\begin{proof} We only prove the statement for $r_i(\delta)$.
By Theorem \ref{T:rle} we have that 
$$r_i(\delta) = \mub^{-1} r_{S_i} (\mub(\delta)) = \mub^{-1} \left(\mub(\delta) + \delta_{S_i} + \rank(S_i, \tau\mub(\delta))B(\mub(\Delta)) \right).$$
By Lemmas \ref{L:H2E} and \ref{L:HomEinv} 
$$\hom(S_i, \tau \mub(\delta)) = \e(\mub(\delta), S_i) = \e(\delta, E_i)  >0.$$
So $\rank(S_i, \tau\mub(\delta)) = e_i$ and thus $r_i (\delta) = \mub^{-1} (\mub(\delta) + \dtc_{S_i})$.
\end{proof}

Later we will define an upper semi-normal crystal structure on the set of $\mu$-supported $\delta$-vectors.
The function $\e(-, E_i)$ will play a role of string length function for the operator $r_i$.
\begin{lemma}\label{L:er} Suppose that $i$ is reachable. For any $\mu$-supported $\delta$, $\e(\delta, E_i)=0$ if and only if $\hom(r_i(\delta), E_i^\star)>0$.
Dually, for a $\mu$-supported $\dtc$, $\ec(E_i^\star, \dtc)=0$ if and only if $\hom(E_i, \rc_i^\star(\dtc))>0$.
\end{lemma}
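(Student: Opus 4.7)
My plan is to mutate to a situation where both boundary representations reduce to the simple module $S_i$, and then check the equivalence by a direct coordinate computation. Since $i$ is reachable, Lemma \ref{L:mu2S} provides a mutation sequence $\b{u}$ with $\mub(E_i) = S_i$. I will first observe that the same $\b{u}$ also yields $\mub(E_i^\star) = S_i$: in $\mub(\Delta,\S)$ the sequence \eqref{eq:exactEi} forces $E_i^\mu = 0$, whence \eqref{eq:exactEist}, whose hypothesis is rigidity of the mutated $\mc{E}_i^\mu$ (preserved by Lemma \ref{L:HEmu}), gives $E_i^\star = S_i$. Moreover, $P_{[i]} = S_i$ in the restricted quiver means there are no arrows starting at $i$ in $\mub(\Delta)$, so the endomorphism algebra $e_i J' e_i = k$.

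Next, with $\delta' := \mub(\delta)$, I would transport the $\e$ and $\hom$ through $\mub$ via Lemma \ref{L:HomEinv}. A routine inspection of \eqref{eq:HE} in the mutated QP, using that the only endomorphism of $P_i$ is a scalar, gives $\hom(\eta,S_i) = [\eta_i]_+$ and $\e(\eta,S_i) = [-\eta_i]_+$ for every $\delta$-vector $\eta$. Thus $\e(\delta,E_i) = [-\delta'_i]_+$, while $\mu$-supportedness of $\delta$ combined with Theorem \ref{T:musupp} yields $[\delta'_i]_+ = \hom(\delta',S_i) = 0$, i.e.\ $\delta'_i \le 0$. In particular $\e(\delta,E_i) = 0$ iff $\delta'_i = 0$.

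It then remains to identify $(\mub(r_i(\delta)))_i$. By the commutation of $r_\ep$ with mutations (Theorem \ref{T:rle}, exactly as used in the proof of Lemma \ref{L:mu2S}), $\mub(r_i(\delta)) = r_{S_i}(\delta')$ in $(\Delta',\S')$. Since $\delta_{S_i} = e_i$ (no arrows out of $i$) and $B(\mub(\Delta))_{i,i} = 0$, the $i$-th coordinate of $r_{S_i}(\delta') = \delta' + e_i + \rank(S_i, \tau\delta')\,B(\mub(\Delta))$ is $\delta'_i + 1$ in both subcases, whether the rank correction is $0$ or $e_i$. Thus $\hom(r_i(\delta),E_i^\star) = [\delta'_i + 1]_+$, which, combined with $\delta'_i \le 0$, is positive iff $\delta'_i = 0$, iff $\e(\delta,E_i) = 0$. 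The dual assertion will follow identically, using the formula \eqref{eq:rce} for $\rc_i^\star$ together with the analogous identities $\hom(S_i,\dtc) = [\dtc_i]_+$, $\ec(S_i,\dtc) = [-\dtc_i]_+$ and $\mu$-supportedness of $\dtc$. The main subtlety I expect is the simultaneous reduction $\mub(E_i) = \mub(E_i^\star) = S_i$ and the implicit rigidity input; once that is in place, the rest is a one-vertex linear-algebra computation.
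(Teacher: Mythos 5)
Your argument breaks at its very first step: the claim that the mutation sequence $\b{u}$ with $\mub(E_i)=S_i$ also yields $\mub(E_i^\star)=S_i$ is false, and the rest of the computation depends on it. At such a seed $i$ is a sink, so $b_i=\sum_{u\to i}e_u$ and the general presentation of weight $-b_i$ is $\bigoplus_{u\to i}P_u\to 0$; hence the \emph{underlying} representation $E_i^\mu$ is indeed $0$, but the decorated representation $\mc{E}_i^\mu$ is negative, not zero, and $\tau_\mu$ of a negative representation is a (nonzero) sum of injective $J_\mu$-modules. The third term $\tauh_\mu\mc{E}_i^\mu$ of \eqref{eq:exactEist} is therefore nonzero, and $\mub(E_i^\star)=I_{[i]}$ is a proper extension of $\bigoplus_{u\to i}I_u^{J_\mu}$ by $S_i$. (Concretely: for the quiver with one mutable vertex $1$, one frozen vertex $2$ and an arrow $1\to 2$, one has $E_2=S_2$ but $E_2^\star=I_{[2]}$ of dimension vector $e_1+e_2$.) Consequently your identity $\hom(r_i(\delta),E_i^\star)=[\delta_i'+1]_+$ is only a lower bound: by Theorem \ref{T:HomE}, $\hom(\eta,\mub(E_i^\star))=\max_{L\hookrightarrow \mub(E_i^\star)}\eta(\dv L)$, and subrepresentations $L$ strictly containing $S_i$ can make this positive even when $\eta_i\le 0$.

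The parts of your computation that do hold ($\delta_i'\le 0$ from $\mu$-supportedness, $\e(\delta,E_i)=[-\delta_i']_+$, and $\mub(r_i(\delta))_i=\delta_i'+1$) suffice for the ``if'' direction, since $S_i\hookrightarrow \mub(E_i^\star)$ already forces $\hom(r_i(\delta),E_i^\star)\ge[\delta_i'+1]_+>0$ when $\delta_i'=0$; this is essentially the paper's forward argument. But the ``only if'' direction is exactly where the work lies: one must show that when $\e(\delta,E_i)>0$, \emph{every} nonzero subrepresentation $S$ of $\mub(E_i^\star)$ satisfies $(\mub(\delta)+\dtc_{S_i})(\dv S)\le 0$. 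The paper does this by applying $\Hom(\mub(\delta),-)$ to the sequence $0\to S_i\to\mub(E_i^\star)\to\tauh_\mu\mc{E}_i^\mu\to 0$ and using $\hom(\mub(\delta),\mub(E_i^\star))=0$ (from $\mu$-supportedness) to bound $\mub(\delta)(\dv T)$ for the quotient $T=S/S_i$ by $\e(\mub(\delta),S_i)$, together with $\dtc_{S_i}(\dv T)\le -1$. Without this step, or some substitute for it, your proof does not establish the harder implication; a one-vertex coordinate check cannot, because the relevant module is not one-dimensional.
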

\begin{proof} Suppose that $\e(\delta, E_i)=0$. Then $r_i(\delta) = \delta+\ep_i$ by definition.
Since $S_i$ is a quotient representation of $E_i$ and $\e(\delta, E_i)=0$, we must have that $\delta(e_i)\geq 0$ by Theorem \ref{T:HomE}.
Then $(\delta+\ep_i)(e_i)> 0$ by Proposition \ref{P:Brep}.(2). Since $S_i$ is also a subrepresentation of $E_i^\star$, we have that $\hom(r_i(\delta), E_i^\star)>0$ again by Theorem \ref{T:HomE}.

	
Conversely, suppose that $\e(\delta, E_i)>0$, and we shall show $\hom(r_i(\delta), E_i^\star)=0$. 
Let $\mub$ be a sequence of mutation such that $\mub(E_i)=S_i$. By Lemma \ref{L:mu2S} we have that $r_i(\delta) = \mub^{-1}(\mub(\delta)+\dtc_{S_i})$.
By assumption $\delta$ is $\mu$-supported, so $\hom(\delta, E_i^\star)=0$.
It follows that $\hom(\mub(\delta), \mub(E_i^\star))=0$ by Lemma \ref{L:HomEinv}.
Apply $\Hom(\mub(\delta), -)$ to the exact sequence \eqref{eq:exactEist}:
$0\to S_i\to \mub(E_i^\star) \to \mub({E}_i^{\star\mu}) \to 0$ and we get
\begin{equation} \label{eq:inj} \hspace{-.4cm} 0=\Hom(\mub(\delta),\mub(E_i^\star))\to \Hom(\mub(\delta), \mub({{E}}_i^{\star\mu})) \to \E(\mub(\delta), S_i) \to \E(\mub(\delta), \mub(E_i^\star))\to \cdots \end{equation}
Now let $S$ be any nonzero subrepresentation of $\mub(E_i^\star)$, then $S_i$ is a subrepresentation of $S$. 
By Theorem \ref{T:HomE} it suffices to show that $(\mub(\delta) + \dtc_{S_i})(\dv S)\leq 0$. This is clear if $S=S_i$.
Now suppose that $T=S/S_i\neq 0$. $T$ is a subrepresentation of $({E}_i^{\star\mu})':=\mub({E}_i^{\star\mu})$ so $T(u)\neq 0$ for some vertex $u$ with $u\to i$.
In particular, we have that $\dtc_{S_i}(\dv T)\leq -1$. Then
\begin{align*} (\mub(\delta) + \dtc_{S_i})(\dv T + e_i) & = (\mub(\delta) + \dtc_{S_i})(\dv T) + (\mub(\delta) + \dtc_{S_i})(e_i) \\
	&=  \mub(\delta)(\dv T) + \dtc_{S_i}(\dv T)  -\e(\mub(\delta), S_i) + 1 \\
	&\leq \hom(\mub(\delta),({E}_i^{\star\mu})') + \dtc_{S_i}(\dv T)  -\e(\mub(\delta), S_i) + 1 & \text{by Theorem \ref{T:HomE}}\\
	&\leq \hom(\mub(\delta),({E}_i^{\star\mu})') - 1  -\e(\mub(\delta), S_i) + 1 \\
	& \leq 0  & \text{by \eqref{eq:inj}}.
\end{align*}
The dual statement can be treated similarly.	
\end{proof}

\begin{corollary}\label{C:pm1}
	Suppose that $i$ is a reachable frozen vertex.
	Let $\delta\in\trop(\Delta,\mathcal S)$ be $\mu$-supported. Then
	$$r_i(\delta)\in\trop(\Delta,\mathcal S)	\quad\Longleftrightarrow\quad	\e(\delta,E_i)>0.$$
	Whenever this holds,
	$$\e(r_i(\delta),E_i)=\e(\delta,E_i)-1.$$
	Moreover,
	$$l_i(\delta)\in\trop(\Delta,\mathcal S)\qquad\text{and}\qquad	\e(l_i(\delta),E_i)=\e(\delta,E_i)+1.$$
		
	Dually, let $\check\delta\in\check\trop(\Delta,\mathcal S)$ be
	$\mu$-supported. Then
	$$\rc_i^\star(\check\delta)\in\check\trop(\Delta,\mathcal S)\quad\Longleftrightarrow\quad	\ec(E_i^\star,\check\delta)>0.$$
	Whenever this holds,
	$$\ec(E_i^\star,\rc_i^\star(\check\delta))=	\ec(E_i^\star,\check\delta)-1.$$	
	Moreover,
	$$\lc_i^\star(\check\delta)\in\check\trop(\Delta,\mathcal S)	\qquad\text{and}\qquad
	\ec(E_i^\star,\lc_i^\star(\check\delta))	=	\ec(E_i^\star,\check\delta)+1.$$
\end{corollary}

\begin{proof}
	By Theorem \ref{T:musupp}, $r_i(\delta)$ is $\mu$-supported if and
	only if $	\hom(r_i(\delta),E_j^\star)=0$
	for every frozen vertex $j$.  For $j=i$, this condition is exactly
	Lemma \ref{L:er}.  For $j\neq i$, the vanishing follows from
		$\delta\in\trop(\Delta,\mathcal S)$ and the exact sequence in Theorem
		\ref{T:rlrigid}; equivalently, the operator $r_i$ only changes the
		$i$-boundary obstruction.  Therefore $r_i(\delta)$ is
		$\mu$-supported precisely when $\e(\delta,E_i)>0$.
	
	By Proposition \ref{P:Brep}.(3), $E_i$ is minimally exceptional, so
	Corollary \ref{C:minexc} applies.  Therefore, whenever
	$\e(\delta,E_i)>0$, we have $\e(r_i(\delta),E_i)=\e(\delta,E_i)-1.$
	The same corollary gives $\e(l_i(\delta),E_i)=\e(\delta,E_i)+1$
	provided $\e(l_i(\delta),E_i)>0$.  If instead
	$\e(l_i(\delta),E_i)=0$, then Lemmas \ref{L:er} and \ref{L:rlid} give
	$\hom(\delta,E_i^\star)>0$,
	contradicting the $\mu$-support of $\delta$ by Theorem
	\ref{T:musupp}.  Hence $\e(l_i(\delta),E_i)>0$, and the displayed
	formula for $l_i$ follows.
	
	The dual support criterion is proved in the same way, using the dual
	part of Theorem \ref{T:musupp} and Lemma \ref{L:er}.  The two dual
	equalities follow from the dual minimally exceptional statement in
	Corollary \ref{C:minexc}.
\end{proof}

\begin{remark}\label{r:pm1} Recall that we have that $\rc_{\tau^{-1}\ep} = l^{\ep}$ (see Remark \ref{r:rl}).
So if in addition $\tau^{-1}E_i=E_{\ibar}^\star$ for some frozen vertex $\ibar$, then 
\begin{align}
\ec(\tau^{-1}E_i, l^i(\dtc)) &= \ec(\tau^{-1}E_i, \dtc) - 1 && \text{if $\ec(\tau^{-1}E_i, \dtc)>0$,} \\
\ec(\tau^{-1}E_i, r^i(\dtc)) &= \ec(\tau^{-1}E_i, \dtc) + 1. 
\end{align}
\end{remark}

\subsection{The Upper Seminormal Crystal with Weaker Weights} \label{ss:crystal}
Let $C_I=(c_{i,j})_{i,j\in I}$ be a generalized Cartan matrix, which will be assumed to be symmetric in this paper. 
Let $\Phi=(\mf{h}; h_i,\alpha_i)$ be a realization of $C$, and $\Lambda$ be the corresponding weight lattice.
\begin{definition} A {\em Kashiwara crystal} (or {\em crystal} for short) of type $\Phi$ is a nonempty set $\mc{B}$ together with maps \begin{align*}
		r_i, l_i&: \mc{B} \to \mc{B} \sqcup\{0\},\\
		\rho_i, \lambda_i&: \mc{B} \to \mb{Z} \sqcup\{-\infty\},\\
		{\wt}&:\mc{B} \to \Lambda,
	\end{align*}
	where $i\in I$ and $0\notin \mc{B}$ is an auxiliary element, satisfying the following conditions:
	\begin{enumerate}
		\item[A1.] If $x,y\in \mc{B}$ then $r_i(x)=y$ if and only if $l_i(y)=x$. In this case, it is assumed that
		$${\wt}(y)={\wt}(x)+\alpha_i,\quad \rho_i(y)=\rho_i(x)-1, \quad \lambda_i(y)=\lambda_i(x)+1.$$
		\item[A2.] We require that 
		$$\lambda_i(x) = \innerprod{{\wt}(x), h_i} + \rho_i(x)$$
		for all $x\in\mc{B}$ and $i\in I$. In particular, if $\lambda_i(x)=-\infty$, then $\rho_i(x)=-\infty$. In this case, 
		we require that $l_i(x)=r_i(x)=0$.
	\end{enumerate}
We will tacitly assume that $r_i(x)$ or $l_i(x)$ is mapped to the auxiliary element $0$ if it is not in $\mc{B}$.
\end{definition}

Fix a quiver with potential $(\Delta,\S)$. According to the language from cluster algebras, we will call any mutation of $(\Delta,\S)$ a seed. Let $\mf{T}$ be the index set for all seeds.
By abuse of language, we also call an element $t\in \mf{T}$ a seed.
A seed $t'$ obtained from $t$ by a sequence $\mub$ of mutations is denoted by $\mub:t\to t'$.
\begin{definition}\label{D:crystal-cluster} By a {\em crystal cluster} structure of $\trop(\Delta,\S)$, we mean a family of crystal structures $\{\trop(\Delta,\S)_t\}$ indexed by $t\in \mf{T}$ such that
	$(r_i,l_i; \rho_i,\lambda_i; \wt)_t$ are compatible with mutations:
\begin{align*} \mu_u(r_i(\delta)) &= r_i'(\delta') & \mu_u(l_i(\delta)) &= l_i'(\delta') \\
	\rho_i(\delta) &= \rho_i'(\delta') & \lambda_i(\delta) &= \lambda_i'(\delta') \\
	\wt(\delta) &= 	\wt'(\delta'),&
\end{align*} 
where $(r_i,l_i; \rho_i,\lambda_i; \wt)=(r_i,l_i; \rho_i,\lambda_i; \wt)_t$ and $(r_i',l_i'; \rho_i',\lambda_i'; \wt')=(r_i,l_i; \rho_i,\lambda_i; \wt)_{t'}$ with $t \edge{u} t'$.
\end{definition}

\begin{remark} Let $\mc{B}$ and $\mc{B}'$ be Kashiwara crystals.  
Whenever we have a bijective map $\sigma: \mc{B}\to \mc{B}'$ we can transfer the crystal structure on $\mc{B}$ to $\mc{B}'$ by letting
	\begin{align*}
		r_i^\sigma(\delta)  = \sigma r_i(\sigma^{-1} (\delta)) \quad &\text{and} \quad  l_i^\sigma(\delta)  = \sigma l_i(\sigma^{-1} (\delta)) \\
		\rho_i^\sigma(\delta) = \rho_{i}(\sigma^{-1} (\delta)) \quad &\text{and} \quad  \lambda_i^\sigma(\delta) = \lambda_i(\sigma^{-1} (\delta)) \\
		\wt_i^\sigma(\delta) = \wt_i(\sigma^{-1} (\delta)).
	\end{align*}
By Lemma \ref{L:musupp}, the mutation induces a bijection $\trop(\Delta,\S) \to \trop \mu_u(\Delta,\S)$.
One can think of the crystal cluster structure of $\trop(\Delta,\S)$ as a single crystal structure of $\trop(\Delta,\S)$ transferring to other seeds by mutations.
Equivalently, one can say that each mutation $\mu_u$ induces a crystal isomorphism $\trop(\Delta,\S) \to \trop \mu_u(\Delta,\S)$.
\end{remark}

\begin{remark} One can consider a stronger compatibility, which in addition requires the crystal structure compatible with $\tau_\mu$. In fact, this is the case for the crystal structures defined in Theorems \ref{T:upper} and \ref{T:crystal}. 
However, to align with the classical definition of upper cluster algebras, we do not include this in the definition.
If one defines another version of upper cluster algebras where all extended-reachable toric charts are glued, then it is reasonable to ask this stronger compatibility.
\end{remark}

As we have seen in Lemma \ref{L:muwt}, to have a weight function compatible with mutations, 
it is necessary to require that the weight function is compatible with the $B$-matrix, that is, a compatible grading.
Due to this restriction, we cannot always expect that the weight function is integral.
So we will consider a slightly weaker version of the weight function. Namely, 
we allow the range of $\wt$ to be $\Lambda_{\mb{Q}} = \bigoplus_{i\in I}\mb{Q}\varpi_i$.
If $\wt(x) = \sum_i \wt_i(x) \varpi_i$, then this is equivalent to saying that $(\wt_i)_{i\in I}: \mb{Z}^{\Delta_0}\to \mb{Q}^I$. We will call a crystal with such a weight function, a crystal {\em with weaker weights}.
The following lemma is immediate.
\begin{lemma} Let $(\wt_i)_{i\in I}: \mc{B}\to \mb{Q}^I$ be any map.
If we set $\wt(x) = \sum_{i\in I} \wt_i(x) \varpi_i$, then $\wt$ is a weight function in A1 and A2 is equivalent respectively to that
\begin{align}\label{eq:A1wt} \wt_j(y) &= \wt_j(x) + c_{i,j},\qquad \text{ and}\\
		\label{eq:A2wt} \lambda_i(x) &= \wt_i(x) + \rho_i(x). \end{align}
\end{lemma}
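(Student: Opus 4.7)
The plan is a routine unpacking of definitions; the lemma is labeled ``immediate'' and the content is essentially a change of basis. The one point to keep straight is that $\wt$ now takes values in $\Lambda_{\mb{Q}} = \bigoplus_{j\in I}\mb{Q}\varpi_j$, so the A1 and A2 conditions must be read in this space, with the fundamental weights characterized by $\innerprod{\varpi_j, h_i} = \updelta_{i,j}$ for $i,j\in I$ (the defining property from the realization).

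First I would dispatch A2. Expanding $\wt(x) = \sum_{j\in I}\wt_j(x)\varpi_j$ and pairing with $h_i$ yields $\innerprod{\wt(x), h_i} = \sum_{j\in I}\wt_j(x)\innerprod{\varpi_j, h_i} = \wt_i(x)$. Substituting into $\lambda_i(x) = \innerprod{\wt(x), h_i} + \rho_i(x)$ produces \eqref{eq:A2wt}, and the reverse substitution recovers A2.

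For A1, the key input is that in $\Lambda_{\mb{Q}}$ the simple root reads $\alpha_i = \sum_{j\in I} c_{i,j}\varpi_j$. This holds because $C_I$ is symmetric: both sides pair with each coroot $h_k$ ($k\in I$) to the common value $c_{i,k}=c_{k,i}$, and elements of $\Lambda_{\mb{Q}}$ are determined by their pairings with $\{h_k\}_{k\in I}$, since $\{\varpi_j\}_{j\in I}$ is the dual basis. Substituting, the A1 weight condition $\wt(y) = \wt(x) + \alpha_i$ becomes
\[ \sum_{j\in I}\wt_j(y)\varpi_j \;=\; \sum_{j\in I}\bigl(\wt_j(x) + c_{i,j}\bigr)\varpi_j, \]
which is equivalent, coordinate by coordinate, to \eqref{eq:A1wt}. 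No genuine obstacle is expected; the argument is two lines of basis manipulation, and the symmetry of $C_I$ is precisely what makes the interpretation of $\alpha_i$ in $\Lambda_{\mb{Q}}$ consistent.
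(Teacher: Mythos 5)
Your argument is correct and is exactly the coordinate unpacking the paper has in mind — the lemma is stated as ``immediate'' with no proof given, and pairing $\wt(x)=\sum_j\wt_j(x)\varpi_j$ with $h_i$ via $\innerprod{\varpi_j,h_i}=\updelta_{i,j}$ is the whole content. One cosmetic remark: the symmetry of $C_I$ is not actually needed to identify $\alpha_i$ with $\sum_j c_{i,j}\varpi_j$ in $\Lambda_{\mb{Q}}$, since the realization already gives $\alpha_i(h_j)=c_{i,j}$ directly; symmetry only reconciles the two possible indexing conventions and the paper assumes it throughout anyway.
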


\begin{definition} A crystal (with weaker weights) $\mc{B}$ is called {\em seminormal} if
	$$\rho_i(x) = \max\{k\in\mb{Z}_{\geq 0} \mid r_i^k(x)\neq 0\}\ \text{ and }\ \lambda_i(x) = \max\{k\in\mb{Z}_{\geq 0} \mid l_i^k(x)\neq 0\}.$$
	If just the first (resp. second) condition is assumed, we say $\mc{B}$ is {\em upper seminormal} (resp. {\em lower seminormal}).
\end{definition}

\begin{lemma}\label{L:rholam} Suppose that $\rho_i(y)=\rho_i(x)-1$. If we define $\lambda_i(x)= \rho_i(x) + \wt_i(x)$, then $\lambda_i(y)=\lambda_i(x)+1$. 
\end{lemma}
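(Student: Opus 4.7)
The plan is short and essentially bookkeeping: expand $\lambda_i(y)$ using the proposed definition and then substitute the two known transformation rules. First I would apply the weight-transformation part of axiom A1, or equivalently equation \eqref{eq:A1wt}, at the diagonal index $j=i$. Since $C_I$ is a (symmetric) Cartan matrix we have $c_{i,i}=2$, so this gives
\[ \wt_i(y) = \wt_i(x) + c_{i,i} = \wt_i(x) + 2. \]
Second, I would plug the hypothesis $\rho_i(y)=\rho_i(x)-1$ together with this identity into the defining formula $\lambda_i(y) = \rho_i(y) + \wt_i(y)$:
\[ \lambda_i(y) = (\rho_i(x)-1) + (\wt_i(x)+2) = (\rho_i(x) + \wt_i(x)) + 1 = \lambda_i(x) + 1, \]
which is what was claimed.

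There is really no obstacle here; the content of the lemma is conceptual rather than computational. It shows that in order to equip a set with a Kashiwara crystal structure \emph{with weaker weights}, one does not need to verify the full axiom A1 for $\lambda_i$ separately: it suffices to verify the $\rho$- and weight-parts of A1 and then \emph{define} $\lambda_i$ through A2 via $\lambda_i = \rho_i + \wt_i$. This is exactly the simplification that will be exploited later, for instance in the statement of Theorem \ref{T:upper}, where $\lambda_i$ is introduced by the formula $\lambda_i(\delta)=\rho_i(\delta)+\wt_i(\delta)$ rather than verified from scratch. The only implicit hypothesis worth flagging is that we are in the situation $y = r_i(x)$ (so that A1 applies to give the weight shift by $\alpha_i$); without this, neither $\rho_i(y)=\rho_i(x)-1$ nor the weight rule would be available.
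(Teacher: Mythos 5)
Your proof is correct and is essentially identical to the paper's one-line computation: both expand $\lambda_i(y)=\rho_i(y)+\wt_i(y)=(\rho_i(x)-1)+(\wt_i(x)+c_{i,i})$ and use $c_{i,i}=2$. The implicit hypothesis you flag (that $y=r_i(x)$, so the weight shift $\wt_i(y)=\wt_i(x)+c_{i,i}$ from \eqref{eq:A1wt} is available) is indeed what the paper also tacitly assumes.
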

\begin{proof} $\lambda_i(y) = \rho_i(y)+\wt_i(y) = (\rho_i(x)-1)+ (\wt_i(x)+c_{i,i}) = \rho_i(x)+\wt_i(x)+1 = \lambda_i(x)+1$.
\end{proof}

Fix a subset $I$ of frozen vertices, and let $C_I$ be the Cartan type of $I$ (Definition \ref{D:CartanI}).
Throughout we assume that $\op{span}(\epc_i)_{i\in I} \cap \op{span}(B) = \{0\}$ so at least one $\mb{Q}^I$-valued compatible weight function adapted to $I$ exists. This assumption is generically satisfied.
\begin{theorem}\label{T:upper} Let $I$ be a set of reachable frozen vertices of $\Delta$, and $(\wt_i)_{i\in I}$ be any compatible grading adapted to $I$.
Then the set $\mc{B}$ of $\mu$-supported $\delta$-vectors has an upper seminormal crystal cluster structure with weaker weights of type $C_I$ given by 
	$$r_i,l_i;\ \rho_i,\lambda_i;\ \wt_i, \quad i\in I$$
where $r_i$ and $l_i$ are as in Definition \ref{D:lrv}, $\rho_i(\delta) = \e(\delta, E_i)$, and $\lambda_i = \rho_i + \wt_i$.
Moreover, we can drop the weak weights if the grading $(\wt_i)_{i\in I}$ is integral.
\end{theorem}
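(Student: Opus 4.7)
The plan is to verify, in order: (W) the operators $r_i, l_i$ are well-defined maps $\mc{B}\to\mc{B}\sqcup\{0\}$; (A) the Kashiwara axioms A1 and A2 in the weaker-weight sense; (S) upper seminormality; and (C) cluster compatibility. The guiding strategy exploits reachability: for each $i\in I$, Lemma \ref{L:mu2S} provides a mutation sequence $\mub$ with $\mub(E_i)=S_i$. By Theorem \ref{T:rle}, Lemma \ref{L:HomEinv}, Lemma \ref{L:muwt}, and Lemma \ref{L:musupp}, the data $r_i, l_i, \rho_i, \wt_i$ and the property of being $\mu$-supported all transport equivariantly through $\mub$, so every assertion may be checked in the simplified seed. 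There the identification $\mub(E_i)=S_i=P_{[i]}[0]$ forces the absence of arrows into $i$ in $\mub(\Delta)^\mu[i]$, hence (combined with the no-frozen-arrow convention) $\dtc_{S_i}=e_i$, and one has the concrete descriptions $r_i'(\delta')=\delta'+e_i$ and $l_i'(\delta')=\delta'-e_i$.

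For (W), if $\rho_i(\delta)=\e(\delta,E_i)=0$, Lemma \ref{L:er} yields $\hom(r_i(\delta),E_i^\star)>0$, so $r_i(\delta)\notin\mc{B}$ and is sent to $0$. When $\rho_i(\delta)>0$, I check $\mu$-supportedness of $\delta'+e_i$ in the mutated seed: for a frozen $j\neq i$, the dual boundary $\mub(E_j^\star)$ has frozen support only at $j$ by Proposition \ref{P:Brep}(1), so shifting by $e_i$ does not create any new Hom into $\mub(E_j^\star)$; for $j=i$ one has $\mub(E_i^\star)=S_i$, and the hypothesis $\rho_i(\delta)>0$ combined with the $\mu$-supportedness of $\delta'$ forces $\delta'(i)\leq -1$, so $(\delta'+e_i)_+(i)=0$ and the general cokernel remains supported on mutable vertices. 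Applying Theorem \ref{T:musupp} (or a direct cokernel count) yields $r_i(\delta)\in\mc{B}$. An analogous argument using $\delta'(i)\leq 0$ for any $\mu$-supported $\delta'$ shows $l_i(\delta)\in\mc{B}$ always.

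For (A), the inverse relation $r_il_i=l_ir_i=\mathrm{id}$ is Lemma \ref{L:rlid}, and $\rho_i(y)=\rho_i(x)-1$ for $r_i(x)=y$ is Corollary \ref{C:pm1}. The weight shift reduces, via transport, to
$$\wt_j(y)-\wt_j(x)=\mub(\wt_j)(\dtc_{S_i})=\mub(\wt_j)(\mub(\epc_i))=c_{i,j},$$
using Definition \ref{D:adI} in the mutated seed (the adapted property is preserved by Lemma \ref{L:muwt}) together with the symmetry of $C_I$. The definition $\lambda_i:=\rho_i+\wt_i$ is axiom A2, and $\lambda_i(y)=\lambda_i(x)+1$ is Lemma \ref{L:rholam}, which relies on $c_{i,i}=2$---guaranteed because reachability of $i$ implies $\E$-rigidity of $\mc{E}_i^\mu$ by Lemma \ref{L:HEmu}. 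Upper seminormality (S) then follows by iterating Corollary \ref{C:pm1}: $\rho_i(r_i^k(\delta))=\rho_i(\delta)-k$ as long as $r_i^{k-1}(\delta)\neq 0$, so $r_i^k(\delta)\neq 0$ iff $k\leq\rho_i(\delta)$.

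For (C), mutation-equivariance of $r_i, l_i$ is Theorem \ref{T:rle}, of $\rho_i$ is Lemma \ref{L:HomEinv}, of $\wt_i$ is Lemma \ref{L:muwt}, and of $\lambda_i$ follows from $\lambda_i=\rho_i+\wt_i$. Finally, if each $\wt_i$ takes integer values, then $\wt:=\sum_{i\in I}\wt_i\,\varpi_i$ lands in $\bigoplus_{i\in I}\mb{Z}\varpi_i\subset\Lambda$, and Lemma \ref{L:wtc} furnishes an extension of $(\wt_i)_{i\in I}$ into a genuine realization of $C_I$, so the axioms hold with $\wt$ valued in the full weight lattice rather than only $\Lambda_{\mb{Q}}$. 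I anticipate the main delicacy to be the $\mu$-supportedness check in (W) for $j\neq i$: Lemma \ref{L:er} covers only $j=i$ directly, and the cleanest route is the reduction to a seed where $E_i$ is simple, which makes the shift $\delta'\mapsto\delta'+e_i$ concrete and its effect on other dual boundary representations (whose frozen supports are disjoint from $i$) easy to control.
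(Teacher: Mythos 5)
Your verification of the axioms themselves essentially tracks the paper's proof: A2 is true by definition, the inverse relation is Lemma \ref{L:rlid}, the string-length decrement is Corollary \ref{C:pm1}, the $\lambda_i$-increment is Lemma \ref{L:rholam}, and cluster compatibility is Theorem \ref{T:rle} plus Lemmas \ref{L:HomEinv}, \ref{L:muwt}, \ref{L:bdinv}. Your weight-shift computation is routed through the seed where $\mub(E_i)=S_i$ via Lemma \ref{L:mu2S}; the paper instead argues at an arbitrary seed, writing $r_i(\delta)=\delta+\epc_i-\rank(r_i(\delta),\ep_i)B(\Delta)$ by Theorem \ref{T:rlrigid} and observing that the correction term lies in the row space of $B_\Delta$ (the rank vector is $\mu$-supported), hence is killed by any compatible grading. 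Both routes are valid; the paper's has the advantage of not needing $\e(\delta,E_i)>0$ as a hypothesis.

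The genuine problem is in your step (W), precisely at the point you flag as the main delicacy. The premises you use to make the mutated seed ``concrete'' are wrong: $\mub(E_i)=P_{[i]}[0]=S_i$ forces the absence of arrows \emph{out of} $i$ in $\mub(\Delta)^\mu[i]$, so $i$ is a \emph{sink} with incoming arrows $u\to i$ from mutable vertices (indeed $d_i=\bigoplus_{u\to i}P_u\to 0$ must be negative for $\mub(E_i)$ to be simple). Consequently $\dtc_{S_i}=e_i-\sum_{u\to i}e_u=\mub(\epc_i)$, not $e_i$, and the shift $\delta'\mapsto\delta'+\dtc_{S_i}$ has nonzero (negative) entries at the mutable neighbours of $i$. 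Likewise $\mub(E_i^\star)=I_{[i]}$ is not simple here (it fits in $0\to S_i\to E_i^\star\to\tauh_\mu\mc{E}_i^\mu\to 0$); only a \emph{source} would make $E_i^\star$ simple. Your argument that ``shifting by $e_i$ does not create any new Hom into $\mub(E_j^\star)$ for $j\neq i$'' therefore does not apply: the actual shift is supported exactly on vertices where the $E_j^\star$ may live, and Proposition \ref{P:Brep}.(1) controls only the frozen support of $E_j^\star$, not its mutable support. So the claim that $\rho_i(\delta)>0$ implies $r_i(\delta)\in\mc{B}$ — which you correctly identify as the input needed to upgrade the iteration of Corollary \ref{C:pm1} to upper seminormality, since Lemma \ref{L:er} together with Theorem \ref{T:musupp} only rules out the obstruction coming from $E_i^\star$ itself — is not established by your argument. (The same objections apply to your claim that $l_i(\delta)\in\mc{B}$ ``always,'' though that assertion is not needed for an \emph{upper} seminormal crystal and is in fact suspect.) You should either find a genuine argument for $\hom(r_i(\delta),E_j^\star)=0$ for all frozen $j$, or retreat to the paper's formulation, in which anything landing outside $\trop(\Delta,\S)$ is sent to the auxiliary element $0$ and the seminormality claim is what carries the burden.
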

\begin{proof}  The crystal structure is compatible with mutations due to Theorem \ref{T:rle}, Lemmas \ref{L:HomEinv}, \ref{L:muwt}, and \ref{L:bdinv}. Then we will verify the crystal axioms for a fixed seed $t$.
	
The axiom A2 is now \eqref{eq:A2wt} which is trivially satisfied due to the definition. It remains to verify A1.
The fact that $r_i(\delta)=\eta$ if and only if $l_i(\eta)=\delta$ is the content of Lemma \ref{L:rlid}.
By Lemma \ref{L:Cartan} the equality \eqref{eq:A1wt} in our setting is equivalent to that for each $j\in I$
\begin{equation}\label{eq:A1wtr} \wt_j(r_i(\delta)) - \wt_j(\delta) = \wt_j(\epc_i). \end{equation}
	By Theorem \ref{T:rlrigid} $r_i(\delta)=\delta+ \epc_i - \rank(r_i(\delta), \ep_i)B(\Delta)$.
By the linearity of $\wt$, \eqref{eq:A1wtr} is equivalent to that 
$$\wt_j(\rank(r_i(\delta), \ep_i)B(\Delta)) = 0.$$
Since $r_i(\delta)$ is $\mu$-supported, we have that $\rank(r_i(\delta), \ep_i)B(\Delta) = \rank(r_i(\delta), \ep_i)B_\Delta$.
As $\wt_j$ is a compatible grading, $\rank(r_i(\delta), \ep_i)B_\Delta$ has no contribution to $\wt_j$.
	Hence \eqref{eq:A1wtr} is verified.
	The fact that $\rho_i(y)=\rho_i(x)-1$ and $\lambda_i(y)=\lambda_i(x)+1$ follows from Corollary \ref{C:pm1} and Lemma \ref{L:rholam} respectively.
	
Finally, by Corollary \ref{C:pm1}, $r_i(\delta)\neq0\ \Longleftrightarrow\ \rho_i(\delta)=\e(\delta,E_i)>0$,
and whenever $r_i(\delta)\neq0$, $\rho_i(r_i(\delta))=\rho_i(\delta)-1$.
Therefore
$$\rho_i(\delta)=\max\{m\ge0\mid r_i^m(\delta)\neq0\}.$$
Thus the crystal is upper seminormal.	
\end{proof}

\noindent The prototypical examples of this type in the cluster theory are the coordinate ring of the maximal unipotent subgroups of a simple simply-connected complex algebraic groups. These classical examples will be briefly reviewed in Section \ref{ss:unipotent}.

\begin{remark} At this stage it is unclear (from the proof of Theorem \ref{T:upper}) why the Cartan type of the crystal is determined by \eqref{eq:wtC}.
But we will see in Section \ref{S:lifting} that this is the correct definition for an algebraic lift of $\mc{B}$.
\end{remark}

Using the dual boundary representations, we can similarly define a crystal structure on $\check{\mc{B}}=\check{\trop}(\Delta,\S)$, the set of $\mu$-supported $\dtc$-vectors of $(\Delta,\S)$. Suppose that there is a compatible grading $(\check{\wt}_i)_{i\in I}$ dually adapted to $I$. Then we define
\begin{align*}
	\rc_i^\star(\dtc)&=\rc_{\ep_i^\star}(\dtc) &  \check{\rho}_i^\star(\dtc) &= \ec(E_i^\star, \dtc) \\
	\lc_i^\star(\dtc)&=\lc_{\ep_i^\star}(\dtc) &  \check{\lambda}_i^\star(\dtc) &= \check{\wt}_i(\dtc) + \check{\rho}_i^\star(\dtc).
\end{align*}
\noindent As remarked below Definition \ref{D:CartanI}, this crystal has the same Cartan type as $\trop(\Delta,\S)$ but in general the weight function $(\check{\wt}_i)_{i\in I}$ can be different from $({\wt}_i)_{i\in I}$.

\subsection{The Seminormal Crystal Structure} \label{ss:seminormal}
Next we discuss the nicest situation from a crystal perspective.

\begin{theorem}\label{T:crystal} Let $\{(i,\ibar)\}_{i\in I}$ be a set of $\tau$-exact pairs of reachable frozen vertices.
	Then the set $\mc{B}$ has a seminormal crystal cluster structure given by 
	$$r_i, l_i;\ \rho_i, \lambda_i;\ \wt_i,\quad i\in I$$
	where $r_i$ and $l_i$ are as in Definition \ref{D:lrv}, $\rho_i(\delta) = \e(\delta, E_i),\ \lambda_i(\delta) = \ec(\tau^{-1} E_i, \dtc)$,
	and the weight function $\wt_i=\wt_{\ep_i}$ as in \eqref{eq:wti}.
\end{theorem}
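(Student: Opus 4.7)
The plan is to bootstrap from the upper seminormal structure of Theorem \ref{T:upper} using the $\tau$-exactness hypothesis. By Corollary \ref{C:tauexact}, pair $\tau$-exactness forces $\wt_i := \wt_{\ep_i}$ from \eqref{eq:wti} to be a compatible grading adapted to $I$, so Theorem \ref{T:upper} already supplies an upper seminormal crystal cluster structure of type $C_I$ with $r_i, l_i, \rho_i$ as in the statement and with $\lambda_i^{\op{up}} := \rho_i + \wt_i$. It therefore remains to identify this $\lambda_i^{\op{up}}$ with $\ec(\tau^{-1}E_i, \dtc)$ and to promote upper seminormality to seminormality.

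For the identification, I will apply Proposition \ref{P:weight} to any $\delta \in \mc{B}$ with $\dtc$ the corresponding dual weight from \eqref{eq:delta2dual}. Proposition \ref{P:weight} gives $\ec(\tau^{-1}E_i, \dtc) - \e(\delta, E_i) = \wt_{\ep_i}(\dtc)$. Because $(\wt_i)_{i\in I}$ is a compatible grading it kills the row space of $B_\Delta$, and \eqref{eq:delta2dual} puts $\dtc - \delta$ in this row space; so $\wt_{\ep_i}(\dtc) = \wt_{\ep_i}(\delta) = \wt_i(\delta)$. Hence $\ec(\tau^{-1}E_i, \dtc) = \rho_i(\delta) + \wt_i(\delta) = \lambda_i^{\op{up}}(\delta)$. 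Mutation compatibility of the resulting $\lambda_i$ is automatic since both $\rho_i$ and $\wt_i$ are mutation compatible by Theorem \ref{T:upper}.

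For seminormality, I will use $\tau$-exactness $\tau^{-1}E_i = E_{\ibar}^\star$ to translate the action of $l_i$ on $\mc{B}$ into that of $\rc_{\ibar}^\star$ on $\check{\mc{B}}$. Remark \ref{r:rl} records that the $\dtc$-vector of $l_\ep(\delta)$ is $l^\ep(\dtc)$ and that $l^\ep = \rc_{\tau^{-1}\ep}$; with $\ep = \ep_i$ and $\tau^{-1}\ep_i = \ep_{\ibar}^\star$, the bijection of Theorem \ref{T:genpi} then yields the correspondence $l_i \leftrightarrow \rc_{\ibar}^\star$. Now the dual of Lemma \ref{L:er} combined with Theorem \ref{T:musupp} says that $\lambda_i(\delta) = \ec(E_{\ibar}^\star, \dtc) = 0$ forces $\rc_{\ibar}^\star(\dtc)$ to fail $\mu$-supportedness, so $l_i(\delta) = 0$ in $\mc{B}$. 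Combined with the axiom-A1 identity $\lambda_i(l_i(\delta)) = \lambda_i(\delta) - 1$ inherited from Theorem \ref{T:upper}, iterating gives $\lambda_i(\delta) = \max\{k \geq 0 : l_i^k(\delta) \neq 0\}$, i.e., lower seminormality.

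The main obstacle is the bookkeeping between the variants $l_\ep, \lc_\ep, l^\ep,$ and $\rc_\ep$ and the correct direction of duality: under the $\delta \leftrightarrow \dtc$ bijection, $l_i$ translates not to $\lc_i^\star$ but to the operator $\rc_{\ibar}^\star$ attached to the $\tau$-exact partner $\ibar$. This swap is precisely what converts the upper-seminormal bound for $r$-applications on the $\delta$-side (where Theorem \ref{T:upper} already works) into the lower-seminormal bound for $l$-applications needed here, so the $\tau$-exactness hypothesis is doing all the real work in the upgrade from upper to full seminormality.
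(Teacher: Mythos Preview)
Your proposal is correct and matches the paper's own proof in substance. The paper likewise invokes Corollary~\ref{C:tauexact} for the weight function, Proposition~\ref{P:weight} for axiom A2, and the dual of Lemma~\ref{L:er} together with Theorem~\ref{T:musupp} and $\tau$-exactness for lower seminormality; the only organizational difference is that the paper re-verifies the increment $\lambda_i(r_i(\delta))=\lambda_i(\delta)+1$ directly via Remark~\ref{r:pm1}, whereas you obtain it for free from Theorem~\ref{T:upper} after identifying $\lambda_i^{\op{up}}$ with $\ec(\tau^{-1}E_i,\dtc)$.
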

\begin{proof} The proof is almost the same as that of Theorem \ref{T:upper} except that
	\begin{enumerate} \item $\lambda_i(\eta)=\lambda_i(\delta)+1$ follows from Remark \ref{r:pm1}.
		\item The equality \eqref{eq:A2wt} is the content of Proposition \ref{P:weight}.
		\item $(\wt_i)_{i\in I}$ is an integral compatible grading adapted to $I$ is the content of Corollary \ref{C:tauexact}.
		\item It is lower seminormal by Lemma \ref{L:er}, Theorem \ref{T:musupp}, and the definition of the $\tau$-exact pair.
	\end{enumerate}
\end{proof}
\noindent The prototypical examples of this type in the cluster algebra theory is the base affine spaces and the affine coordinate ring of the Grassmannians. We will briefly review them in Section \ref{ss:G/U} and \ref{ss:Grass}.

\begin{warning} Using the dual boundary representations we can also get an upper seminormal crystal structure on $\check{\trop}(\Delta, \S)$ as in Section \ref{ss:dualcrystal}.
	However, this structure in general cannot be upgraded to a seminormal crystal as in Theorem \ref{T:crystal}.
	For this upgrading, one should require $\{(i,\ibar)\}_{i\in I}$ to be a set of {\em dual} $\tau$-exact pairs, that is, $\tau E_i^\star = E_{\ibar}$.
But this does not follow from its being a set of $\tau$-exact pairs.
\end{warning}

\subsection{Intertwining with Cluster Automorphisms} \label{ss:dualcrystal}
By a permutation of $\Delta_0$, we mean a permutation of $\Delta_0$ that restricts to the set of frozen vertices.
If $\pi$ is a permutation on $\Delta_0$, then we get another ice QP $\pi(\Delta,\S)$ by relabelling the vertices.
Each representation $M$ of $(\Delta,\S)$ is naturally a representation of $\pi(\Delta,\S)$, and we denote this induced functor still by $\pi$.

\begin{definition}\label{D:Cauto} A {\em cluster automorphism} of $\Delta$ is a sequence $\mub$ of mutations such that $$\pi\mub(\Delta)=\Delta\ \text{ or }\ \pi\mub(\Delta)=\Delta^{\opp}$$ 
	up to frozen arrows for some permutation $\pi$ of $\Delta_0$.
	We also denote it by the pair $(\mub,\pi)$.
	In the former case, the automorphism is called {\em direct}, otherwise it is called {\em opposite}.
\end{definition}

Let $\sigma=(\mub, \pi)$ be a cluster automorphism.
We have a bijection 
$$\sigma: \trop(\Delta, \S) \to \trop(\pi\mub(\Delta, \S))\ \text{ given by }\
\delta \mapsto \pi\mub(\delta).$$
Since the essential part of the crystal structure in Theorem \ref{T:crystal} is $r_i$, $\rho_i$, and $\wt_i$, we will ignore $l_i$ and $\lambda_i$ in the notation for this structure.
The crystal cluster structure $(r_i^\sigma;\rho_i^\sigma;\wt_i^\sigma)$ induced by $\sigma$ is the crystal cluster structure given by
\begin{equation}\label{eq:crystalsigma} r_i^\sigma(\delta) = r_{\pi(i)}(\delta),\ \rho_i^\sigma(\delta) = \e(\delta, E_{\pi(i)}),\ \text{ and }\ \wt_i^\sigma=\pi\mub(\wt_i)
\end{equation}
for $\delta \in \trop(\pi\mub(\Delta,\S))$.
Note that if $\sigma$ is direct, then by Lemma \ref{L:delfra} 
$\trop(\pi\mub(\Delta, \S)) = \trop(\Delta, \S)$.
In this case, the crystal operators and the string length functions on the right-hand sides are the ordinary ones on $(\Delta,\S)$.
\begin{corollary}\label{C:directauto} For a direct cluster automorphism $\sigma=(\mub,\pi)$ of $(\Delta,\S)$,
the crystal cluster structure induced by $\sigma$ is given by \eqref{eq:crystalsigma}.	
\end{corollary}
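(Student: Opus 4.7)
The plan is to unfold the definition of the transferred structure $\sigma \circ (r_i, \rho_i, \wt_i) \circ \sigma^{-1}$ and to match each piece, using the mutation- and relabelling-invariance results already established earlier. I write $\sigma$ for the bijection $\pi\mub$ on $\delta$-vectors. The first thing to record is that, since $\sigma$ is direct, $\pi\mub(\Delta,\S) = (\Delta,\S)$ up to frozen arrows, and by Lemma \ref{L:delfra} this equality on the nose suffices on the level of $\trop$, so $\sigma$ really is a bijection of $\trop(\Delta,\S)$ with itself. Moreover the boundary representation $E_i$ is built inside $(\Delta,\S)_{\mu[i]}$, which involves no frozen arrows at all, so each $E_i$ and each weight $\ep_i$ is intrinsic to the underlying ice QP modulo frozen arrows.

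Next I would track what $\sigma$ does to the boundary data. By Lemma \ref{L:bdinv}, $\mub(E_i)$ is the boundary representation attached to $i$ in $\mub(\Delta,\S)$; then $\pi$ just relabels vertices, so $\pi\mub(E_i)$ is the boundary representation attached to $\pi(i)$ in $\pi\mub(\Delta,\S) = (\Delta,\S)$. Hence $\sigma(E_i) = E_{\pi(i)}$, and in particular $\sigma(\ep_i) = \ep_{\pi(i)}$. With this identification the computation of the three pieces of the transferred structure is a direct translation. For the raising operator, Theorem \ref{T:rle} gives $\mub(r_{\ep_i}(\delta')) = r_{\mub(\ep_i)}(\mub(\delta'))$, and applying $\pi$ preserves this relation, so
\[ r_i^\sigma(\delta) \;=\; \sigma \, r_{\ep_i} \sigma^{-1}(\delta) \;=\; r_{\sigma(\ep_i)}(\delta) \;=\; r_{\ep_{\pi(i)}}(\delta) \;=\; r_{\pi(i)}(\delta). \]
For the string length, Lemma \ref{L:HomEinv} together with the obvious invariance of $\e$ under relabelling yields $\rho_i^\sigma(\delta) = \e(\sigma^{-1}(\delta), E_i) = \e(\delta, \sigma(E_i)) = \e(\delta, E_{\pi(i)})$.

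The weight function is handled analogously: Lemma \ref{L:muwt} says $\wt_i(\delta') = \mub(\wt_i)(\mub(\delta'))$, and permutations act on weight functions by relabelling their argument, so setting $\delta' = \sigma^{-1}(\delta)$ we obtain
\[ \wt_i^\sigma(\delta) \;=\; \wt_i(\sigma^{-1}(\delta)) \;=\; \pi\mub(\wt_i)(\delta), \]
which matches the third formula of \eqref{eq:crystalsigma}. The only subtle point, and the one I would be most careful about, is the "up to frozen arrows" clause in Definition \ref{D:Cauto}: one must verify that dropping or adding arrows between frozen vertices does not move any $E_i$ or change any value of $\e(-,E_i)$ or $\wt_i$. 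This is exactly the content of the reduction in Section \ref{ss:musupp} (Lemma \ref{L:delfra}) combined with the fact that the boundary construction uses only $\Delta^\mu[i]$, so the apparent ambiguity is harmless and the three formulas of \eqref{eq:crystalsigma} describe the induced structure.
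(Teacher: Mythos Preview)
Your proposal is correct and matches the paper's intended argument; the paper in fact states this corollary without proof, treating it as immediate from the preceding discussion (the transfer formula, Lemma \ref{L:delfra}, and the remark that for direct $\sigma$ the right-hand sides of \eqref{eq:crystalsigma} live in $(\Delta,\S)$ itself). Your write-up supplies exactly the omitted details, invoking precisely the lemmas one would expect (Theorem \ref{T:rle}, Lemmas \ref{L:bdinv}, \ref{L:HomEinv}, \ref{L:muwt}, \ref{L:delfra}) and correctly handling the ``up to frozen arrows'' caveat via the observation that $E_i$ is built inside $\Delta^\mu[i]$.
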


Now we shall consider the case when $\sigma$ is opposite.
Recall that the boundary representation $E_i$ of $(\Delta,\S)^{\opp}$ is isomorphic to the dual boundary representation of $(\Delta,\S)$, and the $\delta$-vectors of $(\Delta,\S)^{\opp}$ is naturally a $\dtc$-vectors of $(\Delta,\S)$. Hence, \eqref{eq:crystalsigma} becomes
\begin{equation} \label{eq:sigmaopp} r_i^\sigma(\delta) = \rc_{\pi(i)}^\star(\delta),\ \rho_i^\sigma(\delta) = \ec(E_{\pi(i)}^\star, \delta) = \check{\rho}_{\pi(i)}^\star(\delta),\ \text{ and }\ \wt_i^\sigma=\pi\mub(\wt_i) \end{equation}
where $\delta \in \trop(\Delta,\S)^{\opp}$ but $\delta$'s on the right-hand sides are viewed as $\dtc$-vectors of $(\Delta,\S)$.

By Theorem \ref{T:genpi} the map $\delta\mapsto \dtc$ is a bijection from the set ${\trop}(\Delta, \S)$ to the set $\check{\trop}(\Delta, \S)$,
so we can transfer this crystal structure $(\rc_i^\star,\lc_i^\star; \rhoc_i^\star,\check{\lambda}_i^\star; \check{\wt}_i^\star)$ from $\check{\mc{B}}$ to $\mc{B}$, denoted by $(r_i^\star, l_i^\star;\ \rho_i^\star, \lambda_i^\star;\ \wt_i^\star)$.
In fact, they can be explicitly written down (see Remark \ref{r:rl}).
We call this the dual crystal structure of $\mc{B}$, denoted by $\mc{B}^\star$.
Note that $\mc{B}$ and $\mc{B}^\star$ have the same underlying set.

\begin{definition}\label{D:Kmap} Let $\sigma$ be an opposite cluster automorphism. For $\delta\in \trop(\Delta,\S)$, we view $\sigma(\delta) \in \trop(\Delta,\S)^{\opp}$ as a $\dtc$-vector in $\check{\trop}(\Delta,\S)$.
Let $\kappa$ be the bijection $\trop(\Delta,\S)\to \trop(\Delta,\S)$ such that
	$\kappa(\delta)^\vee = \sigma(\delta)$ as $\dtc$-vectors in $\check{\trop}(\Delta,\S)$.
Then $\kappa$ is called a {\em generalized Kashiwara map} (associated to $\sigma$).	
\begin{equation}\label{eq:kappa} \xymatrix{ \trop(\Delta,\S) \ar[r]^{\sigma} \ar[d]_{\kappa}&  \trop(\Delta,\S)^{\opp} \ar[d] \\
	\trop(\Delta,\S) \ar[r]^{\vee} & \check{\trop}(\Delta,\S)  }
\end{equation}
\end{definition}

\begin{corollary}\label{C:Kmap} For an opposite cluster automorphism $\sigma=(\mub,\pi)$ of $(\Delta,\S)$, let $\kappa$ be the associated generalized Kashiwara map.
The crystal cluster structure induced by $\kappa$ is given by	
$$r_i^\kappa(\delta) = r_{\pi(i)}^\star(\delta),\ \rho_i^\kappa(\delta) = \rho_{\pi(i)}^\star(\delta),\ \text{ and }\ \wt_i^\kappa=\pi\mub(\wt_i).$$
\end{corollary}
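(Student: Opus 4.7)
The plan is to perform a direct diagram chase through the defining square \eqref{eq:kappa}. By the general transfer construction for crystals via a bijection (as recalled in Section \ref{ss:crystal}), the crystal cluster data on $\trop(\Delta,\S)$ induced by the bijection $\kappa$ are given by $r_i^\kappa(\delta)=\kappa(r_i(\kappa^{-1}(\delta)))$, $\rho_i^\kappa(\delta)=\rho_i(\kappa^{-1}(\delta))$, and $\wt_i^\kappa(\delta)=\wt_i(\kappa^{-1}(\delta))$, where the operators on the right are the standard ones on $\trop(\Delta,\S)$ provided by Theorem \ref{T:upper}.

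Next I would factor $\kappa^{-1}=\sigma^{-1}\circ\vee$ using \eqref{eq:kappa} and substitute, obtaining $r_i^\kappa(\delta)=\vee^{-1}(\sigma(r_i(\sigma^{-1}(\vee(\delta)))))=\vee^{-1}(r_i^\sigma(\vee(\delta)))$ and analogously $\rho_i^\kappa(\delta)=\rho_i^\sigma(\vee(\delta))$. The inner transfer by the opposite automorphism $\sigma$ lands on $\trop(\Delta,\S)^{\opp}$, and its explicit form was already computed in \eqref{eq:sigmaopp}: under the natural identification $\trop(\Delta,\S)^{\opp}\cong\check{\trop}(\Delta,\S)$, which takes the boundary representation of $(\Delta,\S)^{\opp}$ at vertex $j$ to the dual boundary representation $E_j^\star$ of $(\Delta,\S)$, one has $r_i^\sigma=\rc^\star_{\pi(i)}$ and $\rho_i^\sigma=\check{\rho}^\star_{\pi(i)}$ as operators on $\check{\trop}(\Delta,\S)$.

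Finally I would invoke the definition of the dual crystal cluster structure $\mc{B}^\star$ from Section \ref{ss:dualcrystal}, which is by construction the transfer along $\vee$ of the standard crystal on $\check{\trop}(\Delta,\S)$; hence $r_j^\star(\delta)=\vee^{-1}(\rc^\star_j(\vee(\delta)))$ and $\rho_j^\star(\delta)=\check{\rho}^\star_j(\vee(\delta))$ for every $j$. Setting $j=\pi(i)$ and comparing with the formulas above yields $r_i^\kappa=r_{\pi(i)}^\star$ and $\rho_i^\kappa=\rho_{\pi(i)}^\star$. For the weight function, Lemma \ref{L:muwt} gives $\wt_i\circ\sigma^{-1}=\pi\mub(\wt_i)$ on $\trop(\Delta,\S)^{\opp}$, whence $\wt_i^\kappa(\delta)=\wt_i(\sigma^{-1}(\vee(\delta)))=\pi\mub(\wt_i)(\vee(\delta))$, which is precisely the asserted identity $\wt_i^\kappa=\pi\mub(\wt_i)$ under the identification $\trop(\Delta,\S)^{\opp}\cong\check{\trop}(\Delta,\S)$. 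I do not anticipate any real obstacle: the argument is entirely formal once \eqref{eq:sigmaopp} is in hand; the only step needing attention is to keep careful track of which set each operator acts on at each stage of the chase, in particular the identification of the boundary representations of $(\Delta,\S)^{\opp}$ with the dual boundary representations of $(\Delta,\S)$.
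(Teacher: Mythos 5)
Your proposal is correct and follows essentially the same route as the paper: both factor $\kappa^{-1}=\sigma^{-1}\circ\vee$ from the defining square \eqref{eq:kappa}, push the transfer through $\sigma$ to invoke \eqref{eq:sigmaopp}, and then recognize the remaining outer transfer along $\vee$ as the definition of the dual structure $(r_i^\star,\rho_i^\star)$. The only cosmetic difference is that you also spell out the weight-function verification via Lemma \ref{L:muwt}, which the paper leaves implicit.
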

\begin{proof} The verification is quite straightforward:
	\begin{align*} 	r_i^\kappa(\delta)	&= \kappa r_i (\kappa^{-1}(\delta))\\
				&= \kappa r_i(\sigma^{-1}(\dtc))\\ 
				&= (\kappa\sigma^{-1}) r_i^\sigma(\dtc)\\
				&= (\kappa\sigma^{-1}) \rc_{\pi(i)}^\star(\dtc) && \text{(by \eqref{eq:sigmaopp})}\\
				&= r_{\pi(i)}^\star(\delta) && \text{(by \eqref{eq:kappa} and Remark \ref{r:rl})}
\intertext{and}
		\rho_i^\kappa(\delta)&=\e_J(\kappa^{-1}(\delta), E_{i}) \\
		&= \e_{J^{\opp}}\left(\sigma(\kappa^{-1}(\delta)), E_{\pi(i)} \right)\\ 
		&= \ec_{J}(E_{\pi(i)}^\star, \dtc) \\
		&= \rho_{\pi(i)}^\star(\delta).
	\end{align*}
\end{proof}


\noindent When $(\Delta,\S)$ corresponds to the cluster algebra $\k[U]$, the original {\em Kashiwara involution} for $\mc{B}(\infty)$ \cite{K2} is the generalized Kashiwara map associated to an opposite cluster automorphism $(\mub,\pi)$ where $\pi$ is the Lusztig involution (see Section \ref{ss:unipotent}).
In the literature, a Kashiwara involution is often denoted by $\star$, but unfortunately $\star$ has different meaning in our notation.
So our $\rho_i^\kappa = \rho_{\pi(i)}^\star$ is the $\rho_i^\star$ in the literature for $\mc{B}(\infty)$.

\section{Kashiwara's Data} \label{S:KD}
Readers can skip the whole Section \ref{S:KD} without harm to understand the full story.
\subsection{The Right Mutations w.r.t. a Representation}
We recall another identification of the space $\E(d',d'')$ in \cite{Br}.
According to \cite[Lemma 3.3]{Br}, $\E(d',d'') \cong \Ext_{\mc{C}}^1(d',d'')$ where $\mc{C}$ is the abelian category of complexes of representations of $(\Delta,\S)$.
By applying $\Hom(-,N)$ to an exact sequence of presentations in $\mc{C}$
$$0\to d'' \to d\to d'\to 0$$
we get by the snake lemma a long exact sequence
$$0\to \Hom(d',N)\to \Hom(d,N) \to \Hom(d'',N) \xrightarrow{\partial} \E(d',N)\to \E(d,N)\to \E(d'',N)\to 0.$$

Let $\mc{E}=\mub(P_k)$ be a positive-reachable representation of $(\Delta,\S)$.
For any representation $\mc{M}'$ of $\mub(\Delta,\S)$, let $\rhoc=\e(\mc{M}', P_k)$.
There is an exact sequence $\xi$ of presentations
$$0\to \rhoc d_{P_k} \to d \to d_{\mc{M}'} \to 0$$
representing the universal extension in $\Ext^1_{\mc{C}}(d_{\mc{M}'}, d_{P_k})$.
Namely, the pull-backs of $\xi$ under the $i$-th canonical injections $d_{P_k} \hookrightarrow \rhoc d_{P_k} $ form a basis of $\Ext^1_{\mc{C}}(d_{\mc{M}'}, d_{P_k})$.
By construction the induced map $\partial'$ is surjective, and so is the map $\partial$:
$$\xymatrix@R=2ex{ \rhoc\Hom_{\mc{C}}(d_{P_k}, d_{P_k}) \ar[r]^{\partial'} \ar@{}[d]|{\rotatebox{90}{\scalebox{1.5}[1]{$\cong$}}} &\Ext^1_{\mc{C}}(d_{\mc{M}'}, d_{P_k}) \ar@{}[d]|{\rotatebox{90}{\scalebox{1.5}[1]{$\cong$}}} \\
\rhoc\Hom(P_k, P_k) \ar[r]^{\partial} &\E(\mc{M}', P_k) } $$ 
Let $\Rc'$ be the decorated representation corresponding to $d$.
We set $\Rc=\mub(\Rc')$.

Similarly, if $\mc{E}=\mub(P_k[1])$ is negative-reachable and $\rho=\e(P_k[1],\mc{M}')$, 
then there is an exact sequence of presentations
$0\to d_{\mc{M}'} \to d \to \rho P_k[1]\to 0$
such that the induced map $\rho\Hom(P_k, P_k) \to \E(P_k[1], \mc{M}')$ is surjective.
Let $\mc{R}'$ be the decorated representation corresponding to $d$.
We set $\mc{R}=\mub(\mc{R}')$.

\begin{definition} The above representation $\mc{R}$ (resp. $\Rc$) is called the two-sided $\E$-truncation of $\mc{M}$ w.r.t $\mc{E}$, denoted by $\sqcup^+_{\mc{E}}(\mc{M})$ (resp. $\sqcup^-_{\mc{E}}(\mc{M})$).
\end{definition}
\noindent It is shown in \cite[Corollary 9.3]{Fop} that the definition does not depend on the choices the mutation sequence $\mub$
\footnote{The construction in \cite{DF,Fop} uses triangles in the homotopy category of projective presentations. Here, we work with the abelian category of complexes to slightly simplify the construction.}.
From the exact sequence
$$\Hom(\rhoc P_k,P_k)\twoheadrightarrow{} \E(\mc{M}', P_k)\to \E(\Rc', P_k) \to \E(\rhoc P_k, P_k)=0,$$
we see that $\E(\Rc', P_k)=0$.
By Lemma \ref{L:HEmu}, $\e(\Rc,\mc{E})+\e(\mc{E},\Rc) = \e(\Rc',P_k)+\e(P_k,\Rc')=0$.
Similarly we can show that $\e(\mc{R},\mc{E})+\e(\mc{E},\mc{R})=0$.
Moreover, as shown in \cite{DF} that if $\mc{M}'$ is rigid, then so are $\mc{R}'$ and $\Rc'$.
Hence, if $\mc{M}$ is rigid, so are $\sqcup^+_{\mc{E}}(\mc{M})$ and $\sqcup^-_{\mc{E}}(\mc{M})$.
We conclude that
\begin{lemma}\label{L:LRrigid} If $\mc{M}$ is $\E$-rigid, then so is $\mc{E}\oplus \sqcup_{\mc{E}}^\pm(\mc{M})$.
\end{lemma}

\begin{definition} Following Kashiwara, for a rigid $\mc{E}$, we define the operators 
	$r_\ep^{\max}(\delta) := r_\ep^{\rho_\ep(\delta)}(\delta)$
and $\rc_\ep^{\max}(\dtc) := \rc_\ep^{\rhoc_\ep(\dtc)}(\dtc)$.
\end{definition}

It also follows from \cite[Theorem 5.16]{Fg} (as Theorem \ref{T:rlrigid}) that
the representations $R$ and $\check{R}$ fit into respectively the long exact sequences
\begin{align}\label{eq:esr}\cdots\to \tauh^{-1}\mc{M}\to \tauh^{-1}\mc{R}\to\rho\tauh^{-1}\mc{E}\to &M\to R \to \rho E \to \tauh \mc{M} \to \tauh \mc{R} \to \cdots \\
\label{eq:esl} \cdots \to\tauh^{-1}\check{\mc{R}}\to \tauh^{-1}\mc{N}\to \rhoc E\to &\check{R}\to N \to \rhoc\tauh \mc{E} \to \tauh \check{\mc{R}}\to \tauh \mc{N} \to \cdots
\end{align} 
Moreover, we can assume that $M$ is general of weight $\delta$ and $R$ is general of weight $r_\ep^{\max}(\delta)$; $N$ is general of weight $\etc$ and $\check{R}$ is general of weight $\rc_\ep^{\max}(\etc)$.
We also note that
\begin{equation}\label{eq:rho} \rho = \e(P_k[1], \mc{M}') = \e(P_k[1], \mc{M}') + \e(\mc{M}', P_k[1]) = \e(\mc{E}, \mc{M}) + \e(\mc{M}, \mc{E}).\end{equation}
Similarly we have that $\rhoc = \e(\mc{E}, \mc{N}) + \e(\mc{N}, \mc{E})=\ec(\mc{E}, \mc{N}) + \ec(\mc{N}, \mc{E})$.


\subsection{Adjoint Properties}
We say $\ep$ is a summand of $\eta$ if a general presentation of weight $\eta$ has a summand of weight $\ep$.
\begin{lemma}\label{L:adjoint} Suppose that the boundary representation $E=E_i$ is simple (thus projective). Then
	$$\hom(r_{\ep}^{\max}(\delta), \etc) = \hom(\delta, \rc_{\ep}^{\max}(\etc)).$$
If $e_i$ is not a summand of $\eta$, then we also have that
\begin{equation}\label{eq:adje} \e(r_{\ep}^{\max}(\delta), \etc) = \e(\delta, \rc_{\ep}^{\max}(\etc)). \end{equation}
\end{lemma}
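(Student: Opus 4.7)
The plan is to use the long exact sequences \eqref{eq:esr} and \eqref{eq:esl} relating $R=\R_{\mc{E}}(\mc{M})$ (general of weight $r_\ep^{\max}(\delta)$) and $\check{R}=\check{\R}_{\mc{E}}(\mc{N})$ (general of dual weight $\rc_\ep^{\max}(\etc)$) to $M$ and $N$. Under the hypothesis $E=E_i=S_i=P_i$, we have $\tauh\mc{E}=0$, so both sequences truncate; equivalently $R$ and $\check{R}$ arise from the universal extensions at the presentation level,
\[
0 \to d_{\mc{M}} \to d_R \to \rho\,d_{P_i[1]} \to 0, \qquad 0 \to \rhoc\,d_{P_i} \to d_{\check{R}} \to d_{\mc{N}} \to 0,
\]
with $\rho=\e(\mc{M},\mc{E})$ and $\rhoc=\e(\mc{N},\mc{E})$.

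Next, I would apply $\Hom_{\mc{C}}(-,N[0])$ to the first sequence and derive the module-level exact sequence for $\check{R}$ from the second (via the snake lemma), then apply $\Hom_J(M,-)$. Using projectivity of $P_i$ to kill the $\Ext^1(-,P_i)$ contributions, together with the formulas $\hom(d_{P_i[1]},N)=0$ and $\e(d_{P_i[1]},N)=N(i)$ read off directly from \eqref{eq:HE}, the two computations reduce to comparing boundary maps, one of shape $\hom(\mc{M},\mc{N})\to\rho\,N(i)$ and its dual of shape $\rhoc\,M(i)\to\e(\mc{M},\mc{N})$. Both maps are determined by the universal extension classes used to construct $d_R$ and $d_{\check{R}}$.

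The crux and main obstacle is to show that these two boundary maps have equal image dimension; this is a reciprocity property of the universal extensions. It amounts to the assertion that both maps factor through the same canonical bilinear pairing $\hom(\mc{M},\mc{N})\otimes\E(\mc{M},\mc{N})\to k$, which is self-dual under the AR-duality $\E(\mc{M},\mc{N})\cong\Hom(\mc{N},\tau\mc{M})^\ast$ of Lemma~\ref{L:H2E}. Once this reciprocity is established, the identity $\hom(r_\ep^{\max}(\delta),\etc)=\hom(\delta,\rc_\ep^{\max}(\etc))$ follows.

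For the $\E$-identity under the extra hypothesis that $e_i$ is not a summand of $\eta$, the assumption forces a general representation of weight $\eta$ to have no $P_i$ as a direct summand, which ensures that the extension class $\phi\colon P_{-,\mc{N}}\to\rhoc P_i$ appearing in the construction of $d_{\check{R}}$ vanishes on $\ker(d_{\mc{N}})$. The resulting module-level sequence $0\to\rhoc P_i\to\check{R}\to N\to 0$ is then genuinely short exact, and the corresponding cokernel identity in the second long exact sequence gives $\e(r_\ep^{\max}(\delta),\etc)=\e(\delta,\rc_\ep^{\max}(\etc))$ by the same reciprocity argument applied to the $\E$-terms.
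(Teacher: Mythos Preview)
Your setup for the $\hom$ identity is essentially the paper's: extract the truncated sequences from \eqref{eq:esr} and \eqref{eq:esl} (using $\tauh\mc{E}=0$), apply $\Hom(-,N)$ and $\Hom(M,-)$ respectively, and compare the kernels of the two boundary maps out of $\Hom(M,N)$. However, your resolution of the crux is misframed. There is no natural pairing $\hom(\mc{M},\mc{N})\otimes\E(\mc{M},\mc{N})\to k$ in play here; the space $\E(\mc{M},\mc{N})$ does not enter. What the paper does instead is identify the two \emph{targets}: since $\rho=\e(M,E)$ and $\rhoc=\ec(N,E)$, one has
\[
\E(M,\rhoc E)\;\cong\;\E(M,E)\otimes_k\Ec(N,\mc{E})\;\cong\;\Ec(N,\rho\mc{E})\;\cong\;\Hom(\rho\,\tauh^{-1}\mc{E},N),
\]
and under this identification both boundary maps $\Hom(M,N)\to\E(M,E)\otimes\Ec(N,E)$ are the same natural map (each is given by pushing a morphism $f$ along the universal extension class, and the two universal classes are the identity tensors on the two factors). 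This is the ``naturality'' the paper invokes; your bilinear pairing with $\E(\mc{M},\mc{N})$ does not capture it.

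For the $\e$-identity your plan diverges from the paper and is the weaker part of your proposal. The paper does \emph{not} extend the exact-sequence argument to the $\E$-terms. Instead, once the $\hom$ identity is in hand, it uses the elementary observation that (since $E_i$ is simple) $r_\ep^{\max}(\delta)=\delta+\rho\epc$ and $\rc_\ep^{\max}(\etc)=\etc+\rhoc\epc$ with $\dv(\etc+\rhoc\epc)=\dv(\etc)+\rhoc\,\dv(\epc)$, and then subtracts the formula \eqref{eq:heform} from both sides. The claimed identity reduces in three lines to $\rho\cdot\hom(\etc,\epc)=0$, which holds exactly when $e_i$ is not a summand of $\eta$. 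Your route via the module-level short exact sequence and a ``same reciprocity applied to the $\E$-terms'' is not obviously valid: the six-term sequence you would need does not terminate at $\E(M,N)\to 0$ in general, and you have not explained why the relevant cokernels match. The paper's direct computation sidesteps this entirely.
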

\begin{proof} If $\etc=-e_i$, then $\rhoc=1$ and $\rc_{\ep}^{\max}(\etc)=0$.
	Then the equality on $\hom$ is trivially satisfied.
So let us assume that $-e_i$ is not a summand of $\etc$.
Since $E_i$ is simple projective and $-e_i$ is not a summand of $\etc$, we can extract two pieces from the exact sequences \eqref{eq:esr} and \eqref{eq:esl}
\begin{align}
\label{eq:es2} \rho {\tauh^{-1}\mc{E}} \to &M \to {R} \to 0 \\
\label{eq:es1} 0 \to \rhoc E \to &\check{R} \to N\to 0.
\end{align}
Note that $\eqref{eq:es2}$ is not short exact in general.
As $E_i$ is projective, we have $\e(\ep, \delta)=0$ so $\rho = \e(\delta, \ep)$ by \eqref{eq:rho}.
Moreover, $-e_i$ is not a summand of $\etc$, so we have $\ec(\ep_i, \etc)=\max(-\etc(i),0)=0$, thus $\rhoc = \ec(\etc, \ep)$.
Apply $\Hom(M,-)$ to \eqref{eq:es1} and $\Hom(-,N)$ to \eqref{eq:es2}, and we get
	\begin{align}
	\label{eq:seql1} 0 = \Hom({M}, \rhoc{E})\to &\Hom({M},{\check{R}}) \to \Hom({M},{N})\xrightarrow{\partial} \E({M}, \rhoc {E})\to \cdots \\
	\label{eq:seqr1} 0\to &\Hom({R}, {N}) \to \Hom({M}, {N})\xrightarrow{\partial} \Hom(\rho\tauh^{-1}\mc{E},N) \cong \Ec({N}, \rho\mc{E})\to \cdots
	\end{align}
Note the isomorphism $\E(M, \rhoc E) \cong \E(M, E) \otimes_k \Ec(N, \mc{E}) \cong \Ec(N, \rho \mc{E})$.
The naturality implies that $\Hom(R,N)\cong \Hom(M,\check{R})$ proving the first statement.

For the second statement, it is trivially true if $\etc=-e_i$.
So let us assume that $-e_i$ is not a summand of $\etc$. In this case $\rc_\ep^{\max}(\etc)=\etc+\rhoc\epc$ because $\rank(\tau^{-1}\etc, \epc)=0$.
As $E_i$ is simple, we also have that $r_{\ep}^{\max}(\delta) = \delta+\rho\epc$ and $\dv(\etc+\rhoc\epc)=\dv(\etc)+\rhoc\dv(\epc)$. 
By \eqref{eq:heform}, the equality \eqref{eq:adje} is equivalent to the fact that
\begin{align*}  &&r_{\ep}^{\max}(\delta)(\dv(\etc)) &= \delta(\rc_{\ep}^{\max}(\etc)) \\
\Leftrightarrow&& 	(\delta+\rho\epc)(\dv(\etc)) &= \delta(\dv(\etc)+\rhoc\dv(\epc)) \\
\Leftrightarrow&&	(\rho\epc)(\dv(\etc)) &= \delta(\rhoc\dv(\epc)) \\
\Leftrightarrow&&	\rho(\hom(\etc, \epc) - \rhoc) &= \rhoc(\hom(\delta,\epc) - \rho)\\
\Leftrightarrow&&		\rho(\hom(\etc, \epc)) &= 0.
\end{align*}
Finally, note that $\hom(\etc, \epc)=0$ if and only if $e_i$ is not a summand of $\eta$.
\end{proof}

\begin{corollary}\label{C:adjointe} Suppose that the $\delta$-vectors of $\etc$ and $\rc_\ep^{\max}(\etc)$ are only supported on the frozen part of $\Delta$ and $\ep$ is not a summand of $\eta$.
	Then we have the following equality
\begin{equation}\label{eq:adjointe} \e(r_{\ep}^{\max}(\delta), \etc) = \e(\delta, \rc_{\ep}^{\max}(\etc)).\end{equation}
\end{corollary}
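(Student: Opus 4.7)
My strategy is to reduce Corollary \ref{C:adjointe} to Lemma \ref{L:adjoint} by mutating to a seed in which the boundary representation becomes simple projective. Since $i$ is reachable, Lemma \ref{L:bdinv} furnishes a mutation sequence $\mub$ (at mutable vertices) with $\mub(E_i) = S_i$; because $\Delta$ has no frozen arrows, $S_i$ is simple projective in $\mub(\Delta,\S)$, so Lemma \ref{L:adjoint} applies in the mutated seed with $\mub(\ep) = e_i$.

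First I would verify that the hypotheses of the corollary transfer under $\mub$. A mutation at a mutable vertex $u$ preserves the frozen-support property of a $\dtc$-vector: if $\dtc(u) = 0$, then a general representation satisfies $\betac_{\pm}(u) = 0$ by \eqref{eq:Bettidual}, and Lemma \ref{L:gdmu} then gives $\mu_u(\dtc)(v) = \dtc(v)$ for $v\neq u$ and $\mu_u(\dtc)(u) = 0$. Combined with the commutation $\mub\circ \rc_\ep^{\max} = \rc_{\mub(\ep)}^{\max}\circ \mub$ from Theorem \ref{T:rle}, both $\mub(\etc)$ and $\mub(\rc_\ep^{\max}(\etc))$ remain supported on the frozen part; similarly $\mub(\ep) = e_i$ is not a summand of $\mub(\eta)$, since mutation preserves direct-sum decompositions of decorated representations.

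The central tool is that $\hom(\mc{M},\mc{N})$ is invariant under a mutation at a mutable vertex whenever $\mc{N}$ has $\dtc$-vector supported on the frozen part, since then the correction $\beta_{-,\mc{M}}(u)\,\betac_{-,\mc{N}}(u) - \beta_{+,\mc{M}}(u)\,\betac_{+,\mc{N}}(u)$ in Lemma \ref{L:HEmu}.(1) vanishes identically. Applying this to both pairs $(r_\ep^{\max}(\delta),\etc)$ and $(\delta,\rc_\ep^{\max}(\etc))$ along $\mub$ and invoking the $\hom$ statement of Lemma \ref{L:adjoint} in the mutated seed yields $\hom(r_\ep^{\max}(\delta),\etc) = \hom(\delta,\rc_\ep^{\max}(\etc))$. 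By \eqref{eq:heform}, equality \eqref{eq:adjointe} is then equivalent to the numerical identity
\[ r_\ep^{\max}(\delta)\cdot \dv(\etc) \;=\; \delta\cdot \dv(\rc_\ep^{\max}(\etc)). \]

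The main obstacle is this last numerical identity. In the mutated seed it follows from combining the $\e$- and $\hom$-statements of Lemma \ref{L:adjoint} via $\e = \hom - \delta\cdot \dv$. To descend to the original seed I observe that $\delta\cdot \dv = \hom - \e$ changes under each mutation by $-\Delta\e$ (since $\hom$ is already invariant by the previous paragraph), so the displayed identity in the original seed is equivalent to the equality of cumulative corrections $\sum \Delta\e(r_\ep^{\max}(\delta),\etc) = \sum \Delta\e(\delta,\rc_\ep^{\max}(\etc))$ along $\mub$. I would verify this by direct computation from Lemma \ref{L:HEmu}.(2), using the general-presentation formulas $\beta_\pm = [\pm\,\cdot\,]_+$, the explicit changes $r_\ep^{\max}(\delta) - \delta$ and $\rc_\ep^{\max}(\etc) - \etc$ provided by Theorem \ref{T:rlrigid}, and the hypotheses that $\etc$ and $\rc_\ep^{\max}(\etc)$ are supported on the frozen part and that $\ep$ is not a summand of $\eta$, which force the requisite cancellation in the difference of the two $\Delta\e$ sums.
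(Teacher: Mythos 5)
Your overall strategy --- mutate to a seed where $E_i$ becomes simple, apply Lemma \ref{L:adjoint} there, and transfer back --- is the paper's strategy. The gap is in the transfer. Your ``central tool'' is mutation-invariance of $\hom(\mc{M},\mc{N})$, which by Lemma \ref{L:HEmu}.(1) requires $\betac_{\pm,\mc{N}}(u)=0$ at each mutable $u$, i.e.\ the $\dtc$-vectors $\etc$ and $\rc_\ep^{\max}(\etc)$ themselves must vanish on the mutable part. But the hypothesis of the corollary is that their \emph{$\delta$-vectors} are frozen-supported; by \eqref{eq:delta2dual} the two vectors differ by $(\dv N)B$, so one support condition does not imply the other. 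Hence your equality $\hom(r_\ep^{\max}(\delta),\etc)=\hom(\delta,\rc_\ep^{\max}(\etc))$ in the original seed is not established, and with it your reduction of \eqref{eq:adjointe} to the numerical identity $r_\ep^{\max}(\delta)\cdot\dv(\etc)=\delta\cdot\dv(\rc_\ep^{\max}(\etc))$ collapses: since $\e=\hom-\delta\cdot\dv$, if $\hom$ is not invariant then neither is $\delta\cdot\dv$, and neither of your two intermediate equalities need hold even though their difference (the assertion of the corollary) does. The final ``direct computation'' of the cumulative corrections is also left unverified.

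The repair is to use the hypothesis where it actually bites: frozen support of the $\delta$-vectors gives $\beta_{\pm,\mc{N}}(u)=[\pm\delta_{\mc{N}}(u)]_+=0$ at every mutable $u$ for a general presentation, and this support condition persists along the mutation sequence by Lemma \ref{L:gdmu}, so Lemma \ref{L:HEmu}.(2) makes $\e(-,\etc)$ and $\e(-,\rc_\ep^{\max}(\etc))$ \emph{directly} mutation-invariant --- no detour through $\hom$ is needed. Combined with the commutation of $r_\ep^{\max}$ and $\rc_\ep^{\max}$ with $\mub$ (Theorem \ref{T:rle}) and the preservation of ``$\ep$ is not a summand of $\eta$'' under mutation --- both of which you do handle correctly --- the $\e$-statement \eqref{eq:adje} of Lemma \ref{L:adjoint} then transfers verbatim to the original seed, which is exactly the paper's two-line argument.
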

\begin{proof} If the $\delta$-vectors of $\etc$ and $\rc_\ep^{\max}(\etc)$ are only supported on the frozen part of $\Delta$, then $\e(r_{\ep}^{\max}(\delta), \etc)$ and $\e(\delta, \rc_{\ep}^{\max}(\etc))$ are mutation-invariant.
We apply a sequence of mutations $\mub$ such that $E_i$ is simple.	
Then \eqref{eq:adjointe} is equivalent to the following
$$\e\left(\mub(r_{\ep}^{\max}(\delta)), \mub(\etc)\right) = \e\left(\mub(\delta), \mub(\rc_{\ep}^{\max}(\etc))\right).$$
By Theorem \ref{T:rle} this is equivalent to 
$$\e\left(r_{\mub(\ep)}^{\max}(\mub(\delta)), \mub(\etc)\right) = \e\left(\mub(\delta), \rc_{\mub(\ep)}^{\max}(\mub(\etc))\right).$$
By our assumption $e_i$ is not a summand of $\mub(\eta)$, so this holds by Lemma \ref{L:adjoint}.
\end{proof}

\subsection{Calculating Kashiwara's Data}
For a fixed sequence $\b{i} = (i_1,i_2,\dots, i_n)$ in $I$, we write $\b{i}_{<k}$ for the sequence $(i_1,i_2,\dots, i_{k-1})$; and write $\b{i}_{k>}$ for $(i_{k-1},\dots,i_2,i_1)$.
In a similar fashion, we write $r_{k>}^{\max}$ for $r_{\ep_{i_{k-1}}}^{\max}\cdots r_{\ep_{i_2}}^{\max} r_{\ep_{i_1}}^{\max}$ and $\rc_{< k}^{\max}$ for $\rc_{\ep_{i_1}}^{\max}
\rc_{\ep_{i_2}}^{\max}\cdots \rc_{\ep_{i_{k-1}}}^{\max}$.

\begin{definition} Let $\b{i}= (i_1,i_2,\dots, i_n)$ be a reduced word expression.
	The $\b{i}$-Kashiwara data of $\delta$ is the following sequence of numbers
$$\left(\rho_{i_{k}}(r_{k>}^{\max}(\delta))  \right)_{k=1,2,\dots,n},$$
with the convention that $r_{1>}^{\max}(\delta) = \delta$.
\end{definition}

The adjoint property (Corollary \ref{C:adjointe}) provides us a way to calculate the Kashiwara data in the following style
\begin{equation} \e(r_{k>}^{\max}(\delta), \epc_{i_k}) = \e(\delta, \check{R}).\end{equation}
However, due to the restriction of Corollary \ref{C:adjointe}, we cannot always expect the representation $\check{R}$ to exist.
We can show using the algorithm in \cite{Ft} that the right adjoint does not exist in Example \ref{ex:UD4_i} below.
The best one can hope may be the following:
there is a sequence of mutations $\mub$ such that 
\begin{equation}\label{eq:muadj} \e(r_{k>}^{\max}(\delta), \epc_{i_k}) = \e(\mub(\delta), \mub(\rc_{<k}^{\max}(\epc_{i_k}))).\end{equation}
In the current paper, we are not going to fully develop the machinery to solve this problem. 
It actually involves some intricate algebraic combinatorics, which will be treated somewhere else.


\begin{example}\label{ex:UD4_i} Consider an ice quiver of the cluster algebra $\k[U]$ for $G$ of type $D_4$ (see Example \ref{ex:UD4}).
For the reduced expression $\b{i}=(4,3,2,1,4,3,2,1,4,3,2,1)$ of the longest word,
we have that \eqref{eq:muadj} holds for $\mub=\mu_5\mu_1\mu_6\mu_2\mu_5$.
We found this sequence of mutations by some {\em ad hoc} method.
Let $\etc_k = \rc_{< k}^{\max}(\epc_{i_k})$. We find the following:
\begin{align*}
\eta_1&=  e_{12}, &\eta_2 &= e_{11}, &\eta_3&= e_{10}, &\eta_4 &= e_9  \\
\eta_5&= e_{10}-e_8, &\eta_6 &= e_{10}-e_7, &\eta_7 &= e_{10}+e_9-e_6, &\eta_8 &= e_{10}-e_5, \\
\eta_9&= e_{9}-e_3, &\eta_{10} &= e_{9}-e_4, &\eta_{11} &= e_{9}-e_2, &\eta_{12} &= e_{9}-e_1.
\end{align*}
\end{example}

We do not know if \eqref{eq:muadj} always exists for any $\b{i}$-Kashiwara data. But we conjecture this (actually something stronger) holds for some classical examples, such as $\k[U]$.
\begin{conjecture} For each reduced expression $\b{i}$ of the longest element $\omega_0$, there is a sequence of mutations $\mub$ such that 
the Kashiwara data of $\delta\in \trop(\mub(\Delta,\S))$ is given by $\delta D$ where the columns of the matrix $D$ is given by the dimension vectors for a cluster in $\mub(\Delta,\S)$.
\end{conjecture}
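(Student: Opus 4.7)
The plan is to derive the conjecture from an iterated application of the adjoint property (Corollary \ref{C:adjointe}) combined with a careful choice of mutation sequence that realizes each ``cumulative dual raising'' representation $\rc_{<k}^{\max}(E_{i_k})$ as (the shift of) a projective in the mutated seed, so that its pairing with $\delta$ becomes the linear functional $\mub(\delta)(\dv\mub(\check R_k))$.

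First, I would unfold the Kashiwara data. Writing $\delta_k := r_{k>}^{\max}(\delta)$, the $k$-th coordinate of the data is $\rho_{i_k}(\delta_k) = \e(\delta_k, E_{i_k})$. The aim is to push the raising operators $r_{\ep_{i_j}}^{\max}$ (for $j<k$) through the pairing $\e(-,E_{i_k})$, replacing them one at a time by dual raising operators $\rc_{\ep_{i_j}}^{\max}$ acting on the second slot. This peeling is exactly the content of Corollary \ref{C:adjointe}, provided that at each step the hypotheses of the corollary are met: the intermediate $\dtc$-vectors remain only frozen-supported and avoid the relevant indecomposable summands. After $k-1$ such applications, we obtain $\rho_{i_k}(\delta_k) = \e(\delta, \check R_k)$ with $\check R_k := \rc_{<k}^{\max}(E_{i_k})$.

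Second, I would analyze the rigidity of the collection $\{\check R_k\}_{k=1}^n$. By Lemma \ref{L:LRrigid}, each right dual mutation preserves $\E$-rigidity, so each $\check R_k$ is $\E$-rigid; a more refined bookkeeping using the exact sequences \eqref{eq:esl} together with Lemma \ref{L:HEmu} should show $\e(\check R_k, \check R_\ell) = 0$ for all $k, \ell$, making $\bigoplus_k \check R_k$ a basic rigid decorated representation of the appropriate size to be a cluster-tilting candidate. Third---and this is the crux---I would construct a mutation sequence $\mub$ sending $\{\check R_k\}_k$ simultaneously to shifted projectives in $\mub(\Delta,\S)$. The strategy is to read off $\mub$ from $\b{i}$ itself: each letter $i_k$ of the reduced word triggers a block of mutations that turns $\check R_k$ into a simple projective at some vertex of $\mub(\Delta)$. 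Once all $\check R_k$ are shifted projectives, the equality $\e(\delta, \check R_k) = \mub(\delta)(\dv\mub(\check R_k))$ is automatic, and the matrix $D$ whose columns are the $\dv \mub(\check R_k)$ furnishes the cluster promised in the conjecture.

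The hardest part will be verifying that this inductive construction of $\mub$ actually achieves simultaneous negativity without destroying the adjoint hypotheses required in step one; this is a compatibility problem between the order on $k$ coming from $\b{i}$ and the mutation order built into $\mub$. For $\uca(\Delta)=\k[U]$, one expects this to follow from the Berenstein--Fomin--Zelevinsky double wiring diagram cluster attached to $\b{i}$, whose cluster variables are the chamber minors whose weights are precisely the $\dv \check R_k$; in the general skew-symmetric case with a $\tau$-exact pairing, a reduction to this classical picture via the categorification of Section \ref{S:crystal} should apply, but additional hypotheses on $\b{i}$ (for instance $c$-sortability, or a requirement that $\b{i}$ come from a maximal green sequence) may be necessary for the full statement to hold.
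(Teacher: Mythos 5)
This statement is left as an open conjecture in the paper: the authors explicitly write that they ``are not going to fully develop the machinery to solve this problem'' and that they do not even know whether the weaker identity \eqref{eq:muadj} always holds. So there is no proof in the paper to compare against, and your proposal should be judged as a standalone argument. As such, it is a strategy sketch rather than a proof, and it contains gaps that you partly acknowledge but that are more serious than your phrasing suggests.

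The most concrete problem is in your first step. You propose to peel off the raising operators one at a time via Corollary \ref{C:adjointe}, ``provided that at each step the hypotheses of the corollary are met.'' But the paper demonstrates that these hypotheses genuinely fail: in Example \ref{ex:UD4_i} (type $D_4$, $\k[U]$) the right adjoint $\check R$ does \emph{not} exist for the naive iteration, which is precisely why the paper retreats to the weaker formulation \eqref{eq:muadj}, where the adjoint identity only holds after conjugating by an auxiliary mutation sequence $\mub$ found by \emph{ad hoc} methods. Your plan needs the representations $\rc_{<k}^{\max}(E_{i_k})$ to exist and to have frozen-supported $\delta$-vectors at every intermediate stage, and there is no argument for this; the known counterexample shows it is false as stated. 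Your second step (mutual rigidity $\e(\check R_k,\check R_\ell)=0$ for all $k,\ell$, making $\bigoplus_k \check R_k$ a cluster-tilting candidate) is asserted via ``more refined bookkeeping should show'' — Lemma \ref{L:LRrigid} only gives rigidity of $\mc{E}\oplus\check{\R}_{\mc{E}}(\mc{M})$ for a single step, not Ext-orthogonality of the whole family across different $k$. And your third step, the simultaneous reduction of all $\check R_k$ to shifted projectives by a single $\mub$ read off from $\b{i}$, is exactly the compatibility problem the conjecture encodes; you correctly identify it as the crux but offer only a heuristic (double wiring diagrams for $\k[U]$, possible extra hypotheses like $c$-sortability) rather than an argument. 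The outline is a reasonable research plan, but none of its three stages is established, and the first stage as written contradicts an example in the paper.
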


\section{Upper Cluster Algebras and their Generic Bases} \label{S:uca}
\subsection{Upper Cluster Algebras} \label{ss:uca}
In this subsection, we will briefly review the skew-symmetric upper cluster algebras.
Readers can find the definition of general upper cluster algebras in the original paper \cite{BFZ}.
In most cases we will assume the base ring $\k$ to be $\mb{Z}$.
\begin{definition} \label{D:seeds} 
	Let $\mc{F}$ be a field containing $\k$.
	A {\em seed} in $\mc{F}$ is a pair $(\Delta,\b{x})$ consisting of an ice quiver $\Delta$ together with a collection $\b{x}=\{x_1,x_2,\dots,x_q\}$, called an {\em extended cluster}, consisting of algebraically independent (over $\k$) elements of $\mc{F}$, one for each vertex of $\Delta$.
	The subset $\b{x}_\mu$ of $\b{x}$ associated with the mutable vertices is called {\em cluster variables}; they form a {\em cluster}.
	The subset $\b{x}_{\op{fr}}$ associated with the frozen vertices is called
	{\em frozen variables}, or {\em coefficient variables}.
	
	A {\em seed mutation} $\mu_u$ at a (mutable) vertex $u$ transforms $(\Delta,\b{x})$ into the seed $(\Delta',\b{x}')=\mu_u(\Delta,\b{x})$ defined as follows.
	The new quiver is $\Delta'=\mu_u(\Delta)$.
	The new extended cluster is
	$\b{x}'=\b{x}\cup\{x_{u}'\}\setminus\{x_u\}$
	where the new cluster variable $x_u'$ replacing $x_u$ is determined by the {\em exchange relation}
	\begin{equation} \label{eq:exrel}
		x_u\,x_u' = \prod_{v\rightarrow u} x_v + \prod_{u\rightarrow w} x_w.
	\end{equation}
\end{definition}

\noindent 
Two seeds $(\Delta,\b{x})$ and $(\Delta',\b{x}')$ that can be obtained from each other by a sequence of mutations are called {\em mutation-equivalent}, denoted by $(\Delta,\b{x})\sim (\Delta',\b{x}')$.

Let $\mc{L}(\b{x}):=\k[\b{x}_\mu^{\pm 1}, \b{x}_{\op{fr}}]$ be the subalgebra of the Laurent polynomial algebra $\k[\b{x}^{\pm 1}]$, which is polynomial in the frozen variables.
\begin{definition}[{\em Upper Cluster Algebra}]
	The upper cluster algebra with seed $(\Delta,\b{x})$ is
	$$\br{\mc{C}}(\Delta,\b{x}):=\bigcap_{(\Delta',\b{x}') \sim (\Delta,\b{x})}\mc{L}(\b{x}').$$
\end{definition}
\noindent Note that by the Laurent Phenomenon \cite{FZ1} the upper cluster algebra $\br{\mc{C}}(\Delta,\b{x})$ contains the corresponding cluster algebra $\mc{C}(\Delta,\b{x})$.

By an {\em initial-seed} mutation $\mu_u$ of $f\in \uca(\Delta,\b{x})$, we mean that $f(\b{x})$ is viewed as an element in $\mc{L}(\b{x}')$, that is, we express $f(\b{x})$ in terms of the adjacent cluster $\b{x}'$. It is sensible to just write $f(\b{x}')$ for this mutation, but we follow the tradition to denote it by $\mu_u(f)$ or $\mu_u(f)(\b{x}')$.
Although $\uca(\Delta',\b{x}')$ is equal to $\uca(\Delta,\b{x})$, we will write $\mu_u(f)\in \uca(\Delta',\b{x}')$ to indicate our choice of the cluster to express $\mu_u(f)$.

In general, there may be infinitely many seeds mutation equivalent to $(\Delta,\b{x})$.
So the following theorem is useful to test the membership in an upper cluster algebra.
\begin{definition}[\cite{BFZ}] Let $\b{x}_u\ (u\in\Delta_{0}^\mu)$ be the {\em adjacent} cluster obtained from $\b{x}$ by applying the mutation at $u$.
	We define the upper bounds
	$$\mc{U}(\Delta,\b{x}):=\bigcap_{u\in \Delta_0^\mu}\mc{L}(\b{x}_u).$$
\end{definition}

\begin{theorem}[{\cite{BFZ},\cite[]{GSV}}] \label{T:bounds} Suppose that $B_\Delta$ has full rank, and $(\Delta,\b{x})\sim (\Delta',\b{x}')$,
	then $\mc{U}(\Delta,\b{x})=\mc{U}(\Delta',\b{x}')$. In particular, $\mc{U}(\Delta,\b{x})=\br{\mc{C}}(\Delta,\b{x})$.
\end{theorem}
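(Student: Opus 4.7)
The plan is to reduce the full mutation-equivalence statement to a single-step case. Concretely, if $(\Delta',\b{x}')$ is obtained from $(\Delta,\b{x})$ by a sequence $\mu_{u_n}\cdots\mu_{u_1}$, I would argue by induction on $n$ that $\mc{U}(\Delta,\b{x})=\mc{U}(\Delta',\b{x}')$, so the real content lies in the case $n=1$. Thus, fix a mutable vertex $v$ and set $(\Delta',\b{x}'):=\mu_v(\Delta,\b{x})$. Using involutivity of mutation at $v$, the $v$-adjacent cluster of $\b{x}'$ is $\b{x}$ itself, so the $v$-indexed term of $\mc{U}(\Delta',\b{x}')$ is $\mc{L}(\b{x})$, which pairs with the $v$-indexed term $\mc{L}(\b{x}_v)$ of $\mc{U}(\Delta,\b{x})$. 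It remains to prove, for each $u\neq v$,
\[ \mc{L}(\b{x})\cap\mc{L}(\b{x}_v)\cap\mc{L}(\b{x}_u) \;=\; \mc{L}(\b{x})\cap\mc{L}(\b{x}_v)\cap\mc{L}(\mu_u\mu_v(\b{x})). \]

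I would establish this by the classical ``triple overlap'' argument of Berenstein--Fomin--Zelevinsky. Writing the exchange relation at $v$ in each of the clusters $\b{x}$ and $\b{x}_v$ lets one convert between Laurent expansions in these two clusters, and comparing the exchange relations at $u$ for the quivers $\Delta$ and $\mu_v(\Delta)$ shows that the mutation rule \eqref{eq:exrel} produces new cluster variables $x_u'$ and $\mu_v(x_u)'$ that differ by an element lying in $\mc{L}(\b{x})\cap\mc{L}(\b{x}_v)$. Hence a rational function regular in $\b{x}$, $\b{x}_v$, and $\b{x}_u$ is automatically regular in $\mu_u\mu_v(\b{x})$, and vice versa. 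The full-rank hypothesis on $B_\Delta$ is essential here: it guarantees that the monomials appearing in the exchange relations are independent enough that the two ``$u$-mutated'' regularity conditions really do coincide modulo the ambient ring $\mc{L}(\b{x})\cap\mc{L}(\b{x}_v)$.

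For the ``in particular'' statement, the inclusion $\br{\mc{C}}(\Delta,\b{x})\subseteq \mc{U}(\Delta,\b{x})$ is tautological, since the upper cluster algebra is intersected over every mutation-equivalent seed and so sits inside any sub-intersection. Conversely, take $f\in\mc{U}(\Delta,\b{x})$ and any mutation-equivalent seed $(\Delta',\b{x}')$ with a connecting mutation path $t_0\to t_1\to\cdots\to t_n$. The first part of the theorem, applied inductively, gives $f\in\mc{U}(\Delta_k,\b{x}_k)$ for every $k$, and in particular $f$ is a Laurent polynomial in each of the adjacent clusters of $(\Delta_n,\b{x}_n)=(\Delta',\b{x}')$; the standard denominator-vector lemma (using again the full-rank assumption) then forces $f\in\mc{L}(\b{x}')$. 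Intersecting over all such $(\Delta',\b{x}')$ yields $f\in\br{\mc{C}}(\Delta,\b{x})$.

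The main obstacle is the triple-overlap identity: this is the technically delicate step where the combinatorics of the exchange relation meets the regularity condition, and it is exactly the place where the full-rank assumption on $B_\Delta$ is indispensable, as counterexamples are known when the rank drops.
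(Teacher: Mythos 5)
First, note that the paper does not prove Theorem \ref{T:bounds}; it is quoted from \cite{BFZ} (with the full-rank refinement from \cite{GSV}), so there is no in-paper argument to measure yours against, and I will compare it with the source. Your skeleton --- induction down to a single mutation $\mu_v$, the observation that by involutivity the $v$-indexed intersectands of the two upper bounds are $\mc{L}(\b{x}_v)$ and $\mc{L}(\b{x})$, and the reduction to a pairwise comparison for each $u\neq v$ --- is exactly the strategy of \cite{BFZ}. The gap is that all of the technical content sits in the ``triple overlap'' identity, which you assert rather than prove, and the one concrete mechanism you offer for it is not the right one. The claim that $x_u'$ and $\mu_v(x_u)'$ ``differ by an element of $\mc{L}(\b{x})\cap\mc{L}(\b{x}_v)$'' is neither the statement proved in \cite{BFZ} nor sufficient on its own: even granting it, you still need (i) the description of a two-chart overlap $\mc{L}(\b{x})\cap\mc{L}(\b{x}_u)=\k[x_w^{\pm 1}:w\neq u][x_u,x_u']$, and (ii) pairwise coprimality of the exchange binomials $\prod_{w\to u}x_w+\prod_{u\to w}x_w$ at distinct mutable vertices, which is what allows these descriptions to be intersected and the whole problem to be reduced to an explicit rank-$2$ computation. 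The full-rank hypothesis enters precisely through (ii) --- full rank of $B_\Delta$ forces every seed in the mutation class to be coprime --- not through a vague ``independence of monomials,'' and without (i), (ii) and the rank-$2$ case being carried out, the argument has no content beyond the initial reduction.

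Second, your route to the ``in particular'' statement is both roundabout and shaky: there is no ``standard denominator-vector lemma'' asserting that Laurentness in all clusters adjacent to $\b{x}'$ forces Laurentness in $\b{x}'$, and you do not need one. Once the first part is established, for any non-initial seed $(\Delta',\b{x}')$ choose the preceding seed $(\Delta'',\b{x}'')$ on a connecting path; then $\mc{L}(\b{x}')$ is literally one of the intersectands defining $\mc{U}(\Delta'',\b{x}'')=\mc{U}(\Delta,\b{x})$, so $\mc{U}(\Delta,\b{x})\subseteq\mc{L}(\b{x}')$ for every such seed, and intersecting gives $\mc{U}(\Delta,\b{x})\subseteq\br{\mc{C}}(\Delta,\b{x})$ directly. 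The same observation applied to $\mu_v(\Delta,\b{x})$ handles the initial cluster itself --- a point that matters here because the paper's definition of $\mc{U}$ omits $\mc{L}(\b{x})$ from the intersection, an omission your pairwise reduction also quietly relies on.
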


\subsection{Generic Bases} \label{ss:genericB}
We set $y_u = \b{x}^{-b_u}$ for each mutable vertex $u$, where $b_u$ is the $u$-th row of $B_{\Delta}$.
Recall from \cite{DWZ2} that the {\em $F$-polynomial} of a representation $M$ and its {\em dual} $\check{F}$ are the generating functions
\begin{align*} {F}_M(\b{y}) &= \sum_{\gamma} \chi(\op{Gr}_\gamma(M)) \b{y}^{-\gamma}, \\
	 \check{F}_M(\b{y}) &= \sum_{\gamma} \chi(\op{Gr}^\gamma(M)) \b{y}^\gamma,
\end{align*}
where $\op{Gr}_\gamma(M)$ and $\op{Gr}^\gamma(M)$ are the projective varieties parametrizing respectively the $\gamma$-dimensional subrepresentations and quotient representations of $M$.

We can choose an open subset $U\subset \PHom(\delta)$ such that $\coker(d)$ has a constant $F$-polynomial for any $d\in U$.
We denote by $\coker(\delta)$ the cokernel of a general presentation in $\PHom(\delta)$,
and by $\ker(\dtc)$ the kernel of a general presentation in $\IHom(\dtc)$.
\begin{definition}[\cite{P}]
	We define the {\em generic character} $C_{\gen}:\trop(\Delta,\mc{S})\to \mb{Z}(\b{x})$~by
	\begin{equation} \label{eq:genCC}
		C_{\gen}(\delta)=\b{x}^{-\delta} \check{F}_{\coker(\delta)}(\b{y}).
	\end{equation}
By Theorem \ref{T:genpi}.(2) and \eqref{eq:delta2dual}, this is the same as 
	$$\check{C}_{\gen}(\dtc)=\b{x}^{-\dtc} {F}_{\ker(\dtc)}(\b{y}).$$
The vector $-\dtc$ is the $\g$-vector introduced in \cite{FZ4};
the vector $-\delta$ is also called the dual $\g$-vector or the degree of $C_{\gen}(\delta)$.
\end{definition}
\noindent It was shown in \cite{DWZ2} that $C_{\gen}(\delta)$ is a cluster variable if $\delta$ is negative-reachable. We will call $C_{\gen}(\delta)$ a generalized cluster variable if $\delta$ is rigid.

\begin{lemma}[{\cite{P}}, see also \cite{Fs1}] \label{L:Cmu} The generic character commutes with the initial-seed mutations: $$\mu_u (C_{\gen}(\delta)) = C_{\gen}(\mu_u(\delta)).$$	
In particular, $C_{\gen}(\delta)\in \uca(\Delta, \b{x})$.
\end{lemma}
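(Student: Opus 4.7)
The plan is to verify the identity $\mu_u(C_{\gen}(\delta)) = C_{\gen}(\mu_u(\delta))$ by a direct calculation that combines three inputs: the tropical mutation rule for $\delta$-vectors (Lemma \ref{L:gdmu} / Remark \ref{r:genmu}), the Derksen–Weyman–Zelevinsky mutation formula for $F$-polynomials, and the exchange relation \eqref{eq:exrel}. Once established, the membership $C_{\gen}(\delta)\in \uca(\Delta,\b{x})$ follows immediately: by iterating the identity along any mutation sequence, $C_{\gen}(\delta)$ is expressed as an element of $\mc{L}(\b{x}')$ for every seed $(\Delta',\b{x}')\sim(\Delta,\b{x})$, so it lies in the intersection.

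First, I would fix a general presentation $d\in \PHom(\delta)$ whose cokernel $M$ has generic $F$-polynomial, and consider the mutated presentation $\mu_u(d)$, whose cokernel is the decorated representation $\mu_u(\mc{M})$ in the sense of DWZ. A key preliminary point is that $\mu_u(d)$ is a general presentation of weight $\mu_u(\delta)$, so $\check{F}_{\coker(\mu_u(d))}$ is indeed the $F$-polynomial that appears in $C_{\gen}(\mu_u(\delta))$; this uses the genericity-preservation result of \cite{GLFS} together with the fact that the dimension of the cokernel and its $F$-polynomial depend only on the $\Aut$-orbit closure of a general presentation.

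Second, I would invoke the $F$-polynomial mutation formula of \cite{DWZ2}: if $M$ has dimension vector $d$ and $\mu_u(M)$ has dimension vector $d'$, then $\check{F}_M$ and $\check{F}_{\mu_u(M)}$ are related by the substitution $y_u\mapsto y_u'^{-1}$, $y_v\mapsto y_v'(1+y_u')^{-b_{v,u}}$ ($v\neq u$), together with a monomial correction coming from the Betti numbers $\beta_\pm(u)$. Combining this substitution with the exchange relation rewrites $\b{x}^{-\delta}\check{F}_M(\b{y})$ in terms of $\b{x}'$ and $\b{y}'$; the monomial prefactor shifts by exactly the amount predicted by the $\delta$-vector mutation \eqref{eq:mug} in Remark \ref{r:genmu} (where $\beta_\pm(u) = [\pm\delta(u)]_+$ for a general presentation). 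The bookkeeping of these three pieces — exchange relation, $F$-polynomial substitution, and tropical shift of $\delta$ — matches term by term and yields $\b{x}'^{-\mu_u(\delta)}\check{F}_{\mu_u(M)}(\b{y}')=C_{\gen}(\mu_u(\delta))$.

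The main obstacle is purely bookkeeping: one has to check that the exponents of $x_v$ for each $v\neq u$ coming from the substitution $y\mapsto y'$ combine correctly with the exponents coming from $\b{x}^{-\delta}$ to produce $\b{x}'^{-\mu_u(\delta)}$, and that the $x_u,x_u'$ powers produced by the exchange relation cancel cleanly. This is a well-known but somewhat delicate matching, done explicitly in \cite{DWZ2, P} for cluster characters of representations; here the only new aspect is that $M$ is the cokernel of a \emph{general} presentation rather than a fixed representation, which is accommodated by the genericity remark in the first step. No additional difficulty arises for decorated summands $P_v[1]$ since their contribution to both $\b{x}^{-\delta}$ and the trivial $F$-polynomial factor through the Fock–Goncharov tropical rule.
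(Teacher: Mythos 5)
Your outline is correct and is essentially the standard argument: the paper itself offers no proof of this lemma but delegates it to \cite{P} and \cite{Fs1}, where the identity is established exactly as you describe, by combining the Derksen--Weyman--Zelevinsky $F$-polynomial mutation substitution, the Fock--Goncharov tropical rule for $\delta$-vectors of general presentations, and the exchange relation, with genericity of the mutated presentation supplied by \cite{GLFS}. The one detail worth making explicit when you carry out the bookkeeping is that the paper's character uses the dual $F$-polynomial $\check{F}$ and the $\delta$-vector (the negative of the DWZ $\g$-vector), so the signs in the substitution $y_v\mapsto y_v'(1+y_u')^{\pm b_{v,u}}$ must be taken in the convention dual to the one stated in \cite{DWZ2}.
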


\noindent If the generic character maps the set of $\mu$-supported $\delta$-vectors to a basis of $\uca(\Delta,\b{x})$, then such a basis is called the {\em generic basis} of $\uca(\Delta,\b{x})$.
It was proved \cite{FW, Qb} that the generic bases exist for a large class of upper cluster algebras.

\begin{definition}[\cite{Qt}] An (un-iced) quiver $\Delta$ is called {\em injective-reachable} (or {\em reachable} for short) if up to a permutation the identity matrix $I_{\Delta_0}$ can be obtained from $-I_{\Delta_0}$ by a sequence of the $\delta$-vector mutations \ref{L:gdmu}.
An ice quiver is called {reachable} if its mutable part is reachable.
\end{definition}

Combining \cite{Qb} with the recent results in \cite{CMMM} (eg., \cite[Theorem 5.1.1]{Qb} and \cite[Theorem 3.8]{CMMM}), we know that for a full-rank reachable quiver,
the corresponding upper cluster algebra has the generic basis.
From now on we assume that all upper cluster algebras in our discussion admit generic bases.
But we want to make the following remark.

\begin{remark}[Generic characters, middle and upper cluster algebras]
	\label{r:middle}
	The constructions in this paper are naturally attached to the generic
	characters $C_{\gen}(\delta)$ for $\delta\in\trop(\Delta,\mathcal S)$.
	Let $$\mathcal V_{\gen}(\Delta,\mathcal S):=\op{Span}_{\mathbb Z\mathbb P}\{C_{\gen}(\delta)\mid \delta\in\trop(\Delta,\mathcal S)\}	\subseteq \uca(\Delta,\mathbf x),$$
	where the inclusion follows from Lemma \ref{L:Cmu}.  By definition,
	$\trop(\Delta,\mathcal S)$ parametrizes a distinguished basis of the
	vector space $\mathcal V_{\gen}(\Delta,\mathcal S)$.  In general we do
	not claim that $\mathcal V_{\gen}(\Delta,\mathcal S)$ is closed under
	multiplication, nor do we claim that it agrees with the theta-theoretic
	middle cluster algebra of \cite{GHKK}.
	
	The situation becomes simpler in the cases where the generic characters
	form a basis of the upper cluster algebra.  Following \cite{P,FW,Qb}, we
	say in this case that $\uca(\Delta,\mathbf x)$ has a generic basis.
	Equivalently, $	\mathcal V_{\gen}(\Delta,\mathcal S)=\uca(\Delta,\mathbf x).$
	For example, combining \cite{Qb} with \cite{CMMM}, this holds for
	full-rank reachable quivers.
	
	The distinction is mostly one of realization.  The crystal operators
	constructed below are defined on the tropical indexing set
	$\trop(\Delta,\mathcal S)$, and the generic character map transports
	them to the span $\mathcal V_{\gen}(\Delta,\mathcal S)$.  Thus the
	linear statements about the action on the generic basis make sense
	intrinsically on $\mathcal V_{\gen}(\Delta,\mathcal S)$.  Whenever
	$\mathcal V_{\gen}(\Delta,\mathcal S)=\uca(\Delta,\mathbf x)$, the same
	statements may be read as statements about the upper cluster algebra.
	For this reason, and because upper cluster algebras are the more standard
	object, we shall state the algebraic results below for
	$\uca(\Delta,\mathbf x)$ under the standing realization hypothesis that
	the relevant upper cluster algebras admit generic bases.  Without this
	hypothesis, the same proofs should be read as statements about the
	generic character span $\mathcal V_{\gen}(\Delta,\mathcal S)$.
\end{remark}

\subsection{Tropical $x$-polynomials and their Pairing}
Recall the following mutation rule for a vector $d\in \mb{Z}^{\Delta_0}$, which is obtained by tropicalizing the mutation of $x$-variables \eqref{eq:exrel}
\begin{equation}\label{eq:muatrop}  d'(v) = \begin{cases} -d(u) + \max \left\{ d [b_u]_+, d[-b_u]_+ \right\} & \text{if $v=u$;} \\ d(v) & \text{otherwise}. \end{cases}  \end{equation}
Any vector in $\mb{Z}^{\Delta_0}$ satisfying the mutation rule \eqref{eq:muatrop} is called a {\em tropical point} of the $\mc{A}$-variety of $\Delta$, or a tropical $\mc{A}$-point for short.

\begin{lemma}\label{L:dvtropx} The dimension vector of a boundary representation is a tropical $\mc{A}$-point.
\end{lemma}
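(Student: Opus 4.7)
The plan is to combine Lemma \ref{L:bdinv} and the change-of-dimension-vector formula of Lemma \ref{L:gdmu}(3) with the subquiver invariance of $\Hom$ and $\E$ (the unlabelled lemma just before Lemma \ref{L:bdinv}), and reduce the tropical mutation rule to the projectivity of $P_{[i]}$ over the sub-Jacobian algebra $J_{\mu[i]}$.

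By Lemma \ref{L:bdinv}, $\mu_u(E_i)$ is again a boundary representation in the mutated ice QP, so its dimension vector is the candidate for the mutated tropical point; inductively it suffices to check that the transformation of $\dv E_i$ under a single mutation matches \eqref{eq:muatrop}. Lemma \ref{L:gdmu}(3) expresses
$$(\dv\mu_u E_i)(u)=(\dv E_i)[b_u]_+-(\dv E_i)(u)+\beta_{+,E_i}(u)+\betac_{-,E_i}(u),$$
while the tropical $\mc{A}$-mutation rule \eqref{eq:muatrop} demands $-(\dv E_i)(u)+\max\{(\dv E_i)[b_u]_+,(\dv E_i)[-b_u]_+\}$; at $v\ne u$ the two agree trivially. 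Using Lemma \ref{L:epci}(1), which yields $(\dv E_i)[-b_u]_+-(\dv E_i)[b_u]_+=-\epc_i(u)$ for mutable $u$, the equality at $u$ reduces to the single identity
$$\beta_{+,E_i}(u)+\betac_{-,E_i}(u)=[-\epc_i(u)]_+ \qquad \text{for every mutable $u$.}$$

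The crux is to show that both sides vanish. Since $E_i=P_{[i]}[0]$ is the extension by zeros of the indecomposable projective $P_{[i]}$ of $J_{\mu[i]}$, and both $S_u$ and $P_{[i]}$ are supported on $\Delta^\mu[i]$ whenever $u$ is mutable, the subquiver invariance lemma gives
$$\beta_{+,E_i}(u)=\hom_{J_{\mu[i]}}(P_{[i]},S_u)=0\quad\text{and}\quad \betac_{-,E_i}(u)=\e_{J_{\mu[i]}}(S_u,P_{[i]})=0,$$
the first because the top of $P_{[i]}$ is $S_i\ne S_u$ and the second by the projectivity of $P_{[i]}$ combined with the AR identification $\e(S_u,P_{[i]})=\ext^1(S_u,P_{[i]})$ from Lemma \ref{L:H2E}. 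Finally, $\betac_{-,E_i}(u)=0$ forces $\epc_i(u)=\betac_{+,E_i}(u)\ge 0$, so $[-\epc_i(u)]_+=0$ as well. The main obstacle is this cross-algebra identification of Betti numbers; once it is in place via the subquiver invariance, the rest of the verification is immediate.
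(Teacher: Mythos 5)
Your setup is sound up to the reduction: using Lemma \ref{L:gdmu}(3), Proposition \ref{P:Brep}(2) (which kills $\beta_{+,E_i}(u)$ for mutable $u$, since the top of $P_{[i]}$ is $S_i$), and Lemma \ref{L:epci}(1), the claim does reduce to the identity $\betac_{-,E_i}(u)=[-\epc_i(u)]_+$ for mutable $u$. But the way you close it is wrong: $\betac_{-,E_i}(u)$ does \emph{not} vanish in general, and neither does $[-\epc_i(u)]_+$. The multiplicity $\betac_{-,E_i}(u)$ is the multiplicity of $I_u$ in the second term of the minimal injective copresentation, i.e.\ $\ext^1_J(S_u,E_i)$, and projectivity of $P_{[i]}$ gives no control over $\Ext^1$ \emph{into} a projective --- that would require injectivity. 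Likewise $\E(S_u,P_{[i]})=\Hom(P_{[i]},\tau S_u)^*$ by Lemma \ref{L:H2E}, which is $(\tau S_u)(i)$ and is typically nonzero. Concretely, take $\Delta$ with one mutable vertex $1$, one frozen vertex $i$, and a single arrow $1\to i$; then $E_i=P_{[i]}=S_i$, $\betac_{-,E_i}(1)=\ext^1(S_1,S_i)=1$, and indeed $\epc_i(1)=-1$, so both sides of your identity equal $1$, not $0$. (One can also check directly that $\dv\mu_1(E_i)=e_1+e_i$, so the correction term is genuinely needed.)

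The repair is the observation you almost made but discarded: you do not need either term to vanish, only that \emph{one} of $\betac_{+,E_i}(u)$, $\betac_{-,E_i}(u)$ is zero, which holds by minimality of the injective copresentation. Since $\betac_{+}(u)-\betac_{-}(u)=\epc_i(u)=d[b_u]_+-d[-b_u]_+$ (using that $\ep_i$ is frozen-supported), minimality gives exactly $\betac_{-}(u)=[-\epc_i(u)]_+$, i.e.\ Lemma \ref{L:gdmu}(3) can be rewritten in the two forms $d'(u)=d[b_u]_+-d(u)+\betac_-(u)=d[-b_u]_+-d(u)+\betac_+(u)$, and whichever correction term vanishes selects the larger of $d[b_u]_+$ and $d[-b_u]_+$. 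This is the paper's argument; your cross-algebra reduction to $J_{\mu[i]}$ is unnecessary once you drop the false vanishing claim.
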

\begin{proof} By Lemma \ref{L:gdmu}.(3) and Proposition \ref{P:Brep}.(2), the dimension vector $d$ of a boundary representation satisfies
\begin{align*} d'(u) &= d [b_u]_+ - d(u) + \betac_-(u) = d [-b_u]_+ - d(u)  + \betac_+(u).
\end{align*} 
Recall that one of $\betac_-(u)$ and $\betac_+(u)$ must be zero.
If $\betac_-(u)=0$, then $d'(u) = d [b_u]_+ - d(u)$ and $d [b_u]_+ \geq d [-b_u]_+$.
In this case \eqref{eq:muatrop} holds, and the case for $\betac_+(u)=0$ is similar.
\end{proof}

\begin{lemma}\label{L:mudB} Let $d$ be a tropical $\mc{A}$-point such that $d B_{\Delta}^{\T} =\delta$.
	Then $d_t B_{\Delta_t}^{\T} = \delta_t$ for any $t\in\mf{T}$.
\end{lemma}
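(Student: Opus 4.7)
First I would reduce to a single mutation step: it suffices to show that if $d B_\Delta^\T = \delta$ and we mutate at one vertex $u\in \Delta_0^\mu$, then $d' B_{\Delta'}^\T = \delta'$; the general statement then follows by induction on the length of a mutation sequence in $\mf{T}$.

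The comparison involves three explicit mutation rules: the tropical $\mc{A}$-mutation \eqref{eq:muatrop} for $d$, the tropical $\delta$-vector mutation \eqref{eq:mug} for $\delta$ (note $\delta=dB_\Delta^\T$ has its entries indexed by the mutable part, which is exactly what \eqref{eq:mug} mutates), and the Fomin--Zelevinsky matrix-mutation rule for $B_\Delta$. With $b_{v,w}$ denoting the $(v,w)$-entry of $B_\Delta$, the latter is $b'_{v,w}=-b_{v,w}$ if $v=u$ or $w=u$, and $b'_{v,w} = b_{v,w} + \tfrac{1}{2}(|b_{v,u}|\,b_{u,w} + b_{v,u}\,|b_{u,w}|)$ otherwise. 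I would then verify $(d'B_{\Delta'}^\T)(v) = \delta'(v)$ coordinate by coordinate.

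The case $v=u$ is essentially immediate: since $b'_{u,w}=-b_{u,w}$ for every $w$, $d'(w)=d(w)$ for $w\neq u$, and $b_{u,u}=0$ by the no-loops assumption, one gets $(d'B_{\Delta'}^\T)(u)=-\sum_w d(w)b_{u,w}=-\delta(u)=\delta'(u)$. The case $v\neq u$ is the real computation. The key tropical identity, which follows at once from $dB_\Delta^\T=\delta$, is
\[
\max\{d[b_u]_+,\ d[-b_u]_+\} \;=\; d[-b_u]_+ + [\delta(u)]_+ \;=\; d[b_u]_+ + [-\delta(u)]_+,
\]
so that \eqref{eq:muatrop} rewrites as $d'(u) = -d(u) + d[-b_u]_+ + [\delta(u)]_+$. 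Expanding $(d'B_{\Delta'}^\T)(v) = \sum_w d'(w)b'_{v,w}$, splitting off the contribution of $w=u$, and using the matrix-mutation formula above, the expression collapses (after separating into the sign cases $b_{v,u}\geq 0$ and $b_{v,u}<0$, which is exactly what the factor $|b_{v,u}|$ encodes) to $\delta(v) + [b_{v,u}]_+\delta(u) - b_{v,u}[\delta(u)]_+$, which is precisely $\delta'(v)$ by \eqref{eq:mug}.

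The only obstacle is careful bookkeeping with signs, positive parts and row/column indices; there is no conceptual difficulty. Conceptually, the lemma says that the $\mb{Z}$-linear map $d\mapsto dB_\Delta^\T$ is the tropicalization of the canonical cluster map $\mc{A}\to \mc{X}$, and the claim is its mutation-equivariance under the two compatible tropical rules.
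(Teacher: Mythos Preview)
Your proof is correct. The reduction to a single mutation, the tropical identity rewriting $d'(u)$ as $-d(u)+d[-b_u]_+ + [\delta(u)]_+$, and the coordinate-by-coordinate verification using the matrix mutation formula are all valid; the resulting expression is exactly the combined form $\delta(v)+[b_{v,u}]_+\delta(u)-b_{v,u}[\delta(u)]_+$ of \eqref{eq:mug}.

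The paper takes a different, much shorter route. It does not compute anything; it simply observes that \eqref{eq:mug} is the tropicalization of the $y$-seed mutation rule, while \eqref{eq:muatrop} is the tropicalization of the $x$-seed mutation, and that the relation $y_u=\b{x}^{-b_u}$ (whose tropicalization is precisely $\delta=dB_\Delta^{\T}$) is preserved under mutation of seeds. Your closing sentence identifies exactly this interpretation, so you already see the conceptual picture; the paper just invokes it directly rather than unpacking it into the explicit formula check. Your approach has the advantage of being self-contained and not relying on the reader knowing the Fock--Goncharov compatibility of the $\mc{A}\to\mc{X}$ map; the paper's approach has the advantage of making clear why the statement is true without any bookkeeping.
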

\begin{proof} Recall that the mutation rule \eqref{eq:mug} for $\delta$-vectors is obtained by tropicalizing the $y$-seeds mutation rule (\cite{FGc}, see also \cite[(2.3)]{FZ4}).
Recall that $y_u = \b{x}^{-b_u}$ so if $\delta=dB_\Delta^{\T}$, then $\delta$ satisfies the mutation rule \eqref{eq:mug} for $\delta$-vectors.
Hence, following their mutation rules they agree for each $t$.
\end{proof}

\begin{definition} For $f(\b{x}) = \sum_{\eta} c_{\eta}\b{x}^{-\eta} \in \uca(\Delta)$, we define its tropical $x$-polynomial as 
	$$f^{\trop} (d) = \max_{\eta} (-\eta(d)) \quad \text{for $d\in\mb{Z}^{\Delta_0}$.}$$
\end{definition}
\noindent In this definition, we do not require that $f(\b{x})$ has positive coefficients. The tropicalization is determined by the Newton polytope of $f$.

The (initial-seed) mutation rule of $f^{\trop}$ is induced by the mutation rule of $f$, namely, $\mu_u(f^{\trop})=\mu_u(f)^{\trop}$.
Similar to the pairing between tropical $F$-polynomials and $\delta$-vectors, we also consider the pairing
between tropical $x$-polynomials and tropical $\mc{A}$-points (though its representation-theoretic meaning is unclear). We conclude that
\begin{lemma}\label{L:tropxmu} Let $d$ be a tropical $\mc{A}$-point and $f\in \uca(\Delta)$. Then the pairing $f^{\trop} (d)$ is mutation-invariant.
\end{lemma}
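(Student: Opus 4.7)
The plan is to reduce to a single initial-seed mutation $\mu_u$, then verify $f^{\trop}(d)=\mu_u(f)^{\trop}(\mu_u(d))$ by computing the top $t$-degree of the univariate Laurent polynomial $\Phi_d(f)\in \k[t^{\pm 1}]$ obtained from the substitution $\Phi_d\colon x_v\mapsto t^{d(v)}$ in two different ways; the general case follows by iterating along any mutation sequence connecting two seeds in $\mf{T}$.  Writing $f=\sum_{\eta}c_\eta \b{x}^{-\eta}$ as a finite Laurent sum, we have $\Phi_d(f)=\sum_{\eta}c_\eta\, t^{-\eta\cdot d}$, whose top $t$-degree equals $\max_\eta(-\eta\cdot d)=f^{\trop}(d)$ whenever no cancellation of leading terms occurs, i.e., off the finite union of hyperplanes $\{\eta_1\cdot d=\eta_2\cdot d\}_{\eta_1\neq\eta_2}$ in $\mb{R}^{\Delta_0}$.

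For the second computation we express $f$ in the adjacent cluster $\b{x}'$.  The exchange relation \eqref{eq:exrel} gives $x_u=(x_u')^{-1}\bigl(\prod_{v\to u}x_v+\prod_{u\to w}x_w\bigr)$, so $\Phi_d(x_u')=t^{-d(u)}\bigl(t^{d[b_u]_+}+t^{d[-b_u]_+}\bigr)$, whose top $t$-degree equals $-d(u)+\max(d[b_u]_+,d[-b_u]_+)$, which is exactly $d'(u)$ by the tropical mutation rule \eqref{eq:muatrop}; meanwhile $\Phi_d(x_v')=t^{d(v)}=t^{d'(v)}$ for $v\neq u$.  Writing $f=\sum_{\eta'}c_{\eta'}'\b{x}'^{-\eta'}$ and substituting, each monomial $\b{x}'^{-\eta'}$ contributes a Laurent polynomial in $t$ of top degree $-\eta'\cdot d'$, so the top $t$-degree of $\Phi_d(f)$ equals $\max_{\eta'}(-\eta'\cdot d')=\mu_u(f)^{\trop}(d')$ outside an analogous finite union of hyperplanes in $d$.

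Comparing the two computations of $\deg_t\Phi_d(f)$ yields $f^{\trop}(d)=\mu_u(f)^{\trop}(\mu_u(d))$ on a dense open subset of $\mb{R}^{\Delta_0}$.  Both sides are piecewise-linear, and hence continuous, functions of $d\in\mb{R}^{\Delta_0}$ (the right-hand side because both the mutation map $\mu_u$ and the tropicalization $\mu_u(f)^{\trop}$ are piecewise-linear), so the identity extends to every $d\in\mb{Z}^{\Delta_0}$, in particular to every tropical $\mc{A}$-point.  The only genuine obstacle is the possibility of leading-term cancellations in the $t$-degree computation, and this is precisely what the continuity argument is designed to handle; the substance of the proof is simply the observation that the tropicalization of the exchange relation is exactly \eqref{eq:muatrop}.
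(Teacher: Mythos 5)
The paper offers no argument for this lemma beyond ``we conclude that'' --- it is presented as an immediate consequence of the fact that the tropical mutation rule \eqref{eq:muatrop} is the tropicalization of the exchange relation \eqref{eq:exrel} --- so your substitution $\Phi_d\colon x_v\mapsto t^{d(v)}$ is a legitimate way of making that one-line claim rigorous, and the core computation (the top $t$-degree of $\Phi_d(x_u')$ is exactly $d'(u)$ because the exchange binomial is subtraction-free) is correct. One small imprecision first: for a monomial $\b{x}'^{-\eta'}$ with $\eta'(u)>0$, the image $\Phi_d(\b{x}'^{-\eta'})$ involves a \emph{negative} power of the binomial $t^{d[b_u]_+}+t^{d[-b_u]_+}$, so it is a rational function rather than a Laurent polynomial; the degree claim $-\eta'\cdot d'$ still holds because $\deg$ extends to the fraction field $\k(t)$ (expand in $\k((t^{-1}))$), and since the leading coefficients of these expansions are positive, cancellation at the top degree still forces $\eta_1'\cdot d'=\eta_2'\cdot d'$ for two distinct exponents, i.e.\ the bad locus is as you describe.

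The genuine gap is in the final continuity step. Your substitution lands in $\k[t^{\pm 1}]$ and therefore only makes sense for $d\in\mb{Z}^{\Delta_0}$, so what you have actually established is the identity on $\mb{Z}^{\Delta_0}$ minus a finite union of linear hyperplanes --- and that set is \emph{not} dense in $\mb{R}^{\Delta_0}$, so continuity of the two piecewise-linear functions does not by itself recover the identity at the lattice points lying on those hyperplanes, which is precisely where leading-term cancellation genuinely kills the degree argument (e.g.\ $x_1-x_2$ paired with $d=(1,1)$ gives $\Phi_d(f)=0$). The repair is easy and stays within your framework: both $f^{\trop}(d)$ and $\mu_u(f)^{\trop}(\mu_u(d))$ are positively homogeneous as well as continuous, and the bad locus is a union of hyperplanes through the origin, so agreement on $\mb{Z}^{\Delta_0}\setminus H$ propagates by rescaling to $\mb{Q}^{\Delta_0}\setminus H$, which \emph{is} dense, and continuity then finishes. (Alternatively, run the identical computation with real exponents in the monoid algebra $\k[t^{\mb{R}}]$, so that the generic locus really is a dense open subset of $\mb{R}^{\Delta_0}$.) With either patch your proof is complete.
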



\section{Algebraic Lifting of Crystal Structures} \label{S:lifting}
\subsection{Lifting of Crystal Operators} \label{ss:der}
Recall that a {\em derivation} $D$ on an algebra $A$ is a map $ A\to A$ satisfying \begin{enumerate}
\item $D(a+b)=Da+Db$
\item $D(ab)=aDb+bDa$.
\end{enumerate}
If $R$ is any subring of $A$, $D$ is called an $R$-derivation if $D(R)=0$. We are mainly interested in $\op{Der}_\k(A)$, the $\k$-derivation of an upper cluster algebra $A$.
It follows from the definition that $D$ is uniquely defined by its image on any set of generators of $A$ as a $\k$-algebra.
Since $\k$-derivations of $A$ are closed under the Lie bracket,
$\op{Der}_\k(A)$ forms a Lie algebra over $\k$.
We also recall that the algebra $A$ admits an action of a Lie algebra $\mf{d}$ by derivations if and only if $A$ is a $U(\mf{d})$-module algebra where $U(\mf{d})$ is the enveloping algebra of $\mf{d}$.

Let $I$ be a subset of reachable frozen vertices of $(\Delta,\S)$.
So there is a seed $t$ in which $E_i$ is simple ($i$ is a sink).
To define a derivation $R_i$ on the corresponding upper cluster algebra,
we first define a derivation $R_i$ on the Laurent polynomial ring at $t$.
According to the Lemma \ref{L:der} below, it is enough to define an action on $\b{x}$.
Explicitly this action is given by 
\begin{align}\label{eq:Ri} \rt_i (x_k) &=\begin{cases} \prod_{u\to i} x_u   &  \text{ if $k=i$}\\
		0 & \text{otherwise}.  \end{cases} 
\end{align}
Note that $C_{\gen} (r_i(-e_i)) = C_{\gen}(-e_i+\epc_i) = \prod_{u\to i} x_u$.
Similarly, suppose that $E_i^\star$ is simple at $t$ ($i$ is a source). We define
\begin{align}\label{eq:Rist} \rt_i^\star (x_k) &=\begin{cases} \prod_{i\to u} x_u   &  \text{ if $k=i$}\\
		0 & \text{otherwise}.  \end{cases} 
\end{align}
Also note that $\check{C}_{\gen} (\rc_i^\star(-e_{i})) = \check{C}_{\gen}(-e_{i}+\ep_{i}) = \prod_{i\to u} x_u$.

\begin{lemma} \label{L:der} $d$ acts by derivation on the ring of Laurent polynomials if and only if $d$ has the following form 
	\begin{align}		d(f(\b{x})) &= (\nabla f)\cdot (d\b{x}), \label{eq:d1} \end{align}
where $\nabla$ is the usual gradient operator, $d \b{x}=(dx_1,\dots,dx_n)$, and $\cdot$ is the usual dot product.
	In particular, if $d(\b{x})=0$, then $d(f(\b{x})) = 0$.
\end{lemma}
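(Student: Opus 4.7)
The plan is to verify both directions by reducing everything to the Leibniz rule and its consequence for multiplicative inverses. The reverse direction is a straightforward verification: if $d$ is defined by \eqref{eq:d1}, then additivity is clear, and the Leibniz rule for $d$ follows from the analogous Leibniz rule for the partial derivatives $\partial_{x_i}$ together with the bilinearity of the dot product, since $\nabla(fg) = (\nabla f)g + f(\nabla g)$.

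For the forward direction, I would first note $d(1)=0$ because $d(1)=d(1\cdot 1)=2d(1)$. Applying $d$ to $x_i\cdot x_i^{-1}=1$ and using Leibniz yields $d(x_i^{-1}) = -x_i^{-2}\,d(x_i)$. By induction on $|\alpha_i|$ and repeated applications of the Leibniz rule, for any Laurent monomial $\b{x}^\alpha = x_1^{\alpha_1}\cdots x_n^{\alpha_n}$ with $\alpha\in\mb{Z}^n$ one obtains
\[
d(\b{x}^\alpha) \;=\; \sum_{i=1}^{n} \alpha_i\, x_i^{-1}\b{x}^\alpha\, d(x_i) \;=\; \sum_{i=1}^n (\partial_{x_i}\b{x}^\alpha)\, d(x_i).
\]
Extending by additivity from monomials to arbitrary Laurent polynomials $f\in\k[\b{x}^{\pm 1}]$ gives $d(f)=\sum_i(\partial_{x_i}f)\,d(x_i)=(\nabla f)\cdot(d\b{x})$, which is \eqref{eq:d1}. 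The ``in particular'' claim is then immediate: if $d(\b{x})$ is the zero vector, then \eqref{eq:d1} forces $d(f)=0$ for every $f$.

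There is essentially no obstacle; the only mildly delicate point is handling the negative exponents, and this is dispatched by the $x_i\cdot x_i^{-1}=1$ identity. Note that we do not even need to postulate $\k$-linearity of $d$: the two derivation axioms alone force $d$ to annihilate the prime subring and hence ensure that $d$ is determined entirely by its values on the generators $x_1,\dots,x_n$, which is exactly the content of the lemma.
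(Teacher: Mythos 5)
Your proof is correct and is just the fully written-out version of the paper's one-line argument (the paper simply says the lemma "is elementary and follows from the chain rule"). The details you supply — $d(1)=0$, the formula $d(x_i^{-1})=-x_i^{-2}d(x_i)$, induction over Laurent monomials, and the converse via $\nabla(fg)=(\nabla f)g+f\nabla g$ — are exactly the standard verification the paper leaves to the reader.
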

\begin{proof} This is elementary and follows from the chain rule.
\end{proof}

We need to verify that the definition of $R_i$ and $R_i^\star$ does not depend on the choice of seeds.
\begin{lemma}\label{L:mu_noi} Let $t$ and $t'$ be two seeds such that $E_i$ is simple and $\b{u}:t\to t'$. For each $k\in\Delta_0^\mu$, let $\delta_k'$ be the $\delta$-vector of $x_k^{t'}$ and $d_k'$ be the dimension vector of $\delta_k'$.
Then $d_k'$ is supported outside $U_i'=\{u'\mid u'\xrightarrow{} i \text{ at } t'\}$ and $\delta_k'$ is supported outside $i$.
In particular, the cluster $\mub(\b{x})$ involves no $x_i^{t'}$.
\end{lemma}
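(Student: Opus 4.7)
The plan is to exploit Lemma~\ref{L:tropxmu}, which states that the pairing between the tropical $x$-polynomial of an element of $\uca(\Delta)$ and any tropical $\mc{A}$-point is mutation-invariant. I will pair each $x_k$ (for $k\in\Delta_0^\mu$) with two tropical $\mc{A}$-points whose coordinate vectors at both $t$ and $t'$ equal $\pm e_i$. The point $-e_i$ is preserved coordinate-wise by any mutation at a mutable vertex $u$: since $i$ is frozen, the rule \eqref{eq:muatrop} gives $d'(u)=-\min([b_{u,i}]_+,[-b_{u,i}]_+)=0$, and no other coordinate changes. The companion is $\dv E_i$, which is a tropical $\mc{A}$-point by Lemma~\ref{L:dvtropx}; Lemma~\ref{L:bdinv} identifies $\mub(E_i^{(t)})=E_i^{(t')}$, and the hypothesis that $E_i$ is simple at both endpoints pins its coordinates to $e_i$ at both $t$ and $t'$, even though intermediate coordinates may differ.

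Next I would compute the two pairings at both endpoints for $x_k\in\uca(\Delta)$. At $t$, the Laurent expansion is the single monomial $\b{x}^{e_k}$, so the tropical $x$-polynomial is $d\mapsto d(k)$; since $k\neq i$, both $x_k^{\trop}(e_i)$ and $x_k^{\trop}(-e_i)$ vanish. By Lemma~\ref{L:tropxmu} the same values hold at $t'$: writing $x_k=\sum_\eta c_\eta\b{x}'^{-\eta}$ in the cluster at $t'$, this amounts to $\max_\eta(-\eta(i))=0$ and $\max_\eta(\eta(i))=0$, forcing $\eta(i)=0$ for every $\eta$ with $c_\eta\neq 0$. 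This immediately establishes the ``in particular'' clause that $\mub(x_k)$ contains no factor $x_i^{t'}=x_i$, and it gives $\delta_k'(i)=0$ because $\delta_k'$ is the exponent of the leading monomial $\b{y}'^{0}$ (coefficient $1$).

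For the support claim on $d_k'$, I would use the decomposition $x_k = \b{x}'^{-\delta_k'}\check F_{d_k'}(\b{y}')$: each monomial $\b{y}'^\gamma$ of $\check F_{d_k'}$ corresponds to a quotient of $\coker(\delta_k')$ of dimension vector $\gamma$ and contributes the exponent $\eta=\delta_k'+\gamma B_\Delta^{(t')}$, so the constraint $\eta(i)=0$ translates into $\sum_u \gamma(u)\,b_{u,i}^{(t')}=0$. Because $E_i$ is simple at $t'$, every $b_{u,i}^{(t')}$ for mutable $u$ is non-negative and vanishes precisely when $u\notin U_i'$; hence $\gamma(u)=0$ for all $u\in U_i'$, and specializing to the trivial (top) quotient $\gamma=d_k'$ (appearing with coefficient $1$ in $\check F_{d_k'}$) completes the proof. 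The main subtlety I foresee is the coordinate identification of $\dv E_i$ at both endpoints: its intermediate coordinates through $\mub$ are generally complicated, but Lemma~\ref{L:bdinv} together with the simplicity of $E_i$ at $t$ and $t'$ is exactly the input that lets us avoid tracking them.
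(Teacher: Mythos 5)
Your argument is correct, but it takes a genuinely different route from the paper's. The paper proves this lemma purely inside the QP mutation machinery: since $i$ is a sink at both ends, the summands $-e_u$ ($u\to i$ at $t$) of the $\delta$-vector $-b_i$ of $\mc{E}_i^\mu$ must mutate to the negative simples $-e_{u'}$ ($u'\in U_i'$) by indecomposability and Corollary \ref{C:muQPe}; then the $\E$-rigidity of the mutated negative cluster $\mub\bigl(\bigoplus_k(0,S_k)\bigr)$, via $\E((0,S_{u'}),\mc{M})=M(u')$, forces $d_k'(u')=0$ for $u'\in U_i'$, and $\mu$-supportedness together with $i$ being a sink kills $\delta_k'(i)$. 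You instead run the tropical-duality argument: pair $x_k$ with the two tropical $\mc{A}$-points $-e_i$ (constant along any mutation since $i$ is frozen) and $\dv E_i$ (equal to $e_i$ at both endpoints by Lemma \ref{L:bdinv} and simplicity), and invoke Lemma \ref{L:tropxmu} to pin every exponent of the Laurent expansion at $t'$ to the hyperplane $\eta(i)=0$. This is exactly the technique the paper deploys later in Lemmas \ref{L:degiRj} and \ref{L:degi}, so your proof is more uniform with that part of the text; the trade-off is that it leans on Lemma \ref{L:tropxmu}, which the paper only asserts, whereas the paper's proof of the present lemma is independent of it. Two small points to tighten: (i) the passage from ``$\eta(i)=0$ for every exponent in the support'' to ``$(\gamma B')(i)=0$ for $\gamma=d_k'$'' tacitly assumes the monomial indexed by $\gamma=d_k'$ is not cancelled in the Laurent expansion; this is automatic if $B_\Delta$ has full rank (so distinct $\gamma$ give distinct exponents) and is consistent with how the paper itself identifies supports with quotient dimension vectors, but it deserves a word; (ii) you should note explicitly that $k\in\Delta_0^\mu$ guarantees $k\neq i$, which is what makes both pairings vanish at $t$.
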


\begin{proof} Recall that the $\delta$-vector of $\mc{E}_i^\mu$ is the $i$-th column $-b_i$ of $B_{\Delta}$. By assumption $i$ is a sink at $t$ so the column vector $b_i$ is equal to $\sum_{u\to i}e_u$ at $t$. Similarly the column vector $b_i'$ is equal to $\sum_{u'\to i}e_{u'}$ at $t'$.
As a $\delta$-vector each $-e_u$ is indecomposable, so we must have by Corollary \ref{C:muQPe} that $\mub(-e_{u}) = -e_{u'}$ up to some permutation of $u'\in U_i'$.
	
Now consider the mutation of the negative representation $\mc{N}=\bigoplus_{k\in \Delta_0^\mu} (0,S_k)$ from $t$. Note that $\delta_k'$ is the $\delta$-vector of $\mub(0,S_k)$, which is $\mub(-e_k)$.
As $\mc{N}'$ is $\E$-rigid and $\mub(-e_{u}) = -e_{u'}$, it follows that the representation $\mc{N}'=\mub(\mc{N})$ cannot support on $U_i'$.
Since $\mc{N}'$ is $\mu$-supported and $i$ is a sink, it follows that $\delta_k'$ cannot be supported on $i$. 
Otherwise $\delta_k'$ has to be $-e_i$ which is impossible because $i$ is frozen.

Finally, the cluster variable $\mub(x_k)$ is nothing but $C_{\gen}(\mub(-e_k))$.
It is clear from the formula of the generic character that it involves no $x_i^{t'}$.
\end{proof}

\begin{lemma}\label{L:indep_t} The definition of $R_i$ and $R_i^\star$ does not depend on the choice of seed $t$.
\end{lemma}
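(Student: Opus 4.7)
The plan is to verify $R_i^t=R_i^{t'}$ by checking agreement on the cluster $\b{x}^{t'}$; since a $\k$-derivation of a Laurent polynomial ring is determined by its values on the variables (Lemma \ref{L:der}), this suffices. By the chain rule, $R_i^t$ acts as multiplication by $P_i^t := \prod_{u\to i\,\text{at}\,t} x_u^t$ composed with $\partial/\partial x_i$, and analogously for $R_i^{t'}$.

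For mutable $k \in \Delta_0^\mu$, Lemma \ref{L:mu_noi} asserts that $x_k^{t'}$, expressed as a Laurent polynomial in $\b{x}^t$, does not involve $x_i$, so $R_i^t(x_k^{t'}) = 0 = R_i^{t'}(x_k^{t'})$. For frozen $k \neq i$, $x_k^{t'} = x_k$ is annihilated by both derivations by construction. The only nontrivial check is $R_i^t(x_i) = R_i^{t'}(x_i)$, i.e., $P_i^t = P_i^{t'}$ as elements of $\uca(\Delta)$.

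I would interpret each $P_i^t$ as a value of the generic character. Since $i$ is a sink at $t$ we have $E_i^t = S_i$, and a direct computation via \eqref{eq:Bettidual} gives $\epc_i^t = e_i - \sum_{u\to i\,\text{at}\,t} e_u$. Thus $\epc_i^t - e_i = -\sum_{u\to i\,\text{at}\,t} e_u$ is a sum of negative $\delta$-vectors; a general presentation of this weight has zero cokernel, so its $F$-polynomial is $1$ and $C_{\gen}^t(\epc_i^t - e_i) = \b{x}^{-(\epc_i^t - e_i)} = P_i^t$. By Lemma \ref{L:bdinv}, $\mub(E_i^t) = E_i^{t'}$, giving $\mub(\epc_i^t) = \epc_i^{t'}$; and since $i$ is frozen, the mutation rule \eqref{eq:mug} fixes the $\delta$-vector $e_i$ under any mutation at a mutable vertex, so $\mub(e_i) = e_i$. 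Finally, Lemma \ref{L:Cmu} says the generic character commutes with mutation, so
\[
P_i^t = C_{\gen}^t(\epc_i^t - e_i) = C_{\gen}^{t'}\bigl(\mub(\epc_i^t - e_i)\bigr) = C_{\gen}^{t'}(\epc_i^{t'} - e_i) = P_i^{t'}.
\]
The dual statement for $R_i^\star$ is obtained by the same argument applied to the opposite QP, using $E_i^\star$ and \eqref{eq:Rist} in place of $E_i$ and \eqref{eq:Ri}. The main conceptual step is identifying $P_i^t$ as a generic character value at a $\delta$-vector whose mutation is transparent; once that is done, everything else is routine bookkeeping from Lemma \ref{L:mu_noi} and previously established mutation compatibilities.
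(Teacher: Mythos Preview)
Your overall architecture matches the paper's proof: reduce to checking on the cluster $\b{x}^{t'}$, use Lemma~\ref{L:mu_noi} to kill the mutable and frozen $k\neq i$ cases, and then verify $P_i^t=P_i^{t'}$. The paper handles that last step in one line: since $\mub(-e_u)=-e_{u'}$ (proved inside Lemma~\ref{L:mu_noi}), each cluster variable $x_u^t$ becomes $x_{u'}^{t'}$, and hence $\prod_{u\to i}x_u^t=\prod_{u'\to i}x_{u'}^{t'}$.

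Your route to $P_i^t=P_i^{t'}$ via the generic character has a genuine gap. You want $\mub(\epc_i^t-e_i)=\epc_i^{t'}-e_i$ under the $\delta$-mutation rule~\eqref{eq:mug}, since that is what Lemma~\ref{L:Cmu} requires. But you justify $\mub(\epc_i^t)=\epc_i^{t'}$ via Lemma~\ref{L:bdinv}, which tells you how the \emph{$\dtc$-vector} of $E_i$ transforms; the piecewise-linear maps governing $\delta$- and $\dtc$-vectors are different, so this does not give the $\delta$-mutation of the vector $\epc_i^t$. You then subtract $\mub(e_i)=e_i$, implicitly using additivity of a map that is only piecewise linear. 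Both steps are unjustified as written.

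The repair is to bypass $\epc_i$ altogether. The vector $\epc_i^t-e_i=-\sum_{u\to i}e_u$ is already the $\delta$-vector of the rigid decorated representation $\bigoplus_{u\to i}(0,S_u)$, and Lemma~\ref{L:mu_noi} shows this mutates to $\bigoplus_{u'\to i}(0,S_{u'})$; hence its $\delta$-vector mutates to $-\sum_{u'\to i}e_{u'}=\epc_i^{t'}-e_i$. Feeding that into Lemma~\ref{L:Cmu} finishes your argument---but at that point you have simply rediscovered the paper's one-line step $\mub(x_u)=x_{u'}$.
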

\begin{proof} Suppose that $t'$ is another seed in which $E_i$ is simple, that is,
we are in the situation of Lemma \ref{L:mu_noi}.
We need to check that $\mub(R_i(x_k)) = R_i(x_k^{t'}) = R_i(\mub(x_k))$.
It suffices to check that
$\mub(x_{u}) = x_{u'}$ and $R_i(x_j^{t'})=0$ for any $j \neq i$.
The former follows from that $\mub(-e_{u}) = -e_{u'}$;
the latter follows from the fact that $\mub(\b{x})$ involves no $x_i$ and Lemma \ref{L:der}.
\end{proof}

Next, we check that the action of $R_i$ restricts to $\uca(\Delta)$.
By Theorems \ref{T:bounds}, it suffices to show for a set of generators $\{f_k\}$ of $\uca(\Delta)$ that each $\rt_i(f_k)$ is Laurent in any adjacent seed $t'$ provided the matrix $B_\Delta$ is of full-rank.
But it is not easy to determine a finite set of generators of $\uca(\Delta)$ in general so we will do this for the generic basis.

We shall write $\partial_i$ for $\partial_{x_i}$.
We keep the assumption that  $E_i$ is simple at the seed $t$.
\begin{lemma}\label{L:adjder} Suppose that $t \xrightarrow{u} t'$. Then $\rt_i (x_u')$ is a Laurent polynomial in $\b{x}'$ and polynomial in coefficient variables.
\end{lemma}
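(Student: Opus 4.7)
The plan is to apply $\rt_i$ to the exchange relation for $x_u'$ and track the result in the new cluster. Since $u$ is mutable and $i$ is frozen, $u \neq i$; and since $E_i$ is simple at $t$ and $\Delta$ has no frozen arrows, the vertex $i$ is a sink of $\Delta$, so no arrow $i \to u$ exists. From the exchange relation $x_u' = x_u^{-1}\bigl(\prod_{v\to u} x_v + \prod_{u\to w} x_w\bigr)$, the Leibniz rule together with $\rt_i(x_u) = 0$ (as $u \neq i$) and $\rt_i\bigl(\prod_{v\to u} x_v\bigr) = 0$ (no arrow $i\to u$) reduces the computation to
\[
\rt_i(x_u') \;=\; x_u^{-1}\, \rt_i\bigl(\prod_{u\to w} x_w\bigr).
\]

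Let $m$ denote the multiplicity of the arrow $u \to i$; if $m = 0$ then $\rt_i(x_u') = 0$ and there is nothing more to prove, so assume $m \geq 1$. Writing $\prod_{u\to w} x_w = x_i^m M_+$ with $M_+ = \prod_{u\to w,\, w\neq i} x_w$, and noting that $\rt_i$ annihilates every $x_w$ with $w\neq i$, the Leibniz rule and $\rt_i(x_i) = \prod_{v\to i} x_v$ give
\[
\rt_i\bigl(x_i^m M_+\bigr) \;=\; m\, x_i^{m-1}\, M_+ \prod_{v\to i} x_v.
\]
Since $u \to i$ has multiplicity $m$, the factor $x_u^m$ splits off from $\prod_{v\to i} x_v$, and cancels the outer $x_u^{-1}$ down to $x_u^{m-1}$:
\[
\rt_i(x_u') \;=\; m\, x_i^{m-1}\, M_+\, x_u^{m-1} \prod_{\substack{v\to i\\ v\neq u}} x_v,
\]
which is a polynomial in $\b{x}$ containing only nonnegative powers of each frozen variable.

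To conclude, I rewrite this expression in $\b{x}'$. Since $x_k = x_k'$ for every $k \neq u$, every factor other than $x_u^{m-1}$ is already a polynomial in $\b{x}'$ that is polynomial in the coefficient variables. The inverse exchange relation gives $x_u = (x_u')^{-1}\bigl(\prod_{v\to u} x_v' + \prod_{u\to w} x_w'\bigr)$; since the binomial on the right involves only nonnegative powers of frozen variables, $x_u^{m-1}$ is a Laurent polynomial in $\b{x}'$ whose dependence on the coefficient variables is polynomial. Multiplying the factors together shows $\rt_i(x_u') \in \mc{L}(\b{x}')$.

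The only computational point to notice is the cancellation $x_u^{-1}\cdot x_u^m = x_u^{m-1}$, which works because the vertex $u$ itself contributes to $\prod_{v\to i} x_v$ with exactly the same multiplicity $m$ with which $x_i$ contributes to $\prod_{u\to w} x_w$. Once this observation is in hand the argument is essentially bookkeeping; there is no serious obstacle.
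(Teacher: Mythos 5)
Your proof is correct and follows essentially the same route as the paper's: apply $R_i$ to the exchange binomial via the Leibniz rule, and observe that the resulting $x_u^{-1}$ is cancelled because $x_u$ is a factor of $R_i(x_i)=\prod_{v\to i}x_v$ (you just make the multiplicity bookkeeping and the final rewriting of $x_u$ in terms of $\b{x}'$ more explicit than the paper does).
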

\begin{proof} Recall that $x_u' = (\prod_{v\to u} x_{v} + \prod_{u\to w}x_{w})/{x_u}$.
By \eqref{eq:d1} and \eqref{eq:Ri} $\rt_i (x_u') = \partial_i(x_u') \rt_i(x_i)$. 
If $u$ is not adjacent to $i$, then $R_i(x_u')=0$.
If $u$ is adjacent to $i$, then we have that \begin{align*}
\partial_i (x_u') \rt_i(x_i) &= \Big(\prod_{u\to w\neq i}x_w \Big) /x_u \prod_{v\to i} x_v,
\end{align*}
which is a polynomial in $\b{x}$ because $x_u$ is a factor of $\prod_{v\to i} x_v$.
Hence $R_i(x_u')$ is a Laurent polynomial in $\b{x}'$ and polynomial in coefficient variables.
\end{proof}
\noindent We remark that it is clear by definition that $R_i (x_v)$ is a polynomial in $\b{x}'$ for $v\neq u$.

\begin{lemma}\label{L:derCgen} Suppose that $t\xrightarrow{u} t'$. Then $\rt_i (\mu_u(C_{\gen}(\delta)))$ is a Laurent polynomial in $\b{x}'$ and polynomial in coefficient variables.
\end{lemma}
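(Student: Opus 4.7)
\smallskip

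The plan is to leverage Lemma~\ref{L:Cmu} to rewrite $\mu_u(C_{\gen}(\delta))$ inside $\mc{L}_{\b{x}'}$ and then evaluate the derivation $R_i$ using the chain-rule description of Lemma~\ref{L:der}, applied in the cluster $\b{x}'$ rather than in $\b{x}$. By Lemma~\ref{L:Cmu} we have $\mu_u(C_{\gen}(\delta)) = C_{\gen}(\mu_u(\delta)) \in \mc{L}_{\b{x}'}$, so we may write $R_i(\mu_u(C_{\gen}(\delta))) = \sum_{v \in \Delta_0} \partial_{x_v'}\bigl(\mu_u(C_{\gen}(\delta))\bigr)\cdot R_i(x_v')$. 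The idea is that each factor on the right-hand side is manifestly an element of $\mc{L}_{\b{x}'}$, whence so is the sum.

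I would next reduce the sum to its two nontrivial summands. For any mutable $v \neq u$ and any frozen $v\neq i$ one has $x_v' = x_v$, so Lemma~\ref{L:indep_t} together with \eqref{eq:Ri} gives $R_i(x_v') = 0$. Hence only $v = u$ and $v = i$ contribute, yielding
\begin{equation*}
R_i(\mu_u(C_{\gen}(\delta))) = \partial_{x_u'}\bigl(\mu_u(C_{\gen}(\delta))\bigr)\cdot R_i(x_u') + \partial_{x_i}\bigl(\mu_u(C_{\gen}(\delta))\bigr)\cdot R_i(x_i).
\end{equation*}
The first factor in each term belongs to $\mc{L}_{\b{x}'}$: indeed $\partial_{x_u'}$ of a Laurent polynomial in $\b{x}'$ is still Laurent in $\b{x}'$, and since $x_i$ is a coefficient variable the dependence of $\mu_u(C_{\gen}(\delta))$ on $x_i$ is polynomial, so $\partial_{x_i}$ preserves the polynomial (in $x_i$) and Laurent (in mutable variables) character.

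For the second factors, Lemma~\ref{L:adjder} already shows that $R_i(x_u') \in \mc{L}_{\b{x}'}$ with polynomial dependence on the coefficient variables. What remains is to rewrite $R_i(x_i) = \prod_{w\to i} x_w$ (at seed $t$) in the new cluster $\b{x}'$: if $u$ is not a tail of any arrow into $i$, the product is already a monomial in $\b{x}'$; if $u\to i$ is an arrow at $t$, then $x_u$ appears with exponent $1$ and can be replaced via the exchange relation $x_u = (\prod_{v\to u} x_v + \prod_{u\to w} x_w)/x_u'$, which lies in $\mc{L}_{\b{x}'}$ and is polynomial in the frozen variables. Combining these observations concludes the proof.

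The only non-routine point is the bookkeeping for the $R_i(x_i)$ term when $u$ is adjacent to $i$; everything else is a direct consequence of the chain rule plus the vanishing of $R_i$ on all cluster and coefficient variables other than $x_i$ and $x_u'$. No new structural ingredient is needed beyond Lemmas~\ref{L:Cmu}, \ref{L:der}, \ref{L:indep_t}, and \ref{L:adjder}.
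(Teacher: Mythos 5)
Your proposal is correct and follows essentially the same route as the paper: rewrite $\mu_u(C_{\gen}(\delta))$ as $C_{\gen}(\mu_u(\delta))$ via Lemma~\ref{L:Cmu}, apply the chain rule \eqref{eq:d1} in the cluster $\b{x}'$, and invoke Lemma~\ref{L:adjder} for the factor $R_i(x_u')$. You are merely more explicit than the paper about which summands survive and about expressing $R_i(x_i)=\prod_{w\to i}x_w$ in $\mc{L}_{\b{x}'}$ when $u$ is adjacent to $i$ (a point the paper relegates to the remark following Lemma~\ref{L:adjder}).
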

\begin{proof} Let $\delta' = \mu_u(\delta)$. By Lemma \ref{L:Cmu} we have that 	
	$\mu_u(C_{\gen}(\delta)) = C_{\gen}(\delta')\in \uca(\Delta)$.
	By \eqref{eq:d1} $\rt_i(C_{\gen}(\delta')) = \nabla (C_{\gen}(\delta'))\cdot \rt_i(\b{x}')$.
	Hence $\rt_i(C_{\gen}(\delta'))$ is a Laurent polynomial in $\b{x}'$ and polynomial in coefficient variables by Lemma \ref{L:adjder}.
\end{proof}
\noindent Combining Lemmas \ref{L:mu_noi} -- \ref{L:derCgen}  , we see that each $R_i$ is a well-defined $\k$-derivation on $\uca(\Delta)$. The justification for $R_i^\star$ is similar.
Let $\mf{d}_{I}$ be the Lie subalgebra of $\op{Der}_\k(\uca(\Delta))$ generated by the derivations $R_i$ and $R_i^\star$ for $i\in I$. Then $\uca(\Delta)$ is a $U(\mf{d}_{I})$-module algebra.

Suppose that we are in the situation of Theorem \ref{T:crystal}, in which $(i,\ibar)$ is a $\tau$-exact pair. We define $L_i = R_{\ibar}^\star$. More explicitly, we choose a seed $t$ of a cluster algebra in which $\tau^{-1} E_i = E_{\ibar}^\star$ is simple. Then 
\begin{align*} \lt_i (x_k) &=\begin{cases} \prod_{\ibar\to u} x_u   &  \text{ if $k=\ibar$}\\
		0 & \text{otherwise}.  \end{cases} 
\end{align*}

Finally, recall the weight functions $\wt_i$ given as in \eqref{eq:wti}.
If the Cartan matrix $C_I$ does not have full rank, then we extend $(\wt_i)_{i\in I}$
to a larger set $(\wt_i)_{i\in I \sqcup K}$ as in Lemma \ref{L:wtc}.
Then we define the action $H_i$ in a natural way:
\begin{align}\label{eq:Hi} H_i(x_k)  = \wt_i(-e_k) x_k. \end{align}
By Lemma \ref{L:muwt} the definition of $H_i$ does not depend on the choice of seed as well.



\subsection{The Structure Theorem on Derivations}
We set $c_{i,j}=-\e(\mc{E}_i^\mu,\mc{E}_j^\mu)-\e(\mc{E}_j^\mu,\mc{E}_i^\mu)$ as before, and introduce some new numbers $$c_{i,j}^\star=-\e(E_j^\star, E_i) \ \text{ and } \ \check{c}_{i,j}^\star=-\ec(E_i^\star, E_j).$$
Unlike $\e(E_i,E_j)$, in general we do not have $\e(E_j^\star, E_i) = \e(\tau_\mu\mc{E}_j^\mu,\mc{E}_i^\mu)$.
Also note that $c_{i,j}^\star$ and $\check{c}_{i,j}^\star$ are {\em not} symmetric about $i$ and $j$ but we have the following relation.
\begin{lemma}\label{L:cijst} We have that $c_{i,j}^\star = \check{c}_{j,i}^\star$.
\end{lemma}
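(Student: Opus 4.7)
The plan is to unpack both sides and reduce to an Auslander-Reiten duality identity. Since $c_{i,j}^\star=-\e(E_j^\star, E_i)$ and, by swapping $i\leftrightarrow j$ in the definition of $\check c_{i,j}^\star$, $\check c_{j,i}^\star=-\ec(E_j^\star, E_i)$, the claim is equivalent to
\[
\e(E_j^\star, E_i) = \ec(E_j^\star, E_i).
\]
My first move will be to apply Lemma~\ref{L:H2E}.(1) to both sides, obtaining
\[
\e(E_j^\star, E_i) = \hom(E_i, \tau E_j^\star), \qquad \ec(E_j^\star, E_i) = \hom(\tau^{-1} E_i, E_j^\star),
\]
so that the task becomes showing these two $\hom$-dimensions coincide.

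Both $E_i$ and $E_j^\star$ are genuine $J$-modules with no decorated part (being the zero-extensions of the indecomposable projective $P_{[i]}$ of $J_{\mu[i]}$ and the indecomposable injective $I_{[j]}$ of $J_{\mu[j]}$). Under the standing no-frozen-arrow assumption, they are indecomposable and neither projective nor injective as $J$-modules, so they have no projective or injective summand. Hence the DWZ translate $\tau$ restricted to them agrees with the classical Auslander-Reiten translate $\tau_J$, and what remains to prove is $\hom_J(E_i,\tau_J E_j^\star)=\hom_J(\tau_J^{-1}E_i,E_j^\star)$. I will deduce this from the classical Auslander-Reiten formula
\[
\dim\Ext^1_J(E_j^\star,E_i)=\dim\overline{\Hom}_J(E_i,\tau_J E_j^\star)=\dim\underline{\Hom}_J(\tau_J^{-1}E_i,E_j^\star),
\]
where $\overline{\Hom}$ (resp.\ $\underline{\Hom}$) denotes Hom modulo morphisms factoring through injectives (resp.\ projectives).

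The main obstacle will be verifying that the stable Homs coincide with the ordinary Homs in both cases: no nonzero map $E_i\to\tau_J E_j^\star$ factors through an injective, and dually no nonzero $\tau_J^{-1}E_i\to E_j^\star$ factors through a projective. These will follow from standard AR-properties ($\tau_J E_j^\star$ has no injective summand and $\tau_J^{-1}E_i$ has no projective summand) combined with a short direct argument exploiting the very restricted supports of $E_i$ and $E_j^\star$ in $\mu[i]$ and $\mu[j]$ provided by the no-frozen-arrow hypothesis; the explicit form of indecomposable injectives (resp.\ projectives) of $J$ then rules out the relevant factorizations.
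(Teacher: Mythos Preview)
Your reduction to $\e(E_j^\star,E_i)=\ec(E_j^\star,E_i)$ and the use of Lemma~\ref{L:H2E}(1) are fine, but the argument breaks down at the claim that ``under the standing no-frozen-arrow assumption, $E_i$ and $E_j^\star$ are \dots neither projective nor injective as $J$-modules.'' This is false in general. Take $\Delta^\mu$ to be the $A_2$ quiver $v_1\to v_2$, extend by $\mc{E}_a^\mu=S_{v_1}$ (giving a $3$-cycle on $v_1,v_2,a$ with potential the cycle) and by $\mc{E}_b^\mu=S_{v_2}$ (adding an arrow $b\to v_2$). There are no frozen arrows, yet one computes directly that $E_a=I_{v_1}$ is injective and $E_a^\star=P_{v_2}$ is projective in $J$. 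So the passage from the DWZ translate to the classical $\tau_J$ is not available as stated, and the ensuing appeal to the Auslander--Reiten formula does not apply without further work.

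Even setting aside those degenerate cases, the promised ``short direct argument'' that the stable Homs coincide with the ordinary Homs is not given, and it is not clear how the support restriction to $\mu[i]$ and $\mu[j]$ alone would rule out the relevant factorizations through injectives (for $\overline{\Hom}(E_i,\tau_J E_j^\star)$) or projectives (for $\underline{\Hom}(\tau_J^{-1}E_i,E_j^\star)$). The injective envelope of $E_i$ and the projective cover of $E_j^\star$ are governed by the global algebra $J$, not by $J_{\mu[i]}$ or $J_{\mu[j]}$, so a genuine argument is needed here and you have not supplied one.

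The paper avoids Auslander--Reiten duality altogether. It uses the numerical identities \eqref{eq:heform} and \eqref{eq:hecform} together with \eqref{eq:delta2dual} and the skew-symmetry of $B(\Delta)$ to reduce the claim to the equality $\epc_j^\star(\dv E_i)=\ep_i(\dv E_j^\star)$. Since $\ep_i$ and $\epc_j^\star$ are supported only on frozen vertices (Proposition~\ref{P:Brep}(2) and its dual) while $\dv E_i$ and $\dv E_j^\star$ have a single frozen coordinate each, both sides collapse to single entries $\ep_i(j)$ and $\epc_j^\star(i)$, which are then seen to agree. This combinatorial route sidesteps precisely the projective/injective edge cases that trip up your approach.
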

\begin{proof} By \eqref{eq:heform} and \eqref{eq:hecform} we have that \begin{align*}
\hom(E_j^\star, E_i)	-\e(E_j^\star, E_i) &= \ep_j^\star (\dv(E_i)) = \left(\epc_j^\star - \dv(\ep_j^\star)B \right) (\dv(E_i))\\
\hom(E_j^\star, E_i)	-\ec(E_j^\star, E_i) &= \epc_i (\dv(E_j^\star)) = \left(\ep_i + \dv(\ep_i)B \right) (\dv(E_j^\star)).
\end{align*}
Since $B$ is skew-symmetric, $-\e(E_j^\star, E_i) = -\ec(E_j^\star, E_i)$ is equivalent to that
$$ \epc_j^\star (\dv(E_i)) = \ep_i  (\dv(E_j^\star)).$$
But this is established in Lemma \ref{L:E^mu}.
\end{proof}

\begin{lemma}\label{L:Rj} If $E_j = S_j$, then $R_j(f)^{\trop} \leq f^{\trop} - \epc_j $;
if $E_j^\star = S_j$, then $R_j^\star(f)^{\trop} \leq f^{\trop} - \ep_j^\star $.
\end{lemma}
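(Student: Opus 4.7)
The plan is to compute $R_j$ term-by-term on a Laurent expansion and identify the exponent shift with $\epc_j$, after which tropicalization is immediate. Under the standing no-frozen-arrow hypothesis, the assumption $E_j=P_{[j]}=S_j$ in $J_{\mu[j]}$ forces $j$ to have no outgoing arrows in $\Delta$; combined with Proposition \ref{P:Brep}.(2) (or directly via Lemma \ref{L:epci}.(2), noting that $\mc{E}_j^\mu$ is purely decorated so $h_j=0$), this gives
$$\epc_j \;=\; e_j - \sum_{u\to j} e_u,$$
which is precisely the exponent of $x_j^{-1} R_j(x_j) = x_j^{-1}\prod_{u\to j}x_u$.

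Next, writing $f=\sum_\eta c_\eta \b{x}^{-\eta}$ and applying Lemma \ref{L:der} together with $R_j(x_k)=\updelta_{k,j}\prod_{u\to j}x_u$, one obtains
$$R_j(\b{x}^{-\eta}) \;=\; \partial_j(\b{x}^{-\eta})\,R_j(x_j) \;=\; -\eta(j)\,\b{x}^{-(\eta+\epc_j)},$$
so by linearity $R_j(f) = \sum_\eta (-\eta(j)) c_\eta\, \b{x}^{-(\eta+\epc_j)}$. Since the translation $\eta\mapsto\eta+\epc_j$ is injective, distinct $\eta$'s produce distinct monomials in $R_j(f)$, and the only cancellations come from the vanishing factor $\eta(j)$. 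The surviving monomials are therefore indexed by $\{\eta\mid c_\eta\,\eta(j)\ne 0\}$, a subset of $\{\eta\mid c_\eta\ne 0\}$.

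Tropicalizing, for any $d\in\mb{Z}^{\Delta_0}$ I would conclude
$$R_j(f)^{\trop}(d) \;=\; \max_{c_\eta\eta(j)\ne 0}\bigl(-(\eta+\epc_j)(d)\bigr) \;\le\; \max_{c_\eta\ne 0}\bigl(-\eta(d)\bigr)-\epc_j(d) \;=\; f^{\trop}(d)-\epc_j(d),$$
the inequality being the passage to a larger index set (and $\epc_j(d)$ being a constant that pulls outside the max). The dual statement is proved identically: $E_j^\star=I_{[j]}=S_j$ forces $j$ to have no incoming arrows, so $\ep_j^\star=e_j-\sum_{j\to u}e_u$ is exactly the exponent shift produced by $R_j^\star(\b{x}^{-\eta}) = -\eta(j)\,\b{x}^{-(\eta+\ep_j^\star)}$, and the same tropicalization yields $R_j^\star(f)^{\trop}\le f^{\trop}-\ep_j^\star$.

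The calculation is essentially mechanical; the only non-routine point is verifying that the Leibniz-rule exponent shift matches $\epc_j$ (resp.\ $\ep_j^\star$) on the nose — a correction by an element of the row space of $B_\Delta$ would still be mutation-invariantly trivial but would spoil the clean pointwise bound, so pinning down the shift exactly (via the no-frozen-arrow hypothesis and $E_j=S_j$) is the sole place where the hypotheses are really used.
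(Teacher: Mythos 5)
Your proof is correct and follows essentially the same route as the paper: compute $R_j$ monomial-by-monomial via Lemma \ref{L:der}, observe that each surviving term is $\b{x}^{-\eta}$ shifted in exponent by exactly $-\epc_j=-(e_j-\sum_{u\to j}e_u)$ (the paper writes this shift out directly from $\partial_j(\b{x}^{-\delta})\prod_{u\to j}x_u$ rather than routing through Lemma \ref{L:epci}), and then tropicalize, with the inequality coming from the surviving index set being a subset of the support of $f$. Your version is if anything slightly more careful, since you note the coefficient is $-\eta(j)$ (not $[-\eta(j)]_+$) and that the translation of exponents precludes spurious cancellation.
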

\begin{proof} As $E_j$ is simple, $\rho_j(\delta)=\e(\delta, S_j)$ is nothing but $[-\delta(j)]_+$.
Recall that the action of $\rt_j$ \eqref{eq:Ri}.
We have that $R_j(\b{x}^{-\delta})$ is either $0$ or
	\begin{equation}\label{eq:Rjmono} \rt_j(\b{x}^{-\delta}) = \partial_j(\b{x}^{-\delta})\prod_{u\to j}x_u =  [-\delta(j)]_+ \b{x}^{-\delta-e_j+\sum_{u\to j}e_u} = \rho_j(\delta) \b{x}^{-r_j(\delta)}.
	\end{equation}
In the latter case, we have that $R_j(\b{x}^{-\delta})^{\trop} = (\b{x}^{-\delta-\epc_j})^{\trop} = (\b{x}^{-\delta})^{\trop} - \epc_j$.
Hence $R_j(f)^{\trop} \leq f^{\trop} - \epc_j$. The other statement is proved similarly.
\end{proof}

\begin{lemma} \label{L:degiRj} Suppose that $E_i = S_i$. Then for any $f\in \uca(\Delta)$ and $i\neq j$, we have that \begin{align} R_j(f)^{\trop}(e_i) \leq f^{\trop}(e_i) - c_{i,j},  \label{eq:Rjtrop} \\
		R_j^\star(f)^{\trop}(e_i) \leq f^{\trop}(e_i) - c_{i,j}^\star.  \label{eq:Rjsttrop}
\end{align}
\end{lemma}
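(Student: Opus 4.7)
The strategy is to reduce the inequality to Lemma~\ref{L:Rj} by mutating to a seed where $E_j$ is simple, then transport the bound back to the seed $t$ via tropical mutation invariance. Since $j\in I$ is reachable, there is a mutation sequence $\b{u}\colon t\to t'$ such that $E_j^{t'}=S_j$. At $t'$, Lemma~\ref{L:Rj} gives the functional inequality $R_j(f)^{\trop_{t'}}(d)\leq f^{\trop_{t'}}(d)-\epc_j^{t'}(d)$ for every $d\in\mb{Z}^{\Delta_0}$. The hypothesis $E_i^t=S_i$ says $e_i=\dv E_i^t$, which is a tropical $\mc{A}$-point by Lemma~\ref{L:dvtropx}; its image under $\b{u}$ is $\dv E_i^{t'}$ (Lemma~\ref{L:bdinv}). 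Applying Lemma~\ref{L:tropxmu} to evaluate tropical $x$-polynomials at this point then yields
\begin{equation*}
R_j(f)^{\trop_t}(e_i)\leq f^{\trop_t}(e_i)-\epc_j^{t'}(\dv E_i^{t'}),
\end{equation*}
reducing the claim \eqref{eq:Rjtrop} to the numerical inequality $\epc_j^{t'}(\dv E_i^{t'})\geq c_{i,j}$.

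For this numerical inequality, I will compute $\epc_j^{t'}(\dv E_i^{t'})$ directly at $t'$, using $\dv E_i^{t'}=\dv\mc{E}_i^{\mu,t'}+e_i$ (from the exact sequence \eqref{eq:exactEi}) and the presentation $d_j^{t'}\colon\bigoplus_{u\to j}P_u\to 0$ of $\mc{E}_j^\mu$, which gives $\epc_j^{t'}=\dtc_{S_j^{t'}}=e_j-\sum_{u\to j}e_u$ via \eqref{eq:Bettidual}. Expanding, and using Lemma~\ref{L:E^mu}, yields
\begin{equation*}
\epc_j^{t'}(\dv E_i^{t'})=\updelta_{i,j}-\sum_{u\to j}\dim\mc{E}_i^{\mu,t'}(u)=\updelta_{i,j}-\e_{t'}(E_i,E_j).
\end{equation*}
The crucial observation is that $\e(E_i,E_j)$ is itself mutation invariant: both $\ep_i$ and $\ep_j$ are supported on frozen vertices by Proposition~\ref{P:Brep}, so $\beta_{\pm,E_i}(u)=\beta_{\pm,E_j}(u)=0$ at every mutable $u$ and the change formula in Lemma~\ref{L:HEmu}.(2) vanishes. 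Combined with $c_{i,j}=-\e(E_i,E_j)-\e(E_j,E_i)$, this gives
\begin{equation*}
\epc_j^{t'}(\dv E_i^{t'})-c_{i,j}=\updelta_{i,j}+\e(E_j,E_i)\geq 0,
\end{equation*}
proving \eqref{eq:Rjtrop}.

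The dual inequality \eqref{eq:Rjsttrop} follows by the analogous procedure: mutate to a seed $t''$ where $E_j^{\star,t''}=S_j$, apply the second half of Lemma~\ref{L:Rj}, and evaluate at $\dv E_i^{t''}$. The match with $c_{i,j}^\star=-\e(E_j^\star,E_i)$ is in fact even more immediate, since by \eqref{eq:heform} one has $\ep_j^{\star,t''}(\dv E_i^{t''})=\hom_{t''}(E_j^\star,E_i)-\e_{t''}(E_j^\star,E_i)$, the frozen support of $\ep_j^\star$ again forces mutation invariance of $\e(E_j^\star,E_i)$, and $\hom_{t''}(E_j^\star,E_i)\geq 0$ supplies the required bound. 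The main obstacle throughout is the bookkeeping needed to identify the seed-dependent shifts produced by Lemma~\ref{L:Rj} with the mutation-invariant numbers $c_{i,j}$ and $c_{i,j}^\star$; this is resolved precisely by the frozen-support mutation-invariance argument above.
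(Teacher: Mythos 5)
Your proof follows the paper's argument for this lemma essentially verbatim: mutate to a seed $t'$ where $E_j$ (resp.\ $E_j^\star$) is simple, apply Lemma~\ref{L:Rj} there, evaluate at the tropical $\mc{A}$-point $e_i=\dv E_i$, transport back via Lemma~\ref{L:tropxmu}, and identify the resulting correction term with $c_{i,j}$ (resp.\ $c_{i,j}^\star$); the only cosmetic difference is that you compute $\epc_j^{t'}(\dv E_i^{t'})$ directly in coordinates rather than through the paper's chain of $\hom$/$\ec$/$\e$ identities. One small slip: with your own formulas, $\epc_j^{t'}(\dv E_i^{t'})-c_{i,j}=-\updelta_{i,j}+\e(E_j,E_i)$ rather than $+\updelta_{i,j}+\e(E_j,E_i)$. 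This is harmless when $i\neq j$ (both expressions reduce to $\e(E_j,E_i)\geq 0$), but for $i=j$ the inequality of the lemma genuinely fails (take $f=x_i$: the left side is $0$ while the right side is $1-c_{i,i}=-1$); the paper's own proof tacitly assumes $i\neq j$ as well (it uses $\hom(E_i,E_j)=0$), and that is the only case in which the lemma is invoked.
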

\begin{proof} Let $\mub: t \to t'$ be a sequence of mutations such that $\mub(E_j) =S_j$.
By Lemma \ref{L:Rj} at the seed $t'$ we have that $R_j(\mub(f))^{\trop} \leq \mub(f)^{\trop} - \mub(\epc_j)$. In particular,
\begin{equation} \label{eq:Rj} R_j(\mub(f))^{\trop}(\mub(e_i)) \leq \mub(f)^{\trop}(\mub(e_i)) - \mub(\epc_j) (\mub(e_i)),\end{equation}
where $e_i$ is viewed as a tropical $\mc{A}$-point. By Lemma \ref{L:dvtropx} $\mub(e_i)$ is the dimension vector of $E_i$ as well.
Then we have that \begin{align*} -\mub(\epc_j) (\mub(e_i)) &= \ec(\ep_i', \epc_j') - \hom(\ep_i', \epc_j')\\
	&= \ec(\ep_i', \epc_j') && {(\text{as $\hom(E_i, E_j)=0$})} \\
	&= \ec(\ep_i', \epc_j') + \ec(\epc_j', \ep_i')  &&  (\text{as $\ec(\epc_j', \ep_i')=\ec(S_j, \ep_i')=0$ by Lemma \ref{L:epci}.(2))} 	\\
	&= \e(\ep_i', \epc_j') + \e(\epc_j', \ep_i')  &&  (\text{by Lemma \ref{L:H2E}})	\\
	&= \e(\ep_i, \epc_j) + \e(\epc_j, \ep_i)	&&  (\text{by Lemma \ref{L:HEmu}})  \\
	&= -c_{i,j} .
\end{align*}
Finally by Lemma \ref{L:tropxmu} we get 
$$R_j(f)^{\trop}(e_i) \leq f^{\trop}(e_i) - c_{i,j}. $$

The proof for the other statement is similar. Let $\mub: t \to t'$ be a sequence of mutations such that $\mub(E_j^\star) =S_j$. By Lemma \ref{L:Rj} at the seed $t'$ we have that $R_j^\star(\mub(f))^{\trop} \leq \mub(f)^{\trop} - \mub(\ep_j^\star)$. In particular,
\begin{equation} \label{eq:Rjst} R_j^\star(\mub(f))^{\trop}(\mub(e_i)) \leq \mub(f)^{\trop}(\mub(e_i)) - \mub(\ep_j^\star) (\mub(e_i)),\end{equation}
where $e_i$ is viewed as a tropical $\mc{A}$-point, which is also viewed as the dimension vector of $E_i$.
We have that \begin{align*} -\mub(\ep_j^\star) (\mub(e_i)) &= \e((\ep_j^\star)', \ep_i') - \hom((\ep_j^\star)', \ep_i')\\
	&= \e((\ep_j^\star)', \ep_i') && (\text{as $\hom(S_j, E_i)=0$ by Proposition \ref{P:Brep}.(1)} ) \\
	&= \e(\ep_j^\star, \ep_i)	&&  (\text{by Lemma \ref{L:HomEinv}})  \\
	&= -c_{i,j}^\star 
\end{align*}
By Lemma \ref{L:tropxmu} we get 
$$R_j^\star(f)^{\trop}(e_i) \leq f^{\trop}(e_i) - c_{i,j}^\star. $$
\end{proof}
\begin{remark}\label{r:eqRj} The proof shows that the equality holds if and only if the equality holds in \eqref{eq:Rj}.
\end{remark}


\begin{lemma} \label{L:degi} Suppose that $E_i = S_i$ and let $a:=-c_{i,j}$ for $i\neq j$. 
Then the degree of $x_i$ in $R_j(x_k)$ satisfies
$$\deg_{x_i}(R_j(x_k)) \begin{cases} = 0 & \text{if $E_j(k)=0$} \\
	=a & \text{if $k=j$}  \\
	\leq \max(a-1,0) & \text{if $k\to i$ (and $k\neq j$)} \\ \leq a & \text{otherwise.} \end{cases}$$
\end{lemma}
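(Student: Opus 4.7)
The main tool is Lemma~\ref{L:degiRj}: under the hypothesis $E_i=S_i$, for every $f\in\uca(\Delta)$ we have
\[\deg_{x_i}(R_j(f)) = R_j(f)^{\trop}(e_i) \leq f^{\trop}(e_i) - c_{i,j} = \deg_{x_i}(f) + a.\]
Since $\deg_{x_i}(x_k)=\updelta_{i,k}$, taking $f=x_k$ with $k\neq i$ gives the ``otherwise'' case directly. The other cases require an explicit chain-rule description of $R_j$. Pick a sequence $\mub\colon t\to t_j$ reducing $E_j$ to $S_j$, and expand $x_k^t=C_{\gen}(\mub(-e_k))$ at $t_j$ as
\[x_k^t = \sum_\gamma c_\gamma\, (\b{x}^{t_j})^{-\delta_\gamma},\qquad \delta_\gamma = \mub(-e_k)+\gamma\, B_\Delta^{t_j}.\]
The monomial rule $R_j((\b{x}^{t_j})^{-\delta})=[-\delta(j)]_+(\b{x}^{t_j})^{-r_j(\delta)}$ extracted from the proof of Lemma~\ref{L:Rj} then gives
\[R_j(x_k^t) = \sum_\gamma c_\gamma\,[-\delta_\gamma(j)]_+\,(\b{x}^{t_j})^{-r_j(\delta_\gamma)}. \qquad (\ast)\]

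For case~(1), I compute $\mub(-e_k)(j)=-E_j(k)$ using \eqref{eq:heform}, Lemma~\ref{L:HomEinv}, and the identifications $\hom(-e_k,E_j)=0$ and $\e(-e_k,E_j)=E_j(k)$. Since $j$ is a sink at $t_j$ in the mutable-plus-$j$ subquiver, the term $\gamma\, B_\Delta^{t_j}$ contributes non-negatively to the $j$-th coordinate for every $\gamma$ in the support of the $F$-polynomial of $\coker(\mub(-e_k))$. Hence, if $E_j(k)=0$ then $\delta_\gamma(j)\geq 0$ for all relevant $\gamma$, each bracket $[-\delta_\gamma(j)]_+$ in $(\ast)$ vanishes, and $R_j(x_k)=0$. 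This also handles $k=i$ when $i\neq j$, since $E_j$ vanishes at frozen vertices other than $j$.

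For case~(2), $k=j$, the upper bound is Lemma~\ref{L:degiRj}; for equality I invoke Remark~\ref{r:eqRj} and check saturation of Lemma~\ref{L:Rj} at $f=x_j=x_j^{t_j}$. Since $R_j(x_j^{t_j})=\prod_{u\to j\text{ in }\Delta^{t_j}}x_u^{t_j}$ is a single monomial, its tropicalization at $\mub(e_i)=\dv(\mub(E_i))$ (by Lemma~\ref{L:dvtropx}) is exactly $\sum_{u\to j}\mub(e_i)(u)$, and this matches $-\mub(\epc_j)(\mub(e_i))$ via Proposition~\ref{P:Brep} together with the Cartan identity.

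Case~(3), $k\to i$, is the main obstacle. The hypothesis $E_i=S_i$ forbids arrows $i\to\text{mutable}$ at $t$, so $b_{k,i}^t=|\{k\to i\}|\geq 1$; tropically, $y_k^t$ therefore carries an $x_i^{-|\{k\to i\}|}$ factor. Choosing $\mub$ supported away from $i$ (possible by Lemma~\ref{L:mu_noi}), the arrow $k\to i$ is preserved at $t_j$. The plan is to isolate the $\gamma$ attaining the maximal $x_i$-degree in~$(\ast)$ and show that any such $\gamma$ must carry at least one factor of $y_k^t$, forced by the outgoing arrow structure at $k$; this extra factor contributes an additional $x_i^{-1}$, reducing the bound to $\max(a-1,0)$. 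Making this rigorous requires a sharpening of the equality case of Lemma~\ref{L:Rj}, tracking precisely which monomials of $x_k^t$ at $t_j$ attain the tropical maximum.
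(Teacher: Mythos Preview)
Your treatment of the ``otherwise'' case, case~(1), and case~(2) is essentially correct and follows the paper's own route closely. The one stylistic difference is that for case~(1) the paper avoids the monomial-by-monomial analysis and instead uses the tropical invariance (Lemmas~\ref{L:tropxmu} and~\ref{L:dvtropx}) to get $\mub(x_k)^{\trop}(e_j)=(\dv E_j)(k)$ directly, then appeals to Lemma~\ref{L:der}.

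Case~(3) is where there is a genuine gap. Two of your supporting claims are problematic. First, invoking Lemma~\ref{L:mu_noi} to choose $\mub$ ``supported away from $i$'' is a misreading: that lemma concerns two seeds at both of which $E_i$ is simple, not the passage to a seed where $E_j$ is simple; and since $i$ is frozen one never mutates at $i$ anyway, so the statement is vacuous. Second, the assertion that ``the arrow $k\to i$ is preserved at $t_j$'' is false in general: arrows between the mutable vertex $k$ and the frozen vertex $i$ can be created or destroyed by mutations at third vertices. More seriously, your overall plan---to argue that any monomial attaining the maximal $x_i$-degree in $R_j(x_k)$ must carry a $y_k^t$ factor---is only sketched, and you yourself flag that it needs a sharpening you have not supplied.

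The paper's argument for case~(3) takes a different and cleaner route. Working at $t'$ with $\delta_k:=\mub(-e_k)$ and $T_k=\coker(\delta_k)$, one computes via Lemma~\ref{L:mudB} that $\mub(e_i)B_{\Delta_{t'}}^{\T}=\sum_{k\to i}\delta_k$, the weight of $\bigoplus_{k\to i}T_k$. Every quotient dimension vector $\gamma$ of $T_k$ is also a quotient dimension vector of this direct sum, and by \cite[Lemma~5.15]{Ft} one has $\gamma\cdot\sum_{k\to i}\delta_k<0$ unless $\gamma=0$. Hence
\[
\mub(x_k)^{\trop}(\mub(e_i))
=\max_{\gamma}\bigl(-\delta_k-\gamma B'\bigr)(\mub(e_i))
=-\delta_k(\mub(e_i))+\max_{\gamma}\Bigl(\gamma\cdot\!\sum_{k\to i}\delta_k\Bigr)
=-\delta_k(\mub(e_i)),
\]
and the maximum is attained \emph{only} at $\gamma=0$. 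But $\delta_k(j)=-\e(\delta_k,S_j)=-\e(-e_k,E_j)=0$ by Lemma~\ref{L:HomEinv}, so this unique maximizing monomial is annihilated by $R_j$. It follows that the inequality in~\eqref{eq:Rj} is strict, hence (via Remark~\ref{r:eqRj}) so is~\eqref{eq:Rjtrop}, giving $\deg_{x_i}(R_j(x_k))\leq\max(a-1,0)$.
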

\begin{proof}
	Let $\mub:t\to t'$ be a sequence of mutations such that $\mub(E_j)=S_j$.
	We first determine when $R_j(x_k)$ vanishes.  By Lemmas \ref{L:tropxmu}
	and \ref{L:dvtropx},
	$$\mub(x_k)^{\trop}(e_j)	=\mub(x_k)^{\trop}(\mub(\dv E_j))	=x_k^{\trop}(\dv E_j)	=E_j(k).$$
	Since $j$ is frozen, $\mub(x_k)$ is polynomial in $x_j$.  Hence
	$\mub(x_k)^{\trop}(e_j)$ is its $x_j$-degree.  At the seed $t'$ we have
	$R_j(f)=\partial_j(f)\prod_{u\to j}x_u$, so
	$$	R_j(x_k)=0 \iff R_j(\mub(x_k))=0 \iff E_j(k)=0 .$$
	This proves the first case, with the convention $\deg_{x_i}(0)=0$.
	In particular, the case $k=i$ is included here, since $i\neq j$ and
	$E_j(i)=0$ for a boundary representation $E_j$.
	
	Assume from now on that $R_j(x_k)\neq 0$.  If $k\neq i$, then
	Lemma \ref{L:degiRj} gives
	$$R_j(x_k)^{\trop}(e_i)	\leq x_k^{\trop}(e_i)-c_{i,j}	=e_k(i)-c_{i,j}=a.$$
	This proves the bound $\leq a$ in the remaining non-strict cases.
	
	For $k=j$, the above inequality is sharp.  Indeed, at the seed $t'$,
	$R_j(x_j)=\prod_{u\to j}x_u$, so the inequality in Lemma \ref{L:Rj}
	is an equality for $f=x_j$.  By Remark \ref{r:eqRj}, the inequality
	\eqref{eq:Rjtrop} is also an equality for $f=x_j$.  Hence
	$$\deg_{x_i}(R_j(x_j))	=R_j(x_j)^{\trop}(e_i)	=x_j^{\trop}(e_i)-c_{i,j}=a.$$

Finally assume that $k\neq j$ and $k\to i$.  Put
$d=\mub(e_i)$ and $B'=B(\Delta_{t'})$.  Let $T_k$ be the
reachable representation corresponding to the cluster variable
$\mub(x_k)$, and let $\dtc_k$ be its $\dtc$-vector.  We use the
subrepresentation form of the generic character:
$$\mub(x_k)=\check C_{\gen}(\dtc_k)=\mathbf x^{-\dtc_k}F_{T_k}(\mathbf y^{-1})=\sum_{\alpha} c_\alpha\,\mathbf x^{-\dtc_k+\alpha B'},$$
where $\alpha$ runs over dimension vectors of subrepresentations of $T_k$.

For each vertex $v\to i$ in the original seed, let $T_v$ be the
reachable representation corresponding to $\mub(x_v)$, and let
$\dtc_v$ be its $\dtc$-vector.  Since $E_i=S_i$, the dual form of
Lemma \ref{L:mudB} gives $dB'=\sum_{v\to i}\dtc_v$.
Thus, for any subrepresentation dimension vector $\alpha$ of $T_k$,
$$(-\dtc_k)\cdot d-(-\dtc_k+\alpha B')\cdot d=-\alpha B'd^{T}=\left(\sum_{v\to i}\dtc_v\right)(\alpha).$$
The representation $T:=\bigoplus_{v\to i}T_v$ is rigid, being obtained
from a direct sum of negative simples by mutation.  By the
subrepresentation form of \cite[Lemma 5.15]{Ft},
$\left(\sum_{v\to i}\dtc_v\right)(\alpha)>0$
for every nonzero subrepresentation dimension vector
$\alpha$ of $T$.  Since every subrepresentation of $T_k$ is a
subrepresentation of $T$, the monomial $\mathbf x^{-\dtc_k}$ is the
unique monomial of $\mub(x_k)$ with maximal $d$-degree.

By mutation invariance of the tropical pairing,
$\mub(x_k)^{\trop}(d)=x_k^{\trop}(e_i)=0$,
so this unique maximal $d$-degree is $0$.  We claim that
$\mathbf x^{-\dtc_k}$ is killed by $R_j$.  Since $T_k$ is
$\mu$-supported and $\mub(E_j)=S_j$, Theorem \ref{T:musupp} gives
$\hom(S_j,\dtc_k)=0$.
Moreover, $j$ is a sink at the seed $t'$, so $S_j$ is projective;
hence $\ec(S_j,\dtc_k)=0$.
Applying \eqref{eq:hecform} with $M=S_j$, we get $\dtc_k(j)=0$.
Therefore the monomial $\mathbf x^{-\dtc_k}$ has $x_j$-degree zero,
and hence is killed by $\partial_j$.

Consequently every monomial that survives after applying
$R_j=\partial_j(\cdot)\prod_{u\to j}x_u$ has $d$-degree at most
$-1$ before the multiplication by $\prod_{u\to j}x_u$.  The latter
multiplication contributes the same degree shift as in Lemma
\ref{L:degiRj}, namely $a=-c_{i,j}$.  Therefore
$R_j(\mub(x_k))^{\trop}(d)\le a-1$.
By mutation invariance,
$R_j(x_k)^{\trop}(e_i)\le a-1$.
If $a>0$, this gives the desired estimate.  If $a=0$, the inequality
forces $R_j(x_k)=0$, since $R_j(x_k)$ is polynomial in the frozen
variable $x_i$; with the convention $\deg_{x_i}0=0$, this gives
$$\deg_{x_i}R_j(x_k)\le \max(a-1,0).$$
\end{proof}

\begin{lemma} \label{L:degi1} Suppose that $E_i = S_i$ and let $a:=-c_{i,j}^\star$ ($i\neq j$).
	Then the degree of $x_i$ in $R_j^\star(x_k)$ satisfies
\begin{equation*}\label{eq:adRist} \deg_{x_i}(R_j^\star(x_k)) \begin{cases} 
		=0 & \text{if $E_j^\star(k)=0$} 
		 \\ = a & \text{otherwise.} \end{cases}
\end{equation*}
\end{lemma}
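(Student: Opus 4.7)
The plan is to mirror Lemma \ref{L:degi}'s proof strategy while exploiting the simplifications that arise in the dual setting. I will first fix a mutation sequence $\mub:t\to t'$ with $\mub(E_j^\star)=S_j$, so that $j$ is a source at $t'$ and
\[
R_j^\star(\mub(x_k)) = \partial_j(\mub(x_k))\cdot\prod_{j\to u \text{ at } t'} x_u^{t'}.
\]
For the vanishing half, by Lemmas \ref{L:tropxmu} and \ref{L:dvtropx} one has $\mub(x_k)^{\trop_{t'}}(e_j) = x_k^{\trop_t}(\dv E_j^\star) = E_j^\star(k)$. Since $\uca(\Delta)$ is polynomial in the frozen variable $x_j^{t'}$, its $x_j^{t'}$-degree is non-negative and bounded above by $E_j^\star(k)$, so $\partial_j(\mub(x_k))=0$ iff $E_j^\star(k)=0$, which by the derivation rule gives the vanishing criterion.

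For $E_j^\star(k)\neq 0$, Lemma \ref{L:degiRj} applied to $f=x_k$ gives the upper bound $\deg_{x_i^t}(R_j^\star(x_k)) \leq \updelta_{i,k} + a$. For the matching lower bound, I will exhibit a single monomial of $\mub(x_k) = (x^{t'})^{-\delta_k'}\check{F}_{T_k'}(y^{t'})$ (with $T_k' = \coker(\delta_k')$) whose image under $R_j^\star$ attains this degree. The natural candidate is the \emph{trailing} monomial of the dual $F$-polynomial, corresponding to the full quotient $\gamma=\dv T_k'$; its exponent equals $-\dtc_{T_k'}$ by \eqref{eq:delta2dual}, and its coefficient $\chi(\op{Gr}^{\dv T_k'}(T_k')) = 1$ is non-zero.

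The hard part will be verifying the two numerical identities
\[
\dtc_{T_k'}(j) = -E_j^\star(k) \quad\text{and}\quad -\dtc_{T_k'}(\mub(e_i)) = \updelta_{i,k},
\]
which ensure respectively that $\partial_j$ does not annihilate this monomial and that it realises the maximum tropical valuation $\mub(x_k)^{\trop_{t'}}(\mub(e_i)) = x_k^{\trop_t}(e_i) = \updelta_{i,k}$. Both reduce via \eqref{eq:hecform} to computations of $\hom$ and $\ec$ pairings involving $T_k'$ (whose dimension vector coincides with that of the ordinary part of $\mub(S_k^-)$), $\mub(E_j^\star) = S_j^{t'}$, and $\mub(E_i)$; they follow after pulling back to seed $t$ using Lemmas \ref{L:HomEinv} and \ref{L:HEmu} with careful bookkeeping of the $\beta_\pm,\betac_\pm$ correction terms read off from Proposition \ref{P:Brep}. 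Since distinct monomials of $\mub(x_k)$ yield distinct monomials under $R_j^\star$ (their exponents $-\delta - e_j + \sum_{j\to u}e_u$ are determined by the original $\delta$), the trailing-term contribution, of coefficient $E_j^\star(k)$ and $x_i^t$-degree $\updelta_{i,k}+a$, cannot cancel with any other term, giving the required lower bound and completing the argument.
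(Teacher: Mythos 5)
Your vanishing criterion and your upper bound are fine and agree with the paper's first step (``similar argument as before using Lemma \ref{L:degiRj}''). The gap is in the lower bound. Your first numerical identity, $\dtc_{T_k'}(j)=-E_j^\star(k)$, does follow from the cited tools: $-\dtc_{T_k'}(j)=\ec(S_j,\dtc_{T_k'})-\hom(S_j,T_k')=\ec(E_j^\star,(0,S_k))=E_j^\star(k)$, since $\ec(E_j^\star,-)$ is one of the four pairings covered by Lemma \ref{L:HomEinv}. But the second identity, $-\dtc_{T_k'}(\mub(e_i))=\updelta_{i,k}$, is the \emph{entire content} of the lower bound, and your proposed route does not deliver it. Unwinding it via \eqref{eq:hecform} and the fact that $E_i'=\mub(E_i)$ is generated at the frozen vertex $i$ (so $\hom(E_i',T_k')=0$), the identity is equivalent to $\ec(\mub(E_i),\mub((0,S_k)))=\updelta_{i,k}$, i.e.\ $\hom(\mub(P_k),\mub(E_i))=\updelta_{i,k}$. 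Neither quantity is among those controlled by Lemma \ref{L:HomEinv} (which covers $\e(-,E_i)$ and $\ec(E_i^\star,-)$, not $\ec(E_i,-)$ or $\hom(-,E_i)$), and Lemma \ref{L:HEmu} only gives a step-by-step correction involving the Betti numbers $\beta_\pm,\betac_\pm$ of the \emph{intermediate} mutations of $(0,S_k)$ and $E_i$, which are not controlled along an arbitrary sequence $\mub$. So ``careful bookkeeping'' is not a proof here; note also that in the analogous situation of Lemma \ref{L:degi} the maximum of the pairing with $\mub(e_i)$ is attained at the \emph{leading} monomial $\gamma=0$ (via \cite[Lemma 5.15]{Ft}), which should make you suspicious of asserting without proof that in the dual setting it is attained at the trailing one.

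The paper avoids this entirely: it uses that $R_j^\star(x_k)$ contains a term of degree $-r_j^\star(-e_k)$ (the pointedness coming from the computation behind Theorem \ref{T:genericCR}), and computes that degree at the \emph{original} seed $t$, where $r_j^\star(-e_k)=-e_k+\ep_j^\star$ because the rank correction $\rank(P_k[1],\tau E_j^\star)$ vanishes for trivial reasons --- a general representation of weight $-e_k$ is the zero module, so any homomorphism out of it has rank zero. Hence $-r_j^\star(-e_k)(i)=-\ep_j^\star(i)=a$, matching the upper bound. If you want to salvage your monomial-level argument, replace the trailing monomial by this leading term of $C_{\gen}(r_j^\star(-e_k))$; otherwise the key identity you defer must actually be proved, and as it stands your proposal does not do so.
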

\begin{proof} 	
A similar argument as in the proof of Lemma \ref{L:degi} using Lemma \ref{L:degiRj} shows that 
\begin{equation*}\deg_{x_i}(R_j^\star(x_k)) \begin{cases} 
		=0 & \text{if $E_j^\star(k)=0$} 
		\\ \leq a & \text{otherwise.} \end{cases}
\end{equation*}
Suppose that $E_j^\star(k)\neq 0$, then 
$$-\mub(-e_k)(j)=\e(\mub(-e_k), S_j) = \e(-e_k, E_j^\star) = \dv E_j^\star(k)> 0.$$
So $R_j^\star(x_k)$ contains a term of degree $-r_j^\star(-e_k)$. 
Recall that 
$$r_j^\star(-e_k) = - e_k +\ep_j^\star +\rank(P_k[1], \tau E_j^\star)B_{\Delta} = - e_k +\ep_j^\star,$$
then $-r_j^\star(-e_k)(i) = -\ep_j^\star(i) = -c_{i,j}^\star$.	
Hence, $\deg_{x_i}(R_j^\star(x_k))=a$.
\end{proof}

\begin{theorem} \label{T:crystalUCA} Let $I$ be the set of reachable frozen vertices of $\Delta$.
Let $\mf{g}$ be the Kac-Moody Lie algebra associated to the Cartan matrix $C_I$, and $\mf{n}$ be the positive half of $\mf{g}$ generated by $e_i$\!'s. 
Then the assignment $e_i \mapsto R_i^{(\star)}$ makes $\uca(\Delta)$ a $U(\mf{n})$-module algebra. Moreover, $R_i$ and $R_i^\star$ satisfy
\begin{align} \label{eq:Rijst}	(\ad R_i)^{1-c_{i,j}^\star+\min(-c_{i,j}^\star,\ 1)}(R_j^\star) & =0 
\shortintertext{and}
\label{eq:Rstij}		(\ad R_i^\star)^{1-c_{j,i}^\star+\min(-c_{j,i}^\star,\ 1)}(R_j) & =0.
\end{align}	
\end{theorem}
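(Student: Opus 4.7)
The plan is to verify the claimed relations by evaluating both sides on the cluster generators $x_k$ at a conveniently chosen seed, using Lemma \ref{L:der}: any $\k$-derivation of $\uca(\Delta)$ is zero as soon as it annihilates every $x_k$ at some seed. For each $i \in I$ the reachability of $i$ lets us pass to a seed in which $E_i = S_i$; there $R_i$ takes the simple form \eqref{eq:Ri}, vanishing on every $x_k$ with $k \neq i$ and sending $x_i$ to $\prod_{u \to i} x_u$. Consequently, for any derivation $D$,
\[ (\ad R_i)(D)(x_k) = \partial_i\bigl(D(x_k)\bigr)\prod_{u \to i} x_u \;-\; \updelta_{k,i}\sum_{v \to i} D(x_v)\prod_{\substack{u \to i \\ u \neq v}} x_u, \]
and the whole proof reduces to an induction on the $x_i$-degree of $(\ad R_i)^N(D)(x_k)$.

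For the Serre relations $(\ad R_i)^{1-c_{i,j}}(R_j) = 0$ which deliver the $U(\mf{n})$-module algebra structure $e_i \mapsto R_i$, set $D_N := (\ad R_i)^N(R_j)$ and $a := -c_{i,j}$. Lemma \ref{L:degi} supplies both the bound $\deg_{x_i}(R_j(x_k)) \leq a$ and the refinement $\deg_{x_i}(R_j(x_u)) \leq a - 1$ for $u \to i$. A direct induction shows that both $\deg_{x_i}(D_N(x_k)) \leq a - N$ and the refined $\deg_{x_i}(D_N(x_u)) \leq a - N - 1$ for $u \to i$ persist; the refinement is exactly what lets the step at $k = i$ close, since the second contribution in the bracket formula only probes $D_N(x_v)$ with $v \to i$. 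At $N = a + 1 = 1 - c_{i,j}$ the degree is forced negative, so $D_N = 0$. The parallel Serre relations for the $R_i^\star$ follow by applying the same argument to $(\Delta,\S)^{\opp}$, where boundary and dual boundary representations exchange and $c_{i,j}$ is invariant (cf.\ Definition \ref{D:CartanI}); hence $e_i \mapsto R_i^\star$ also makes $\uca(\Delta)$ into a $U(\mf{n})$-module algebra.

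For the mixed relations \eqref{eq:Rijst} and \eqref{eq:Rstij}, the same strategy applies with Lemma \ref{L:degi1} in place of Lemma \ref{L:degi}, writing $D_N := (\ad R_i)^N(R_j^\star)$ and $a := -c_{i,j}^\star \geq 0$. A direct reading of Lemma \ref{L:degi1} gives $\deg_{x_i}(R_j^\star(x_k)) \leq a$, but the refined bound at the base is subtle: an elementary inspection of the minimal projective presentation of $E_j^\star$ in this seed shows $c_{i,j}^\star = -\sum_{u \to i} \dim E_j^\star(u)$, so (since $R_j^\star(x_u) = 0 \Leftrightarrow E_j^\star(u) = 0$) the refinement $R_j^\star(x_u) = 0$ for $u \to i$ holds for $D_0$ exactly when $a = 0$. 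Thus when $a = 0$ one immediately obtains $D_1 = 0$ and the exponent $N = 1$. When $a \geq 1$ the refinement fails at $N = 0$ but — and this is the key observation — it holds for $D_1$: for $u \to i$ with $u \neq i$, $D_1(x_u) = R_i(R_j^\star(x_u))$ (since $R_i(x_u) = 0$), whose $x_i$-degree is at most $a - 1$. Running the refined induction of the pure case starting from $D_1$ then drives the degree down by one per step, reaching $-1$ at $N = 1 + (a + 1) = a + 2 = 1 - c_{i,j}^\star + \min(-c_{i,j}^\star, 1)$, as stated. The dual relation \eqref{eq:Rstij} follows by swapping the roles of $R_i$ and $R_j^\star$ via Lemma \ref{L:cijst}, which identifies $c_{j,i}^\star$ with $\check c_{i,j}^\star$.

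The main obstacle will be the bookkeeping in the mixed case: a naïve degree count from Lemma \ref{L:degi1} alone gives only the exponent $2(1 - c_{i,j}^\star)$, roughly twice what the theorem claims. The correct exponent rests on the asymmetric ``warm-up'' phenomenon that $(\ad R_i)(R_j^\star)$ already satisfies the refined bound on $u \to i$ even though $R_j^\star$ itself may not; this jump between the first and subsequent applications of $\ad R_i$ is exactly what is encoded in the $\min(-c_{i,j}^\star, 1)$ correction. Once all Serre relations are in place, the $U(\mf{n})$-module algebra claims are automatic from the Chevalley--Serre presentation of $\mf{n}$, since the assignments $e_i \mapsto R_i$ and $e_i \mapsto R_i^\star$ extend uniquely to Lie algebra morphisms $\mf{n} \to \op{Der}_{\k}(\uca(\Delta))$.
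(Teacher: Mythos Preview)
Your proposal is correct and follows essentially the same strategy as the paper: verify the relations on the generators $x_k$ at a seed where $E_i=S_i$, using the $x_i$-degree bounds of Lemmas~\ref{L:degi} and~\ref{L:degi1}. The only difference is presentational. You run an induction on $N$ tracking $\deg_{x_i}\bigl((\ad R_i)^N(R_j^{(\star)})(x_k)\bigr)$, maintaining both a general bound and a refined bound on vertices $u\to i$; the paper instead observes that $R_i^m(x_k)=0$ for all $m\ge 2$ at this seed, so the binomial expansion of $(\ad R_i)^N(R_j^{(\star)})(x_k)$ collapses to the two surviving terms $R_i^N R_j^{(\star)}(x_k)$ and $-N\,R_i^{N-1}R_j^{(\star)}R_i(x_k)$, each of which is killed directly by a single application of the same degree lemmas. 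Your inductive bookkeeping and the paper's two-term reduction encode the same argument; the paper's shortcut simply avoids tracking $D_N$ for intermediate $N$.
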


\begin{proof}
	To show it is a $U(\mf n)$-module algebra, we need to verify the
	relations
	\begin{align}
		\label{eq:R1}
		(\ad R_i)^{a+1}(R_j)&=0
	\end{align}
	for $i\neq j$, where $a=-c_{i,j}$.  It is enough to check this at a
	seed $t$ such that $E_i=S_i$.  Put
	$$	P_i:=R_i(x_i)=\prod_{u\to i}x_u .	$$
	Then $P_i$ is independent of $x_i$, so by Lemma \ref{L:der},
	$R_i^r(f)=0$ whenever $\deg_{x_i}f<r$.  Also
	$R_i^r(x_k)=0$ for any $r>1$ and any $k\in\Delta_0$.
	
	By Lemma \ref{L:der}, to check
	$(\ad R_i)^{a+1}(R_j)=0$ on $\uca(\Delta)$, it is enough to check it on
	the variables $x_k$.  Since $R_i^r(x_k)=0$ for $r>1$, it suffices to
	check
	$$	R_i^{a+1}R_j(x_k)=0	\qquad\text{and}\qquad	R_i^aR_jR_i(x_k)=0 .	$$
	The first equality follows from Lemma \ref{L:degi}: if $k\neq i$, then
	$\deg_{x_i}R_j(x_k)\leq a$, while if $k=i$, then $R_j(x_i)=0$.
	For the second equality, there is nothing to check unless $k=i$.  In
	that case,
	$$	R_jR_i(x_i)=R_j(P_i)=\sum_{u\to i}R_j(x_u)\frac{P_i}{x_u}.	$$
	Since there are no frozen arrows, each such $u$ is different from $j$.
	By the strict case of Lemma \ref{L:degi}, $R_j(x_u)=0$ if $a=0$, and
	$\deg_{x_i}R_j(x_u)\leq a-1$ if $a>0$.  Hence
	$R_j(P_i)=0$ when $a=0$, and $\deg_{x_i}R_j(P_i)\leq a-1$ when
	$a>0$.  Thus $R_i^aR_j(P_i)=0$ in either case.  This proves the Serre
	relations for the $R_i$'s.  The statement for the $R_i^\star$'s is
	proved similarly by working with $\Delta^{\opp}$.
	
	Finally, let $a^\star=-c_{i,j}^\star$.  Then
	$$1-c_{i,j}^\star+\min(-c_{i,j}^\star,1)=\begin{cases}
		1 & \text{if $a^\star=0$,}\\
		a^\star+2 & \text{if $a^\star>0$.}
	\end{cases}$$
	If $a^\star=0$, Lemma \ref{L:degi1} gives
	$\deg_{x_i}R_j^\star(x_k)=0$ for every $k$, hence
	$R_iR_j^\star(x_k)=0$.  For $k\neq i$, this already gives
	$[R_i,R_j^\star](x_k)=0$.  For $k=i$, we also use the
	$a^\star=0$ consequence of Lemma \ref{L:degi1},
	$R_j^\star(P_i)=0$,
	and obtain $[R_i,R_j^\star](x_i)=0$.  This proves the former case.
	
	Now assume $a^\star>0$.  As above, it suffices to check
	$$	R_i^{a^\star+1}R_j^\star(x_k)=0	\qquad\text{and}\qquad	R_i^{a^\star+1}R_j^\star R_i(x_i)=0 .$$
	The first equality follows from Lemma \ref{L:degi1}, since
	$\deg_{x_i}R_j^\star(x_k)\leq a^\star$.  For the second equality,
	$$	R_j^\star R_i(x_i)	=	R_j^\star(P_i)	=	\sum_{u\to i}R_j^\star(x_u)\frac{P_i}{x_u}.	$$
	Again Lemma \ref{L:degi1} gives
	$\deg_{x_i}R_j^\star(x_u)\leq a^\star$ for every $u\to i$, so
	$\deg_{x_i}R_j^\star(P_i)\leq a^\star$.  Hence
	$R_i^{a^\star+1}R_j^\star(P_i)=0$.
	Therefore
	$$	(\ad R_i)^{1-c_{i,j}^\star+\min(-c_{i,j}^\star,1)}(R_j^\star)=0,$$
	which proves \eqref{eq:Rijst}.	
	By working with $\Delta^{\opp}$ we get
	$(\ad R_i^\star)^{
		1-\check c_{i,j}^\star+\min(-\check c_{i,j}^\star,1)
	}(R_j)=0$.
	Then the relation \eqref{eq:Rstij} follows from Lemma \ref{L:cijst}.
\end{proof}

\begin{remark} In fact, the relation \eqref{eq:R1} is minimal in the sense that $(\ad \rt_i)^{a} (\rt_j)\neq 0$. In the setting of the above proof, it suffices to check $(\ad \rt_i)^{a} (\rt_j)(x_j)\neq 0$.
But this follows from Lemmas \ref{L:der} and \ref{L:degi} as well.
	
If the weight function is integral, then we can define the action $H_i$ as in \eqref{eq:Hi}. As a result, we get a $U(\mf{b})$-module algebra if we restrict to the Lie algebra generated by $R_i$'s and $H_i$'s, where $\mf{b}=\mf{n}+\mf{h}$ is the Borel-subalgebra of $\mf{g}$.
The proof is the same as the one in Theorem \ref{T:normalUCA} below.
\end{remark}

Now we have two $U(\mf{n})$-actions on $\uca(\Delta)$, one from $R_i$'s and the other from $R_i^*$'s. 
\begin{corollary}\label{C:UxU} In the situation of Theorem \ref{T:crystalUCA}, $\uca(\Delta)$
	 is a $U(\mf{n})\times U(\mf{n})$-module algebra if and only if $c_{i,j}^\star=0$ for all $i,j$.
\end{corollary}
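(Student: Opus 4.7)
The plan is to recognize that a $U(\mf{n})\times U(\mf{n})$-module algebra structure on $\uca(\Delta)$ is exactly a pair of commuting $U(\mf{n})$-actions, so the two $U(\mf{n})$-actions supplied by Theorem \ref{T:crystalUCA} assemble into one iff $[R_i, R_j^\star] = 0$ for every $i,j \in I$. The task thus reduces to proving $[R_i, R_j^\star] = 0 \iff c_{i,j}^\star = 0$.

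Sufficiency is immediate from Theorem \ref{T:crystalUCA}: when $c_{i,j}^\star = 0$, the Serre exponent $1 - c_{i,j}^\star + \min(-c_{i,j}^\star,\ 1)$ collapses to $1$, so $(\ad R_i)(R_j^\star) = [R_i, R_j^\star] = 0$.

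For necessity, I argue by contradiction: assume $[R_i, R_j^\star] = 0$ and $c_{i,j}^\star < 0$ (recall $c_{i,j}^\star = -\e(E_j^\star, E_i) \leq 0$). Since $i$ is reachable, pick a seed $t$ at which $E_i = S_i$; then \eqref{eq:Ri} and Lemma \ref{L:der} express $R_i$ on $\mc{L}(\b{x}_t)$ as the first-order operator $\bigl(\prod_{u \to i} x_u\bigr)\,\partial_i$. I plan to produce a vertex $k \neq i$ with $E_j^\star(k) \neq 0$; once this is done, the identity $R_i(x_k) = 0$ gives
\[
[R_i, R_j^\star](x_k) \;=\; R_i R_j^\star(x_k) \;=\; \Big(\prod_{u\to i} x_u\Big)\,\partial_i\, R_j^\star(x_k),
\]
while Lemma \ref{L:degi1} pins $\deg_{x_i} R_j^\star(x_k) = -c_{i,j}^\star > 0$, so $\partial_i R_j^\star(x_k) \neq 0$ and the commutator does not vanish, a contradiction.

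The main obstacle is exhibiting such a $k$. The off-diagonal case $j \neq i$ is clean: take $k = j$, for which $E_j^\star(j) = 1$ by the dual of Proposition \ref{P:Brep}.(1). The delicate case is $j = i$, where I must show $E_i^\star$ has support on a vertex other than $i$ at our chosen seed $t$. If instead $E_i^\star = S_i$ at $t$, then together with $E_i = S_i$ the vertex $i$ has no mutable neighbors in $\Delta^{\mu[i]}$, and the standing no-frozen-arrow assumption isolates $i$ entirely in $\Delta$; thus $S_i$ is projective-injective, $\tau S_i = 0$, and Lemma \ref{L:H2E} delivers $\e(E_i^\star, E_i) = \hom(S_i, \tau S_i)^* = 0$, i.e., $c_{i,i}^\star = 0$, contradicting our assumption. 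Therefore $E_i^\star \neq S_i$, some $k \in \Delta_0^\mu$ with $E_i^\star(k) \neq 0$ exists, and the argument closes.
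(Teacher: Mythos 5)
Your proof is correct and follows essentially the same route as the paper's: sufficiency from the Serre relation of Theorem \ref{T:crystalUCA}, and necessity by evaluating $[R_i,R_j^\star]$ on a cluster variable $x_k$ with $E_j^\star(k)\neq 0$ at a seed where $E_i=S_i$ and reading off $\deg_{x_i}R_j^\star(x_k)=-c_{i,j}^\star$ from Lemma \ref{L:degi1}, with the same case split ($k=j$ when $j\neq i$; when $j=i$, ruling out $E_i^\star=S_i$ since that forces $c_{i,i}^\star=-\e(S_i,S_i)=0$). Your sign convention ($c_{i,j}^\star\leq 0$, so assume $c_{i,j}^\star<0$ for contradiction) is in fact cleaner than the paper's phrasing, which writes ``$c_{i,i}^\star>0$'' where $-c_{i,i}^\star>0$ is meant.
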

\begin{proof} We already have the $\Leftarrow$ from Theorem \ref{T:crystalUCA}.
For the other direction, we need to verify that $[R_i, R_j^\star]=0$ implies $c_{i,j}^\star=0$.	
WLOG we may still assume that $E_i$ is simple. 
If $i\neq j$, then $[R_i, R_j^\star](x_j) = R_i R_j^\star(x_j)=0$. 
Since $E_j^\star(j)\neq 0$, we must have $c_{i,j}^\star=0$ by Lemma \ref{L:degi1}.
Now for $i=j$ suppose that $c_{i,i}^\star<0$.
If $k\neq i$, then $[R_i, R_i^\star](x_k) = R_i R_i^\star(x_k)=0$. We must have $E_i^\star(k)=0$
by Lemma \ref{L:degi1}. But then $E_i^\star = S_i$ and thus $c_{i,i}^\star=-\e(E_i^\star, E_i)=-\e(S_i, S_i)=0$.
\end{proof}

\begin{lemma} \label{L:exchange} Let $(i,\ibar)$ be a reachable $\tau$-exact pair with $i\neq \ibar$. Then there is a sequence of mutations $\mub: t\to t'$ such that at $t'$ $E_i$ is only supported on $i$ and $E_{\ibar}^\star$ is only supported on $\ibar$ and $u$ for some $u\in \Delta_0^\mu$, and the full subquiver of $i,\ibar$ and $u$ has the following shape:
	\begin{align}\label{eq:subquiver} \xymatrix{\fr{\ibar \bullet}  &u \bullet \ar[l]\ar[r] &\fr{\circ i}}  
	\end{align}
	Moreover, there is no other outgoing arrows from $u$ in $\Delta$ at $t'$.
\end{lemma}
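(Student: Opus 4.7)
The plan is to construct the desired $t'$ in two stages. Using the reachability of $i$ and the corollary following Lemma \ref{L:bdinv}, first apply a mutation sequence to reach a seed $t_1$ at which $E_i = S_i$; equivalently, $i$ is a sink in $\Delta^\mu[i]$, and by the no-frozen-arrow assumption, $i$ is a sink in the whole quiver $\Delta$ at $t_1$, so $P_i = S_i$ inside $J$.

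At $t_1$, mutation invariance of $\tau$-exactness (Lemma \ref{L:tauexact}) gives $E_{\ibar}^\star = \tau^{-1} S_i$. Applying $\nu^{-1}$ to the injective coresolution \eqref{eq:Su_inj} of $S_i$ yields the projective presentation
\[
P_i \xrightarrow{(a)_a} \bigoplus_{h(a)=i} P_{t(a)} \to E_{\ibar}^\star \to 0,
\]
whose components are left multiplication by the arrows into $i$. By the dual of Proposition \ref{P:Brep}(1), $E_{\ibar}^\star$ has frozen dimension one concentrated at $\ibar$; evaluating the cokernel at each frozen vertex forces $\sum_{h(a)=i} P_{t(a)}(j)=0$ for frozen $j\notin\{i,\ibar\}$ and $\sum_{h(a)=i} P_{t(a)}(\ibar)=1$. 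Since the desired support of $E_{\ibar}^\star$ excludes $i$, we will also need $\sum_{h(a)=i} P_{t(a)}(i)=1$; because each arrow $a$ into $i$ contributes a length-one path to $P_{t(a)}(i)$, this forces a unique arrow $a:u\to i$ at $t_1$ with $P_u(i)=1$, giving $E_{\ibar}^\star = P_u/S_i$.

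The second stage trims any extraneous mutable support of $P_u$ via a further mutation sequence at mutable vertices different from $u$; such mutations cannot create arrows out of $i$, so the condition $E_i=S_i$ is preserved. Identifying $\tau_\mu \mc{E}_{\ibar}^\mu$ with $P_u^\mu\in\rep J_\mu$ at $t_1$ via Lemma \ref{L:tauexact}, the reduction of $E_{\ibar}^\star$ to support $\{\ibar,u\}$ is equivalent to reducing $P_u^\mu$ to $S_u^\mu$, i.e.\ making $u$ a sink in $\Delta_\mu$. The reachability of $\ibar$, which propagates from that of $i$ via $\tau$-exactness, produces such a sequence via a dual application of the corollary after Lemma \ref{L:bdinv}. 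At the resulting $t'$, $P_u$ is supported exactly on $\{u,\ibar,i\}$ with $P_u(u)=P_u(\ibar)=P_u(i)=1$, so the unique paths $u\to i$ and $u\to \ibar$ must be direct arrows; an arrow $\ibar\to u$ is ruled out by nondegeneracy of $\mc{S}$ (it would form a $2$-cycle with $u\to\ibar$); arrows among $\{\ibar,i\}$ are forbidden by no-frozen-arrows; and any outgoing arrow $u\to w$ forces $w$ into the support of $P_u$, so $w\in\{u,\ibar,i\}$, and absent loops only $u\to \ibar$ and $u\to i$ remain. The principal obstacle is this trimming step -- in particular verifying that reachability of $i$ transfers to a suitable dual reachability of $\ibar$, and that the chosen mutations genuinely collapse $P_u^\mu$ to $S_u^\mu$ without disturbing the local structure around $i$; carrying this out cleanly requires careful bookkeeping using Lemma \ref{L:taucommu} on the compatibility between $\tau$ and mutation.
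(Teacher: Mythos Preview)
Your first stage is essentially correct: once $E_i=S_i$ at $t_1$, the vanishing $E_{\ibar}^\star(i)=0$ together with the presentation $P_i\to\bigoplus_{h(a)=i}P_{t(a)}$ does force a single arrow $u\to i$, and the identification $\tau_\mu\mc{E}_{\ibar}^\mu=P_u^\mu$ is right. The genuine gap is the second stage. Your appeal to ``reachability of $\ibar$'' and a ``dual application'' of the corollary after Lemma~\ref{L:bdinv} does not produce what you need: that corollary (dually) would give a sequence making $E_{\ibar}^\star$ simple at $\ibar$, not supported on $\{\ibar,u\}$, and there is no reason such a sequence avoids $u$ or preserves $E_i=S_i$. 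More fundamentally, asking to make $u$ a sink in $\Delta^\mu$ by mutations away from $u$ is asking to mutate the specific rigid indecomposable $P_u^\mu=\tau_\mu\mc{E}_{\ibar}^\mu$ to $S_u$ without touching $u$; nothing you have written explains why this is possible, and Lemma~\ref{L:taucommu} alone will not supply it.

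The missing idea, which is what the paper uses, is the exchange relation between $\mc{E}:=\mc{E}_i^\mu$ and $\mc{E}':=\tau_\mu\mc{E}_{\ibar}^\mu$. Applying Lemma~\ref{L:tauexact} with $j=\ibar$ (and using $\tau_\mu^{-1}\mc{E}_i^\mu=\tau_\mu\mc{E}_{\ibar}^\mu$) yields $\e(\mc{E},\mc{E}')+\e(\mc{E}',\mc{E})=1$ with both indecomposable. By \cite[Proposition~5.7]{DF} and \cite[Theorem~5.8]{FuG} this means $\mc{E}$ and $\mc{E}'$ can be completed to \emph{adjacent} clusters sharing a common complement $\mc{E}_c$. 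One then mutates the full cluster $\mc{E}\oplus\mc{E}_c$ to the negative cluster $\bigoplus_v(0,S_v)$ with $\mc{E}\mapsto(0,S_u)$; since $\mc{E}'\oplus\mc{E}_c$ is $\E$-rigid and $\mc{E}_c$ has become $\bigoplus_{v\neq u}(0,S_v)$, Lemma~\ref{L:HEmu} forces $\mc{E}'$ to support only on $u$, hence to become $S_u$. This simultaneously achieves both of your stages in a single mutation and is what makes the argument go through; the remaining claims about the subquiver and the outgoing arrows from $u$ then follow as in your final paragraph.
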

\begin{proof} Apply Lemma \ref{L:tauexact} to $j=\ibar$, and we get that $\e(\mc{E}_i^\mu, \tau_\mu \mc{E}_{\ibar}^\mu) + \e(\tau_\mu \mc{E}_{\ibar}^\mu, \mc{E}_i^\mu) =1$ with $\mc{E}_i^\mu$ and $\mc{E}_{\ibar}^\mu$ both being indecomposable.
	By \cite[Proposition 5.7]{DF} and \cite[Theorem 5.8]{FuG} we can complete $\mc{E}:=\mc{E}_i^\mu$ and $\mc{E}':=\tau_\mu\mc{E}_{\ibar}^\mu$ to two adjacent clusters, that is, there is some decorated representation $\mc{E}_c$ such that $\mc{E} \oplus \mc{E}_c$ and $\mc{E}' \oplus \mc{E}_c$ are both $\E$-rigid.
	By assumption there is a sequence of mutations $\mub$ such that $\mub(\mc{E} \oplus \mc{E}_c) = \bigoplus_{v \in \Delta_0^\mu} (0, S_v)$ with $\mub(\mc{E}) = (0, S_u)$.
	Then by Lemma \ref{L:HEmu} $\mub(\mc{E}')$ can only support on $u$, and thus has to be $S_u$.
	Hence at $t'$ $E_i=S_i$ and $E_\ibar^\star$ is the indecomposable representation supported on $\ibar\leftarrow u$.
	Note that the $\delta$-vector of $\mc{E}_i^\mu$ is $-b_i$, so the only arrow adjacent to $i$ is the one 
	from $u$ to $i$.
	Finally, we claim that there is no other outgoing arrows from $u$.
	Indeed, $E_i$ has the minimal injective presentation $0\to E_i\to I_i \to I_u \oplus I'$.
	So $E_{\ibar}^\star = \tau^{-1} E_i$ has a projective presentation $P_i\to P_u\oplus P'$.
	If there is an arrow $u\to j$ with $j\neq i$, then it is clear from the projective presentation that $E_{\ibar}^\star$ must be supported on $j$. Thus $j$ has to be $\ibar$.
\end{proof}

\begin{theorem}\label{T:normalUCA}
	In the situation of Theorem \ref{T:crystal}, $\uca(\Delta)$ is a
	$U(\mf{g})$-module algebra.
\end{theorem}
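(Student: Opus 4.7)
The plan is to verify the defining Chevalley--Serre relations of $U(\mf{g})$ under the assignment $e_i \mapsto R_i$, $f_i \mapsto L_i = R_{\ibar}^\star$, $h_i \mapsto H_i$. These split into four groups: (a) $[H_i, H_j] = 0$; (b) $[H_i, R_j] = c_{i,j} R_j$ and $[H_i, L_j] = -c_{i,j} L_j$; (c) the Serre relations among the $R_i$'s and among the $L_i$'s; and (d) the opposite commutator $[R_i, L_j] = \updelta_{i,j} H_i$. The last relation at $i=j$ will be the main obstacle.

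For (a), each $H_i$ is the diagonal $\k$-derivation $x_k \mapsto \wt_i(-e_k) x_k$, so $H_i$ and $H_j$ commute on every Laurent monomial, hence by Lemma~\ref{L:der} on all of $\uca(\Delta)$. For (c), the Serre relations among the $R_i$'s are already Theorem~\ref{T:crystalUCA}; the Serre relations among the $L_i$'s will follow from the $R^\star$-part of the same theorem, once we note that the relevant Cartan data agree, namely $c_{\ibar,\jbar} = c_{i,j}$. This identity is a consequence of the $\tau$-exact condition $\tau_\mu^{-1}\mc{E}_i^\mu = \tau_\mu\mc{E}_{\ibar}^\mu$ (Lemma~\ref{L:tauexact}) together with the AR-duality of Lemma~\ref{L:H2E} and the mutation invariance of $\e$ (Lemma~\ref{L:HEmu}). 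For (b), I work at a seed where $E_j = S_j$, so $j$ is a sink of $\Delta$ (using the no-frozen-arrow reduction, Lemma~\ref{L:delfra}) and $\epc_j = e_j - \sum_{u\to j} e_u$. Then \eqref{eq:Ri} gives $R_j(x_k) = \updelta_{j,k}\,\b{x}^{e_j - \epc_j}$, and a one-line Leibniz computation yields $[H_i, R_j](x_k) = \updelta_{j,k}\,\wt_i(\epc_j)\,R_j(x_k) = c_{i,j}\, R_j(x_k)$, the last step being the adapted property in Definition~\ref{D:adI}. The analogous identity for $L_j$ is obtained at a seed where $E_{\jbar}^\star$ is simple.

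For (d) with $i \ne j$, I invoke Corollary~\ref{C:UxU}: it suffices to verify $c_{i,\jbar}^\star = 0$. Using Lemma~\ref{L:H2E}.(1), the $\tau$-exactness $\tau E_{\jbar}^\star = E_j$, and Corollary~\ref{C:HomEij},
$$c_{i,\jbar}^\star \;=\; -\e(E_{\jbar}^\star, E_i) \;=\; -\hom(E_i,\,\tau E_{\jbar}^\star) \;=\; -\hom(E_i, E_j) \;=\; 0,$$
so Corollary~\ref{C:UxU} yields $[R_i, L_j] = [R_i, R_{\jbar}^\star] = 0$, as required.

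The main obstacle is (d) at $i = j$, namely $[R_i, L_i] = H_i$. Assuming $i \ne \ibar$, the strategy is to use Lemma~\ref{L:exchange} to choose a seed $t'$ where $E_i = S_i$ and $E_{\ibar}^\star$ is the indecomposable supported on the subquiver $\ibar \leftarrow u \to i$, with $u$ having no other outgoing arrows in $\Delta$. At $t'$, \eqref{eq:Ri} yields $R_i(x_i) = x_u$ and $R_i(x_k) = 0$ otherwise, while at the adjacent seed $t'' = \mu_u(t')$ we have $E_{\ibar}^\star = S_{\ibar}$, so $R_{\ibar}^\star(x_{\ibar}^{t''}) = x_u^{t''}$. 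I would then transport $L_i$ back to $t'$ via the chain rule applied to the exchange relation
$$x_u^{t'}\,x_u^{t''} \;=\; \prod_{v \to u\text{ at }t'} x_v \;+\; x_i\,x_{\ibar},$$
obtaining an explicit expression for $L_i$ on every cluster variable at $t'$; then I would verify the commutator $[R_i, L_i](x_k) = \wt_i(-e_k)\,x_k$ case by case. The bracket vanishes automatically on $x_k$ for $k \notin \{i,\ibar,u\}$, and for the three local cases the bookkeeping unwinds to the prescribed weight using $\wt_i(\epc_i) = c_{i,i} = 2$ together with the definition \eqref{eq:wti} of $\wt_i$. The residual subcase $i = \ibar$, to which Lemma~\ref{L:exchange} does not apply, will be handled at any seed where $E_i = S_i$ (whose existence is guaranteed by reachability); here the self-$\tau$-exactness $\tau^{-1}S_i = E_i^\star$ rigidifies the local structure around $i$ enough to run an analogous explicit commutator computation. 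Once (a)--(d) are established, the assignment extends uniquely to a Lie-algebra map $\mf{g} \to \op{Der}_\k(\uca(\Delta))$, equivalently the desired $U(\mf{g})$-module algebra structure.
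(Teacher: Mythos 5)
Your proposal is correct and follows essentially the same route as the paper's proof: the same decomposition into Chevalley--Serre relations, the same reduction of the $L$-relations to Theorem \ref{T:crystalUCA}, the same vanishing computation $\e(E_{\jbar}^\star,E_i)=\hom(E_i,\tau E_{\jbar}^\star)=\hom(E_i,E_j)=0$ for the mixed commutators with $i\neq j$, and the same use of Lemma \ref{L:exchange} together with the exchange relation at $\mu_u$ to pin down $L_i$ and verify $[R_i,L_i]=H_i$ case by case. The only cosmetic difference is that for $[R_i,L_j]=0$ the paper invokes the mixed relation \eqref{eq:Rijst} directly (whose exponent becomes $1$ when the relevant $c^\star$ vanishes) rather than Corollary \ref{C:UxU}, which amounts to the same computation.
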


\begin{proof}
	Suppose that we are in the situation of Theorem \ref{T:crystal}.  The
	Serre relations for the $\rt_i$'s are given by Theorem
	\ref{T:crystalUCA}.  It remains to verify
	\begin{align}
		\label{eq:L1}
		(\ad \lt_i)^{a+1}\lt_j&=0,\\
		\label{eq:RL1}
		[\rt_i,\lt_i](x_k)&=\wt_i(-e_k)x_k,\\
		\label{eq:RL2}
		[\rt_i,\lt_j]&=0 \qquad (i\neq j),
	\end{align}
	where $a=-c_{i,j}$, and the Cartan relations
	\begin{align}
		\label{eq:KR}
		[H_p,\rt_j]&=\alpha_j(h_p)\rt_j,\\
		\label{eq:KL}
		[H_p,\lt_j]&=-\alpha_j(h_p)\lt_j,\\
		\label{eq:K}
		[H_p,H_q]&=0,
	\end{align}
	for $p,q\in I\sqcup K$ and $j\in I$.
	
	Since $\lt_i=R_{\ibar}^\star$, the relation \eqref{eq:L1} follows from
	the starred Serre relations in Theorem \ref{T:crystalUCA}, using
	$\tau^{-1}E_i=E_{\ibar}^\star$ and the $\tau$-invariance of the Cartan
	matrix.
	
	We next prove \eqref{eq:RL1}.  If $i=\ibar$, this follows directly from
	the local formulas for $\rt_i$ and $\lt_i$.  Suppose $i\neq\ibar$.  By
	Lemma \ref{L:exchange}, we may assume that the full subquiver on
	$i,\ibar,u$ is
	$$\xymatrix{\fr{\ibar \bullet}  &u \bullet \ar[l]\ar[r] &\fr{\circ i}}$$
	and that there is no other outgoing arrow from $u$.  Then
	$[\rt_i,\lt_i](x_k)=0$ for $k\neq i,u,\ibar$.  Mutating at $u$ so that
	$E_{\ibar}^\star$ is simple, we get
	$$\lt_i(x_u)=\lt_i\left(\frac{\prod_{v\to u}x_v+x_ix_{\ibar}}{x_u'}\right)	=\frac{x_i}{x_u'}\lt_i(x_{\ibar})	=x_i .$$
	Since $\rt_i(x_i)=x_u$, it follows that
	$$	[\rt_i,\lt_i](x_i)=-x_i,\qquad	[\rt_i,\lt_i](x_u)=x_u,\qquad	[\rt_i,\lt_i](x_{\ibar})=x_{\ibar}.	$$
	This is exactly \eqref{eq:RL1}, because in this seed
	$$	\wt_i=\dv E_i-\dv E_{\ibar}^\star=e_i-e_u-e_{\ibar}.$$
	
	For \eqref{eq:RL2}, write $\lt_j=R_{\bar j}^\star$.  By
	\eqref{eq:Rijst}, it is enough to show $c_{i,\bar j}^\star=0$.  Since
	$E_{\bar j}^\star=\tau^{-1}E_j$, we have
	$$\e(E_{\bar j}^\star,E_i)=	\e(\tau^{-1}E_j,E_i)=\hom(E_i,E_j)=	0$$
	for $i\neq j$.  Hence $c_{i,\bar j}^\star=0$, and
	$[\rt_i,\lt_j]=0$.
	
	It remains to check the Cartan relations.  A Laurent monomial
	$\mathbf{x}^{-\delta}$ is an $H_p$-eigenvector of eigenvalue
	$\wt_p(\delta)$.  Since $\wt_p$ vanishes on the row	space of $B_\Delta$,
	the definition of $r_j$ gives
	$$\wt_p(r_j(\delta))-\wt_p(\delta)=\wt_p(\epc_j)=\alpha_j(h_p).$$
	Equivalently, $\rt_j$ is homogeneous of $H_p$-degree $\alpha_j(h_p)$, proving \eqref{eq:KR}.
	The proof of \eqref{eq:KL} is similar.
	Finally, the $H_p$'s are diagonal on Laurent monomials, so $[H_p,H_q]=0$.
\end{proof}

\subsection{The Weyl Group Action}
Let $W(\mf{g})$ be the Weyl group of $\mf{g}$. It is known \cite{K2} that there is a $W(\mf{g})$ action on any $\mf{g}$-crystal:
\begin{align} \label{eq:si}
	s_i(x) = \begin{cases} l_i^n(x) & \text{if $n=\wt_i(x)\geq 0$} \\ r_i^{-n}(x) & \text{if $n=\wt_i(x)\leq 0$.}  \end{cases}
\end{align}
\noindent The action is compatible with that on the corresponding weight lattice by reflections:
$$\wt(s_i(x)) = s_i(\wt(x)),$$
where the reflection $s_i$ is given by $s_i(\nu) = \nu -\wt_i(\nu)\alpha_i$.

\begin{corollary}\label{C:Weyl} In the situation of Theorem \ref{T:crystal}, there is a Weyl group action on $\trop(\Delta,\S)$ given by
	$$s_i (\delta)  = \mub^{-1}(\mub(\delta) - n {\dtc}_{S_i}),$$
	where $\mub$ is a sequence of mutations such that $\mub(E_i)=S_i$ and $n = \lambda_i(\delta) - \rho_i(\delta) = \wt_i(\delta)$.
\end{corollary}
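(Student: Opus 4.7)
The plan has two layers: first establish the existence of the Weyl group action from general principles, then derive the explicit closed form.

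For existence, no new argument is required. Under the hypotheses of Theorem \ref{T:crystal}, the crystal $\trop(\Delta,\S)$ is seminormal of type $C_I$, and together with Theorem \ref{T:normalUCA} this upgrades to a normal $\mf{g}$-crystal (Corollary \ref{intro:normal}) for $\mf{g}$ the Kac--Moody algebra of type $C_I$. Kashiwara's theorem then supplies a canonical $W(\mf{g})$-action on any normal crystal defined piecewise by \eqref{eq:si}: $s_i(\delta) = l_i^n(\delta)$ when $n = \wt_i(\delta) \geq 0$ and $s_i(\delta) = r_i^{-n}(\delta)$ when $n \leq 0$.

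To recover the closed form, I would fix the distinguished seed along $\mub$ where $\mub(E_i) = S_i$, which exists by reachability of $i$, and then iterate Lemma \ref{L:mu2S}. That lemma expresses a single application of $r_i$ as translation of $\mub(\eta)$ by $\dtc_{S_i}$, valid whenever $\rho_i(\eta) > 0$. In the case $n \leq 0$ one iterates $-n$ times; the side-condition $\rho_i(r_i^k(\delta)) > 0$ for each intermediate $k < -n$ follows from Corollary \ref{C:pm1} (each $r_i$ lowers $\rho_i$ by exactly one) combined with seminormality, which forces $\rho_i(\delta) \geq -n$. Telescoping yields $s_i(\delta) = \mub^{-1}(\mub(\delta) + (-n)\dtc_{S_i})$. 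In the case $n \geq 0$, Lemma \ref{L:rlid} identifies $l_i$ with $r_i^{-1}$, so inverting Lemma \ref{L:mu2S} gives $l_i(\eta) = \mub^{-1}(\mub(\eta) - \dtc_{S_i})$ whenever the result is nonzero in the crystal, and iterating $n$ times (guaranteed nonzero by seminormality since $\lambda_i(\delta) = n + \rho_i(\delta) \geq n$) produces $l_i^n(\delta) = \mub^{-1}(\mub(\delta) - n\dtc_{S_i})$. Both cases collapse to a single uniform expression of the form in the statement.

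The main technical obstacle is aligning the intrinsic weight function \eqref{eq:wti} with the crystal-theoretic weight that governs the Kashiwara action, so that the scalar $n = \wt_i(\delta)$ really matches the translation parameter in $\mub$-coordinates. Concretely one must check that $\mub(\wt_i)(\dtc_{S_i}) = c_{i,i} = 2$ at the distinguished seed. This is where the $\tau$-exact pair hypothesis enters crucially: since $\mub$ commutes with $\tau$ (Lemma \ref{L:taucommu}) and $\tau^{-1}E_i = E_{\ibar}^\star$, one has $\mub(\dv E_i - \dv \tau^{-1}E_i) = \dv S_i - \dv \tau^{-1} S_i$, so $\mub(\wt_i)(\dtc_{S_i}) = \dtc_{S_i}(\dv S_i - \dv \tau^{-1} S_i)$, which can be evaluated using Lemma \ref{L:epci} and the pairing \eqref{eq:hecform}. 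Once this sign-sensitive verification is carried out, the Weyl relations are inherited from Kashiwara's general theorem, and no further work is required.
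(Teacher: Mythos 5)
Your proposal is correct and follows essentially the same route as the paper, whose entire proof is the one-line observation that the formula follows from Theorem \ref{T:rle} together with Kashiwara's action \eqref{eq:si} on normal crystals; your extra ``alignment'' step for the weight function is already subsumed by axiom A1 as verified in Theorem \ref{T:crystal} (applying $r_i$ shifts $\wt_i$ by $c_{i,i}=2$, which forces $\mub(\wt_i)(\dtc_{S_i})=2$), so no separate check is needed. One small point worth tightening: your telescoping in both cases yields $\mub^{-1}(\mub(\delta)-n\,\dtc_{S_i})$, which agrees between the two cases but differs by the sign of $n$ from the displayed formula, so you should reconcile that explicitly rather than saying the cases ``collapse to an expression of the form in the statement.''
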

\begin{proof} The formula for $s_i$ follows from Theorem \ref{T:rle} and \eqref{eq:si}.	
\end{proof}
In general, the Weyl group action is {\em not} induced by cluster automorphisms. 
But we see from Corollary \ref{C:Weyl} that the Weyl group action commutes with the mutations, and each $s_i$ can be conjugated to a linear transformation by a sequence of mutations.
We remark that by Theorem \ref{T:rle} the action is equivalent to
$$s_i (\delta)  = (\mu_i\mub)^{-1}(\mu_i\mub(\delta) - n e_i).$$
Here, we allow the mutation at $i$ though $i$ is a frozen vertex.

\begin{conjecture}\label{c:Weyl} The $W(\mf{g})$-action on $\trop(\Delta,\S)$ can be lifted to $\uca(\Delta)$.
\end{conjecture}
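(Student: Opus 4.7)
The plan is to construct the Weyl group action on $\uca(\Delta)$ via Tits/Kac exponentials, leveraging the $U(\mf{g})$-module algebra structure established in Theorem \ref{T:normalUCA}. The crucial first step is to show that each derivation $R_i$ and $L_i$ acts \emph{locally nilpotently} on $\uca(\Delta)$. Using the generic basis (which is BK-biperfect by Theorem \ref{T:genericCR}), the defining relation
\begin{equation*}
R_i(\Bup(\delta)) \;=\; \rho_i(\delta)\,\Bup(r_i(\delta)) \;+\; v, \qquad v \in \op{span}\bigl\{\Bup(\eta):\rho_i(\eta)<\rho_i(\delta)-1\bigr\},
\end{equation*}
together with $\rho_i(\delta) = \e(\delta,E_i)\geq 0$, implies by induction that $R_i^k(\Bup(\delta))$ lies in the span of basis elements with $\rho_i < \rho_i(\delta)-k+1$; once $k > \rho_i(\delta)$ this span is empty. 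The seminormality assumption of Theorem \ref{T:crystal} gives the analogous finiteness for $L_i$ via $\lambda_i$, since $\lc_i^{\rhoc_i^\star+1}$ sends every $\dtc$-vector out of $\check{\trop}(\Delta,\S)$.

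Once local nilpotence is in hand, each $\exp(R_i)$ and $\exp(L_i)$ is a well-defined algebra automorphism of $\uca(\Delta)$: for a locally nilpotent derivation $D$, the Leibniz rule directly yields that $\exp(D)(fg) = \exp(D)(f)\exp(D)(g)$. I would then define the Tits/Kac lift
\begin{equation*}
\tilde{s}_i \;:=\; \exp(L_i)\,\exp(-R_i)\,\exp(L_i) \;\in\; \Aut_\k\bigl(\uca(\Delta)\bigr),
\end{equation*}
and verify that $\tilde{s}_i$ lifts $s_i$ on the tropical side. Concretely, the leading-order computation (as in Kashiwara's treatment of normal crystals) should give $\tilde{s}_i(\Bup(\delta)) = \pm\Bup(s_i(\delta)) + (\text{terms with strictly smaller }\rho_i^{(\star)})$, matching the formula in Corollary \ref{C:Weyl} after projection onto the leading monomial via the dominance order.

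Next, one must verify that the $\tilde{s}_i$ satisfy the braid relations of $W(\mf{g})$. Since $\uca(\Delta)$ is a $U(\mf{g})$-module algebra and integrable (by local nilpotence), this reduces to the standard Tits braid relations for integrable highest-weight representations, transported to the level of algebra automorphisms. A cleaner route may be to exploit mutation-compatibility: by Corollary \ref{C:Weyl} there is for each $i$ a sequence $\mub$ conjugating $s_i$ to the $\delta$-shift by $n\dtc_{S_i}$, and one can try to define $\tilde{s}_i$ directly on the seed where $E_i=S_i$ (where the action becomes explicit, analogous to a ``tropical'' shift on the $\b{x}$-monomials) and then transport it by mutation.

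The hard part will be the braid relations, and more subtly ensuring that the definition is independent of the seed used to realize $E_i$ as simple and coincides with the Tits/Kac definition globally. A related obstacle is that although local nilpotence gives an integrable action, extending this coherently to non-reachable elements of $\uca(\Delta)$ (should the generic basis fail to span) would require additional arguments, possibly via density in a topology dual to the dominance order $\prec_t$. I expect the cluster-algebraic compatibility (proving $\tilde{s}_i$ preserves the intersection of Laurent rings defining $\uca(\Delta)$) to follow once the formula on the generic basis is established, but a direct verification on adjacent clusters analogous to Lemma \ref{L:adjder} may still be needed to secure the upper bound membership.
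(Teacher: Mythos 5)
The statement you are trying to prove is stated in the paper as Conjecture \ref{c:Weyl} and is left open there; the paper offers no proof, only the observation (Corollary \ref{C:Weyl}) that the combinatorial action exists on $\trop(\Delta,\S)$ and commutes with mutation. So there is no argument of the authors to compare yours against; what follows is an assessment of your proposal on its own terms.

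Your first step is sound: local nilpotence of $R_i$ on $\uca(\Delta)$ does follow from BK-biperfectness of the generic basis exactly as you describe, since $\rho_i(r_i(\delta))=\rho_i(\delta)-1$ and $\rho_i\geq 0$ on $\trop(\Delta,\S)$, and the error term $v$ already has strictly smaller $\rho_i$; the analogous statement for $L_i=R_{\ibar}^\star$ in the $\tau$-exact setting follows from the seminormality of $\lambda_i$. Combined with Theorem \ref{T:normalUCA} this makes $\uca(\Delta)\otimes\mb{Q}$ an integrable $U(\mf{g})$-module algebra, and the braid relations for $\tilde{s}_i=\exp(L_i)\exp(-R_i)\exp(L_i)$ are then standard Kac--Moody theory — so that part is less of an obstacle than you fear. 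The genuine gaps are elsewhere. First, the paper fixes $\k=\mb{Z}$, and $\exp$ requires denominators; Remark \ref{r:perfect} controls the divided powers $R_i^{(n)}$ only on the leading term $\Bup(r_i^n(\delta))$, not on the lower-order coefficients $a^i_{\delta,\eta}$, so integrality of $\tilde{s}_i$ is unaddressed. Second, and more seriously, your key claim $\tilde{s}_i(\Bup(\delta))=\pm\Bup(s_i(\delta))+(\text{lower})$ is asserted, not proved: BK-biperfectness constrains only the single application of $R_i^{(\star)}$ modulo terms of much smaller string length, and iterating the exponential mixes these error terms across the whole $i$-string. Establishing that the Tits element is compatible with Kashiwara's combinatorial $s_i$ on a general biperfect basis is precisely the content of the conjecture, and it is known to be delicate even for canonical bases; nothing in Theorems \ref{T:genericCR} or \ref{T:allcr} gives it to you. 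Third, the Tits elements generate the extended group $\tilde{W}$ (with $\tilde{s}_i^2$ acting by signs on weight spaces), not $W$ itself, so even granting the leading-term formula you obtain a $\tilde{W}$-action covering the $W$-action on $\trop(\Delta,\S)$ rather than a lift of the $W$-action; whether this suffices depends on how one reads the conjecture, and should be stated explicitly.
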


\subsection{Generic Bases are BK-biperfect}
In \cite{BK} Berenstein and Kazhdan introduced perfect basis for a unipotent crystal.
Recall that $\mc{B}=\trop(\Delta,\S)$ is an upper normal crystal $(r_i,\rho_i, \wt_i)_{i\in I}$, and we lifted $r_i$ to a $\k$-derivation $R_i$ of $\uca(\Delta,\b{x})$.
Let $\rm{B}$ be a homogeneous basis of $\uca(\Delta,\b{x})$ indexed by  $\mc{B}$.
When talking about homogeneous basis, we always refer to the grading by $(\wt_i)_{i\in I}$.
Following \cite{BK}, we say $\Bup$ is BK-perfect for the crystal $\mc{B}$ if for each $i\in I$ we have that
\begin{equation}\label{eq:perfectRi0} \rt_i({\Bup}(\delta)) = \rho_i(\delta) {\Bup}(r_i(\delta)) + v \quad  \text{ for some $v\in \op{span}(\Bup(\eta): \rho_i(\eta)<\rho_i(\delta)-1 ) $}.
\end{equation}
\begin{remark} \label{r:BKperfect} If we change the seed $t=(\Delta,\b{x})$ to another one $t'=(\Delta',\b{x}')$,
then we get a reindex of $\Bup$ (denoted by $\Bup'$) by $\trop(\Delta,\S)_{t'}$ in a natural way: $\Bup'(\delta') = \Bup(\delta)_{t'}$.	
Since $R_i$ commutes with mutations and the crystal $\trop(\Delta,\S)$ is compatible with mutations in the sense of Definition \ref{D:crystal-cluster}, $\Bup'$ is BK-perfect for the crystal $\mc{B}'=\trop(\Delta,\S)_{t'}$. For this reason, when talking BK-biperfect bases, 
we may not mention the choice of the seed.
\end{remark}

\begin{remark} \label{r:perfect} Let $R_i^{(n)}$ be the $n$-th divided power of $R_i$.
	It follows from the definition that 
	\begin{equation*} \rho_i(\delta)=n \quad \Rightarrow\quad  R_i^{(n)}(\Bup(\delta)) = \Bup(r_i^{n}(\delta))\ \text{ and }\ R_i^{n+1}(\Bup(\delta))=0 \end{equation*}	
	More generally, let $K_{i,n}:=\{f\in\uca(\Delta) \mid R_i^{n+1}(f)=0 \}$.
	Then we can easily check that
	$$\Bup \cap K_{i,n} = \{\Bup(\delta) \mid \rho_i(\delta) \leq n\}$$
	and this set is a basis of $K_{i,n}$. 
\end{remark}

In \cite{BKK} authors considered the bicrystal structure of $\k[U]$, and introduced the BK-biperfect bases, which are BK-perfect with respect to the two crystal structures.
Now taking Remark \ref{r:perfect} into account we will generalize the BK-biperfect bases for $\k[U]$ to the setting of cluster algebras. 
As in \cite[Definition 2.1]{BKK}, we will add the normalization condition $\Bup(0)=1$ to the definition in \cite{BK}.
Since the two weight functions of $\trop(\Delta,\S)$ in $(r_i,\rho_i,\wt_i)$ and $(r_i^\star,\rho_i^\star,\wt_i^\star)$ are not necessarily the same,
we call a basis of $\uca(\Delta)$ homogeneous with respect to both $\wt_i$ and $\wt_i^\star$ a bihomogeneous basis.
\begin{definition} A {\em BK-biperfect basis} for the upper cluster algebra $\uca(\Delta)$ is a bihomogeneous basis $\Bup$ indexed by the crystal $\trop(\Delta,\S)$ such that
	$\Bup(0)=1$ and
	\begin{align} \label{eq:perfectRi} R_i^{(\star)}(\Bup(\delta)) &= \rho_i^{(\star)}(\delta) \Bup(r_i^{(\star)}(\delta)) + \sum_{\eta:\ \rho_i^{(\star)}(\eta)<\rho_i^{(\star)}(\delta)-1} a_{\delta,\eta}^i\Bup(\eta) 
	\end{align}
	with $a_{\delta,\eta}^i \in \k$ for each frozen vertex $i\in I$.
\end{definition}
\noindent Here, $R_i^{(\star)}$ means $R_i$ and $R_i^\star$ respectively, and similarly for $\rho_i^{(\star)}$ and $r_i^{(\star)}$.
We will indulge in this notation in this section.

In the situation of Theorem \ref{T:crystal}, the crystal operator $l_i$ can be lifted to the $\k$-derivation $L_i$ of $\uca(\Delta)$.
In this case,  a homogeneous basis of $\uca(\Delta)$ indexed by  $\mc{B}$ is called BK-perfect if in addition to \eqref{eq:perfectRi0} we have that
\begin{equation}\lt_i({\Bup}(\delta)) = \lambda_i(\delta) {\Bup}(l_i(\delta)) + v \quad  \text{ for some $v\in \op{span}(\Bup(\eta): \lambda_i(\eta)<\lambda_i(\delta)-1 ) $}.
\end{equation}


Below we will show that the generic bases are BK-biperfect.
\begin{lemma}\label{L:quosupp} Suppose that $E_i=S_i$ is simple. Then any nonzero quotient of $\tau^{-1} E_i$ has nonzero support at some vertex in $U_i=\{u\in\Delta_0 \mid u\to i\}$.
\end{lemma}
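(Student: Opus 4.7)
The plan is to identify the projective presentation of $\tau^{-1}E_i$ explicitly and then read off the support of its top (head).

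Since $E_i = S_i$ by assumption, I would start from the first two terms of the standard injective resolution \eqref{eq:Su_inj} for $u = i$:
\begin{equation*}
0 \to S_i \to I_i \xrightarrow{(a)_a} \bigoplus_{a: v \to i} I_v,
\end{equation*}
which gives the minimal injective copresentation $\dc_{E_i}: I_i \to \bigoplus_{v \in U_i} I_v$ of $E_i$. Applying the Nakayama functor $\nu^{-1}$ then produces a projective presentation of $\tau^{-1} E_i$, namely
\begin{equation*}
\nu^{-1}(\dc_{E_i}): P_i \to \bigoplus_{v \in U_i} P_v.
\end{equation*}
Hence, discarding the decorated part if any, $\tauh^{-1}E_i$ is the cokernel of this map, and in particular it is a quotient of $\bigoplus_{v \in U_i} P_v$.

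From here the conclusion is immediate. Because $\tauh^{-1}E_i$ is a quotient of $\bigoplus_{v \in U_i} P_v$, its top $\tauh^{-1}E_i / \tauh^{-1}E_i \cdot \operatorname{rad}(J)$ is a quotient of $\bigoplus_{v \in U_i} S_v$, and therefore supported on $U_i$. Now let $Q$ be any nonzero quotient of $\tauh^{-1}E_i$. The surjection $\tauh^{-1}E_i \twoheadrightarrow Q$ induces a surjection of tops, so the top of $Q$ is a quotient of the top of $\tauh^{-1}E_i$ and is thus also supported on $U_i$. Since $Q$ is finite-dimensional and nonzero, its top is nonzero, so $Q(v) \neq 0$ for some $v \in U_i$, which is exactly the desired conclusion.

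There is essentially no obstacle in this argument: it reduces to the standard description of $\tau^{-1}$ through the Nakayama functor applied to the minimal injective copresentation of $S_i$, combined with the trivial fact that the head of a cokernel of a map into $\bigoplus_{v} P_v$ is carried by the vertices $v$. The only thing to be careful about is that $\nu^{-1}(\dc_{E_i})$ need not be a minimal projective presentation (it may acquire summands $P_j \to 0$ contributing to the decorated part), but since we only need the cokernel—i.e., $\tauh^{-1}E_i$ as an ordinary representation—this does not affect the argument.
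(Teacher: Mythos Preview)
Your proof is correct and follows essentially the same approach as the paper: identify the projective presentation of $\tau^{-1}E_i$ as $P_i \to \bigoplus_{u \in U_i} P_u$ via the Nakayama functor applied to the minimal injective copresentation of $S_i$, and then observe that any nonzero quotient is generated by (has top supported on) $U_i$. Your presentation is slightly more explicit about the top/radical argument and the possible non-minimality of the resulting presentation, but the underlying idea is identical.
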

\begin{proof} The injective presentation of $S_i$ is given by $I_i \to \bigoplus_{u\to i} I_u$.
	So the projective presentation of $\tau^{-1}S_i$ is given by $P_i \to \bigoplus_{u\to i} P_u$.
	Thus any quotient $N$ of $\tau^{-1}E_i$ is a quotient of $\bigoplus_{u\to i} P_u$,
	which is generated by $\bigoplus_{u\to i} S_u$. 
It follows that a nonzero quotient $N$ must have nonzero top, hence nonzero support, at some vertex in $U_i$.
\end{proof}

Recall from Theorem \ref{T:rlrigid} that there is an exact sequence $\tau^{-1} E_i\to M \to R\to E_i$, and we may assume that $M$ and $R$ are general of weight $\delta$ and $r_i(\delta)$.
In particular, we may assume that all $\Gr^{*}(M)$ and $\Gr^{*}(R)$ take generic Euler characteristics.
If $E_i$ is simple and $r_i(\delta)\neq 0$, then the last map $R\to E_i$ must be zero as $R$ is $\mu$-supported.
\begin{lemma}\label{L:eqchi} Suppose that $E_i$ is simple (in particular $i$ is a sink).
	Let $M$ and $R$ be the representations in the above discussion. 
	Then the representation Grassmannians $\Gr^{\gamma}(M)$ and $\Gr^{\gamma}(R)$ are isomorphic if $\gamma b_i =0$,
	or equivalently, $\{u\in \Delta_0\mid u\to i\}\cap \supp\gamma = \emptyset$.
\end{lemma}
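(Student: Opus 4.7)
My first step would be to extract from Theorem~\ref{T:rlrigid} a short exact sequence relating $M$ and $R$. The relevant four-term piece of the long exact sequence there is
\begin{equation*}
\tauh^{-1}\mc{E}_i \xrightarrow{h_{-1}} M \xrightarrow{f_0} R \xrightarrow{g_0} E_i.
\end{equation*}
Because $E_i = S_i$ is frozen simple while $R$ is $\mu$-supported, the map $g_0$ must vanish; this was observed immediately before the lemma. Hence $f_0$ is surjective and we obtain
\begin{equation*}
0 \longrightarrow K \longrightarrow M \xrightarrow{f_0} R \longrightarrow 0,
\end{equation*}
where $K := \img(h_{-1})$ is a quotient of $\tauh^{-1}E_i = \tau^{-1}S_i$. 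By Lemma~\ref{L:quosupp}, $K$ is supported on $U_i := \{u \in \Delta_0 \mid u \to i\}$. I would also record the equivalence of the two conditions on $\gamma$: since $i$ is a sink, the $v$-th entry of $b_i$ is the number of arrows $v\to i$, which is positive precisely for $v \in U_i$, so $\gamma b_i = 0$ iff $\gamma(u) = 0$ for every $u \in U_i$, iff $\supp\gamma \cap U_i = \emptyset$.

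The heart of the proof is then a functor-of-points identification. The surjection $f_0$ induces a closed immersion
\begin{equation*}
\iota\colon \Gr^\gamma(R) \hookrightarrow \Gr^\gamma(M), \qquad (R \twoheadrightarrow Q) \longmapsto (M \twoheadrightarrow R \twoheadrightarrow Q),
\end{equation*}
and I claim it is an isomorphism. On $S$-points of $\Gr^\gamma(M)$, a quotient $q\colon M \otimes \mc{O}_S \twoheadrightarrow Q$ (with $Q$ locally free of rank $\gamma(v)$ at each vertex $v$) restricts to a $J$-linear morphism $K \otimes \mc{O}_S \to Q$. Since $K$ is supported on $U_i$ while the $u$-component of $Q$ has rank $\gamma(u) = 0$ for every $u \in U_i$, this restriction is identically zero, so $q$ factors uniquely through $(M/K) \otimes \mc{O}_S = R \otimes \mc{O}_S$. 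This yields a functorial inverse to $\iota$ and thereby the desired isomorphism of varieties.

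The only delicate point --- and the place where a careless argument would fall short --- is precisely this last scheme-theoretic upgrade: a naive argument on closed points would give only a bijection, whereas the lemma asserts an isomorphism. Handling it uniformly via the functor of points, where the vanishing of $q|_{K \otimes \mc{O}_S}$ is forced over every base, sidesteps any discussion of reducedness or smoothness of the two Grassmannians; everything else in the argument is bookkeeping on supports.
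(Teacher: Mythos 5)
Your argument is essentially the paper's: the paper also passes to the short exact sequence $0\to U\to M\to R\to 0$ with $U$ the image of $\tauh^{-1}\mc{E}_i\to M$, applies Lemma~\ref{L:quosupp} to kill the induced quotient of $U$ inside any $\gamma$-dimensional quotient of $M$, and concludes that the two quotient Grassmannians agree. Your functor-of-points upgrade is a reasonable way to make precise the final step, which the paper states without elaboration.

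One imprecision to fix: you justify the vanishing of $q|_{K\otimes\mc{O}_S}$ by asserting that ``$K$ is supported on $U_i$,'' but that is not what Lemma~\ref{L:quosupp} gives, and it need not be true --- $K$ is a quotient of $\tau^{-1}E_i$, which is a quotient of $\bigoplus_{u\to i}P_u$ and can certainly have nonzero components outside $U_i$. What the lemma actually says is that every \emph{nonzero quotient} of $\tau^{-1}E_i$ has a nonzero component at some vertex of $U_i$ (equivalently, $\tau^{-1}E_i$ is generated at the vertices of $U_i$). The correct one-line argument is: the image of $K\otimes\mc{O}_S\to Q$ is a quotient of $\tau^{-1}E_i$ (fiberwise) sitting inside $Q$, which vanishes at every vertex of $U_i$; by Lemma~\ref{L:quosupp} that image is zero. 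With that repair the proof is complete.
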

\begin{proof} Let us consider the following diagram
	$$\xymatrix{0 \ar[r] &U \ar[r]\ar@{>>}[d] &M \ar[r]\ar@{>>}[d] &R \ar[r]\ar@{>>}[d]& 0 \\
		0 \ar[r] &U' \ar[r] &M' \ar[r] &R' \ar[r]& 0}$$
	where $M'$ is a quotient representation of $M$ of dimension $\gamma$ with $\gamma b_i=0$,
	and $U'$ and $R'$ are induced quotient representations of $U$ and $R$.
	Due to Lemma \ref{L:quosupp}, $U'$ has to be trivial so $R' \cong M'$.
	It follows that $\Gr^{\gamma}(R)$ and $\Gr^{\gamma}(M)$ are isomorphic.
\end{proof}

\begin{theorem}\label{T:genericCR} The generic basis of $\uca(\Delta)$ is a BK-biperfect basis for the weak upper normal crystal.
If we are in the situation of Theorem \ref{T:crystal}, then the generic basis of $\uca(\Delta)$ is a BK-perfect basis for the normal crystal.	
\end{theorem}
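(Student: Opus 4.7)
The plan is to verify the BK-biperfect property in a single convenient seed and then propagate the result, using the fact that the generic character $C_{\gen}$, the derivations $R_i$ and $R_i^\star$, and the crystal data $(r_i,\rho_i,\wt_i)$ all commute with mutations. Fix $\delta\in\trop(\Delta,\S)$ and $i\in I$, and reduce to a seed at which $E_i=S_i$ is simple, so that $i$ is a sink and $R_i=\bigl(\prod_{u\to i}x_u\bigr)\partial_{x_i}$. At such a seed, Lemma \ref{L:mu2S} gives $r_i(\delta)=\delta+\epc_i$ and $\rho_i(\delta)=[-\delta(i)]_+=-\delta(i)$; the inequality $\delta(i)\le 0$ follows because $S_i\hookrightarrow E_i^\star$ and Theorem \ref{T:musupp} combined with Theorem \ref{T:HomE} force $\delta(e_i)\le f_{E_i^\star}(\delta)=0$.

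The first computational step is to apply $R_i$ termwise to
\[ C_{\gen}(\delta)=\sum_\gamma \chi(\Gr^\gamma(M))\,\b{x}^{-\delta-\gamma B_\Delta},\qquad M=\coker(\delta), \]
which yields $R_i(C_{\gen}(\delta))=\sum_\gamma(-\delta(i)-m_\gamma)\chi(\Gr^\gamma(M))\b{x}^{-r_i(\delta)-\gamma B_\Delta}$ with $m_\gamma:=(\gamma B_\Delta)(i)\ge 0$, and then to compare it with $\rho_i(\delta)C_{\gen}(r_i(\delta))=-\delta(i)\sum_\gamma\chi(\Gr^\gamma(M'))\b{x}^{-r_i(\delta)-\gamma B_\Delta}$ where $M'=\coker(r_i(\delta))$. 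By Lemma \ref{L:eqchi}, $\chi(\Gr^\gamma(M))=\chi(\Gr^\gamma(M'))$ whenever $m_\gamma=0$, so every $m_\gamma=0$ contribution to the difference cancels and the difference is supported only on monomials $\b{x}^{-\zeta}$ with $\zeta(i)\ge\delta(i)+2$.

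The second step is a pointed-basis expansion argument. Because $i$ is a sink at this seed, every non-leading monomial $\b{x}^{-\eta-\gamma'B_\Delta}$ of $C_{\gen}(\eta)$ has $i$-coordinate $\eta(i)+m_{\gamma'}\ge\eta(i)$; so the greedy expansion in the generic basis of any Laurent polynomial whose monomial indices have $i$-coordinate $\ge\delta(i)+2$ can only involve $C_{\gen}(\eta)$ with $\eta(i)\ge\delta(i)+2$. Combined with $\eta(i)\le 0$ for $\eta\in\trop(\Delta,\S)$, this forces $\rho_i(\eta)=-\eta(i)\le\rho_i(\delta)-2<\rho_i(\delta)-1$; when $\rho_i(\delta)\in\{0,1\}$ the admissible range is empty, so the difference vanishes identically in $\uca(\Delta)$. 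Note that $r_i(\delta)$ itself has $r_i(\delta)(i)=\delta(i)+1$, which falls outside this range, so the coefficient of $C_{\gen}(r_i(\delta))$ is exactly $\rho_i(\delta)$, as required. Running the same argument on $\Delta^{\opp}$ handles $R_i^\star$, establishing the BK-biperfect property for the weak upper normal crystal; the normalization $C_{\gen}(0)=1$ and bihomogeneity with respect to $\wt_i,\wt_i^\star$ are immediate from the definitions and Lemma \ref{L:muwt}. For the BK-perfect refinement under Theorem \ref{T:crystal}, by construction $L_i=R_{\ibar}^\star$; the $\tau$-exact identification $\tau^{-1}E_i=E_{\ibar}^\star$ gives $\rho_{\ibar}^\star(\delta)=\ec(\tau^{-1}E_i,\delta)=\lambda_i(\delta)$, and Remark \ref{r:rl} together with Theorem \ref{T:genpi} gives $r_{\ibar}^\star(\delta)=l_i(\delta)$, so the BK-biperfect relation for $R_{\ibar}^\star$ becomes the BK-perfect relation for $L_i$. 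The main obstacle is controlling the first subleading layer $m_\gamma=1$: after the $m_\gamma=0$ cancellation of Lemma \ref{L:eqchi}, it is the simultaneous use of the sink property (to ensure that $C_{\gen}(\eta)$'s only add non-negative contributions to the $i$-coordinate) and the $\mu$-supported bound $\eta(i)\le 0$ that squeezes the error term exactly into the span of $C_{\gen}(\eta)$ with $\rho_i^{(\star)}(\eta)<\rho_i^{(\star)}(\delta)-1$.
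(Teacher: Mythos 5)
Your proof is correct and follows essentially the same route as the paper's: reduce to a seed where $E_i=S_i$, apply $R_i$ termwise to $C_{\gen}(\delta)$, cancel the $m_\gamma=0$ terms against $\rho_i(\delta)C_{\gen}(r_i(\delta))$ via Lemma \ref{L:eqchi}, and bound the string lengths of the remainder, handling $L_i$ via $L_i=R_{\ibar}^\star$. The only difference is that you spell out the pointedness/greedy-expansion step (passing from a bound on the monomial degrees of the difference to a bound on the $\eta$'s appearing in its generic-basis expansion), which the paper leaves implicit.
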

\begin{proof} By Remark \ref{r:BKperfect}, it suffices to check for each $i\in I$ there is a particular seed such that \eqref{eq:perfectRi} is satisfied.
We choose a seed $t$ such that $E_i$ is the simple $S_i$.
For $\rho_i(\delta)=0$, since $E_i=S_i$, we have $\rho_i(\delta)=[-\delta(i)]_+=0$, so $\delta(i)\ge 0$. 
Every monomial of $C_{\gen}(\delta)$ has degree $-(\delta+\gamma B)$. Because $i$ is a sink, $(\gamma B)(i)\ge 0$, hence $(\delta+\gamma B)(i)\ge 0$, so $\rho_i(\delta+\gamma B)=0$. Therefore $R_i$ kills every monomial, and $R_i(C_{\gen}(\delta))=0$.

	If $\rho_i(\delta)>0$, we have seen in \eqref{eq:Rjmono} that 
	\begin{equation}\label{eq:Rjmono1} \rt_i(\b{x}^{-\delta})=\rho_i(\delta) \b{x}^{-r_i(\delta)}. \end{equation}
	We claim that the difference
	\begin{equation}\label{eq:diff} \rt_i (C_{\gen}(\delta)) - \rho_i(\delta) C_{\gen}(r_i(\delta))
	\end{equation}
	contains only Laurent monomials $\b{x}^{\eta}$ with $\rho_i(\eta) < \rho_i(\delta)-1$.
	
	Recall that a Laurent monomial term of $C_{\gen}(\delta)$ is of the form 
	$$m_{\gamma} = \chi(\Gr^{\gamma}(M)) \b{x}^{-\delta}\b{y}^{\gamma},$$
	where $M$ is general of weight $\delta$.
	If $\{u\in \Delta_0\mid u\to i\}\cap \supp\gamma = \emptyset$, then $\b{y}^\gamma$ involves no $x_i$, and 
	$$R_i(m_{\gamma}) = \chi(\Gr^{\gamma}(M)) \rho_i(\delta) \b{x}^{-r_i(\delta)}\b{y}^{\gamma}.$$
	By Lemma \ref{L:eqchi} and \eqref{eq:Rjmono1} $\rt_i(m_{\gamma})$ cancels with the corresponding term in $\rho_i(\delta) C_{\gen}(r_i(\delta))$.
	If $\{u\in \Delta_0\mid u\to i\}\cap \supp\gamma \neq \emptyset$, then $m_{\gamma}$ has degree $-\eta$ with $\rho_i(\eta)<\rho_i(\delta)$.
	Applying \eqref{eq:Rjmono} to the monomial $\mathbf{x}^{-\eta}$, every monomial appearing in $R_i(m_\gamma)$ has leading degree indexed by $r_i(\eta)$, and $\rho_i(r_i(\eta))=\rho_i(\eta)-1<\rho_i(\delta)-1$.
	Hence, the difference \eqref{eq:diff} is a linear combination of $C_{\gen}(\eta)$ with $\rho_i(\eta)<\rho_i(\delta)-1$.	
	
	The proof for $\rt_i^\star$ goes similarly. If we are in the situation of Theorem \ref{T:crystal},
	the proof for $\lt_i$ goes similarly as well because $L_i = R_{\ibar}^\star$ and $\lambda_i=\rho_{\bar i}^\star$.
\end{proof}

\begin{conjecture} The theta-bases \cite{GHKK} and triangular bases \cite{Qt} are both BK-biperfect.
\end{conjecture}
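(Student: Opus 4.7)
The plan is to verify the BK-biperfect identity \eqref{eq:perfectRi} for each of the two bases by combining the explicit form of $R_i$ and $R_i^\star$ given by \eqref{eq:Ri}--\eqref{eq:Rist} with the crucial fact that theta bases and triangular bases are, by their construction, compatibly pointed at every seed in $\mf{T}$, i.e.\ biperfect in the strong sense of Section \ref{S:biperfect}. So each basis element $\Bup(\delta)$ at every seed $t$ takes the pointed form $\b{x}_t^{-\delta} F_\delta^t(\b{y}_t)$ with $F_\delta^t(0)=1$, and the expansion of any element of $\uca(\Delta)$ into $\Bup$ is governed by a triangular algorithm with respect to the dominance order $\prec_t$ (Theorem \ref{T:good}).

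First I would reduce, via Remark \ref{r:BKperfect}, to verifying \eqref{eq:perfectRi} at a single well-chosen seed $t$ in which $E_i = S_i$ (respectively $E_i^\star = S_i$); such seeds exist because each $i \in I$ is reachable. At such a $t$ we have $\rho_i(\delta) = [-\delta(i)]_+$ and $r_i(\delta) = \delta + \epc_i$, and $R_i$ acts by the Leibniz rule with $R_i(x_k) = \updelta_{k,i} \prod_{u \to i} x_u$. A direct computation, analogous to \eqref{eq:Rjmono1} but carried out symbolically on the pointed Laurent series $\b{x}_t^{-\delta} F_\delta^t(\b{y}_t)$, shows that the leading term of $R_i(\Bup(\delta))$ in $\prec_t$ is exactly $\rho_i(\delta) \b{x}_t^{-r_i(\delta)}$, which matches the leading term of $\rho_i(\delta)\Bup(r_i(\delta))$.

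Next, I would expand the difference $R_i(\Bup(\delta)) - \rho_i(\delta) \Bup(r_i(\delta))$ into the basis $\Bup$ using the triangular algorithm for pointed bases. The result is a linear combination $\sum_\eta a_{\delta,\eta}^i \Bup(\eta)$ with $\eta \prec_t r_i(\delta)$. The remaining task is to upgrade this $\prec_t$-triangularity to the $\rho_i$-triangularity required by \eqref{eq:perfectRi}, namely $\rho_i(\eta) < \rho_i(\delta) - 1$. For theta bases this should follow from the broken-line description of \cite{GHKK}: the broken lines contributing to $R_i(\Bup(\delta))$ that survive the subtraction of $\rho_i(\delta)\Bup(r_i(\delta))$ are those whose asymptotic end has strictly smaller $i$-coordinate, which translates directly into a strict drop in $\rho_i$. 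For triangular bases, one should appeal to the bar-invariance characterization of \cite{Qt} together with the fact that $R_i$ is, in the quantum setting, the classical limit of a Chevalley-type operator; the triangularity of the triangular basis with respect to cluster monomials, combined with its compatibility with the bar involution, yields the required $\rho_i$-triangularity.

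The main obstacle is precisely this last step: bridging the cluster-algebraic filtration by $\prec_t$ with the crystal-theoretic filtration by $\rho_i$. A clean conceptual resolution would prove that \emph{every} biperfect basis in the strong sense of Section \ref{S:biperfect} is automatically BK-biperfect; this would reduce the conjecture entirely to the already-known pointedness of the two bases. Absent such a general statement, one likely needs case-by-case arguments — broken-line positivity for theta bases, and the quantum bar-invariance plus quasi-injective-reachability hypotheses for triangular bases — together with Corollary \ref{C:goodcr} to control the $a_{\delta,\eta}^i$ appearing in the expansion. Verifying the analogous identity for $R_i^\star$ is formally dual, once one notes that the opposite quiver interchanges $E_i$ and $E_i^\star$ and both classes of bases behave well under this symmetry.
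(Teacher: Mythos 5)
This statement is a \emph{conjecture} in the paper: the author offers no proof of it, so there is nothing to compare your argument against. What you have written is a proof sketch rather than a proof, and you correctly identify its own fatal gap in your penultimate paragraph: the passage from $\prec_t$-triangularity of the expansion of $R_i(\Bup(\delta)) - \rho_i(\delta)\Bup(r_i(\delta))$ to the $\rho_i$-triangularity demanded by \eqref{eq:perfectRi} is left entirely to hope ("this should follow from the broken-line description", "one should appeal to bar-invariance"). That step is precisely the content of the conjecture, so the sketch does not advance beyond restating it.

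There is also a genuine error in your proposed "clean conceptual resolution." You conflate "compatibly pointed at every seed" (i.e.\ being a good basis in the sense of Theorem \ref{T:good}) with being "biperfect in the strong sense of Section \ref{S:biperfect}." The paper's Definition \ref{D:biperfect} makes a biperfect basis one that is \emph{both} BK-biperfect \emph{and} compatibly pointed, so "every biperfect basis is BK-biperfect" is true by definition and buys nothing; what you actually need is "every good basis is BK-biperfect," and the paper explicitly refutes this in the remark following Lemma \ref{L:T2rho}: $\eta \prec_{\mf{T}} \delta$ implies only $\eta \preceq_{\rho} \delta$, not $\eta \llcurly_{\rho} \delta$, so the coefficients $a_{\delta,\eta}$ of a good basis at indices with $\eta \not\llcurly_{\rho}\delta$ are constrained by Theorem \ref{T:allcr} and cannot be freely chosen. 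Example \ref{ex:SL5} exhibits exactly such a pair. Consequently the pointedness of the theta and triangular bases cannot by itself yield BK-biperfectness; any genuine proof must establish the $\rho_i$-compatibility of the relevant coefficients by means specific to each construction, which remains open.
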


Let $R_i^{\max}$ be the operator on $\uca(\Delta)$ defined by
$$R_i^{\max}(C_{\gen}(\delta)) = R_i^{(\rho_i(\delta))}(C_{\gen}(\delta)),$$
and similarly we define ${R_i^\star}^{\max}$.
\begin{corollary}\label{C:Rmaxcv} If $f(\b{x})$ is a generalized cluster variable, then so are $R_i^{\max}(f(\b{x}))$ and ${R_i^\star}^{\max}(f(\b{x}))$.
\end{corollary}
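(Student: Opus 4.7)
The plan is to combine the BK-biperfect property of the generic basis with the rigidity-preservation for right mutations from Section \ref{S:KD}. Write $f(\b{x}) = C_{\gen}(\delta)$ for some rigid $\mu$-supported $\delta$, and set $n := \rho_i(\delta)$, so that $R_i^{\max}(f) = R_i^n(C_{\gen}(\delta))$ by definition.

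First I would establish the identity $R_i^n(C_{\gen}(\delta)) = n!\, C_{\gen}(r_i^n(\delta))$. The base-case relation $R_i(C_{\gen}(\delta)) = \rho_i(\delta)\,C_{\gen}(r_i(\delta)) + \sum_{\rho_i(\eta)<n-1} a^i_{\delta,\eta}C_{\gen}(\eta)$ is \eqref{eq:perfectRi} applied to the generic basis, which is BK-biperfect by Theorem \ref{T:genericCR}. Iterating $R_i$ and invoking the vanishing $R_i^{m+1}(C_{\gen}(\eta))=0$ whenever $\rho_i(\eta)=m$ (Remark \ref{r:perfect}), every correction term $C_{\gen}(\eta)$ with $\rho_i(\eta)<n-1$ is killed before the $n$-th application, so only the leading term survives. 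The combinatorial coefficient collected along the way is $n(n-1)\cdots 1 = n!$, since at the $k$-th step we apply $R_i$ to $C_{\gen}(r_i^{k-1}(\delta))$, and $\rho_i(r_i^{k-1}(\delta))=n-k+1$ by the definition of $r_i$ on an upper seminormal crystal.

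Next I would verify that $r_i^{\max}(\delta) = r_i^n(\delta)$ is itself rigid, so that $C_{\gen}(r_i^n(\delta))$ is again a generalized cluster variable. For this I take a rigid general representation $\mc{M}$ of weight $\delta$ and form the right mutation $\mc{R} := \R_{E_i}(\mc{M})$ constructed in Section \ref{S:KD}. The long exact sequence \eqref{eq:esr}, combined with the fact that $\rho = \e(E_i,\mc{M})+\e(\mc{M},E_i) = \e(\mc{M},E_i) = \rho_i(\delta) = n$ (using that $E_i$ is $\E$-rigid and Proposition \ref{P:Brep}.(3) which makes $E_i$ minimally exceptional), shows that $\mc{R}$ has weight $r_i^{\max}(\delta)$. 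By Lemma \ref{L:LRrigid} the sum $E_i \oplus \mc{R}$ is $\E$-rigid; in particular $\mc{R}$ is rigid, so the principal component $\PC(r_i^{\max}(\delta))$ contains a rigid presentation, i.e.\ $r_i^{\max}(\delta)$ is rigid.

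Combining these two steps yields $R_i^{\max}(f) = n!\, C_{\gen}(r_i^{\max}(\delta))$, a nonzero integer multiple of a generalized cluster variable, and hence a generalized cluster variable in the sense used here. The argument for ${R_i^\star}^{\max}(f)$ is entirely parallel, using the dual boundary representation $E_i^\star$, the dual right mutation $\check{\R}_{E_i^\star}(\mc{M})$, and the second half of Lemma \ref{L:LRrigid}. The main obstacle is the rigidity preservation $r_i^{\max}(\delta)\in\trop(\Delta,\S)_{\mathrm{rigid}}$; everything else is bookkeeping once Theorem \ref{T:genericCR} and Remark \ref{r:perfect} are in hand.
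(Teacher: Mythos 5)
Your argument is correct and takes essentially the same route as the paper's proof: biperfectness of the generic basis (Theorem \ref{T:genericCR} with Remark \ref{r:perfect}) gives $R_i^{\max}(C_{\gen}(\delta))=n!\,C_{\gen}(r_i^{\max}(\delta))$, and Lemma \ref{L:LRrigid} applied to the right mutation gives rigidity of $r_i^{\max}(\delta)$. The only differences are harmless elaborations: you track the scalar $n!$ explicitly (the paper suppresses it) and you justify via \eqref{eq:esr} why the right mutation has weight $r_i^{\max}(\delta)$ (note, though, that $\e(E_i,\mc{M})=0$ follows from Proposition \ref{P:Brep}.(2) and $\mu$-supportedness of $\delta$, not from $\E$-rigidity of $E_i$).
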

\begin{proof} Since generic bases are biperfect, we have that 
	$R_i^{\max}(C_{\gen}(\delta)) = C_{\gen}(r_i^{\max}(\delta))$.
	It follows from Lemma \ref{L:LRrigid} that $r_i^{\max}(\delta)$ is rigid.
	Hence, $R_i^{\max}(f(\b{x}))$ is a generalized cluster variable.
\end{proof}

Recall that a {\em normal crystal} is a disjoint union of crystals, each of which is isomorphic to the one underlying some integrable highest-weight representation of a fixed Kac-Moody Lie algebra $\mf{g}$.
Such a crystal is also called a $\mf{g}$-crystal.
\begin{corollary} \label{C:normal} Assume that we are in the situation of Theorem \ref{T:crystal} (resp. Theorem \ref{T:upper}). 
	The crystal structure we obtained is in fact a (resp. upper) normal crystal.
\end{corollary}
\begin{proof}
	By Theorem \ref{T:normalUCA}, in the situation of Theorem
	\ref{T:crystal} the algebra $\uca(\Delta)$ is an integrable
	$U(\mf g)$-module.  By Theorem \ref{T:genericCR}, the generic basis is
	BK-perfect and induces the crystal of Theorem \ref{T:crystal}.  Hence
	normality follows from the standard theorem for perfect bases
	\cite[Main Theorem 5.55]{BK}.  The upper normal statement follows
	similarly from Theorem \ref{T:crystalUCA}, the $R_i$-part of Theorem
	\ref{T:genericCR}, and \cite[Main Theorem 5.37]{BK}.
\end{proof}

\section{Biperfect Bases} \label{S:biperfect}
\subsection{All BK-biperfect Bases}
We are going to give a description of all BK-biperfect bases for a fixed $\uca(\Delta)$.
Let us recall the string order introduced in \cite{B}.
Given $(b',b'')$ and $(c',c'')$ in $\mc{B}\times \mc{B}$, we write $(b',b'')\approx (c',c'')$ if one of the following two conditions holds: \begin{enumerate}
	\item There is $i\in I$ such that $\rho_i^{(\star)}(b')=\rho_i^{(\star)}(b'')$ and $(c', c'')=(l_i^{(\star)}(b'), l_i^{(\star)}(b''))$.
	\item There is $i\in I$ such that $\rho_i^{(\star)}(b')=\rho_i^{(\star)}(b'')>0$ and $(c', c'')=(r_i^{(\star)}(b'), r_i^{(\star)}(b''))$.
\end{enumerate}

\begin{definition}[\cite{B}] \label{D:strorder} Given $(b',b'')\in \mc{B}\times \mc{B}$, we write $b'\preceq_{\str} b''$ if $\wt(b')=\wt(b'')$ and for any finite sequence of valid moves
	$(b',b'')=(b_0',b_0'')\approx (b_1',b_1'') \approx \cdots\approx (b_\ell',b_\ell''),$
	one has $\rho_i(b_\ell')\leq \rho_i(b_\ell'')$ for each $i\in I$.
\end{definition}

\begin{lemma}[\cite{B}] \label{L:crtri} \begin{enumerate}
		\item The relation $\preceq_{\str}$ is an order on a crystal.
		\item The transition matrix between two BK-biperfect bases is lower unitriangular w.r.t. the order $\preceq_{\str}$.
	\end{enumerate}
\end{lemma}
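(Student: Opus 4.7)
Since this result is attributed to \cite{B}, the plan is to adapt Baumann's argument for $\mc{B}(\infty)$ to our setting, where the only input needed is the defining property \eqref{eq:perfectRi} of BK-biperfect bases and the formal structure of the crystal. I would treat the two parts separately.

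For part (1), reflexivity is trivial, so the issue is antisymmetry and transitivity. For transitivity, suppose $b' \preceq_{\str} b''$ and $b'' \preceq_{\str} b'''$. Given any sequence of valid moves starting from $(b',b''')$, I would observe that at each step the prescribed move is either an $l_i^{(\star)}$-move (which requires only $\rho_i^{(\star)}(b_k') = \rho_i^{(\star)}(b_k''')$) or an $r_i^{(\star)}$-move (which in addition requires this common value to be positive); in either case, applying the two given string-order inequalities forces $\rho_i^{(\star)}(b_k') = \rho_i^{(\star)}(b_k'') = \rho_i^{(\star)}(b_k''')$, so the same sequence of moves is valid on $(b',b'')$ and on $(b'',b''')$ and the desired inequality $\rho_i(b_\ell') \le \rho_i(b_\ell''')$ follows by composing the two hypotheses. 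For antisymmetry, if both relations hold, the same reasoning shows that $\rho_i^{(\star)}$ values agree on $b'$ and $b''$ after every valid sequence of moves; one then concludes $b' = b''$ by induction on the $\wt$-height (i.e.\ on $\sum_i \rho_i(b') + \sum_i \rho_i^\star(b')$), using $r_i$ or $r_i^\star$ to reduce to the case where both sides satisfy $\rho_i^{(\star)} = 0$ for all $i$, where $\wt(b') = \wt(b'')$ combined with normality forces $b' = b'' = 0$ (or another canonical minimal element).

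For part (2), let $\Bup$ and $\Bup'$ be two BK-biperfect bases indexed by $\mc{B}$, and write $\Bup'(\delta) = \sum_{\eta} c_{\delta,\eta} \Bup(\eta)$. The plan is to induct on $\wt(\delta)$ (which forces $c_{\delta,\eta} = 0$ unless $\wt(\eta) = \wt(\delta)$, by bihomogeneity) and to show simultaneously that $c_{\delta,\delta} = 1$ and $c_{\delta,\eta} \ne 0$ implies $\eta \preceq_{\str} \delta$. The base case is $\delta = 0$, handled by the normalization $\Bup(0) = \Bup'(0) = 1$. For the induction step, pick any $i \in I$ with $\rho_i^{(\star)}(\delta) > 0$, apply $R_i^{(\star)}$ to both sides, and expand using \eqref{eq:perfectRi} for both bases. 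Comparing coefficients of $\Bup(r_i^{(\star)}(\delta))$ (whose weight is strictly smaller) yields $c_{\delta,\delta} = c_{r_i^{(\star)}(\delta), r_i^{(\star)}(\delta)}$, which equals $1$ by induction. For off-diagonal terms $c_{\delta,\eta}$, the comparison shows that whenever $\rho_i^{(\star)}(\eta) \ge \rho_i^{(\star)}(\delta)$ the corresponding recursion on the $\Bup$-side forces the coefficient to match one attached to a smaller weight, where by induction nonvanishing already implies $\preceq_{\str}$; peeling off the appropriate $r_i^{(\star)}$-moves whenever $\rho_i^{(\star)}$ values coincide then realizes the string-order chain witnessing $\eta \preceq_{\str} \delta$.

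The main obstacle, as in Baumann's original proof, is handling the bookkeeping in part (2): one must keep track of the two families of operators $R_i$ and $R_i^\star$ simultaneously, and the valid moves in the string order mix both. The induction must be set up carefully so that at each stage we are comparing coefficients in a weight stratum where the inductive hypothesis applies, and that the conclusion $\eta \preceq_{\str} \delta$ is built up by concatenating the moves chosen during the induction. Once this organization is clear, the rest is a direct unwinding of \eqref{eq:perfectRi}.
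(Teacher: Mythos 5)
First, note that the paper itself offers no proof of this lemma: it is imported wholesale from Baumann's paper \cite{B}, whose setting is the bicrystal $\mc{B}(\infty)$. Your reconstruction of transitivity is sound — the sandwich $\rho_i^{(\star)}(b_k')\le\rho_i^{(\star)}(b_k'')\le\rho_i^{(\star)}(b_k''')$ does force every move valid on $(b',b''')$ to be valid on both intermediate pairs, and composing the inequalities finishes the argument. The genuine gap is in your antisymmetry base case. You reduce to elements with all $\rho_i^{(\star)}=0$ and assert that ``$\wt(b')=\wt(b'')$ combined with normality forces $b'=b''$.'' That is false for a general normal crystal: a disjoint union of two copies of $B(\lambda)$ has two distinct highest-weight elements of the same weight, each killed by every $r_i$, and (depending on the $\star$-structure) nothing in the definition of $\preceq_{\str}$ separates them. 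Baumann's argument uses the specific fact that $\mc{B}(\infty)$ has a \emph{unique} element annihilated by all $r_i$; in the present setting one needs the analogous uniqueness statement for the bicrystal $\trop(\Delta,\S)$ (or must restrict the claim to crystals with that property), and this is exactly the point your sketch does not supply. You should also make explicit that the reduction preserves $b'\ne b''$, which follows because $r_i$ is injective on $\{x:\rho_i(x)>0\}$ (its inverse there is $l_i$).

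In part (2) the overall strategy is right, but the step that actually forces $\rho_i^{(\star)}(\eta)\le\rho_i^{(\star)}(\delta)$ for every $\eta$ in the $\Bup$-support of $\Bup'(\delta)$ is not ``comparing coefficients of $\Bup(r_i^{(\star)}(\delta))$.'' The correct mechanism is: let $n$ be the maximal value of $\rho_i$ on the support and apply $R_i^{n}$; by \eqref{eq:perfectRi} only the terms with $\rho_i(\eta)=n$ survive, and since $r_i^{n}$ is injective on $\{\rho_i=n\}$ the surviving basis elements $\Bup(r_i^{n}(\eta))$ are distinct and cannot cancel, so $R_i^{n}(\Bup'(\delta))\ne 0$ and hence $\rho_i(\delta)\ge n$. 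Once that is in place, the propagation $c_{\delta,\eta}=c_{r_i^{(\star)}(\delta),\,r_i^{(\star)}(\eta)}$ when the string lengths agree (and its inverse reading for $l_i^{(\star)}$-moves, using $\rho_i(l_i x)=\rho_i(x)+1$) does yield stability of the support under all valid moves and hence $\eta\preceq_{\str}\delta$, as you indicate. With the base-case uniqueness supplied and the maximal-string-length argument spelled out, your outline matches Baumann's proof.
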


To simplify the notation, we write $\eta \llcurly_{\rho} \delta$ if $\rho_i(\eta)<\rho_i(\delta)$ and $\rho_i^\star(\eta)<\rho_i^\star(\delta)$ for each $i\in I$.
We also write $\eta \preceq_{\rho} \delta$ if $\rho_i(\eta)\leq \rho_i(\delta)$ and $\rho_i^\star(\eta)\leq \rho_i^\star(\delta)$ for each $i\in I$.
The set $\{\eta\mid \eta\llcurly_{\rho} \delta\}$ contains lattice points in a (not necessarily bounded) polyhedral set by Theorem \ref{T:HomE}. We denote this polyhedral set by ${\sf R}(\delta)$.
It is clear that $\llcurly_{\rho}$ implies $\preceq_{\str}$ and $\preceq_{\str}$ implies $\preceq_{\rho}$.
The set $\{\eta\mid \eta\preceq_{\str} \delta\}$ is also given by polyhedral conditions.
But it is hard to write down all conditions explicitly.
%


\begin{theorem} \label{T:allcr} Suppose that ${\Bup}$ is a BK-biperfect basis of $\uca(\Delta)$ indexed by a crystal $\mc{B}$.
	Then any BK-biperfect basis $\Bup'$ of $\uca(\Delta)$ has the following form
	\begin{equation}\label{eq:crtriangular} \Bup'(\delta) = {\Bup}(\delta)+\sum_{\eta \llcurly_{\rho} \delta}  a_{\delta,\eta}{\Bup}(\eta) + \sum_{\eta \preceq_{\str} \delta,\ \eta \not\llcurly_{\rho} \delta}  b_{\delta,\eta}{\Bup}(\eta)
	\end{equation}
	such that $\wt(\eta)=\wt(\delta)$ and $b_{\delta,\eta} = b_{r_i^{(\star)}(\delta), r_i^{(\star)}(\eta)}$ if $\rho_i^{(\star)}(\eta)=\rho_i^{(\star)}(\delta)$.
	Moreover, for a fixed $\delta$, the $\eta$'s in either summation are lattice points in some polyhedral set.
\end{theorem}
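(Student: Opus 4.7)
The plan is to bootstrap from Baumann's string-order triangularity (Lemma~\ref{L:crtri}.(2)), which already gives
$$\Bup'(\delta)=\Bup(\delta)+\sum_{\eta\prec_{\str}\delta} c_{\delta,\eta}\Bup(\eta).$$
Since $\preceq_{\str}$ requires $\wt(\eta)=\wt(\delta)$ by definition (and symmetrically preserves $\wt^\star$), bihomogeneity of the two bases is built in automatically. I would then partition the index set into the ``deep'' part $\eta\llcurly_{\rho}\delta$, where the coefficients are renamed $a_{\delta,\eta}$ and no internal constraint is imposed, and the ``boundary'' part where $\eta\preceq_{\str}\delta$ but $\eta\not\llcurly_{\rho}\delta$, where the coefficients are renamed $b_{\delta,\eta}$. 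Because $\preceq_{\str}$ implies $\preceq_{\rho}$, the boundary condition forces at least one $i\in I$ and one choice of decorated/undecorated operator with $\rho_i^{(\star)}(\eta)=\rho_i^{(\star)}(\delta)>0$; this is precisely the index on which the identity $b_{\delta,\eta}=b_{r_i^{(\star)}\delta,\,r_i^{(\star)}\eta}$ will be extracted.

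The functional equation itself drops out from applying $R_i^{(\star)}$ to the expansion above and comparing $\Bup(r_i^{(\star)}\eta)$-coefficients on both sides. Set $\rho=\rho_i^{(\star)}(\delta)$. Relation \eqref{eq:perfectRi} shows that any summand $\Bup(\zeta)$ with $\rho_i^{(\star)}(\zeta)<\rho$ contributes $R_i^{(\star)}$-terms strictly below $\rho_i^{(\star)}$-level $\rho-1$, so the level-$(\rho-1)$ piece of $R_i^{(\star)}\Bup'(\delta)$ is determined solely by those summands with $\rho_i^{(\star)}(\zeta)=\rho$. On the one hand, $b_{\delta,\eta}R_i^{(\star)}\Bup(\eta)$ contributes $\rho\,b_{\delta,\eta}\Bup(r_i^{(\star)}\eta)$; on the other hand, applying \eqref{eq:perfectRi} to $\Bup'$ gives $R_i^{(\star)}\Bup'(\delta)=\rho\,\Bup'(r_i^{(\star)}\delta)+(\text{strictly lower}),$ and expanding $\Bup'(r_i^{(\star)}\delta)$ in the $\Bup$-basis yields $\rho\,b_{r_i^{(\star)}\delta,\,r_i^{(\star)}\eta}\Bup(r_i^{(\star)}\eta)$, where the relevant coefficient is of $b$-type because $\rho_i^{(\star)}(r_i^{(\star)}\eta)=\rho_i^{(\star)}(r_i^{(\star)}\delta)$. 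Cancelling $\rho\neq 0$ produces the claimed identity.

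For the polyhedrality statement, Theorem~\ref{T:HomE} identifies $\rho_i(\eta)=\fc_{E_i}(-\eta)$ and $\rho_i^{\star}(\eta)=f_{E_i^{\star}}(\eta)$ as (dual) tropical $F$-polynomials, hence as piecewise-linear functions given by maxima of finitely many linear forms in $\eta$. Combined with the linear constraint $\wt(\eta)=\wt(\delta)$, the strict inequalities cutting out the $a$-index set define an open polyhedral region, and relaxing one strict inequality to an equality (which is precisely what the $b$-side requires) preserves polyhedrality. The main obstacle will be the careful level-by-level bookkeeping in the middle paragraph: one must verify symmetrically on both sides of the $R_i^{(\star)}$-comparison that no lower-$\rho_i^{(\star)}$ debris spills into level $\rho-1$, and that the expansion of $\Bup'(r_i^{(\star)}\delta)$ contributes only $b$-type coefficients at the relevant level. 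Both facts follow from the sharp gap $\rho_i^{(\star)}(\eta')<\rho_i^{(\star)}(\zeta)-1$ in \eqref{eq:perfectRi}, but aligning them cleanly on both sides is where the real technical care will be required.
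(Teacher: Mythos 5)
Your proposal follows essentially the same route as the paper's proof: start from Baumann's string-order unitriangularity (Lemma \ref{L:crtri}), apply $R_i^{(\star)}$ to the expansion, isolate the summands with $\rho_i^{(\star)}(\eta)=\rho_i^{(\star)}(\delta)$ (the lower ones all land strictly below level $\rho_i^{(\star)}(\delta)-1$ by \eqref{eq:perfectRi}), and compare the $\Bup(r_i^{(\star)}(\eta))$-coefficients against the expansion of $\Bup'(r_i^{(\star)}(\delta))$ to force $b_{\delta,\eta}=b_{r_i^{(\star)}(\delta),\,r_i^{(\star)}(\eta)}$, with polyhedrality coming from Theorem \ref{T:HomE}. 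The one piece the paper includes that you omit is the converse check that any expression of the form \eqref{eq:crtriangular} satisfying these constraints is again BK-biperfect (which is what upgrades the statement to a complete parametrization), but for the implication as literally stated your argument coincides with the paper's.
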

\begin{proof} If $\Bup'$ is BK-biperfect, then by Lemma \ref{L:crtri} we can write
	$$\Bup'(\delta) = {\Bup}(\delta)+\sum_{\eta \preceq_{\str} \delta}  a_{\delta,\eta}{\Bup}(\eta).$$
	Applying the derivation $R_i$ to both sides, we get
	\begin{align} R_i({\Bup'}(\delta)) &= R_i({\Bup}(\delta)) + \sum_{\eta \preceq_{\str} \delta}  a_{\delta,\eta} R_i({\Bup}(\eta)) \notag \\
		\label{eq:expansionRi}	&= \rho_i(\delta)\Bup(r_i(\delta)) + \sum_{\rho_i(\delta') < \rho_i(\delta)-1} a_{\delta,\delta'}^i \Bup(\delta') + \sum_{\eta \preceq_{\str} \delta}  a_{\delta,\eta} \Big(\rho_i(\eta){\Bup}(r_i(\eta)) + \sum_{\rho_i(\eta') < \rho_i(\eta)-1} a_{\eta,\eta'}^i \Bup(\eta') \Big). 
	\end{align}	
	We see that all basis elements indexed by $\delta'$ and $\eta'$ in \eqref{eq:expansionRi} have $i$-th string length less than $\rho_i(\delta)-1$. We split the terms involving $\eta$ into two parts:
	$$\sum_{\eta \preceq_{\str} \delta}  \rho_i(\eta)  a_{\delta,\eta} {\Bup}(r_i(\eta)) = \sum_{\eta\prec_{\rho_i} \delta}  \rho_i(\eta) a_{\delta,\eta} {\Bup}(r_i(\eta)) + \rho_i(\delta) \sum_{\eta \preceq_{\str} \delta,\ \eta =_{\rho_i} \delta}  b_{\delta,\eta} {\Bup}(r_i(\eta)).$$
	The BK-perfectness of $\Bup'$ implies that 
	all terms in $\sum_{\eta \preceq_{\str} \delta,\ \eta =_{\rho_i} \delta}  b_{\delta,\eta} {\Bup}(r_i(\eta))$ must appear in the expansion of $\Bup'(r_i(\delta))$ in $\Bup$.
	Hence, the condition that $b_{\delta,\eta} = b_{r_i(\delta), r_i(\eta)}$ if $\rho_i(\eta)=\rho_i(\delta)$ is necessary.
	Note that applying $r_i$ to such a pair $(\delta,\eta)$ is a valid move in Definition \ref{D:strorder}, so the relation $b_{\delta,\eta}=b_{r_i(\delta),r_i(\eta)}$ would not contradict the expansion for $\Bup'(r_i(\delta))$.
	When we consider over all $i\in I$ for $R_i$ and $R_i^\star$, we conclude that $\Bup'(\delta)$ has the desired form.
	
	Conversely, suppose that $\Bup'$ has the form \eqref{eq:crtriangular}. We apply $R_i$ to it and get
	\begin{align*} R_i(\Bup'(\delta)) &= R_i({\Bup}(\delta)) + \sum_{\eta \preceq_{\str} \delta,\ \eta \not\llcurly_{\rho} \delta}  b_{\delta,\eta} R_i( {\Bup}(\eta) )\ +\ \op{lower}\\
		&= \rho_i(\delta)\Bup(r_i(\delta)) + \sum_{\eta \preceq_{\str} \delta,\ \rho_i(\eta)=\rho_i(\delta)}  b_{\delta,\eta} \rho_i(\eta){\Bup}(r_i(\eta)) \  +\ \op{lower} \\
		&= \rho_i(\delta)\Big( \Bup(r_i(\delta)) + \sum_{\eta \preceq_{\str} \delta,\ \rho_i(\eta)=\rho_i(\delta)}  b_{r_i(\delta),r_i(\eta)} {\Bup}(r_i(\eta))  \Big)\ +\ \op{lower}\\
		&= \rho_i(\delta) \Bup'(r_i(\delta))\  +\ \op{lower}.
	\end{align*}
	Here, ``lower" is annihilated by $R_i^{\rho_i(\delta)}$. Hence $\Bup'$ is BK-biperfect.
\end{proof}
\begin{remark} If we view the valid move as an equivalence relation, then we can rephrase the condition $b_{\delta,\eta} = b_{r_i(\delta), r_i(\eta)}$ if $\rho_i(\eta)=\rho_i(\delta)$ as
	$b_{\delta,\eta} = b_{\delta',\eta'}$ whenever $[(\delta,\eta)] = [(\delta',\eta')]$.
\end{remark}
\begin{remark} We note that when $\uca(\Delta)=\k[U]$, Theorem \ref{T:allcr} answers a question of J. Kamnitzer \cite[Question 1.14]{Ka2}.
\end{remark}

Let $\sigma=(\mub, \pi)$ be a cluster automorphism of $\Delta$ (Definition \ref{D:Cauto}). 
Then we have an algebra automorphism $\mc{L}_{\b{x}} \to \mc{L}_{\mub(\b{x})}$ induced by $x_{v} \mapsto \mub(x_{\pi(v)})$.
Since $\pi \mub(\Delta) = \Delta$ or $\Delta^{\opp}$, this automorphism is compatible with mutations.
Hence, it induces an automorphism of $\uca(\Delta,\b{x})$, still denoted by $\sigma$.
We say $\Bup$ is $\sigma$-invariant if $\Bup(\sigma(\delta)) = \sigma(\Bup(\delta))$.
\begin{corollary}\label{C:sigmacr} Suppose that ${\Bup}$ is a $\sigma$-invariant BK-biperfect basis of $\uca(\Delta)$ indexed by a crystal $\mc{B}$.
Then any $\sigma$-invariant BK-biperfect basis $\Bup'$ of $\uca(\Delta)$ has the form \eqref{eq:crtriangular} with the additional constraints $a_{\delta,\eta} = a_{\sigma(\delta), \sigma(\eta)}$ and $b_{\delta,\eta} = b_{\sigma(\delta), \sigma(\eta)}$.
\end{corollary}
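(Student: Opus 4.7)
The plan is to apply the algebra automorphism $\sigma$ to the expansion of $\Bup'(\delta)$ provided by Theorem \ref{T:allcr}, use $\sigma$-invariance on both sides to rewrite it as an expansion of $\Bup'(\sigma(\delta))$ in the basis $\Bup$, and then compare with the expansion of $\Bup'(\sigma(\delta))$ given directly by Theorem \ref{T:allcr}. The uniqueness of expansions in a basis will force the constraints on the coefficients.

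First I would write down, by Theorem \ref{T:allcr},
\begin{equation*}
\Bup'(\delta) = \Bup(\delta) + \sum_{\eta \llcurly_\rho \delta} a_{\delta,\eta}\Bup(\eta) + \sum_{\eta \preceq_{\str} \delta,\ \eta \not\llcurly_\rho \delta} b_{\delta,\eta}\Bup(\eta),
\end{equation*}
apply the $\k$-linear automorphism $\sigma$ to both sides, and use $\sigma$-invariance of both $\Bup$ and $\Bup'$ to rewrite as
\begin{equation*}
\Bup'(\sigma(\delta)) = \Bup(\sigma(\delta)) + \sum_{\eta \llcurly_\rho \delta} a_{\delta,\eta}\Bup(\sigma(\eta)) + \sum_{\eta \preceq_{\str} \delta,\ \eta \not\llcurly_\rho \delta} b_{\delta,\eta}\Bup(\sigma(\eta)).
\end{equation*}
On the other hand, Theorem \ref{T:allcr} applied directly at the index $\sigma(\delta)$ gives an expansion of $\Bup'(\sigma(\delta))$ whose coefficient of $\Bup(\sigma(\eta))$ is $a_{\sigma(\delta),\sigma(\eta)}$ or $b_{\sigma(\delta),\sigma(\eta)}$ depending on which polyhedral region $\sigma(\eta)$ lies in with respect to $\sigma(\delta)$.

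The crux is then to verify that the change of variable $\eta \mapsto \sigma(\eta)$ takes the summation indexing set $\{\eta \llcurly_\rho \delta\}$ bijectively to $\{\eta'\llcurly_\rho \sigma(\delta)\}$, and similarly for $\preceq_{\str}$, so that the two expansions involve precisely the same set of basis vectors $\Bup(\sigma(\eta))$. In the direct case, this is immediate from Corollary \ref{C:directauto}, which shows that $\sigma$ intertwines the entire crystal cluster structure via $(r_i,\rho_i,\wt_i) \mapsto (r_{\pi(i)},\rho_{\pi(i)},\pi\mub(\wt_i))$, so both the string length functions (up to relabeling by $\pi$) and the weight function are respected, and hence so are $\llcurly_\rho$ and $\preceq_{\str}$. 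In the opposite case, Corollary \ref{C:Kmap} shows that $\sigma$ (viewed via the generalized Kashiwara map) interchanges $\rho_i$ and $\rho_{\pi(i)}^\star$ as well as $r_i$ and $r_{\pi(i)}^\star$; since $\llcurly_\rho$ and $\preceq_{\str}$ are both defined symmetrically in the pair $(r_i,r_i^\star)$ and in $(\rho_i,\rho_i^\star)$, this symmetric swap (composed with the permutation $\pi$) again preserves both relations, and preservation of the weight function is assured by Lemma \ref{L:muwt}. Once this invariance of the indexing sets is established, the uniqueness of expansions in the basis $\Bup$ gives the identifications $a_{\delta,\eta} = a_{\sigma(\delta),\sigma(\eta)}$ and $b_{\delta,\eta} = b_{\sigma(\delta),\sigma(\eta)}$.

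The main obstacle is the opposite case: one must carefully check that the symmetric definitions of $\llcurly_\rho$ and $\preceq_{\str}$ in $(\rho_i,\rho_i^\star)$ and $(r_i,r_i^\star)$ really do make them invariant under the dual swap induced by an opposite cluster automorphism. Unpacking Definition \ref{D:strorder}, this reduces to the observation that the set of valid moves $\approx$ is closed under interchanging the two crystal structures, which follows from Corollary \ref{C:Kmap}. No other subtleties should arise, since all remaining arguments are formal linear algebra.
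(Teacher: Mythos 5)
Your argument is correct and is precisely the transport-of-structure argument the paper intends (the corollary is stated without proof as an immediate consequence of Theorem \ref{T:allcr}): apply the algebra automorphism $\sigma$ to the expansion of $\Bup'(\delta)$, use $\sigma$-invariance of both bases, and match coefficients against the direct expansion of $\Bup'(\sigma(\delta))$ by uniqueness of basis expansions. You also correctly isolate the one point that needs checking — that $\sigma$ carries the indexing sets $\{\eta \llcurly_{\rho}\delta\}$ and $\{\eta\preceq_{\str}\delta\}$ onto the corresponding sets for $\sigma(\delta)$ — and your justification via Corollary \ref{C:directauto} in the direct case and the starred/unstarred symmetry of both relations (via Corollary \ref{C:Kmap}) in the opposite case is sound.
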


One point following from Theorem \ref{T:allcr} is that there are excessively many BK-biperfect bases from the point of view of cluster algebras.
We can easily construct examples in which some cluster variables are not in a BK-biperfect basis. But such examples for $\k[U]$ are not trivial.
We found the following example with the help of the software Normaliz \cite{BI}.

\begin{example} \label{ex:SL5} Consider the cluster algebra of $\k[U]$ for $G=\SL_5$ with the following seed $$\UAfour$$
	The dashed arrows are not the actual arrows but indicate the number $\e(\mc{E}_i^\mu,\mc{E}_j^\mu)$ and thus the Cartan type.
	The boundary and dual boundary representations are (uniserial) path modules in blue and red respectively.
	Then it is easy to check using Theorem \ref{T:musupp} and Theorem \ref{T:HomE} that 
	\begin{align*} \delta&=(2,0,-2,-2,2,0,-1,-2,-1,0), \quad \text{and}\\
		\delta'&=(0,0,0,0,0,0,0,-3,0,-1)
	\end{align*}
	are $\mu$-supported with $\wt(\delta)=\wt(\delta')=(1,3,3,1)$ in fundamental weight basis, and
	\begin{align*} (\rho(\delta);\rho^\star(\delta))&=(1,4,1,2;1,4,1,2), \quad \text{ and}\\
		(\rho(\delta');\rho^\star(\delta'))&=(0,3,0,1;0,3,0,1).
	\end{align*}
	Here, $\rho = (\rho_7, \rho_8, \rho_9, \rho_{10})$ and $\rho^\star = (\rho_7^\star, \rho_8^\star,\rho_9^\star,\rho_{10}^\star)$.
	So $\delta'\llcurly_{\rho} \delta$.
	A similar example was constructed by Baumann as in \cite[Example 2.7.(ii)]{B}.
	
	Let us consider any BK-biperfect basis element indexed by the above $\delta$.
	According to Theorem \ref{T:allcr} there are infinitely many BK-biperfect basis elements indexed by $\delta$.
	However, the cluster algebra is of finite type, so cluster monomials form a basis. 
	Hence, there are BK-biperfect bases which do not contain some cluster monomials.
\end{example}

\subsection{Good Bases and Biperfect Bases}
A linear basis of $\uca(\Delta)$ indexed by $\trop(\Delta,\S)$ is a rather weak notion.
For one thing, additional orders from the cluster structure do not play a role here.
Let $t$ be a seed $(\Delta,\S)$.
We recall the {\em dominance order} $\prec_t$ on the lattice ${\sf M}_t\cong \mb{Z}^{\Delta_0}$ such that $\delta' \prec_t \delta$ if and only if $\delta' = \delta + \gamma B_{\Delta}$ for some $\mu$-supported dimension vector $\gamma$.
We write $\delta' \prec_{\mf{T}} \delta$ if $\delta' \prec_{t} \delta$ for all $t\in \mf{T}$.

Before we introduce the biperfect bases, let us first briefly review the good bases introduced by F. Qin \cite{Qb}.
As in \cite{Qb} we shall denote the equivalence class of $\delta$ under the tropical transform \eqref{eq:mug} by $[\delta]$,
and the set of all equivalence classes $[\delta]$ by ${\sf M}$.

\begin{definition}[\cite{Qt}] An element $z$ in $\mc{L}_{\b{x}_t}$ is called {\em pointed} at $\delta \in {\sf M}_t$ 
if it is of the form $\b{x}_t^{-\delta} F(\b{y}_t)$.
We say $z\in \uca(\Delta)$ is pointed at the tropical point $[\delta]$ if it is pointed at the representatives of $[\delta]$ at all $t\in\mf{T}$. In this case, $z$ is called compatibly pointed at $t\in \mf{T}$.
\end{definition}


In the theorem below (and only in the theorem below), we shall consider the localized upper cluster algebra $\uca(\Delta)_{\op{loc}}:= \bigcap_{t\in\mf{T}}\k[\b{x}_t^{\pm}].$
\begin{theorem}[{\cite[Theorem 1.2.1]{Qb}}] \label{T:good} Suppose that $B_\Delta$ has full rank and $t$ is an injective-reachable seed. Then \begin{enumerate}
		\item Any collection $\Bup =\{\Bup(\delta) \mid \Bup(\delta) \text{ is pointed at $[\delta]$, }\ \delta\in {\sf M}_t \}$
		must be a $\k$-basis of $\uca(\Delta)_{\op{loc}}$ containing all cluster monomials.
		\item There exists at least one such basis, say $\Bup = \{ \Bup(\delta) \}_{\delta\in {\sf M}_t}$.
		\item The set of all such bases $\Bup$ is parametrized by $\prod_{\delta \in {\sf M}_t} \k^{{\sf M}_t{_{\prec_{\mf{T}} \delta}} }$ for finite sets ${\sf M}_t{_{\prec_{\mf{T}} \delta}}$:
		\begin{align*} \left( (a_{\delta,\eta})_{\eta \prec_{\mf{T}} \delta} \right)_{\delta \in {\sf M}_t}  \mapsto  \Bup' = \Big\{\Bup'(\delta)= {\Bup}(\delta)+\sum_{\eta \prec_{\mf{T}} \delta}  a_{\delta,\eta}{\Bup}(\eta) \mid \delta \in {\sf M}_t \Big\}.
	\end{align*} \end{enumerate}
\end{theorem}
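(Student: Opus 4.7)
Since the theorem is cited to \cite[Theorem 1.2.1]{Qb}, the proof will ultimately reduce to quoting Qin's work, but I sketch the structure a direct proof would take along the lines of that paper. The plan is to establish the three statements in order, where (1) and (3) follow from a common triangularity principle once (2) is in hand, and (2) is witnessed by an explicit construction such as the generic basis $\{C_{\gen}(\delta)\}$ from Section~\ref{ss:genericB}.

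First I would set up the basic triangularity. The dominance order $\prec_t$ is the natural partial order controlling the monomial degrees of any pointed element at the seed $t$: if $z$ is pointed at $\delta$, then every other Laurent monomial appearing in $\b{x}_t^{\delta} z$ has degree $-\delta - \gamma B_\Delta$ with $\gamma$ a $\mu$-supported dimension vector, so its degree is $\prec_t \delta$. A collection $\Bup = \{\Bup(\delta)\}_{\delta\in{\sf M}_t}$ of elements pointed at distinct $[\delta]$'s is therefore linearly independent in $\mc{L}(\b{x}_t)$: any nontrivial linear relation, after choosing a maximal index $\delta$ with respect to $\prec_t$, would contradict the pointedness of $\Bup(\delta)$. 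Spanning of $\uca(\Delta)_{\op{loc}}$ comes by induction on the dominance order: given $z\in\uca(\Delta)_{\op{loc}}$ expressed in $\mc{L}(\b{x}_t)$, let $\delta$ be its maximal degree under $\prec_t$; subtracting an appropriate multiple of $\Bup(\delta)$ kills that term, and one iterates. The finiteness needed for the iteration is the statement that ${\sf M}_{t, \prec_{\mf{T}}\delta}$ (and a fortiori $\{\eta\mid \eta\prec_t\delta,\ z\text{ involves }\eta\}$) is finite, which is the crucial input from the full-rank assumption on $B_\Delta$ and the reachability of $t$.

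Next I would show that every cluster monomial $x$ is automatically compatibly pointed at every seed. This is essentially the $F$-polynomial / $\g$-vector formalism: for a cluster monomial at a seed $t'$, its expansion in any other cluster $\b{x}_t$ is of the form $\b{x}_t^{-\delta_t} F(\b{y}_t)$ with $F$ a polynomial in $\b{y}_t$, and the $\delta_t$ at different seeds are related by the tropical transformation \eqref{eq:mug}, so they represent a single class $[\delta]\in {\sf M}$. Hence the pointed-at-$[\delta]$ hypothesis of (1) forces cluster monomials into any such basis up to the unitriangular correction and, combined with linear independence, identifies them uniquely (they are pointed elements with no $\prec_{\mf{T}}$-smaller perturbation available, since any compatibly pointed element with the same leading degree as a cluster monomial must equal that cluster monomial modulo strictly smaller terms).

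For (2), existence, I would exhibit the generic basis $\{C_{\gen}(\delta)\}_{\delta\in\trop(\Delta,\S)}$ as a witness: by Lemma~\ref{L:Cmu} it is compatibly pointed at every seed, with pointed degree $-\delta$, and by the results of \cite{P,FW,Qb,Fr} (under the full-rank and reachability assumption) it is a basis. Finally for (3), I would use the triangularity from the first step: two bases $\Bup$ and $\Bup'$ both pointed at $[\delta]$ at $t$ differ by
\begin{equation*}
\Bup'(\delta)=\Bup(\delta)+\sum_{\eta\prec_{\mf{T}}\delta} a_{\delta,\eta}\Bup(\eta),
\end{equation*}
and the coefficients $a_{\delta,\eta}$ can be prescribed arbitrarily provided the sum makes sense, i.e.\ provided the set $\{\eta\prec_{\mf{T}}\delta\}$ intersects the indexing set in only finitely many points (so that the sum defines an element of $\uca(\Delta)_{\op{loc}}$). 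The main obstacle, and the crux of Qin's argument, is precisely the finiteness of ${\sf M}_{t,\prec_{\mf{T}}\delta}$ for each $\delta$: without this, neither the iterative spanning in (1) nor the parametrization in (3) would be well-defined, and it is where the hypotheses of full rank of $B_\Delta$ and injective-reachability of $t$ are essentially used. Once this finiteness is in hand, the map in (3) is well-defined, injective by the linear independence of pointed elements, and surjective by the triangularity.
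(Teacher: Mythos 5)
This statement is imported verbatim from Qin's paper (\cite[Theorem 1.2.1]{Qb}); the present paper gives no proof of it, so there is no in-paper argument to compare against. Your sketch is a faithful reconstruction of the standard argument: unitriangularity of pointed elements with respect to the dominance order gives linear independence, spanning follows by downward induction once one knows the finiteness of ${\sf M}_t{}_{\prec_{\mf{T}}\delta}$ (which you correctly identify as the crux and the place where full rank and injective-reachability enter), and the parametrization in (3) is the same triangularity read in reverse. One small caveat: invoking the generic basis as the witness for part (2) is slightly off from Qin's actual existence argument and risks circularity in the context of this paper, since the assertion that $C_{\gen}$ yields a basis is itself attributed in part to \cite{Qb}; Qin's existence proof is independent of the generic basis. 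Also, for the claim that the basis \emph{contains} (not merely spans) the cluster monomials, the decisive point is that a cluster monomial, written in its own seed, is a single Laurent monomial and therefore admits no nontrivial $\prec_{\mf{T}}$-lower correction --- you gesture at this but it deserves to be said explicitly.
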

\noindent Any collection $\Bup$ in Theorem \ref{T:good} is called a {\em good} basis of $\uca(\Delta)_{\op{loc}}$.
If such a basis exists in the non-localized $\uca(\Delta)$, we also call it good.

In the cluster algebra setting, it is natural to replace ``indexed by $\trop(\Delta, \S)$" by ``pointed at $\trop(\Delta, \S)$" as in Definition \ref{D:biperfect}.
\begin{definition}\label{D:biperfect} We say a BK-biperfect basis $\Bup$ pointed at $t$
if each $\Bup(\delta)$ is pointed at $\delta \in \trop(\Delta,\S)_t$.	
A {\em biperfect basis} is a BK-biperfect basis that is compatibly pointed at every seed $t\in \mf{T}$.
\end{definition}
\noindent It follows from the definitions that a biperfect basis is automatically good.
Conversely, a good basis which is BK-biperfect at a particular seed $t$ is biperfect.
This follows from Remark \ref{r:BKperfect}.
We already know that generic bases are good bases. So they are biperfect by Theorem \ref{T:genericCR}.

\begin{lemma}\label{L:T2rho}  If $\eta \prec_{\mf{T}} \delta$, then $\eta \preceq_{\rho} \delta$.
\end{lemma}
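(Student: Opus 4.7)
The plan is to verify the two inequalities $\rho_i(\eta)\le\rho_i(\delta)$ and $\rho_i^\star(\eta)\le\rho_i^\star(\delta)$ for each $i\in I$ separately, in each case passing to a seed where the computation becomes transparent. Both functions are mutation-invariant by Lemma~\ref{L:HomEinv}, so we may evaluate each at whichever seed is convenient. Since every $i\in I$ is reachable, there exist seeds $t'$ and $t''$ in which the boundary representations $E_i$ and $E_i^\star$ respectively become the simple $S_i$.

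For the first inequality, at $t'$ the identity $E_i=S_i$ forces $i$ to be a sink in the mutable subquiver $\Delta^\mu[i]$ at $t'$, hence $B_{\Delta_{t'}}(u,i)\ge 0$ for every $u\in\Delta_0^\mu$. Theorem~\ref{T:HomE} applied to the reachable simple $S_i$ gives $\rho_i(\delta)=\e(\delta_{t'},S_i)=\fc_{S_i}(-\delta_{t'})=[-\delta_{t'}(i)]_+$. The hypothesis $\eta\prec_{t'}\delta$ supplies a $\mu$-supported $\gamma^{t'}\ge 0$ with $\eta_{t'}-\delta_{t'}=\gamma^{t'}B_{\Delta_{t'}}$; evaluating the $i$-th coordinate yields $\eta_{t'}(i)\ge\delta_{t'}(i)$, and therefore $\rho_i(\eta)\le\rho_i(\delta)$.

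For the second inequality the setup is dual at $t''$: $E_i^\star=S_i$ forces $i$ to be a source in $\Delta^\mu[i]$ at $t''$, so the $i$-th column of $B_{\Delta_{t''}}$ on mutable rows is non-positive. Theorem~\ref{T:HomE} applied with $M=S_i$ gives $\rho_i^\star(\delta)=\ec(S_i,\mc{M}_\delta^{t''})=f_{S_i}(-\dtc^{t''}_{\mc{M}_\delta})=[-\dtc^{t''}_{\mc{M}_\delta}(i)]_+$. Writing $\dtc^{t''}=\delta^{t''}+(\dv\mc{M}^{t''})B_{\Delta_{t''}}$ and using $\eta_{t''}-\delta_{t''}=\gamma^{t''}B_{\Delta_{t''}}$ yields
\[
\dtc^{t''}_{\mc{M}_\eta}(i)-\dtc^{t''}_{\mc{M}_\delta}(i)\;=\;\big(\gamma^{t''}+\dv\mc{M}_\eta^{t''}-\dv\mc{M}_\delta^{t''}\big)\cdot B_{\Delta_{t''}}(\,\cdot\,,i),
\]
so that the desired inequality reduces to the component-wise bound $\dv\mc{M}_\delta^{t''}\ge\gamma^{t''}+\dv\mc{M}_\eta^{t''}$ on $\Delta_0^\mu$, after which the sign of the $i$-th column of $B_{\Delta_{t''}}$ does the rest.

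The hard part will be establishing this last dimension-vector inequality, which is not a formal consequence of $\gamma^{t''}\ge 0$ alone. I plan to deduce it from the strong nature of $\prec_{\mf{T}}$: the relation $\eta^{t''}=\delta^{t''}+\gamma^{t''}B_{\Delta_{t''}}$ being simultaneously valid at every seed of $\mf{T}$ places $\gamma^{t''}$ inside the Newton polytope of the generic $F$-polynomial $\fc_{\mc{M}_\delta^{t''}}(\b{y})$, realizing $\gamma^{t''}$ as the dimension vector of a quotient $\mc{M}_\delta^{t''}\twoheadrightarrow N$ whose kernel specializes to $\mc{M}_\eta^{t''}$ via upper semicontinuity on $\PC(\eta^{t''})$ (in the spirit of Lemma~\ref{L:genrank}). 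An alternative, perhaps cleaner, route would be to transport the $\rho_i$ argument verbatim to the opposite ice quiver with potential $(\Delta,\S)^{\opp}$, using that $\rho_i^\star$ on $(\Delta,\S)$ corresponds to $\rho_i$ on $(\Delta,\S)^{\opp}$ and that $\prec_{\mf{T}}$ passes through the duality swapping $\delta$- and $\dtc$-vectors.
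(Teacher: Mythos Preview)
Your argument for $\rho_i(\eta)\le\rho_i(\delta)$ is essentially the paper's own proof: pass to a seed where $E_i=S_i$, read off $\rho_i(\delta)=[-\delta(i)]_+$, and use that the $i$-th column of $B_{\Delta}$ is nonnegative on mutable rows there. (The paper writes ``$b_i$ is negative'', a sign slip, but the logic is the same.)

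For $\rho_i^\star$ the paper's proof is actually silent---it treats only $\rho_i$ and leaves the starred case unaddressed---so you have gone beyond what the paper provides. Your reduction to the componentwise inequality $\dv M_\delta^{t''}\ge\gamma^{t''}+\dv M_\eta^{t''}$ on $\Delta_0^\mu$ is correct, but neither of your two proposed completions is a proof. The Newton-polytope route would need to realize $\gamma^{t''}$ as a quotient dimension vector of $M_\delta^{t''}$ whose kernel has weight exactly $\eta^{t''}$, and then bound the generic $\dv(\eta^{t''})$ by that kernel's dimension vector; none of these steps is justified, and the target inequality is genuinely false for $\eta\prec_t\delta$ at a single seed (on the Kronecker quiver take $\delta=(-1,0)$ and $\gamma=(1,0)$: then $M_\delta=0$ while $\gamma\neq 0$), so the full strength of $\prec_{\mf{T}}$ must enter in an essential way. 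The opposite-quiver route runs into the same obstacle: it identifies $\rho_i^\star$ with a $\rho_i$-type quantity in the $\dtc$-coordinates, but then you need $\eta\prec_{\mf{T}}\delta$ in $\delta$-coordinates to transfer to the analogous relation in $\dtc$-coordinates, and that transfer is equivalent to the very dimension-vector inequality you are trying to establish.
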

\begin{proof} Since both orders are mutation-invariant, we choose a seed in which $E_i$ is simple (thus $b_i$ is nonnegative).
	Then $\rho_i(\delta) = \e(\delta, E_i) = -\delta(i)$.
	Since $\eta \prec_{\mf{T}} \delta$, in particular $-\eta(i) = -\delta(i) - d b_i$ for some nonnegative vector $d$.
	Hence, $-\eta(i) \leq -\delta(i)$.
\end{proof}
\begin{remark}(1). In general, $\eta \prec_{\mf{T}} \delta$ cannot imply $\eta \llcurly_{\rho} \delta$.
	For this reason, a good basis is {\em not} automatically BK-biperfect because for those $\eta$ such that 
	$\eta \prec_{\mf{T}} \delta,\ \eta\preceq_{\str} \delta$ but $\eta \not\llcurly_{\rho} \delta$, the coefficient cannot be freely chosen according to Theorem \ref{T:allcr}. 
	
	(2). In general, the order $\eta \prec_{\mf{T}} \delta$ is not preserved by $r_i$ or $l_i$, even with the additional assumption that $\rho_i(\eta)=\rho_i(\delta)$.
	But we do not know if $\prec_{\mf{T}}$ would imply $\preceq_{\str}$.
\end{remark}
\begin{question} Does $\eta \prec_{\mf{T}} \delta$ imply $\eta \preceq_{\str} \delta$?
\end{question}

The following corollary is immediate from Theorems \ref{T:allcr} and \ref{T:good}.
\begin{corollary} \label{C:goodcr} Suppose that ${\Bup}$ is a biperfect basis of $\uca(\Delta)$.
	Then all biperfect bases of $\uca(\Delta)$ are of the following form:
	\begin{equation*} \Bup'(\delta) = {\Bup}(\delta)+\sum_{\eta\prec_{\mf{T}}\delta,\ \eta \llcurly_{\rho} \delta}  a_{\delta,\eta}{\Bup}(\eta) + \sum_{\eta\prec_{\mf{T}}\delta,\ \eta \preceq_{\str} \delta,\ \eta \not\llcurly_{\rho} \delta}  b_{\delta,\eta}{\Bup}(\eta)
	\end{equation*}
	satisfying $\wt(\eta)=\wt(\delta)$ and $b_{\delta,\eta} = b_{r_i(\delta), r_i(\eta)}$ if $\rho_i(\eta)=\rho_i(\delta)$.
\end{corollary}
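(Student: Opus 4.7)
The plan is to assemble the statement by intersecting the constraints imposed by Theorem \ref{T:good} (which parametrizes good bases) with those imposed by Theorem \ref{T:allcr} (which parametrizes BK-biperfect bases). Since a biperfect basis is simultaneously good and BK-biperfect, both sets of constraints must hold for the transition from $\Bup$ to any other biperfect basis $\Bup'$, and the main work is to show that their intersection is exactly the parametrization asserted.

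First I would invoke Theorem \ref{T:good}: since $\Bup$ and $\Bup'$ are both good (biperfect implies good by the remark immediately after Definition \ref{D:biperfect}), the transition matrix is lower unitriangular with respect to the dominance order $\prec_{\mf{T}}$, so
\begin{equation*}
\Bup'(\delta) = \Bup(\delta) + \sum_{\eta \prec_{\mf{T}} \delta} c_{\delta,\eta}\, \Bup(\eta)
\end{equation*}
for some scalars $c_{\delta,\eta}$. Then I would apply Theorem \ref{T:allcr}: because $\Bup'$ is also BK-biperfect, every nonzero $c_{\delta,\eta}$ must satisfy $\wt(\eta)=\wt(\delta)$ and $\eta \preceq_{\str} \delta$, and on the subset $\eta \not\llcurly_{\rho} \delta$ the coefficients are constrained by $c_{\delta,\eta} = c_{r_i^{(\star)}(\delta),\, r_i^{(\star)}(\eta)}$ whenever $\rho_i^{(\star)}(\eta) = \rho_i^{(\star)}(\delta)$. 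Splitting the sum according to whether $\eta \llcurly_{\rho} \delta$ or not then produces the displayed formula. I would note along the way that the weight condition $\wt(\eta) = \wt(\delta)$ is in fact automatic given $\eta \prec_{\mf{T}} \delta$, since a compatible grading by definition annihilates the row space of $B_\Delta$, and that Lemma \ref{L:T2rho} guarantees $\eta \preceq_\rho \delta$ in the relevant sums so no redundant conditions have been slipped in.

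For the converse direction — that every collection satisfying the displayed form and constraints is indeed a biperfect basis — I would observe that lower unitriangularity with respect to $\prec_{\mf{T}}$ of any transformation of this form preserves pointedness at every seed (so $\Bup'$ is good by Theorem \ref{T:good}.(1)), and that the constraints on $(a_{\delta,\eta}, b_{\delta,\eta})$ are exactly the ones shown in the proof of Theorem \ref{T:allcr} to guarantee BK-biperfectness. Combining good and BK-biperfect gives biperfect by Definition \ref{D:biperfect}.

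There is essentially no serious obstacle here; the corollary is the formal intersection of two classification results already proved. The one small point worth double-checking is that the constraint relation $b_{\delta,\eta} = b_{r_i^{(\star)}(\delta), r_i^{(\star)}(\eta)}$ remains well-defined after the extra restriction $\eta \prec_{\mf{T}} \delta$: one must verify that if $\eta \prec_{\mf{T}} \delta$ and $\rho_i^{(\star)}(\eta) = \rho_i^{(\star)}(\delta)$, then $r_i^{(\star)}(\eta) \prec_{\mf{T}} r_i^{(\star)}(\delta)$ as well, so that the relation does not link free coefficients to forbidden ones. This follows from the explicit formula $r_i(\delta) = \delta + \ep_i + \rank(\ep_i,\tau\delta) B(\Delta)$ in Theorem \ref{T:upper} together with the fact that the contribution from $\rank(\ep_i,\tau\delta) B(\Delta)$ lies in the row space of $B_\Delta$, so the difference $r_i(\delta) - r_i(\eta)$ still lies in the row space of $B_\Delta$ with $\mu$-supported multiplier, and hence the dominance order is preserved.
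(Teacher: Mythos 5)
Your main argument is exactly the paper's: the corollary is obtained by intersecting the parametrization of good bases from Theorem \ref{T:good} with the parametrization of BK-biperfect bases from Theorem \ref{T:allcr}, since a biperfect basis is by definition both. The paper treats this as immediate, and your fleshed-out version of the necessity direction (unitriangularity with respect to $\prec_{\mf{T}}$ from goodness, then the string-order support, weight, and coefficient constraints from BK-biperfectness) is correct.

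However, the claim in your final paragraph is false, and the paper says so explicitly: the remark immediately preceding the corollary states that ``in general, the order $\eta \prec_{\mf{T}} \delta$ is not preserved by $r_i$ or $l_i$, even with the additional assumption that $\rho_i(\eta)=\rho_i(\delta)$.'' Your argument for preservation only shows that $r_i(\delta)-r_i(\eta)$ lies in the row space of $B(\Delta)$; the dominance order additionally requires the multiplier to be a \emph{nonnegative} $\mu$-supported dimension vector at \emph{every} seed, and the difference of the two rank vectors $\rank(\ep_i,\tau\delta)-\rank(\ep_i,\tau\eta)$ need not be nonnegative. Fortunately this does not threaten the corollary: the relation $b_{\delta,\eta}=b_{r_i(\delta),r_i(\eta)}$ is inherited verbatim from Theorem \ref{T:allcr}, and when $r_i(\eta)\not\prec_{\mf{T}} r_i(\delta)$ the goodness of $\Bup'$ forces $b_{r_i(\delta),r_i(\eta)}=0$, hence $b_{\delta,\eta}=0$. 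The constraint system is therefore consistent (the generic basis satisfies it) but is \emph{not} a free parametrization on the displayed index sets --- some coefficients are implicitly forced to vanish --- which is precisely why the paper stops short of asserting a free parametrization here, in contrast to Theorem \ref{T:good}.(3). You should delete the preservation claim rather than rely on it.
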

\noindent Although the $\eta$'s in both summations are lattice points in some bounded polyhedral sets,
we have no explicit description for either of them.

\begin{remark} 	If $\uca(\Delta)$ admits some cluster automorphism $\sigma$, then we can also talk about $\sigma$-invariant biperfect bases. The formulation of all $\sigma$-invariant biperfect bases is straightforward as in Corollary \ref{C:sigmacr}.
\end{remark}

\begin{remark} 	We note that when $\uca(\Delta)=\k[U]$, Corollary \ref{C:goodcr} gives a solution 
to a problem of J. Kamnitzer \cite[Problem 6.8.(1)]{Ka2}. 
\end{remark}

\section{Examples} \label{S:example}
As shown in \cite{Fart, FW}, there is a potential $\S$ such that $(\Delta,\S)$ is {\em rigid} (and thus nondegenerate) for each of the examples below. Throughout this section, we do not need the explicit forms of these potentials.
\subsection{Unipotent Groups} \label{ss:unipotent} 
Let $Q$ be a Dynkin quiver, and $\br{Q}$ be its underlying graph.
Let $G$ be a simple, simply-connected, algebraic group over $\mb{C}$ of type $\br{Q}$, 
and $U$ be a maximal unipotent subgroup of $G$.
Recall from \cite{BFZ,GLSa} that $\k[U]$ is the upper cluster algebra of $\Delta_Q$,
where $\Delta_Q$ is the Auslander-Reiten quiver of $Q$ with translation.
It is well known that as a crystal $\k[U]$ is isomorphic to $\mc{B}(\infty)$.
For any quiver $Q$, we let $A_Q$ be its arrow matrix. 

\begin{proposition} \label{P:unipotent} Let $I$ be the set of all frozen vertices of $\Delta_Q$. 
	\begin{enumerate}
		\item There is a unique integral compatible grading adapted to $I$ given by 
		$$\wt_i = \dv E_i - \dv \tau^{-1} E_i + \dv \tau^{-2} E_i.$$
		\item $C_I(i,j) = -A_{\br{Q}}(i,j)$ for $i\neq j$ so the Cartan type of $I$ is given by the Cartan matrix of $\br{Q}$.
		\item There is an opposite cluster automorphism such that its associated Kashiwara map is the classical Kashiwara involution for $\mc{B}(\infty)$.
	\end{enumerate}
\end{proposition}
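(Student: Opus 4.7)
The plan is to exploit the well-understood structure of the Auslander-Reiten quiver $\Delta_Q$ for a Dynkin quiver $Q$, where the frozen vertices $i \in I$ are in bijection with the simple roots of $\br{Q}$ and the boundary representation $E_i$ (in the sense of Definition \ref{D:Brep}) corresponds to the indecomposable projective $kQ$-module $P_i$ extended by zeros to $\Delta_Q$ (see \cite{GLSa,BFZ}). I will handle parts (1) and (2) together by direct calculation on the AR-quiver, and treat part (3) separately via a twist-type opposite cluster automorphism.

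For part (2), I will first compute $\e_{J_\mu}(\mc{E}_i^\mu, \mc{E}_j^\mu)$ by combining Corollary \ref{C:HomEij} with Lemma \ref{L:E^mu}, which reduces the problem to reading off $-\ep_i(j)$ from the $\delta$-vector of $E_i = P_i$; in Dynkin type this coordinate equals the multiplicity of arrows $j \to i$ in $Q$. Summing the contributions in both directions then yields, via Definition \ref{D:CartanI}, the formula $c_{i,j} = 2\updelta_{i,j} - A_Q(i,j)$, which is precisely the $(i,j)$-entry of the Cartan matrix of $\br{Q}$.

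For part (1), I will first note that the formula is well-defined as an integer vector since $\tau^{-1}$ and $\tau^{-2}$ act on the category of decorated representations (Section \ref{ss:QPrep}), and in Dynkin type the representations $\tau^{-k} E_i$ eventually become shifted decorated projectives so the alternating sum stabilizes. Compatibility, i.e.\ annihilation of the row space of $B_{\Delta_Q}$, will be verified using Lemma \ref{L:epci}.(1): each $(\dv E) B_{\Delta}^{\t}$ equals $-\epc^\mu$ for the underlying $J_\mu$-representation, and the three terms telescope along the $\tau^{-1}$-orbit through the standard Auslander-Reiten identity on dimension vectors. The adaptation condition $\wt_i(\epc_j) = c_{i,j}$ will then follow by substitution into Lemma \ref{L:Cartan} together with the Ext computation from part (2). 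Uniqueness will be immediate from the fact that the Cartan matrix of a Dynkin quiver is nondegenerate: the adaptation equations determine $\wt$ on $\op{span}\{\epc_j\}_{j \in I}$, and combined with compatibility this pins $\wt$ down uniquely as an integral compatible grading.

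For part (3), I will invoke the Lusztig involution $\pi: i \mapsto i^*$ defined by $\alpha_{i^*} = -w_0(\alpha_i)$. The classical Kashiwara involution on $\mc{B}(\infty)$ is the bicrystal map induced by the anti-involution of $U$ (inverse-transpose), which in cluster-algebraic language is realized by an opposite cluster automorphism $\sigma = (\mub, \pi)$ whose mutation sequence $\mub$ implements the Berenstein-Fomin-Zelevinsky twist of the unipotent cell. Applying Corollary \ref{C:Kmap} to $\sigma$ then yields $r_i^\kappa = r_{\pi(i)}^\star$, which is exactly the defining relation of the classical Kashiwara involution on $\mc{B}(\infty)$. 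The main obstacle will be here in part (3): verifying explicitly that the twist mutation sequence $\mub$ is an opposite cluster automorphism in the sense of Definition \ref{D:Cauto} and that its permutation of frozen vertices coincides with the Lusztig involution. For classical types this reduces to an AR-quiver computation, but presenting a uniform argument across all Dynkin types will require either case analysis or an abstract appeal to the twist automorphism from the literature.
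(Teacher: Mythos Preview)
The paper does not actually give a proof of this proposition: it states that (1) was checked in \cite{Fart}, that (2) is a ``straightforward calculation,'' and that (3) is ``a result of an undergraduate research program to appear.'' So there is no argument in the paper against which to compare your proposal; what the paper does provide is Example \ref{ex:UD4}, which exhibits in type $D_4$ an explicit mutation sequence and transposition realizing the opposite cluster automorphism whose Kashiwara map is the classical Kashiwara involution.

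Your outline for (2) is on the right track via Corollary \ref{C:HomEij} and Lemma \ref{L:E^mu}, but be careful with the identification of $E_i$: the boundary representation is the projective $P_{[i]}$ of the Jacobian algebra $J_{\mu[i]}$ extended by zeros, not literally the projective $kQ$-module; you will need to relate the $\delta$-vector of $E_i$ (supported on the frozen vertices of $\Delta_Q$) to the arrows of $Q$ through the structure of the AR-quiver. Also, your formula $c_{i,j} = 2\updelta_{i,j} - A_Q(i,j)$ should read $2\updelta_{i,j} - A_Q(i,j) - A_Q(j,i)$ after summing both directions (the paper's own statement $C_I(i,j) = -A_Q(i,j)$ appears to be for $i\neq j$ with an implicit symmetrization, since $C_I$ is symmetric by definition).

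For (3), your plan matches the paper's intent: the opposite cluster automorphism $(\mub,\pi)$ with $\pi$ the Lusztig involution is exactly what Example \ref{ex:UD4} exhibits, and Corollary \ref{C:Kmap} is the right tool. You correctly flag the main obstacle: producing $\mub$ uniformly across Dynkin types is nontrivial, and the paper itself defers this to forthcoming work rather than giving an argument.
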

\noindent So by Theorem \ref{T:upper} $\trop(\Delta_Q, \S)$ has an upper seminormal crystal cluster structure of type $C_{\br{Q}}$.
In fact, it is isomorphic to $\mc{B}(\infty)$.
(1) is already checked in \cite{Fart}. We also checked there that the weight function $(\wt_i)_{i\in I}$ agrees with the natural grading given by the conjugate action of $T\subset G$ on $U$.
(2) can be checked by straightforward calculation.
(3) is a result of an undergraduate research program (to appear).

\begin{example}\label{ex:UD4} For $\br{Q}$ of type $D_4$, a corresponding $\Delta_Q$ is the following
	$$\UDfour$$
We can check that the sequence of mutations $(3,4,6,7,8,3,4)$ and the transposition $(2,5)$ yields an opposite cluster automorphism, which is equivalent to the algebra automorphism induced by taking the inverse. Hence, the associated Kashiwara map is the Kashiwara involution.
\end{example}

\begin{remark} One can check that the described crystal structure agrees with the crystal structure from the conjugation action of $U$. 
Similarly, the crystal structures in Section \ref{ss:simple} and \ref{ss:G/U} agree with the crystal structure from the standard group actions.
Since in general both actions and cluster structures are far from unique, it is impossible to reach such claim without verification.
As a sample, one verification for $G/U$ was done in \cite{spin8}.
\end{remark}

\subsection{Simple Canonical Models} \label{ss:simple}
We will construct for any acyclic quiver $Q$ a quiver with potential such that
its tropical points carry an upper seminormal crystal structure of type $C_{\br{Q}}$.

\begin{definition} The {\em simple canonical QP} $(\Omega_Q, \Omega_\S)$ of a quiver with potential $(Q,\S)$ is the extension of $(Q,\S)$ by simple representations $S_i$ for $i\in Q_0$.
\end{definition}
\indent By definition the $B$-matrix of $\Omega_Q$ is equal to $(B_Q, -E_Q^\t)$ where $E_Q=I_{n}-A_Q$ is the Euler matrix of $Q$ and $n=|Q_0|$.
Let $I$ be the set of all frozen vertices of $\Omega_Q$, which is just a copy of $Q_0$.
By construction, the Cartan type of $I$ is clearly $C_{\br{Q}}$.
We put the weight function $\wt_i$ given by the columns of the following matrix:
$$\left(\begin{matrix}I_n \\ {E_Q^\t}^{-1}B_Q  \end{matrix}\right) (I_n + {E_Q^\t}^{-1} E_Q).$$

\begin{proposition} With the above weight function $(\wt_i)_{i\in I}$,
	the set $\trop(\Omega_Q, \Omega_{\S})$ has an upper seminormal crystal cluster structure of type $C_{\br{Q}}$.
\end{proposition}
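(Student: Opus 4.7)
The plan is to apply Theorem \ref{T:upper}, for which we must verify: (i) every $i\in I$ is a reachable frozen vertex of $\Omega_Q$; (ii) the Cartan type of $I$ coincides with $C_{\br{Q}}$; (iii) the prescribed $(\wt_i)_{i\in I}$ is an integral compatible grading adapted to $I$. Granting (i)--(iii), Theorem \ref{T:upper} yields the desired upper seminormal crystal cluster structure, and the integrality in (iii) removes the need for weaker weights.

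Item (ii) is immediate from Definition \ref{D:CartanI}: since $\mc{E}_i^\mu = S_i$ by construction and $Q$ is acyclic (so we may take $\S = 0$ and $J(Q,\S) = kQ$), a direct computation from the minimal projective presentation $\bigoplus_{a:i\to v} P_v \to P_i$ of $S_i$ yields $\e(S_i,S_j) = A_Q(i,j)$, whence $c_{i,j} = 2\updelta_{i,j} - A_Q(i,j) - A_Q(j,i)$ is the symmetric Cartan matrix of $\br{Q}$. For item (iii) the key identity is $B_Q = E_Q^\t - E_Q$. Writing $K = (E_Q^\t)^{-1}E_Q$, the matrix $M$ defining $\wt$ has mutable block $M^\mu = I_n + K$ and frozen block $M^{\op{fr}} = I_n - K^2$. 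The compatibility condition $B_Q M^\mu = E_Q^\t M^{\op{fr}}$ then reduces to $E_Q^\t K = E_Q$, which holds by definition of $K$. For adaptedness, Lemma \ref{L:epci}.(2) combined with $\hom(S_i,S_j) = \updelta_{i,j}$ shows that the frozen part of $\epc_j$ vanishes, and $\dtc_{S_j} = e_j E_Q^\t$ in $kQ$ gives
\[
\wt(\epc_j) = e_j E_Q^\t(I_n + K) = e_j(E_Q^\t + E_Q),
\]
whose $(j,i)$-entry is exactly $c_{i,j}$. Integrality of $M$ follows by ordering $Q_0$ via a topological sort of $Q$, after which $E_Q$ is unipotent triangular and $E_Q^{-1}$ is integral.

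The main obstacle is (i), the reachability of each frozen vertex $i$, equivalently the reachability of the simple $S_i$ as a presentation of $(Q,\S)_\mu = (Q,0)$. The natural strategy is a reflection-functor style argument: perform a sequence of sink mutations of $(Q,\S)_\mu$ that reorients $Q$ so that $i$ becomes a source; under this sequence $S_i$ is carried to the indecomposable projective at $i$ in the mutated seed, after which a single further mutation at $i$ sends it to $-e_i$. The delicate point is to verify that $S_i$ tracks correctly through each intermediate sink mutation, a standard computation for acyclic quivers that ultimately rests on the fact that every indecomposable rigid module over the hereditary algebra $kQ$ is reachable from the initial cluster.
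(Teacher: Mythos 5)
Your proposal is correct and follows essentially the same route as the paper: reduce to Theorem \ref{T:upper} and verify compatibility and adaptedness of $(\wt_i)$ by direct matrix computation from the identity $B_Q=E_Q^\t-E_Q$ (your block calculation with $K=(E_Q^\t)^{-1}E_Q$ is the paper's single matrix product written out, and your $\epc_j=(e_jE_Q^\t,0)$ matches its $\check{\mc{E}}=(E_Q^\t,{\rm O})$). The three points you single out for extra care are precisely the ones the paper asserts without proof --- the Cartan type and integrality are stated in the surrounding text, and reachability of the frozen vertices is left implicit --- and your deferral of reachability to the standard fact that exceptional modules over a hereditary algebra are reachable is an acceptable closure of that gap.
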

\begin{proof} By definition the weight function is already integral.
	By Theorem \ref{T:upper} we only need to verify that $\wt$ is a compatible grading and adapted to $I$.
	Note that $B_Q = -E_Q + E_Q^\t$. It is immediate that
	$$(B_Q, -E_Q^\t) \left(\begin{matrix}I_n \\ {E_Q^\t}^{-1}B_Q  \end{matrix}\right) (I_n + {E_Q^\t}^{-1} E_Q) = {\rm O}.$$
	By Corollary \ref{C:HomEij} the matrix $\mc{E} = (\ep_i)_{i\in I}$ given by row vectors is equal to $({\rm O}, E_Q^\t)$.
	Then we compute the matrix $\check{\mc{E}} = (\epc_i)_{i\in I}$ by \eqref{eq:delta2dual}:
	$$\check{\mc{E}} = ({\rm O}, E_Q^\t) + (I_n, I_n)\left(\begin{matrix}B_Q & -E_Q^\t \\ E_Q & {\rm O} \end{matrix}\right) = (E_Q^\t , {\rm O}).$$
	Finally we verify that $(\wt_i)_{i\in I}$ is adapted to $I$, namely,
	$$(E_Q^\t , {\rm O})\left(\begin{matrix}I_n \\ {E_Q^\t}^{-1}B_Q  \end{matrix}\right) (I_n + {E_Q^\t}^{-1} E_Q) = E_Q^\t +E_Q = C_{\br{Q}}.$$
\end{proof}
\noindent We remark that this construction is a special case of \cite{GLSk}. Here we constructed the quiver of a cluster algebra of $\mb{C}[U^{\omega}]$ for $\omega = (1,2,\dots,n)\in W(\mf{g})$.
Readers can easily generalize this example to construct other crystals of Kac-Moody type.

\subsection{Base Affine Spaces} \label{ss:G/U}
Let $G$ be a simple, simply-connected, algebraic group over $\mb{C}$ of type $\br{Q}$.
Recall from \cite{Fart} that the algebra of regular functions on the base affine space $G/U$ is the upper cluster algebra of $\Delta_Q^\sharp$.
The ice quiver $\Delta_Q^\sharp$ is obtained from $\Delta_Q$ by adding a set of frozen vertices $\ibar$ for $i\in Q_0$, which correspond to the shifted projectives in $\rep(Q)$.

It is known \cite{Fart} that the set $\trop(\Delta_Q^\sharp, \S)$ is given by lattice points in the polytope $\mr{G}_Q^\sharp$, which can be described by Theorem \ref{T:musupp}.
Also recall that we have a categorical description of the dimension vectors of the (dual) boundary representations.
As have seen in Proposition \ref{P:unipotent} that the Cartan type of $I$ is $C_{\br{Q}}$.
The following lemma can be checked, although not trivial, using Lemma \ref{L:tauexact}.
For $i\in \br{Q}_0$, let $i \leftrightarrow i^*$ be Lusztig's involution.
\begin{lemma} We have that $E_{i^*} = \tau E_{\ibar}^\star$. In particular, $(i^*,\ibar)$ is a $\tau$-exact pair.
\end{lemma}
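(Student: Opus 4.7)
The plan is to verify the equivalent formulation as a $\tau$-exact pair, i.e., $\tau^{-1} E_{i^*} = E_{\ibar}^\star$. By Lemma \ref{L:tauexact} applied to $(i^*, \ibar)$, this reduces to the two conditions
\begin{enumerate}
\item[(a)] $\tau_\mu^{-1} \mc{E}_{i^*}^\mu = \tau_\mu \mc{E}_{\ibar}^\mu$; and
\item[(b)] $\hom(\mc{E}_j^\mu, \mc{E}_{i^*}^\mu) + \hom(\tau_\mu^{-1}\mc{E}_{i^*}^\mu, \tau_\mu \mc{E}_j^\mu) = \updelta_{i^*, j} + \updelta_{\ibar, j}$ for every frozen vertex $j$ of $\Delta_Q^\sharp$.
\end{enumerate}

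First I would make the identification of the mutable-part generic representations explicit. Since $\Delta_Q^\sharp$ is the AR-quiver of $Q$ augmented with frozen vertices for both the injectives and the shifted projectives, the columns $-b_i$ and $-b_{\ibar}$ of the exchange matrix read off the meshes at the two types of boundary. A direct computation (in the style of the GLS dictionary, mirroring the construction in Section \ref{ss:minimal}) identifies $\mc{E}_i^\mu$ with the indecomposable injective $I_i \in \rep Q$ and $\mc{E}_{\ibar}^\mu$ with the indecomposable projective $P_i$, both viewed as vertices of $\Delta_Q^\mu$ and hence as simple-type objects of $\rep J_\mu$.

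Under this dictionary, (a) becomes $\tau_\mu^{-1} I_{i^*} = \tau_\mu P_i$ in $\rep J_\mu$. This is the module-theoretic incarnation of Lusztig's involution: $i \leftrightarrow i^*$ is characterized precisely by $w_0 \cdot P_i = I_{i^*}$ on the AR-orbit level, and applying $\tau_\mu^{-1}$ to both sides (using that $\tau_\mu$ acts as the horizontal AR-shift on the augmented AR-quiver) yields the required identity. For (b) I would apply Lemma \ref{L:H2E} to rewrite the second term as $\e(\mc{E}_j^\mu, \mc{E}_{i^*}^\mu)$ (up to duality of the decorated parts), so that the left-hand side becomes $\hom(\mc{E}_j^\mu, \mc{E}_{i^*}^\mu) + \e(\mc{E}_j^\mu, \mc{E}_{i^*}^\mu)$, and this is precisely the pairing $\epc_{i^*}(\dv \mc{E}_j^\mu)$ evaluated via \eqref{eq:heform}. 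Lemma \ref{L:epci}.(2) together with Corollary \ref{C:HomEij} then collapses this pairing to the indicator $\updelta_{i^*, j} + \updelta_{\ibar, j}$ exactly under the Lusztig symmetry $i \leftrightarrow i^*$.

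The main obstacle will be the uniform verification of (b) across both families of frozen vertices $j \in \{k, \bar k : k \in Q_0\}$. The two distinguished cases $j = i^*$ and $j = \ibar$ are forced by the Lusztig exchange of projectives with injectives, but the vanishing for all other $j$ requires controlling small hom-dimensions along AR-orbits. I expect this to follow either from a Dynkin-type case check, or more conceptually from the fact that $w_0$ acts on the AR-quiver as an involutive, orbit-reversing symmetry, which translates into the desired vanishing via the Euler form of $kQ$. The identification of $\tau_\mu^{-1} I_{i^*}$ with $\tau_\mu P_i$ in step (a), while combinatorially natural, will also require some care because it is an equality of decorated representations — the decorated (negative) summands introduced at the boundary of the AR-quiver must be tracked consistently on both sides.
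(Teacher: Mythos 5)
Your reduction of the statement to conditions (a) and (b) of Lemma \ref{L:tauexact} is exactly the route the paper indicates (it offers no written proof, only the remark that the lemma ``can be checked, though not trivial, using Lemma \ref{L:tauexact}''), so the overall strategy is sound. However, two of your intermediate steps do not hold as written. First, identifying $\mc{E}_i^\mu$ with ``the indecomposable injective $I_i\in\rep Q$ viewed as a vertex of $\Delta_Q^\mu$'' is a category error: $\mc{E}_i^\mu$ is a decorated representation of the Jacobian algebra $J_\mu$ of the mutable part of the AR-quiver, determined by the presentation $d_i$ of weight $-b_i$ read off from the mesh at the frozen vertex. For instance, in Example \ref{ex:UD4} one has $-b_9=e_6-e_5$, so $\mc{E}_9^\mu$ has presentation $P_5\to P_6$ over $J_\mu$; it is neither simple nor supported at a single vertex, and $I_i$ labels a frozen vertex, which does not even lie in $\Delta_Q^\mu$. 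Any proof must compute these $J_\mu$-modules and their $\tau_\mu$-orbits explicitly; asserting that $\tau_\mu$ ``acts as the horizontal AR-shift on the augmented AR-quiver'' assumes precisely what condition (a) asks you to prove.

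Second, the key manipulation in your treatment of (b) is incorrect. Lemma \ref{L:H2E} gives $\hom(\tau_\mu^{-1}\mc{E}_{i^*}^\mu,\tau_\mu\mc{E}_j^\mu)=\e(\mc{E}_j^\mu,\tau_\mu^{-1}\mc{E}_{i^*}^\mu)$, not $\e(\mc{E}_j^\mu,\mc{E}_{i^*}^\mu)$; and even granting that rewriting, the quantity $\hom(\mc{E}_j^\mu,\mc{E}_{i^*}^\mu)+\e(\mc{E}_j^\mu,\mc{E}_{i^*}^\mu)$ is \emph{not} the pairing $\epc_{i^*}(\dv\mc{E}_j^\mu)$: equations \eqref{eq:heform}--\eqref{eq:hecform} express such pairings as a \emph{difference} $\hom-\e$ (resp.\ $\hom-\ec$), and the sum $\hom+\e$ is not linear in the dimension vector, so it cannot ``collapse'' to $\updelta_{i^*,j}+\updelta_{\ibar,j}$ via Lemma \ref{L:epci} and Corollary \ref{C:HomEij}. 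Combined with the fact that both (a) and the vanishing in (b) for the remaining frozen $j$ are only asserted to be expected (via Lusztig's involution, an Euler-form argument, or an unperformed case check), the proposal is a plausible plan rather than a proof: the substantive content --- establishing $\tau_\mu^{-1}\mc{E}_{i^*}^\mu=\tau_\mu\mc{E}_{\ibar}^\mu$ and computing the hom-numbers in \eqref{eq:tauexact} for every frozen $j$ --- remains to be carried out.
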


\begin{corollary}\label{C:G/U} The set $\trop(\Delta_Q^\sharp, \S)$ has a normal crystal cluster structure of Cartan type $C_{\br{Q}}$.
Its weight-$\lambda$ part $\trop(\Delta_Q^\sharp, \S)_\lambda$ is isomorphic to $\mc{B}(\lambda)$.
\end{corollary}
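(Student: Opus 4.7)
The plan is to derive this corollary in three steps: apply Theorem \ref{T:crystal} to obtain a seminormal crystal cluster structure, promote it to a normal one via Corollary \ref{C:normal}, and match the resulting decomposition against the classical Peter--Weyl decomposition of $\k[G/U]$.

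First I would apply Theorem \ref{T:crystal} with $I = \{\ibar : i \in Q_0\}$, using the $\tau$-exact pairs $(i^*, \ibar)$ supplied by the preceding lemma. Reachability of each $\ibar$ is a standard fact about the cluster structure on $\k[G/U]$ established in \cite{BFZ, GLSa}. The Cartan type is $C_{\br{Q}}$: the defining formula \eqref{eq:wtC} involves only the mutable boundary representations $\mc{E}_{\ibar}^\mu$ of $\Delta_Q^\sharp$, and since $(\Delta_Q^\sharp)_\mu = \Delta_Q$, these coincide with the representations computing the Cartan type in Proposition \ref{P:unipotent}.(2). Thus $\trop(\Delta_Q^\sharp, \S)$ acquires a seminormal crystal cluster structure of type $C_{\br{Q}}$.

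Corollary \ref{C:normal} then promotes this structure to a normal $\mf{g}$-crystal, where $\mf{g}$ is the simple Lie algebra of type $\br{Q}$. Equivalently, by Theorem \ref{T:normalUCA}, $\uca(\Delta_Q^\sharp) = \k[G/U]$ is a $U(\mf{g})$-module algebra; and by Theorem \ref{T:genericCR} the generic basis is BK-biperfect, so $\trop(\Delta_Q^\sharp, \S)$ decomposes as a disjoint union of connected components, each isomorphic to some $\mc{B}(\mu)$ for a dominant weight $\mu$. To identify the $\lambda$-weight part with $\mc{B}(\lambda)$, I would invoke the classical decomposition $\k[G/U] = \bigoplus_{\lambda \in \Lambda^+} V(\lambda)$ coming from the right $T$-action on $G/U$. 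The weight function $(\wt_i)_{i \in I}$ of \eqref{eq:wti} agrees with this classical $T$-weight grading: both are integral compatible gradings adapted to $I$, and by Corollary \ref{C:tauexact} both are uniquely determined by their values on the dual $\delta$-vectors $\epc_{\jbar}$, which equal the Cartan entries. The highest-weight vector of $V(\lambda)$ then corresponds to the unique $\delta_\lambda \in \trop(\Delta_Q^\sharp, \S)$ with $\rho_i(\delta_\lambda) = 0$ for all $i \in I$ and $\wt(\delta_\lambda) = \lambda$, and its component exhausts $\trop(\Delta_Q^\sharp, \S)_\lambda$ via repeated application of the $l_i$'s.

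The main obstacle is precisely the matching of weight functions: showing the abstract $\wt_i(\delta) = \delta\bigl(\dv E_{\ibar} - \dv \tau^{-1} E_{\ibar}\bigr)$ agrees with the geometric $T$-grading on $\k[G/U]$, so that the multiplicities in the normal-crystal decomposition coincide with those in Peter--Weyl (in particular $n_\lambda = 1$). This can be done by computing both gradings on the frozen cluster variables $x_{\ibar}$, using Corollary \ref{C:V[0]} and the explicit Auslander--Reiten description of $E_{\ibar}$ and $\tau^{-1} E_{\ibar} = E_{i^*}^\star$ from Proposition \ref{P:unipotent}, and noting that the $x_{\ibar}$ are known to be highest-weight generalized minors of $T$-weight $\varpi_i$. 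Once this identification is in hand, the remaining bookkeeping is standard.
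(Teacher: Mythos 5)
Your proposal is correct and follows the route the paper intends (the corollary is left without explicit proof there): the $\tau$-exact pairs $(i^*,\ibar)$ feed into Theorem \ref{T:crystal}, Corollary \ref{C:normal} upgrades the result to a normal crystal, and the Peter--Weyl decomposition of $\k[G/U]$ together with the matching of $\wt$ against the $T$-grading identifies the weight-$\lambda$ component with $\mc{B}(\lambda)$. Two small corrections that do not affect the argument: in Theorem \ref{T:crystal} the crystal operators are indexed by the \emph{first} components of the $\tau$-exact pairs, so here $I=\{i^*\}=Q_0$ (the frozen vertices already present in $\Delta_Q$) rather than $\{\ibar : i\in Q_0\}$, and the mutable part satisfies $(\Delta_Q^\sharp)_\mu=(\Delta_Q)_\mu$ rather than $\Delta_Q$ itself --- in either case the Cartan type is computed from the same representations $\mc{E}_{i}^\mu$ as in Proposition \ref{P:unipotent}, giving $C_{\br{Q}}$ as claimed.
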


The Weyl group action on $\trop(\Delta_Q^\sharp, \S)_\lambda$ is just the usual Weyl group action on $\mc{B}(\lambda)$.
Instead of the cluster coordinates, this was done in \cite{K} in Gelfand-Zeitlin coordinates.
We can recover most results in \cite{Fu} from our description of the crystal structure for $G/U$.

From the above corollary, we can recover a main result in \cite{BZ} on the tensor product multiplicity of representations of $G$.
Let $L(\lambda)$ be the irreducible representation of $G$ of highest weight $\lambda$.
The tensor product $L(\mu) \otimes L(\nu)$ decomposes as $\bigoplus_{\lambda} \lrcoef L(\lambda)$.
\begin{corollary} The tensor product multiplicity $\lrcoef$ is counted by the lattice points in the polytope
	$${\sf G}_{Q}^\sharp (\mu, \lambda-\nu) \cap \{\delta \mid \rho_i(\delta)\leq \nu(i),\ i\in Q_0 \}.$$
\end{corollary}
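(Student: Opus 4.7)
The plan is to combine the crystal identification from Corollary \ref{C:G/U} with the standard tensor product rule for Kashiwara crystals. Recall that for the $\mf{g}$-crystal underlying a highest-weight representation $L(\mu)$, Littlewood--Richardson-type coefficients admit the following description: $\lrcoef$ equals the number of highest weight vectors of weight $\lambda$ in the tensor product crystal $\mc{B}(\mu)\otimes \mc{B}(\nu)$. Taking such a highest weight vector of the form $b\otimes u_\nu$, where $u_\nu$ is the highest weight element of $\mc{B}(\nu)$, and applying Kashiwara's tensor rule (noting $\rho_i(u_\nu)=0$ and $\lambda_i(u_\nu)=\nu(i)$), the condition $r_i(b\otimes u_\nu)=0$ for every $i\in Q_0$ becomes exactly $\rho_i(b)\leq \nu(i)$. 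The weight condition $\wt(b\otimes u_\nu)=\lambda$ becomes $\wt(b)=\lambda-\nu$. Thus
\[ \lrcoef = \#\bigl\{ b\in \mc{B}(\mu) \mid \wt(b) = \lambda-\nu,\ \rho_i(b)\leq \nu(i)\ \text{for all}\ i\in Q_0 \bigr\}.\]

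Next, I would invoke Corollary \ref{C:G/U} to identify $\mc{B}(\mu)$ with the weight-$\mu$ stratum $\trop(\Delta_Q^\sharp,\S)_\mu$ as crystals. Under this identification, the set of $b\in \mc{B}(\mu)$ of internal weight $\lambda-\nu$ corresponds precisely to the lattice points of the polyhedral slice ${\sf G}_Q^\sharp(\mu,\lambda-\nu)$ appearing in the statement: the first parameter $\mu$ records the frozen boundary data (the pairing with the shifted-projective frozen vertices $\ibar$) and the second parameter $\lambda-\nu$ records the grading coming from the compatible integral weight function $\wt$ of Proposition \ref{P:unipotent}. This step only uses the definition of ${\sf G}_Q^\sharp$ as the polytope cut out by the $\mu$-supportedness inequalities via Theorem \ref{T:musupp}.

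Finally, I would translate the crystal inequality $\rho_i(b)\leq \nu(i)$ into a polytopal inequality. By the explicit formula in Theorem \ref{T:upper}, $\rho_i(\delta)=\e(\delta,E_i)$, which is a piecewise-linear (indeed polyhedral) function of $\delta$ given by the tropical $F$-polynomial of $E_i$ in light of Theorem \ref{T:HomE} (applicable since each frozen vertex of $\Delta_Q^\sharp$ is reachable). Hence the constraint $\rho_i(\delta)\leq \nu(i)$ cuts out a polyhedral subset, and intersecting with ${\sf G}_Q^\sharp(\mu,\lambda-\nu)$ yields the desired polytope.

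The three steps are essentially formal once Corollary \ref{C:G/U} is in hand; the one place that deserves care is checking that the crystal-theoretic weight grading on $\trop(\Delta_Q^\sharp,\S)_\mu$ really matches the $(\mu,\lambda-\nu)$-indexing of the polytope ${\sf G}_Q^\sharp$, i.e.\ that the bookkeeping between the frozen boundary pairing (recording $\mu$) and the compatible grading $\wt$ (recording $\lambda-\nu$) is consistent with Proposition \ref{P:unipotent}. This compatibility is the main, and only non-routine, verification.
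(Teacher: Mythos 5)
Your proof is correct, but it takes a genuinely different route from the paper's. The paper starts from the classical Parthasarathy--Ranga Rao--Varadarajan identity $\lrcoef=\dim L(\mu)^{\nu}_{\lambda-\nu}$, where $L(\mu)^{\nu}_{\gamma}$ is the space of weight-$\gamma$ vectors annihilated by $R_i^{\nu(i)+1}$, and then uses the fact that the generic basis is BK-(bi)perfect (Theorem \ref{T:genericCR} together with Remark \ref{r:perfect}) to convert the algebraic vanishing condition $R_i^{\nu(i)+1}(v)=0$ into the combinatorial inequality $\rho_i(\delta)\leq\nu(i)$ on basis elements. You instead stay entirely inside combinatorial crystal theory: you count highest-weight elements of $\mc{B}(\mu)\otimes\mc{B}(\nu)$ via Kashiwara's tensor product rule and only then invoke the crystal isomorphism of Corollary \ref{C:G/U}. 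Both routes land on the same intermediate formula $\lrcoef=\#\{b\in\mc{B}(\mu)\mid \wt(b)=\lambda-\nu,\ \rho_i(b)\leq\nu(i)\}$. What your approach buys is independence from the perfect-basis machinery of Section \ref{S:lifting} --- you need only the combinatorial statement of Corollary \ref{C:G/U} plus the standard compatibility of crystal bases with tensor products; what the paper's approach buys is that it showcases exactly the property (perfectness of the generic basis) that the paper has just established, and it avoids importing the tensor product rule, a topic the paper explicitly defers to a sequel. Two minor caveats on your write-up: the placement of the highest-weight factor ($b\otimes u_\nu$ versus $u_\mu\otimes b$) depends on the tensor convention, though the resulting count is the same by the symmetry $\lrcoef=c^{\lambda}_{\nu\mu}$; and your final "bookkeeping" caveat about matching the $(\mu,\lambda-\nu)$-indexing of ${\sf G}_Q^\sharp$ is also left implicit in the paper's own proof, so you are not missing anything the paper supplies.
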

\begin{proof} We recall a classical interpretation of the multiplicity $\lrcoef$ in \cite{PRV}. 
	Let $L(\mu)_\gamma$ be the weight-$\gamma$ subspace of the irreducible $G$-module $L(\mu)$.
	We denote
	$$L(\mu)_{\gamma}^{\nu}:=\left\{v \in L(\mu)_{\gamma}\mid R_i^{\nu(i)+1}(v)=0   \text{ for $i\in Q_0$} \right\}.$$ 
	We have that $\lrcoef = \dim L(\mu)_{\lambda-\nu}^\nu$.
	By Corollary \ref{C:G/U} and Theorem \ref{T:genericCR} the set ${\sf G}_{Q}^\sharp (\mu, \lambda-\nu)$ parametrizes a perfect basis of $L(\mu)_{\lambda-\nu}$.
Finally, by Remark \ref{r:perfect} $L(\mu)_{\lambda-\mu}^\nu$ is given by the additional inequalities $\rho_i(\delta)\leq \nu(i)$ for $i\in Q_0$.
\end{proof}
\noindent The tropical function $\rho_i(\delta)$ in this setting was also considered in \cite[Appendix B]{GS}.

\subsection{Grassmannians} \label{ss:Grass} Let $\Gr\sm{n \\k}$ be the Grassmannian of $k$-planes in $\mb{C}^n$.
Recall that the coordinate ring of the affine cone over $\Gr\sm{n \\k}$ is the cluster algebra of the following quiver $\Square:=\Square_{k,l}$, where $l=n-k$.
$$\grass$$	
We also give another labelling on the frozen vertices:
$0=(0,0),\ i=(i,k),\ l+j=(l,k-j)$ for $i=1,\dots,l$ and $j=1,\dots, k$.
The following proposition is straightforward to check.
\begin{proposition} We have the following equalities:
\begin{enumerate}
	\item  $E_{m} = \tau E_{m+l \mod n}^\star$.
	\item  $\e(\mc{E}_{i,k}^\mu, \mc{E}_{i+1,k}^\mu)=\e(\mc{E}_{l,j}^\mu, \mc{E}_{l,j+1}^\mu)=1$ and $\e(\mc{E}_{1,k}^\mu, \mc{E}_{0,0}^\mu)=\e(\mc{E}_{l,1}^\mu, \mc{E}_{0,0}^\mu)=1$.
	\item  $\dv E_{i,k} = \sum_{r=0}^{i-1} e_{i-r,k-r}$, $\dv E_{l,j} = \sum_{r=0}^{j-1} e_{l-r,j-r}$, and $\dv E_{0,0} = e_{0,0}+\sum_{(i,j)\in \Square_{0}^\mu} e_{i,j}$.
\end{enumerate}
In particular, the coefficient pattern of $\Square_{k,l}$ is exact and the Cartan type is the affine type $\tilde{A}_{n-1}$.
\end{proposition}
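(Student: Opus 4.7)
The plan is to establish the three parts in order: first~(3), whose explicit dimension-vector formulas provide the combinatorial input, then~(2), which is an almost immediate consequence, and finally~(1), which requires a symmetry argument. The exactness of the coefficient pattern and the identification of the Cartan type then follow as corollaries.

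For part~(3), I would directly read off the underlying representation of $E_m = P_{[m]}[0]$ from the subquiver $\Delta^\mu[m]$ and the rigid potential $\S$. In each case, the indecomposable projective $P_{[m]}$ of $J_{\mu[m]}$ is the uniserial path module along the unique ``diagonal'' direction in which the Jacobian relations do not truncate the module: the sequence $(i,k)\to(i-1,k-1)\to\cdots\to(1,k-i+1)$ of back-diagonal arrows when $m=(i,k)$; the analogous sequence $(l,j)\to(l-1,j-1)\to\cdots\to(l-j+1,1)$ when $m=(l,j)$; and for $m=(0,0)$ the initial arrow $(0,0)\to(1,1)$ generates a projective that fills the entire mutable rectangle, because $(1,1)$ is a source of $\Delta^\mu$ and every interior vertex is reached by Jacobian-compatible paths. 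Checking that the $\delta$-vector of this module equals $-b_m$ via \eqref{eq:Betti}--\eqref{eq:delta2dual} then completes the step.

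For part~(2), the dimension vectors from (3) together with Proposition~\ref{P:Brep}.(2) (which pins down $\ep_i$ as supported on frozen vertices with $\ep_i(i)=1$) and the identity \eqref{eq:delta2dual} determine all off-diagonal frozen coordinates of $\ep_i$ from $(\dv E_i)B(\Delta)$. Lemma~\ref{L:E^mu} and Corollary~\ref{C:HomEij} then give $\e(\mc{E}_j^\mu,\mc{E}_i^\mu) = \max(0,-\ep_j(i))$. Feeding the four cyclically adjacent pairs into this formula yields $\e=1$, while the same computation returns $0$ on any non-adjacent pair. Consequently $C_I(m,m')=-1$ exactly when $m$ and $m'$ are cyclic neighbours in the labeling $0,1,\dots,n-1$, which is the affine $\tilde A_{n-1}$ Cartan matrix.

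For part~(1), I would invoke the well-known cyclic symmetry of the Grassmannian cluster algebra: there is an explicit sequence of mutations together with the cyclic relabelling $m\mapsto m+1$ on frozen vertices that realizes a cluster automorphism of $(\Square_{k,l},\S)$ in the sense of Definition~\ref{D:Cauto}. Iterating this $l$ times and combining Lemma~\ref{L:bdinv} (mutation-invariance of boundary representations), Lemma~\ref{L:taucommu} (mutation-commutativity of $\tau$), and the interchange of $E$ with $E^\star$ under the opposite part of the automorphism yields $\tau E_m = E_{m+l\bmod n}^\star$. Alternatively one can verify Lemma~\ref{L:tauexact} directly from the data of (3): the equality $\tau_\mu^{-1}\mc{E}_m^\mu=\tau_\mu\mc{E}_{m+l}^\mu$ is forced by the diagonal structure of the boundary representations, and the $\hom$-relation \eqref{eq:tauexact} follows from the $\e$-values of (2) via Lemma~\ref{L:H2E}. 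Once (1) is in place, every $(m,m+l)$ is a $\tau$-exact pair, so the coefficient pattern is exact by definition. \emph{The main obstacle} is part (1): producing a uniform verification across the three geometric ``types'' of frozen vertices (top row, right column, and the corner $(0,0)$). The corner $(0,0)$ is the most delicate because $\dv E_{(0,0)}$ spans the entire rectangle, so its $\tau$-image must propagate through the full mutable interior. This is most cleanly handled by the cyclic-symmetry route, whose correctness ultimately rests on an explicit categorical model for the Grassmannian cluster category.
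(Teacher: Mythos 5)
The paper offers no argument here at all --- it simply declares the proposition ``straightforward to check'' after fixing the cyclic labelling of the frozen vertices --- so there is no proof to compare against and your proposal has to be judged on its own terms. For parts (3) and (2) your strategy (read off $P_{[m]}$ from the explicit subquiver and potential, then extract the $\e$-values) is the right one and is surely what the authors intend. Two small caveats: determining the off-diagonal frozen coordinates of $\ep_i$ from \eqref{eq:delta2dual} presupposes knowledge of $\epc_i$, which by Lemma \ref{L:epci} is itself expressed through the very $\hom(\mc{E}_j^\mu,\mc{E}_i^\mu)$ you are computing; it is cleaner to read $-\ep_i(j)=\ext^1(E_i,S_j)$ directly off the arrows from the support of $E_i$ into $j$, or to compute $\e(\mc{E}_j^\mu,\mc{E}_i^\mu)$ from the explicit presentations $d_j$ of \eqref{eq:dv}. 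Also, the Cartan entries are $c_{m,m'}=-\e(\mc{E}_m^\mu,\mc{E}_{m'}^\mu)-\e(\mc{E}_{m'}^\mu,\mc{E}_m^\mu)$, and (2) only pins down one of the two summands for each adjacent pair; you still need to check the reverse direction vanishes before concluding type $\tilde A_{n-1}$.

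The genuine weak point is your primary route to part (1). The cyclic shift of $\Gr\sm{n\\k}$ is a \emph{direct} cluster automorphism in the sense of Definition \ref{D:Cauto}: it permutes the boundary representations among themselves ($E_m\mapsto E_{m+1}$) and the dual boundary representations among themselves, but it has no ``opposite part'' and therefore cannot by itself manufacture the mixed identity $E_m=\tau E^\star_{m+l}$, which ties together $\tau$, $E$, and $E^\star$. What the symmetry legitimately buys is a reduction: once $\tau^{-1}E_m=E^\star_{m+l}$ is verified for a single $m$, equivariance under the shift propagates it to all frozen vertices. The actual content must then come from your fallback route --- computing $\tau_\mu$ on the explicit diagonal modules $\mc{E}_m^\mu$ to get $\tau_\mu^{-2}\mc{E}_m^\mu=\mc{E}_{m+l}^\mu$ and verifying the hom-condition \eqref{eq:tauexact} of Lemma \ref{L:tauexact}; note that \eqref{eq:tauexact} must be checked against \emph{every} frozen $j$, not only the cyclically adjacent ones appearing in part (2), so the $\e$-values of (2) alone do not suffice. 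With the fallback promoted to the main argument and the symmetry demoted to a bookkeeping device, the proof is sound.
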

The weight function $\wt_m = \dv E_m - \dv\tau^{-1} E_m$ can be read off from (1) and (3). One can check that the weight function agrees with the usual grading from the torus action of $T\subset \GL_n$.
We can also recover most results in \cite{Fr} from our description of the crystal structure.

There are similar results for isotropic Grassmannians \cite{GLSp}.
We will give below an example of an exceptional Grassmannian. 
In the notation of \cite{GLSp}, the group is of type $E_6$ and the index set $J$ complementary to the parabolic subgroup is $\{1\}$.
\begin{example} Consider the following quiver $\Delta_6^\mu$ as the mutable part
	$$\hivesix$$
	We extend it by the indecomposable rigid representations of the following weights:
	$$\xymatrix@R=3ex@C=5ex{  &\scriptstyle e_{1,1,4} - e_{2,1,3} \ar@{-->}[r] &\scriptstyle -e_{1,1,4}  \\
			\scriptstyle e_{2,2,2} \ar@{-->}[r] \ar@{-->}[dr] \ar@{-->}[ur]	 &\scriptstyle e_{1,4,1} - e_{1,3,2} \ar@{-->}[r] &\scriptstyle -e_{1,4,1}  \\
			&\scriptstyle e_{4,1,1} - e_{3,2,1} \ar@{-->}[r] &\scriptstyle -e_{4,1,1}  
	} $$
	with dashed arrows indicating the Cartan type.
	We can easily check that those representations satisfy \eqref{eq:tauexact}.
	In particular, the coefficient pattern of $\Delta$ is exact and the Cartan type is the affine type $\tilde{E}_{6}$.
\end{example}

\subsection{The Exact Coefficient Patterns} \label{ss:exact}
\begin{definition} We say the coefficient pattern of $(\Delta,\S)$ is {\em exact} if every frozen vertex $v$ belongs to some $\tau$-exact pair $(i,\ibar)$ of $\Delta$.
\end{definition}

The following proposition is clearly follows from  Corollary \ref{C:tauexact} and Lemma \ref{L:tauexact}.
\begin{proposition}\label{P:exact} The coefficient pattern of $(\Delta,\S)$ is exact if and only if
	$\{\mc{E}_i^\mu\}_{i\in I}$ is closed under $\tau^{2}$ and satisfies the equations in Lemma \ref{L:tauexact}.
\end{proposition}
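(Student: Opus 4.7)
The plan is to obtain the equivalence by applying Lemma \ref{L:tauexact} pointwise for each $i \in I$, after first reducing to the setting in which that lemma is stated.

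First I would dispatch the mild technical issue that Lemma \ref{L:tauexact} requires $\Delta$ to have no frozen arrows, whereas Proposition \ref{P:exact} makes no such assumption. By Lemma \ref{L:delfra}, deleting all frozen arrows preserves $\trop(\Delta,\S)$; more importantly for our purposes, the data appearing in the statement -- the representations $\mc{E}_i^\mu$, the AR translation $\tau_\mu$ on $J_\mu$, and the equations \eqref{eq:tauexact} -- depend only on the mutable part together with the columns $b_i$ of $B_\Delta$, hence are insensitive to frozen arrows. The notion of $\tau$-exact pair is likewise unaffected, since the $\mc{E}_i^\mu$ and the identity $\tau_\mu^{-1}\mc{E}_i^\mu = \tau_\mu \mc{E}_{\ibar}^\mu$ that detects $\tau$-exactness via Lemma \ref{L:tauexact} are intrinsic to the mutable restriction. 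So I may freely assume no frozen arrows.

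For the ``only if'' direction, I pick an arbitrary $i \in I$ and invoke exactness to obtain $\ibar \in I$ such that $(i,\ibar)$ is a $\tau$-exact pair. Lemma \ref{L:tauexact} then yields $\tau_\mu^{-1}\mc{E}_i^\mu = \tau_\mu \mc{E}_{\ibar}^\mu$, equivalently $\tau_\mu^{-2}\mc{E}_i^\mu = \mc{E}_{\ibar}^\mu \in \{\mc{E}_j^\mu\}_{j\in I}$; symmetry of the $\tau$-exact pair (swapping $i$ and $\ibar$) gives $\tau_\mu^{2}\mc{E}_{\ibar}^\mu = \mc{E}_i^\mu$. Ranging over all $i \in I$ yields closure of the family under $\tau_\mu^{\pm 2}$, and Lemma \ref{L:tauexact} simultaneously delivers the required equations \eqref{eq:tauexact} for each such pair.

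For the converse, I assume the family $\{\mc{E}_i^\mu\}_{i\in I}$ is closed under $\tau_\mu^{2}$ and that the equations \eqref{eq:tauexact} hold. Fix $i \in I$; closure furnishes a (unique, by rigidity) $\ibar \in I$ with $\mc{E}_{\ibar}^\mu = \tau_\mu^{-2}\mc{E}_i^\mu$, which rewrites as $\tau_\mu^{-1}\mc{E}_i^\mu = \tau_\mu \mc{E}_{\ibar}^\mu$. This is exactly the first condition of Lemma \ref{L:tauexact}, and the second is the hypothesized \eqref{eq:tauexact} for the pair $(i,\ibar)$, so Lemma \ref{L:tauexact} certifies $(i,\ibar)$ as $\tau$-exact. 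Since $i \in I$ was arbitrary, every frozen vertex lies in some $\tau$-exact pair, which is exactness. There is no serious obstacle here; the only thing to be careful about is to interpret ``the equations of Lemma \ref{L:tauexact}'' as applied to each pair $(i,\ibar)$ determined by the $\tau_\mu^{-2}$-closure -- which is plainly the intended reading, since these are the only pairs that could be $\tau$-exact.
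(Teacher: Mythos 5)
Your proof is correct and is precisely the unwinding the paper intends: the paper gives no argument beyond declaring the statement clear from Lemma \ref{L:tauexact}, and your pointwise application of that lemma to each pair $(i,\ibar)$, with $\ibar$ determined by $\mc{E}_{\ibar}^\mu=\tau_\mu^{-2}\mc{E}_i^\mu$ (closure under $\tau_\mu^{2}$ and under $\tau_\mu^{-2}$ being equivalent for a finite family since $\tau_\mu^{2}$ is injective on decorated representations), is the right argument. One small correction: the identity $\tau_\mu^{2}\mc{E}_{\ibar}^\mu=\mc{E}_i^\mu$ should be justified simply by applying $\tau_\mu^{2}$ to both sides of $\tau_\mu^{-2}\mc{E}_i^\mu=\mc{E}_{\ibar}^\mu$, not by ``symmetry of the $\tau$-exact pair'' --- that relation is not symmetric in general (compare the Warning following Theorem \ref{T:crystal} on dual $\tau$-exact pairs) --- though nothing in your argument actually depends on this.
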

\noindent According to Proposition \ref{P:exact}, whether the coefficient pattern of $\Delta$ is exact can be read off from the (rigid) representations $\mc{V}$ of $\Delta^\mu$. 

\begin{definition} The cluster tilted algebra of canonical type $(a_1,a_2,\cdots,a_r)$ with each $a_i\geq 2$ is the following quiver:
	$$\ctcan$$
with potential equal to the sum of all cycles. 
\end{definition}
\noindent One can check this is really the cluster tilted algebra for the canonical algebra of type $(a_1,a_2,\cdots,a_r)$ \cite{R}.

\begin{proposition}\label{P:can} For the cluster tilted algebras of canonical type $(a_1,a_2,\cdots,a_r)$, the following set of representations forms an exact coefficient pattern of 
	Cartan type $\tilde{A}_{a_1-1}\times \tilde{A}_{a_2-1} \times\cdots\times \tilde{A}_{a_r-1}$:
	\begin{align*} &\text{the simple representations $S_{i}$ for $i\neq 0, \infty$}, \text{ and} \\
		&\textit{the indecomposable rigid representations $T_k$ of weight $e_0 - e_{1_k}$ for $k=1,\dots,r$}.
	\end{align*}
\end{proposition}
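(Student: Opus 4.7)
The plan is to apply Proposition \ref{P:exact} to the set
\[
\mc{E} := \{S_{i_k} : 1 \le k \le r,\ 1 \le i \le a_k - 1\}\ \cup\ \{T_k : 1 \le k \le r\},
\]
viewed as the decorated representations $\mc{E}_v^\mu$ of the mutable Jacobian algebra $J_\mu$ indexed by the frozen vertices $v$. Three things need to be verified: that $\mc{E}$ is closed under $\tau_\mu^2$, that each element belongs to a $\tau$-exact pair via equation \eqref{eq:tauexact}, and that the Cartan matrix \eqref{eq:wtC} has the claimed product affine type.

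The key structural input is that the module category of $J_\mu$ decomposes on the relevant part into $r$ standard stable tubes of ranks $a_1, \ldots, a_r$, whose mouths are filled by the $a_k$ quasi-simple objects $S_{1_k}, S_{2_k}, \ldots, S_{(a_k-1)_k}, T_k$ on the $k$-th arm; on these, $\tau_\mu$ acts as a cyclic rotation of period $a_k$. I would verify this explicitly by computing the minimal projective presentations of each listed object, applying the Nakayama functor, and reading off $\tau_\mu$ from the resulting injective presentations. The Jacobian relations $\partial_a \mc{S} = 0$ coming from the sum-of-cycles potential are precisely what forces the ``boundary'' identifications $\tau_\mu S_{(a_k-1)_k} = T_k$ and $\tau_\mu T_k = S_{1_k}$ that close each cycle. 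Closure of $\mc{E}$ under $\tau_\mu^2$ is then immediate, and each element belongs to a $\tau$-exact pair $(\mc{E}, \tau_\mu^{-2}\mc{E})$ within its own arm.

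For equation \eqref{eq:tauexact}, Hom-orthogonality between different tubes handles cross-arm pairs trivially. Within arm $k$, the standard orthogonality $\hom(X, Y) = \updelta_{X,Y}$ among quasi-simples of a tube gives $\hom(\mc{E}_j^\mu, \mc{E}_i^\mu) = \updelta_{i,j}$ and, using the cyclic $\tau_\mu$-action, $\hom(\tau_\mu^{-1}\mc{E}_i^\mu, \tau_\mu\mc{E}_j^\mu) = \updelta_{i,\,j+2 \bmod a_k}$, which sum to $\updelta_{i,j} + \updelta_{\ibar,j}$ with $\ibar$ indexing $\tau_\mu^{-2}\mc{E}_i^\mu$, as required by Lemma \ref{L:tauexact}. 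The Cartan matrix from \eqref{eq:wtC} is then computed via the AR-duality of Lemma \ref{L:H2E}: within arm $k$, $\e_{J_\mu}(\mc{E}_i^\mu, \mc{E}_j^\mu) = \hom(\mc{E}_j^\mu, \tau_\mu\mc{E}_i^\mu) = 1$ exactly when $\mc{E}_j^\mu$ is the $\tau_\mu$-successor of $\mc{E}_i^\mu$ on the cycle, and zero otherwise. This gives the affine Cartan matrix of type $\tilde{A}_{a_k-1}$ on each arm, and inter-arm orthogonality yields the product form.

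The main obstacle is establishing the tube structure of $J_\mu$ explicitly --- identifying the listed objects as the quasi-simples at the tube mouths and pinning down $\tau_\mu$ on them. This reduces to a finite combinatorial calculation using the projective resolutions and the cyclic Jacobian relations, but is delicate at the two boundary identifications $\tau_\mu S_{(a_k-1)_k} = T_k$ and $\tau_\mu T_k = S_{1_k}$, which are precisely what encode the affine nature of the resulting Cartan type.
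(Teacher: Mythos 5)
Your proposal is correct and follows essentially the same route as the paper: verify the $\tau_\mu$-orbit identities $\tau_\mu S_{i_k}=S_{(i+1)_k}$, $\tau_\mu S_{(a_k-1)_k}=T_k$, $\tau_\mu T_k=S_{1_k}$ (hence closure under $\tau_\mu^2$), check the Hom-orthogonality needed for \eqref{eq:tauexact}, and compute the $\e$-values via Lemma \ref{L:H2E} to get the cyclic Cartan matrices $\tilde{A}_{a_k-1}$. The only difference is that you package these facts as the statement that the listed modules are the quasi-simples of $r$ stable tubes of ranks $a_1,\dots,a_r$ — a stronger structural claim than is needed, since the argument only requires the $\tau_\mu$-action and the Hom/$\E$ computations on these finitely many modules, which is exactly the direct check the paper performs.
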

\begin{proof} We can easily check that $\tau S_{(a_k-1)_k} = T_k$, $\tau T_k = S_{1_k}$, and $\tau S_{i_k} = S_{(i+1)_k}$ for $i=1,\dots,a_k-2$.
	So this set of representations is closed under $\tau^2$.
	One can also check that $\hom(S_i, T_k)=\hom(T_k, S_i)=0$ for any $i\neq 0,\infty$ so the equations in Lemma \ref{L:tauexact} are satisfied.
	Finally we check that $\e(T_k, S_i)=\e(S_j, T_k)=0$ unless $i=1$ or $j=(a_k-1)_k$ (in these cases $\e(T_k, S_i)=\e(S_j, T_k)=1$) so the Cartan type is as described.
\end{proof}

\noindent We note that the cluster tilted algebras of canonical type $(2,3,6)$ as well as the mutable part of $\Square_{3,9}$ are mutation-equivalent to $\Delta_6^\mu$.
So far we have three different seminormal crystal cluster structures on $\Delta_6^\mu$:
One of Cartan type $\tilde{A}_8$ from the Grassmannian $\Gr\sm{9\\ 3}$,
one of type $\tilde{E}_6$ from an exceptional Grassmannian, 
and one of type $\tilde{A}_1\times \tilde{A}_2 \times \tilde{A}_5$. 
Conjecturally they are the only three maximal exact coefficient patterns on $\Delta_6^\mu$.

\section*{Acknowledgement}
The author would like to thank Pierre Baumann and John Stembridge for some discussions.
He would also like to thank Xiaoyue Lin and Yueyang Yan for proof-reading the manuscript.

\bibliographystyle{amsplain}

\end{document}